\newtheorem{thm}{Theorem}[section]
\newtheorem{lem}[thm]{Lemma}
\newtheorem{cor}[thm]{Corollary}
\newtheorem{prop}[thm]{Proposition}
\newtheorem{defi}[thm]{Definition}
\theoremstyle{remark}
\newtheorem{rmk}[thm]{Remark}
\newtheorem{ex}[thm]{Example}
\numberwithin{equation}{section}
\newcommand{\R}{\mathbb{R}}
\newcommand{\p}{\partial}
\newcommand{\e}{\epsilon}
\newcommand{\g}{\gamma}
\newcommand{\de}{\delta}
\newcommand{\s}{\sigma}
\newcommand{\mA}{\mathcal{A}}
\newcommand{\mH}{\mathcal{H}}
\DeclareMathOperator{\Div}{div}
\DeclareMathOperator{\inj}{inj}
\DeclareMathOperator{\diff}{Diff}
\newcommand{\be}{\begin{equation}}
\newcommand{\ee}{\end{equation}}
\newcommand{\bd}{\begin{displaymath}}
\newcommand{\ed}{\end{displaymath}}
     \title[Constrained deformations of PSC metrics]{Constrained Deformations of Positive Scalar curvature metrics}
     \author{Alessandro Carlotto and Chao Li}
     \address{ \noindent Alessandro Carlotto: 
     	\newline ETH D-Math, R\"amistrasse 101, 8092 Z\"urich, Switzerland 
     	\newline IAS, 1 Einstein drive, 08540 Princeton, United States of America
     	\newline
     	 \textit{E-mail address: alessandro.carlotto@math.ethz.ch, alessandro.carlotto@ias.edu} 
     	 \newline \newline \indent Chao Li: 
     	 \newline Princeton University - Department of Mathematics, Fine Hall, 304 Washington Road, 08544 Princeton, United States of America
     	 \newline \textit{E-mail address: chaoli@math.princeton.edu} }
\begin{document}
     
   	\begin{abstract}
     	We present a series of results concerning the interplay between the scalar curvature of a manifold and the mean curvature of its boundary.
     	In particular, we give a complete topological characterization of those compact 3-manifolds that support Riemannian metrics of positive scalar curvature and mean-convex boundary and, in any such case, we prove that the associated moduli space of metrics is path-connected.  The methods we employ are flexible enough to allow the construction of continuous paths of positive scalar curvature metrics with minimal boundary, and to derive similar conclusions in that context as well.
     	Our work relies on a combination of earlier fundamental contributions by Gromov-Lawson and Schoen-Yau, on the smoothing procedure designed by Miao, and on the interplay of Perelman's Ricci flow with surgery and conformal deformation techniques introduced by Cod\'a Marques in dealing with the closed case. 
     	\end{abstract}

     	\maketitle      
     
     	\tableofcontents

	\section{Introduction} \label{sec:intro}
	
	Let $X^n$ be a compact $n$-dimensional manifold with boundary $\partial X$. 
	When we endow it with a Riemannian metric $g$, one can consider the following two geometric functions:
	\[
	R_g: X\to\mathbb{\R},  \ \  \ \ H_g:\partial X\to\R,
	\]
	describing the \emph{scalar curvature} of $X$ and the \emph{mean curvature} of its boundary, respectively.

	 We shall be concerned here with certain geometrically significant subsets of Riemannian metrics defined by pointwise inequalities involving the functions $R$ and $H$. As a paradigmatic case, we wish to gain some understanding of the space
	 $\mathcal{M}=\mathcal{M}_{R>0, H>0}$ consisting of those Riemannian metrics on $X$ that have positive scalar curvature and (strictly) mean-convex boundary. Two well-known, fundamental questions one may ask, in this respect, are whether for a given $X$ the set $\mathcal{M}$ is not empty, and, if so, whether one can say something about the topology of this space, and draw at least some partial conclusions about its homotopy type. 
	 
	 These two questions naturally fit in a much larger picture, describing the interactions between interior and boundary curvature conditions. When such conditions are defined by inequalities involving the scalar curvature of the manifold, on the one hand, and the mean curvature of its boundary, on the other hand, such interactions have proven to be rather elusive.
	 We refer the reader to the recent article \cite{Gro18} by M. Gromov for a survey of various results related to this general theme, and for a list of significant open problems. It is also appropriate to point out how some of these questions naturally arise in the study of initial data sets for the Einstein equations, in which context the curvature bounds are motivated by specific physical axioms (see, for instance, the monographs \cite{HE73, Wal84} and references therein).

	 In the two-dimensional case, namely when $n=2$, we know (thanks to the Gauss-Bonnet theorem) that $\mathcal{M}\neq\emptyset$ only when $X$ is a disk and that, by the uniformization theorem, $\mathcal{M}$ is always path-connected. It is then natural to wonder to what extent similar conclusions still hold true in the higher-dimensional scenario, or whether instead new phenomena occur. Our first theorem gives a complete characterization of those compact 3-manifolds that support Riemannian metrics of positive scalar curvature and mean-convex boundary.

	 \begin{thm}\label{thm:A}
	 	Let $X^3$ be a connected, orientable, compact manifold with boundary, such that $\mathcal{M}\neq\emptyset$.
	 	Then there exist integers $A,B,C\ge 0$, such that $X$ is diffeomorphic to a connected sum of the form
	 	\[ P_{\g_1}\#\cdots\# P_{\g_A}\# S^3/\Gamma_1\#\cdots\# S^3/\Gamma_B\# \left(\#_{i=1}^C (S^2\times S^1)\right).\]
	 	Here $P_{\g_i}$, $i\le A$, are genus $\g_i\geq 0$ handlebodies, and $\Gamma_i$, $i\le B$, are finite subgroups of $SO(4)$ acting freely on $S^3$. Viceversa, any such manifold supports Riemannian metrics of positive scalar curvature and mean-convex boundary.
	 \end{thm}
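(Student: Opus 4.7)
I would split the statement into the \emph{necessity} of the claimed topological form (given $\mathcal{M} \neq \emptyset$) and the \emph{sufficiency} (construction of a metric in $\mathcal{M}$ on any such $X$), and treat them by essentially orthogonal techniques.

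For the necessity, my plan is to reduce to the closed case by doubling. Given $(X,g) \in \mathcal{M}$, I form the topological double $DX = X \cup_{\partial X} X$ equipped with the symmetric doubled metric, which is smooth away from $\partial X$ but only Lipschitz across it, with a ``corner'' whose total mean curvature from the two sides equals $2H_g > 0$. I would then invoke Miao's smoothing procedure, referenced in the abstract, to smooth this corner and produce a genuine smooth metric $\tilde{g}$ on $DX$ with $R_{\tilde{g}} > 0$: heuristically, a corner with positive mean curvature carries a positive distributional scalar curvature that can be diffused into a smooth positive one without loss elsewhere. At this point $DX$ is a closed, orientable 3-manifold admitting positive scalar curvature, and hence by Gromov-Lawson and Schoen-Yau combined with Perelman's proof of the geometrization conjecture, $DX$ is a finite connected sum of spherical space forms $S^3/\Gamma$ and of copies of $S^2 \times S^1$.

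The \emph{hardest} step is to recover the form of $X$ from this information on $DX$. I would exploit the canonical involution $\iota \colon DX \to DX$ whose fixed-point set is exactly $\partial X$, and combine it with an equivariant sphere theorem (Meeks-Simon-Yau, Jaco) to produce a Kneser-Milnor system of essential, pairwise disjoint 2-spheres in $DX$ which realises the prime decomposition and can be chosen $\iota$-invariant. After equivariant isotopy each such sphere either avoids $\partial X$ entirely or meets $\partial X$ transversally in a single essential simple closed curve; slicing $X$ along the resulting disks and spheres, and classifying the possible prime factors with boundary that arise, should force each summand of $X$ to be a 3-ball, a handlebody $P_\gamma$, a punctured $S^3/\Gamma$, or a punctured $S^2 \times S^1$. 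Reassembling these factors yields the decomposition in the statement.

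For sufficiency I would proceed in two steps. First, I exhibit model metrics in $\mathcal{M}$ on the building blocks: embedding $P_\gamma$ as a thickened wedge of $\gamma$ circles in $\R^3$ yields a flat metric with strictly mean-convex boundary; the round metric on $S^3/\Gamma$, restricted to the complement of a small geodesic ball and slightly deformed near the hole, gives a PSC metric on $S^3/\Gamma \setminus B^3$ with $H > 0$; and a short product $S^2 \times S^1$, suitably perturbed after removing a small ball, yields $(S^2 \times S^1) \setminus B^3$ in $\mathcal{M}$. Second, I combine these models via the Gromov-Lawson connected-sum construction, performed entirely in the \emph{interior} of each piece so as to leave neighbourhoods of the existing boundary components untouched, and apply a further $D$ interior surgeries to excise the extra balls $B_i^3$. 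The only technical point to verify is that each Gromov-Lawson neck can indeed be inserted away from $\partial X$, so that strict mean convexity of the boundary is preserved unchanged.
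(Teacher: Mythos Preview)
Your overall strategy for necessity is sound and leads to the same closed-manifold classification of $DX$, but the route you take from ``$DX$ is a connected sum of space forms and copies of $S^2 \times S^1$'' back to the structure of $X$ is genuinely different from the paper's, and it is here that your argument has gaps.

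The paper does \emph{not} use the involution $\iota$ or any equivariant sphere theorem. Instead it argues directly on $X$ by iterated compression: if some boundary component $\Sigma \subset \partial X$ has positive genus, then $\Sigma$ must be compressible in $X$ (otherwise $\pi_1(\Sigma) \hookrightarrow \pi_1(X) \hookrightarrow \pi_1(DX)$, contradicting Schoen--Yau's result that a closed PSC 3-manifold contains no two-sided incompressible surface of positive genus). Cutting $X$ along a compressing disk yields a manifold $Z$ with $DX \cong DZ \# (S^2 \times S^1)$, so $DZ$ has one fewer $S^2 \times S^1$ factor; iterating finitely many times reduces to the case where every boundary component is a sphere, which is handled by capping off and invoking Milnor's uniqueness of prime decomposition. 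Reattaching the 1-handles assembles the handlebody summands $P_{\gamma_i}$. This is elementary 3-manifold topology (Loop Theorem plus prime decomposition) and avoids equivariant machinery entirely.

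Your approach via an $\iota$-invariant sphere system is plausible in spirit, but two steps are not justified: (i) the existence of an $\iota$-equivariant Kneser--Milnor system requires something like the Meeks--Yau equivariant sphere theorem, which is heavier than anything the paper uses; and (ii) your claim that after equivariant isotopy each invariant sphere meets $\partial X$ in at most a single essential curve is unsupported---a priori the intersection could consist of several curves, and reducing this requires further argument that you have not supplied.

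On sufficiency there is a minor slip: the flat metric on a handlebody embedded in $\R^3$ has $R \equiv 0$, not $R > 0$, so does not lie in $\mathcal{M}$. The paper instead uses Lawson's minimal surfaces of arbitrary genus in round $S^3$: pushing off one side by the first Jacobi eigenfunction yields a strictly mean-convex domain in round $S^3$, hence a handlebody with $R > 0$ and $H > 0$. Your flat model can be salvaged by a conformal deformation exploiting $H > 0$, but this is an extra step you have not mentioned.
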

 
 One of the two implications follows quite directly from Remark \ref{rmk:Lawson} below by means of the Gromov-Lawson connected sum construction \cite{GL80b}, while the converse (providing the topological characterization given the curvature conditions) is much more subtle.
 We refer the reader to Section \ref{sec:TopChar} for a series of remarks related to this statement, the discussion of various simple examples and the proof of this result. 
	 
	 Let us then move to the second question we posed. When $n\geq 3$ it is still true that the $n$-dimensional disk (the closure of the $n$-dimensional ball) supports metrics of positive scalar curvature and convex boundary (like those corresponding to round spherical caps) so it makes sense to ask whether $\mathcal{M}$ is path-connected in this case as well. In this article, we provide an affirmative answer to this question when $n=3$, and in fact we present a result that applies to the moduli space of \emph{any} compact 3-manifold with boundary. To state our second main theorem, we let $\mathcal{D}$ be the class of diffeomorphisms of $X$ and $\mathcal{M}/\mathcal{D}$  be the associated moduli space of metrics. Then, the following conclusions hold:
	 
	 	\begin{thm}\label{thm:B}
	 		Let $X^3$ be a connected, orientable, compact manifold with boundary. If $\mathcal{M}$ is not empty, then the moduli space $\mathcal{M}/\mathcal{D}$ is path-connected. 
	 		When $X^3\simeq D^3$ then the space $\mathcal{M}$ is itself path-connected.
	 \end{thm}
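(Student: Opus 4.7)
The plan is to bootstrap from the closed case by doubling the manifold across its boundary. Given any $g\in\mathcal M(X)$, form the topological double $DX=X\cup_{\partial X}X$ and the naive doubled metric $\overline g$, which is Lipschitz and invariant under the reflection involution $\sigma$ swapping the two copies. Since $H_g>0$ strictly, the sum of the two one-sided mean curvatures at the corner $\partial X\subset(DX,\overline g)$ is strictly positive, so Miao's smoothing procedure delivers a smooth metric $\widehat g$ on $DX$ with $R_{\widehat g}>0$, arbitrarily $C^0$-close to $\overline g$. Averaging the smoothing over $\sigma$ one may arrange $\widehat g$ to be itself $\sigma$-invariant, and the assignment $g\mapsto\widehat g$ is continuous from $\mathcal M(X)$ to the space of $\sigma$-invariant PSC metrics on $DX$.

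Next I would invoke an equivariant refinement of Cod\'a Marques's theorem, asserting that any two $\sigma$-invariant PSC metrics on $DX$ can be joined by a continuous $\sigma$-equivariant path of PSC metrics modulo $\sigma$-equivariant diffeomorphisms. The original Marques argument combines Perelman's Ricci flow with surgery, Gromov--Lawson cut-and-paste, and conformal deformations; each ingredient admits a $\mathbb{Z}/2$-equivariant counterpart provided the surgery choices (locations of $\varepsilon$-necks, insertion of standard caps) and model metrics are performed symmetrically about the fixed hypersurface $\partial X\subset DX$. Concatenating the equivariant pieces produces a continuous path of $\sigma$-invariant PSC metrics joining $\widehat g_0$ to $\Phi^\ast\widehat g_1$ for some $\sigma$-equivariant diffeomorphism $\Phi$ of $DX$.

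Restricting such an equivariant path to the fundamental domain $X\subset DX$ yields a continuous family of PSC metrics on $X$ with totally geodesic boundary, since the $\sigma$-fixed set is forced to be totally geodesic in any invariant metric. To restore the open condition $H>0$, I perform a uniform collar perturbation: in a tubular neighborhood $[0,\varepsilon)\times\partial X$ I add a term of the form $\chi(t)\,h$, with $h$ a small positive-definite symmetric $2$-tensor on $\partial X$ and $\chi$ a cutoff vanishing near $t=\varepsilon$; the perturbed metric has strictly mean-convex boundary, and for $h$ sufficiently small $R>0$ is preserved uniformly in the path parameter. The $\sigma$-equivariant diffeomorphism $\Phi$ descends to a diffeomorphism of $X$, so the resulting family delivers a continuous path in $\mathcal M/\mathcal D$ as required. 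For $X\simeq D^3$, path-connectedness of $\diff^+(D^3)$ (a consequence of Hatcher's theorem) ensures that the $\Phi$-factor can be continuously isotoped to the identity, lifting the moduli-space path to a genuine path in $\mathcal M(D^3)$.

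The hard part will be the equivariant upgrade of Marques's argument. Perelman's smooth flow automatically preserves isometries by uniqueness from an isometric initial datum, but the surgery procedure relies on non-canonical choices of necks and standard solutions, and coordinating these with the $\mathbb{Z}/2$-action is delicate—particularly when a surgery neck intersects the fixed hypersurface $\partial X$ transversally, in which case the surgery must be carried out symmetrically across $\partial X$. This requires a careful reexamination of Perelman's neck/cap dichotomy in the presence of an isometric involution and, correspondingly, an analysis of how Gromov--Lawson's tunneling fill-ins can be made equivariant.
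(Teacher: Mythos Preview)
Your overall strategy matches the paper's: double, run an equivariant version of Marques's argument on $DX$, restrict, and perturb back to $H>0$. But there is a genuine gap in the first step. You produce a continuous assignment $g\mapsto\widehat g$ into $\sigma$-invariant PSC metrics on $DX$, and then a path from $\widehat g_0|_X$ to $\Phi^\ast\widehat g_1|_X$ (after perturbation). What is missing is any path \emph{from $g$ to $\widehat g|_X$} inside $\mathcal M_{R>0,H\geq 0}$. Continuity of $g\mapsto\widehat g$ does not supply this: Miao's smoothing alters the metric in a collar of $\partial X$ in a way that is only $C^0$-close to $g$, the scalar curvature in the smoothing strip is merely bounded below (not positive) before the conformal correction, and a linear interpolation between $g$ and $\widehat g|_X$ has no reason to preserve $R>0$.

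The paper devotes all of Section~\ref{sec:elldef} to precisely this missing isotopy (Proposition~\ref{thm:tgdef}). The key difference from your naive doubling is that they do \emph{not} glue two copies of $(X,g)$ directly; instead they realize the double as the boundary $T_\varepsilon(X')$ of a thin tube around $X'\times\{0\}$ in $X\times\mathbb R$. This tube comes with a natural foliation by level sets $S_u$, and the crucial (and nontrivial) observation is that along this foliation the mean curvature of $S_u$ stays nonnegative while the scalar curvature stays positive away from a $C^1$ interface (Lemma~\ref{lem:singpath}). Miao's mollification is then applied at that $C^1$ interface, and the isotopy is built by sliding $u$ from $-2\varepsilon_0$ to $\varepsilon_0$ together with a Neumann-eigenfunction conformal correction (Proposition~\ref{pro:miao}, Section~\ref{subs:proof1}). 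Your scheme has no analogue of this foliation, and without it there is no evident way to connect $g$ to the symmetrizable metric.

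A minor point: for $X\simeq D^3$ the paper cites Cerf's theorem \cite{Cer68} for the path-connectedness of $\mathcal D_+(D^3)$; Hatcher's theorem (the Smale conjecture) is a much later and stronger statement, though it of course implies what you need.
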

 
 Even in the special case of the $n$-dimensional disk, we are not aware of any similar result, in either direction, for any $n\geq 4$; also there seems to be very fragmentary information about higher homotopy groups $\pi_k(\mathcal{M}(D^n))$ whenever $n\neq 2$.
The possibility of employing the first statement to derive the somewhat stronger conclusion presented in the second one relies on a deep result by J. Cerf \cite{Cer68} asserting the contractibility of $\mathcal{D}_+(D^3)$, the class of orientation-preserving diffeomorphisms of $X$.
  
  In fact, the techniques we employ to prove the two theorems above also allow to study the larger moduli spaces that are obtained by weakening one, or both inequalities that define $\mathcal{M}$. Most significantly, and somewhat surprisingly, we can even refine our deformation schemes to obtain continuous paths of positive, or non-negative, scalar curvature metrics with minimal boundary. In particular, Theorem \ref{thm:B} above has a natural counterpart for the space $\mathcal{H}:=\mathcal{H}_{R\geq 0, H=0}$, see Theorem \ref{thm:B'}, which in turn implies the path-connectedness of asymptotically flat Riemannian 3-manifolds with non-negative scalar curvature and minimal boundary (cf. \cite{Sch84, Mar12}). 
	 
In order to contextualize these theorems, it is convenient to first compare them with our current understanding of the corresponding questions in the case of closed manifolds, a setting where much more is known.

\subsection{Connection to the closed case}\label{subs:closed}

For a given compact, orientable manifold $M^n$ without boundary, let $\mathcal{R}$ denote the (possibly empty) space of positive scalar curvature metrics and let $\mathcal{R}/\mathcal{D}$ denote the associated moduli space, for $\mathcal{D}$ the diffeomorphisms of $M$. As above, one wonders whether $\mathcal{R}\neq\emptyset$ and, in that case, what can be said on $\pi_{\ast}(\mathcal{R})$. These questions turn out to be, in general, \emph{highly sensitive to the value of $n$}.
When $n=2$, we obviously know that $\mathcal{R}\neq\emptyset \ \Rightarrow M\simeq S^2$ (a topological sphere) and it is a theorem due to H. Weyl \cite{Wey15} that $\mathcal{R}$ is path-connected (for any metric can be linked, via a conformal path, to a round representative), while much later J. Rosenberg and S. Stolz proved that $\mathcal{R}$ is actually contractible \cite{RS01}. When $n=3$ we know (cf. work by Gromov-Lawson \cite{GL80b}, Schoen-Yau \cite{SY79a, SY79b, SY82}, Perelman \cite{Per02, Per03a, Per03b}) that $\mathcal{R}$ is not empty only if $M$ takes the form of a (finite) connected sum $S^3/\Gamma_1 \# \ldots \# S^3/\Gamma_p \# S^2\times S^1 \#_q S^2\times S^1$; in each of these cases it was proven by F. Cod\'a Marques \cite{Mar12} that the moduli space $\mathcal{R}/\mathcal{D}$ is path-connected. We will say more on this result, which plays a fundamental role in our work, in the next section when providing an outline of the proofs of our main theorems.
When $n\geq 4$ the scenario in front of us is rather different, and to some extent still far from being fully understood. A remarkable characterization result for simply connected compact manifolds, of dimension at least five, admitting a positive scalar curvature metric was proven by S. Stolz in 1992, see \cite{Sto92}. Concerning the path-connectedness issue, we mention a few landmark results:
\begin{itemize}
 \item {in 1974 N. Hitchin \cite{Hit74} proved, via spinorial methods, that $\mathcal{R}(S^{8k})$ and $\mathcal{R}(S^{8k+1})$  are disconnected for all $k\geq 1$;}
 \item {in 1983 M. Gromov and H. Lawson \cite{GL83} proved that $\mathcal{R}(S^{7})$ has infinitely many connected components;}
 \item {in 1988 R. Carr \cite{Car88} proved that the space $\mathcal{R}(S^{4k-1})$ has infinitely many connected components for all $k\geq 2$;}
 \item {in 1993 M. Kreck and S. Stolz \cite{KS93} proved that the moduli space $\mathcal{R}(S^{4k-1})/\mathcal{D}(S^{4k-1})$ has infinitely many connected components for all $k\geq 2$;}
 \item {in 1996 B. Botvinnik and P. Gilkey \cite{BG96} proved that this property of the moduli space $\mathcal{R}(M^n)/\mathcal{D}(M^n)$ holds true whenever $M$ is a nontrivial spherical space quotient of dimension greater or equal than five (thereby providing, in particular, examples that have dimension exactly equal to 5);}
 \item {in 2014 B.	Hanke, T. Schick and W. Steimle \cite{HSS14} constructed, among other things, elements of infinite order in $\pi_k(\mathcal{R}(M^n), g_0)$ for any $k\in\mathbb{N}$ and correspondingly large dimension $n$ (see Theorem 1.1 therein for a precise statement).}
\end{itemize}
For other recent, significant contributions related to these problems the reader may wish to consult e.~g. \cite{BHSW10, BER17, CH16, Wal11, Wal14b} among others.

\subsection{Related results for manifolds with boundary}\label{subs:survey}	  
	  
As it has been anticipated above, much less is known for the corresponding questions in the setting of compact manifolds with boundary. In fact, some of the techniques that lie behind most of the results mentioned in the previous section do not have a straightforward extension to settings where boundary conditions come into play. Besides the $n=2$ case, which can be dealt with `classical' tools, the territory is still partly unexplored. That being said, there are some notable exceptions to this statement.

 The first one we wish to mention is provided by a beautiful paper \cite{AMW16} by A. Ach\'e, D. Maximo and H. Wu, where it is proven that the space $\mathcal{C}$ of metrics of positive Ricci curvature and convex boundary on a compact 3-manifold (necessarily a 3-disk by \cite{MSY82}) is path-connected. From there, they further derive that the corresponding moduli space (i.~e. the quotient modulo orientation-preserving diffeomorphisms of the 3-disk) is actually contractible.

 The strategy behind their proof, which was highly inspirational to us, can be summarized as follows. Given a metric $g_0 \in \mathcal{C}$, one first constructs an isotopy through metrics of positive Ricci curvature and \emph{weakly} convex boundary connecting it to a metric $g_1$ for which the boundary is totally geodesic (in fact a metric that can be smoothly symmetrized so to obtain a closed Riemannian manifold of positive Ricci curvature). Then, as a second step, one employs Hamilton's (normalized) Ricci flow to obtain an isotopy connecting $g_1$ to a round hemispherical metric. One then only needs to slightly perturb the concatenated path to accomodate the (strict) boundary convexity requirement without decreasing the Ricci curvature too much.

Now, as we shall describe in the next section, this conceptual scheme also lies behind the proof of our Theorem \ref{thm:B}, although serious technical obstacles appear both in the first and in the second part of the argument, preventing any attempt of cheap extensions of the results in \cite{AMW16} to the setting we are considering.
In the first part of the argument, we need to design a completely different symmetrization scheme, as the one presented in \cite{AMW16} (which in turn relies on an idea described by G. Perelman in \cite{Per97}, and worked out in detail in the Ph.D. thesis of H.-H. Wang, cf. \cite{Wan99}) does \emph{not} preserve the curvature conditions we deal with. In the second part of the argument, we cannot simply rely on the work by Hamilton, but need to work with the Ricci flow with surgery, and keep track (in a careful fashion) of the various pieces that originate when we cut along necks. This issue needs, to be properly dealt with, an array of specific tools that we will list when describing the general structure of our proof.

A second notable contribution to the subject is instead provided by the article by M. Walsh \cite{Wal14a}, which deals with the same \emph{interior} curvature conditions we are concerned with (positive scalar curvature) but with somewhat different boundary conditions, of \emph{collar} type. In that setting, the author extended to compact manifolds with boundary various foundational results, such as certain surgery results \`a la Gromov-Lawson. Thereby, he was able to derive deep conclusions on various finer aspects of the topology of these spaces of metrics, including information on their homotopy groups.
We refer the reader to Section 1.1 therein for the statements of the main results, see in particular Corollary D. Thirdly, we mention the recent paper \cite{BK18} by B. Botvinnik and D. Kazaras, aimed at investigating psc-bordism problems for compact manifolds without boundary.

 \subsection{Outline of the proofs}\label{subs:outline}
 
 We shall now provide a more detailed, yet non-technical description of the proof of our main results. This description can also be regarded as a short guided tour through the contents of this article. 
 
 The starting point for our project was an enlightening observation by Gromov-Lawson (see Theorem 5.7 in \cite{GL80a}), who first pointed out that if a compact manifold with boundary supports metrics of positive scalar curvature and (strictly) mean-convex boundary then its double can always be endowed with a metric of positive scalar curvature. In particular, this fact allows to reduce the first question we considered to a purely topological matter, i.~e. to the problem of chacterizing those compact manifolds $X$ whose double takes the form of a finite connected sum of handles and spherical space forms (cf. Section \ref{subs:closed}). To our surprise, such a result seems not to be in the literature, so we provided a detailed proof in Section \ref{sec:TopChar}. On the other hand, we wish to mention here two aspects which might alert the reader and indicate, at least in some vague sense, some of the subtleties that arise in aiming at such a characterization. First: in general the map $X\mapsto DX=M$, that associates to a compact manifold with boundary its double (an element of the list provided above) is highly non-injective. For instance, if $M\simeq S^2\times S^1$ then $X$ could either take the form $S^2\times I$ or $D^2\times S^1$, and of course this phenomenon manifests itself in greater complexity when dealing with the general case. Second: we wish to stress something that follows from the statement of Theorem \ref{thm:A}, namely that the condition that $\mathcal{M}\neq\emptyset$ does not place any restriction on the topological type of $\partial X$. This is discussed in the following remark.
 
 \begin{rmk}\label{rmk:Lawson}{}
 	Considering the round metric on $S^3$ we know, by virtue of Lawson's construction in \cite{Law70}, the existence of minimal surfaces of any given genus $\gamma$; any such surface is obviously unstable hence one can consider a positive speed deformation by means of the first eigenfunction of the Jacobi operator so to determine a smooth subdomain of the ambient manifold, having the topology of a genus $\gamma$ handlebody, whose boundary is (strictly) mean-convex. Hence, a Riemannian compact 3-manifold $(X^3,g)$ whose boundary is the disjoint union of closed surfaces of pre-assigned genera can be obtained from those building blocks by means of Gromov-Lawson connected sums. In particular, this observation allows to justify the second assertion in Theorem \ref{thm:A}.
 \end{rmk}
 
 From there, we started to wonder whether this Gromov-Lawson construction could be implemented in such a way that a whole isotopy (i.~e. a continuous path of metrics) could be derived. More concretely, the very first step in the proof of Theorem \ref{thm:B} is to deform a given metric to a `symmetrizable' one, moving through metrics of positive scalar curvature and weakly mean-convex boundary  (from there one can further deform, a posteriori, to gain the strict inequality, cf. Appendix \ref{sec:push-in}). The core statement we prove is as follows:

		\begin{prop}\label{thm:tgdef}
		Let $X^3$ be a connected, orientable, compact manifold with boundary, endowed with a Riemannian metric $g\in \mathcal{M}$. Then there exists a continuous path of smooth metrics on $X^3$, $\mu\in [0,1]\mapsto g_{\mu}\in \mathcal{M}_{R>0, H\geq 0}$, such that $g_0=g$ and $g_1$ has  totally geodesic boundary.  $(X^3, g_1)$ doubles to a smooth Riemannian manifold of positive scalar curvature.
		\end{prop}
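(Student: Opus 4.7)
My plan is to construct the path $g_\mu$ by a deformation supported in a tubular neighbourhood of $\partial X$, terminating at a metric $g_1$ that agrees with a warped product $dt^2 + f(t)^2 g_0$ on a thinner sub-collar, where $f$ is a smooth positive \emph{even} function with $f(0) = 1$ and $f'(0) = 0$. The evenness of $f$ ensures that $g_1$ extends smoothly across $\partial X$ under reflection (so the double is smooth), while $f'(0) = 0$ makes $\partial X$ totally geodesic.

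First I would fix Fermi coordinates $(t,x) \in [0, \delta) \times \partial X$ so that $g = dt^2 + g_t$ on the tubular neighbourhood $U$, with induced boundary metric $g_0$ and outer mean curvature $H > 0$. By compactness, $R_g \geq \rho > 0$ and $H \geq \eta > 0$ for positive constants $\rho, \eta$. The main subtlety at the outset is that the two-dimensional scalar curvature $R_{g_0}$ may be negative on boundary components of positive genus, so a strict product target $dt^2 + g_0$ would force $R_{g_1} = R_{g_0}$ and violate positivity; instead one picks $f''(0)$ sufficiently negative that $R_{g_0}(x) - 4 f''(0) > 0$ for all $x \in \partial X$. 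The warped-product identity $R_{dt^2 + f^2 g_0} = R_{g_0}/f^2 - 4 f''/f - 2(f'/f)^2$ then yields $R > 0$ in a neighbourhood of $\{t = 0\}$ whose width is controlled by $f$ and $\inf R_{g_0}$.

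The family $g_\mu$ is defined by interpolation in the collar, e.g.
\[
g_\mu = dt^2 + (1 - \chi_\mu(t)) g_t + \chi_\mu(t) f(t)^2 g_0,
\]
where $\chi_\mu$ is a smooth family of cutoffs on $[0, \delta)$ with $\chi_0 \equiv 0$, $\chi_1 \equiv 1$ on $[0, \varepsilon)$ for some small $\varepsilon > 0$, and $\chi_\mu$ supported in $[0, \delta/2)$. Using $f(0) = 1$ and $f'(0) = 0$, a direct computation gives the outer mean curvature at $\partial X$ as $(1 - \chi_\mu(0)) H$, which is non-negative and equals $0$ exactly when $\mu = 1$. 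The endpoint $g_1$ then coincides with the warped product on $[0, \varepsilon) \times \partial X$, giving a totally geodesic boundary and a smooth PSC double after reflection.

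The main obstacle is to ensure $R_{g_\mu} > 0$ throughout the path. From the Fermi-form identity $R_g = R_{g_t} - 2\partial_t H_t - |k_t|^2 - H_t^2$, the scalar curvature of $g_\mu$ is a sum of terms, several of which are sign-definite, with the dangerous contributions arising in the transition region where $\chi_\mu'$ and $\chi_\mu''$ are large. Controlling these cross-terms requires, in my expectation, (a) a preliminary Miao-type smoothing that first deforms $g$ through $\mathcal{M}_{R>0,H\geq 0}$ to a nearby metric whose normal jet at $\partial X$ already agrees with the warped-product form to first order at $t = 0$, thereby reducing the transverse incompatibility of the interpolation; and (b) a careful choice of $\chi_\mu$ and $f$ balancing $|\chi_\mu''|$ against $\rho$, $\eta$ and $|f''(0)|$. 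The simultaneous constraints — $f$ concave enough to overcome the negative values of $R_{g_0}$, and $\chi_\mu$ varying slowly enough to keep the interpolation scalar-positive — are the technical heart of the proof.
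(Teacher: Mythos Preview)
Your approach is genuinely different from the paper's, and there is a real gap at the point you yourself flag as the ``technical heart.''

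Your preliminary step (a) is circular. You propose first deforming $g$ so that its normal jet at $\partial X$ agrees to first order with the target warped product $dt^2 + f(t)^2 g_0$. But since $f'(0)=0$, the warped product satisfies $\partial_t(f^2 g_0)\rvert_{t=0}=0$; matching first-order jets therefore forces the second fundamental form of $\partial X$ to vanish, i.e.\ the boundary is already totally geodesic --- which is exactly what you are trying to achieve. Without (a), the dangerous term in $R_{g_\mu}$ coming from $\chi_\mu''\cdot(g_t - f^2 g_0)$ has size of order $|\chi_\mu''|\cdot t\cdot H$ (because $g_t - f^2 g_0 = 2t\,k + O(t^2)$ with $k$ the nonvanishing second fundamental form), and this changes sign with $\chi_\mu''$ while scaling like $t_0 H/w^2$ if the cutoff transitions over width $w$ at height $t_0$. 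Your proposed compensation via large $|f''(0)|$ contributes only $\chi_\mu\,|f''(0)|$ in the transition zone, and balancing these against one another and against $f>0$ is precisely the obstruction the authors allude to when they remark that the Perelman--Wang collar-symmetrisation (used by Ach\'e--Maximo--Wu for Ricci) ``does not preserve the curvature conditions we deal with.''

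The paper proceeds by an entirely different mechanism. One realises the double $M$ as the boundary $T_\epsilon(X')$ of an $\epsilon$-tube around $X'\times\{0\}\subset X\times\mathbb R$ (Gromov--Lawson). The Gauss equation for the tube gives $R>0$ away from two $C^0$-singular interfaces, and --- crucially --- the level sets $S_u$ of the signed distance to one interface are strictly mean-convex for $u<\epsilon_0$ and totally geodesic at $u=\epsilon_0$ (Lemma~3.1). A localised fibrewise mollification \`a~la Miao then smooths the interfaces while preserving the mean-convex foliation and keeping $R$ bounded below, with $R>0$ outside a strip of arbitrarily small volume (Proposition~3.2). The path of metrics on $X$ is obtained by sliding the boundary through this foliation, at each stage conformally multiplying by the first Neumann eigenfunction of the conformal Laplacian; an eigenvalue estimate (Lemma~A.2) guarantees $\lambda_1>0$ uniformly, so the conformal metric has $R>0$, while the Neumann condition leaves the boundary mean curvature untouched. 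This conformal correction --- which absorbs the unavoidable small region of negative scalar curvature produced by the smoothing --- is the ingredient your scheme lacks, and is what makes the paper's indirect route succeed where the direct collar interpolation stalls.
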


	In \cite{GL80a} the double of $X$ endowed with a positive scalar curvature metric was essentially obtained by considering the boundary of the $\e$-neighborhood of $X'\cong X'\times\left\{0\right\}\subset X\times \R$ where $X'\subset X$ is obtained via a small inward deformation of $\partial X$. Such a double does \emph{not} inherit a smooth Riemannian metric in general but, as it was observed, there are various ways of smoothing the double keeping the scalar curvature positive. Since we are interested in obtaining an isotopy it is instead crucial for our scopes to precisely design a special smoothing scheme and to work out the details with great care. 
	Our desingularization crucially exploits some well-known work by P. Miao, related to non-smooth versions of the positive mass theorem \cite{Miao02}. Our situation is somewhat different, and perhaps more favorable, than the one handled there: the two interfaces match at $C^1$-level (rather than $C^0$ with the appropriate inequality for the mean curvature) so we can actually simplify the argument and avoid some delicate issues that arise there. On other hand, the fact that a desingularization scheme can be designed so that not only the scalar curvature is kept positive, but also all the leaves of a suitable foliation do not cease to be mean-convex is quite remarkable (and crucial in the context of our work).
	This part of the proof is presented in Section \ref{sec:elldef}, see also Appendix \ref{sec:Neu} for a conformal deformation lemma we need to employ.
	
	\
	
	The net outcome of this first step is then a Riemannian 3-manifold than can be smoothly symmetrized, along the totally geodesic boundary, to a closed one endowed with a metric of positive scalar curvature:
	Thereby, we are led to work with objects that we shall call \emph{reflexive $n$-manifolds}. We refer the reader to Section \ref{sec:EquivMan} for the basic definitions, that (although rather technical) naturally encode the properties of the geometric objects coming out of the above construction.

		The core of our proof is then aimed at obtaining an equivariant isotopy, which one can then restrict to, say, the `upper-half of the doubled manifold' to ultimately prove Theorem \ref{thm:B}. To formalize and specify this idea, we define the notion of reflexive isotopy (cf. Definition \ref{def:Z2isot}). 
			In those terms, we aim at constructing a reflexive isotopy of classes connecting the reflexive manifold produced above by symmetrization to a `standard' endpoint (say, in the case when $X$ is the 3-disk, the equivalence class of a suitable spherical cap) in $\mathcal{M}/\mathcal{D}$.
	
	The way this goal is achieved resembles, in general terms, the structure of the argument presented in \cite{Mar12} modulo the fact that some \emph{substantial} changes are needed for the plan to be successfully implemented. Let us first outline the argument in the special case of the three-dimensional disk $D^3$ and later we will describe some of the complications that arise when handling a compact 3-manifold that can be presented as indicated by Theorem \ref{thm:A}.
	
	Thus, one considers a triple $(M_0,g_0, f_0)$ as the initial data to run an equivariant version of the Ricci flow with surgery (cf. \cite{DL09}). Since the scalar curvature of $g_0$ is positive, we know that the flow will become extinct in finite time, and after finitely many surgeries. If no surgeries occur we know that the Ricci flow smoothly connects the initial data to some reflexive triple, say $(M_0, g_{t_{\ast}}, f_0)$, with $M_0$ connected by virtue of our initial assumption, that is covered by (equivariant) canonical neighborhoods (as defined by Perelman \cite{Per03b}, cf. Morgan-Tian \cite{MT07} and Kleiner-Lott \cite{KL08}). Thereby we show, by means of a delicate covering argument and dividing into cases depending on the way the set of fixed points $Fix(f_0)$ intersects the $\e$-necks that belong to the covering of the Riemannian manifold in question, and the type of the $(C,\e)$-caps we employ, that $(M_0,g_{t_{\ast}})$ can always be split via equivariant surgery into pieces each of which can be smoothly isotoped to a round sphere. Each piece resulting from the decomposition is isotoped either via the (smooth, non-surgical) Ricci flow, or via a conformal deformation. For the latter case, we need to discuss the equivariance property of Kuiper's developing map, see Section \ref{subs:equivKuip}. This part of the proof is presented in Section \ref{subs:basis}.
	
	Hence, in order to deal with the general case (when surgeries are performed to extend the flow), the proof proceeds via two inductive arguments. First, as the key preparatory step, in Section \ref{subs:conncomp} we deal with the case of reflexive triples that are actually obtained by (equivariant) Gromov-Lawson connected sums of pieces that are known to be (separately) isotopic to the standard round sphere. In order to do that, we first need to list, in our specific context, the possible types of Gromov-Lawson equivariant connected sum operations (which in turn connects to the classification of equivariant $\e$-necks). Thereby, once this inductive step is done, in Section \ref{subs:backw} we employ a certain \emph{reconstruction lemma}, see Lemma \ref{lemma.GL.and.RFsurgery.are.isotopic} for a precise statement, to design a second inductive argument, going \emph{backward in time} in the Ricci flow evolution, to derive from an isotopy at time (say) $t_{i}+\eta$ an isotopy at the pre-surgery time $t_{i}-\eta$, till we finally reduce to the time interval $[t_0, t_1)$ where the evolution is smooth and unaffected by surgeries. 
	To fix the ideas, it is helpful to consider the case when the Ricci flow with surgery occurs with \emph{exactly one surgery time} $0<t_1<\infty$. The reconstruction lemma ensures that the pre-surgical scenario (at time, say, $t_1-\delta$) can be reproduced (in the sense of an isotopy of smooth metrics) by taking Gromov-Lawson connected sums of the pieces arising when performing the surgery procedure. Since $t_2$ is the (finite) extinction time for the flow, we know that each such piece will come, by the inductive basis (as explained above), with an isotopy to a round metric, hence \emph{using the information provided by the whole space-time track of the Ricci flow}, we can merge these pieces together and then invoke the first inductive scheme to get a global isotopy.

	Now, in extending this result from the case of the disk to the case of general 3-manifolds with boundary as in Theorem \ref{thm:A}, the main obstacle is probably to find an appropriate replacement for the round metric, more precisely to associate to our initial manifold $X$ a (connected) subset of model triples of the form $(M,g,f)$, with $M=DX$, that represent the natural endpoints of the isotopy we wish to construct. The procedure to construct such triples is decribed in Section \ref{subs:ModelMet}. Roughly speaking, we start with a reference sphere and then perform a series of operations (that are associated to the various pieces in the decomposition of $X$), which are essentially equivariant connected sums of various types. An interesting feature, though, is that our model metrics also exhibit \emph{iterated necks}, namely (degree two) chains of equivariant necks, built at two different scales, one on top of the other. In fact, an important point of our discussion is then to prove that \emph{no higher-order chains of necks are needed} or, in other words, that higher-order chains of necks can always isotoped to simpler models. The isotopies we need to build are (due to the equivariance requirements) subject to additional requirements compared to \cite{Mar12}, thus these missing degrees of freedoms force the models to be more complicated. The reader might have a look at Figure \ref{fig:InflateSphere} and Figure \ref{fig:Simplify} to get a feeling for the sort of issues we face. The final result we obtain is as follows:

	\begin{prop}\label{pro:Conclusive}
		Let $(M,g,f)$ be a connected reflexive 3-manifold of positive scalar curvature. Then there exists a reflexive isotopy of classes connecting, through metrics of positive scalar curvature, its equivalence class to that of a model triple.
	\end{prop}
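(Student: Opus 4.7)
The plan is to deploy the equivariant Ricci flow with surgery of \cite{DL09} on $(M,g,f)$ and to translate its space-time track into the desired reflexive isotopy of classes. Since $R_g>0$, the flow becomes extinct in finite time after finitely many surgery times $0<t_1<\ldots<t_N<T_{\mathrm{ext}}$. The basis of the construction is the case $N=0$, in which the flow is smooth up to extinction: following it to a time $t_\ast$ close enough to extinction, the triple $(M,g_{t_\ast}, f)$ is covered by equivariant canonical neighborhoods in the sense of Perelman. A delicate covering argument---distinguishing cases according to how $\fix(f)$ meets the $\e$-necks and the $(C,\e)$-caps of the cover---allows us to split $(M,g_{t_\ast})$ via equivariant surgery into pieces, each of which is reflexively isotopic to a round sphere; for each piece, the isotopy is produced either by the smooth (non-surgical) Ricci flow, or by a conformal deformation relying on the equivariant version of Kuiper's developing map discussed in Section \ref{subs:equivKuip}.

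With the basis case in hand, the next step is a preparatory induction over the number of $\e$-necks along which one performs equivariant Gromov--Lawson connected sums. Specifically, I would show in the spirit of Section \ref{subs:conncomp} that every reflexive triple obtained by equivariant Gromov--Lawson connected sums of pieces already known to be reflexively isotopic to the standard round sphere is itself reflexively isotopic to a model triple in the sense of Section \ref{subs:ModelMet}. This calls for classifying the admissible types of equivariant neck at which the connected sums can be performed, and then verifying that each such local operation can be carried out through a continuous family of metrics of positive scalar curvature compatible with the involution $f$.

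The second, and main, inductive scheme runs backward in time on the number $N$ of surgeries. Given $(M,g,f)$ undergoing $N+1$ surgery times, pick any small $\eta>0$: the post-surgery configuration at $t_1+\eta$ splits equivariantly into finitely many reflexive triples, each subject to a Ricci flow with at most $N$ further surgeries, hence reflexively isotopic to a model triple by the inductive hypothesis. The reconstruction lemma (Lemma \ref{lemma.GL.and.RFsurgery.are.isotopic}) then shows that the pre-surgery configuration at $t_1-\eta$ is reflexively isotopic to an equivariant Gromov--Lawson connected sum of the post-surgery pieces; invoking the preparatory induction of the previous paragraph produces a reflexive isotopy of the pre-surgery triple to a model, and concatenating with the smooth evolution on $[0,t_1-\eta]$ yields the required isotopy starting at $(M,g,f)$ itself.

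The hardest step will be the book-keeping needed to ensure that the model triples constructed along the backward induction stay within the finite list defined in Section \ref{subs:ModelMet}. Iterating equivariant connected sums could a priori force chains of equivariant necks of arbitrarily high order (as suggested by Figures \ref{fig:InflateSphere} and \ref{fig:Simplify}), and the key technical point is to reflexively isotope every such higher-order chain to a model carrying at most degree-two iterated necks. The difficulty is compounded by the fact that the involution $f$ effectively halves the degrees of freedom available in the closed-manifold argument of \cite{Mar12}, which accounts both for the more intricate shape of our model triples and for the necessity of performing this simplification step at every stage of the induction.
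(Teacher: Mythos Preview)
Your outline is correct and mirrors the paper's own argument essentially step for step: equivariant Ricci flow with surgery (Theorem~\ref{thm:RFS}), the covering analysis at near-extinction times (Propositions~\ref{pro:can} and~\ref{pro:canGEN}), the first induction over equivariant Gromov--Lawson connected sums (Lemmata~\ref{lemma.equivariant.connected.sums.canonical} and~\ref{lemma.equivariant.connected.sums.canonical.general}), and the backward-in-time induction via the reconstruction lemma (Proposition~\ref{proposition.backward}). One small inaccuracy: in the general case the pieces produced by surgery are not all reflexively isotopic to the \emph{round sphere} but rather to \emph{model triples} (which may carry spherical space forms, handles, and mixed $CT$ necks), so the preparatory induction must take as input pieces isotopic to model triples and output a model triple---this is exactly the content of Lemma~\ref{lemma.equivariant.connected.sums.canonical.general} and the simplification moves in Remarks~\ref{rmk: UnaffArea}--\ref{rmk: Inflate}, which you correctly flag as the hardest book-keeping step.
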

		
		\subsection{Other path-connectedness results }\label{subs:other}
		
		 We now mention other similar results the reader will find in this article. First of all, in Section \ref{sec:TopChar} we also prove topological characterization theorems for those moduli spaces of metrics that are obtained by weakening either of our geometric inequalities. More specifically, we obtain the same conclusion as in Theorem \ref{thm:A} for the spaces
			$\mathcal{M}_{R>0, H\geq 0}$ (Riemannian metrics on $X$ that have positive scalar curvature and weakly mean-convex boundary), and
			$\mathcal{M}_{R\geq 0, H> 0}$ (Riemannian metrics on $X$ that have non-negative scalar curvature and strictly mean-convex boundary).
			As can be easily inferred going back to the $n=2$ case, the analysis of the larger space
			$\mathcal{M}_{R\geq 0, H\geq 0}$ (Riemannian metrics on $X$ that have non-negative scalar curvature and weakly mean-convex boundary) is somewhat more delicate. In fact, with that goal in mind, we first prove a rigidity theorem singling out the borderline case (see Proposition \ref{pro:Rigidity}), which may be regarded as the analogue, for manifolds with boundary, of a well-known theorem due to J.-P. Bourguignon (cf. \cite{KW75}). Thereby, we derive Corollary \ref{cor:Equivalence}, that might be an equivalence statement of independent interest, and, as a result, Corollary \ref{cor:GenTopChar}.
			Lastly, a version of Theorem \ref{thm:B} for such moduli spaces is given in Section \ref{subs:pcmet}, see in particular the statement of Theorem \ref{thm:PathConnOther}: the analogy with the treatment of $\mathcal{M}_{R>0, H>0}$ works smoothly unless we have $X^3\simeq S^1\times S^1\times I$, in which case the larger moduli space of $\mathcal{M}_{R\geq 0, H\geq 0}$, consisting of \emph{one point}, is obviously path-connected. 
			The important case when the deal with minimal boundaries is harder, and we present it separately in Section \ref{sec:MinPaths}. Since Proposition \ref{pro:Conclusive} really provides a path through \emph{totally geodesic} (hence minimal) boundaries, the real challenge consists in providing a suitable version of Proposition \ref{thm:tgdef}.

		\subsection{Applications and perspectives}
		Besides their intrinsic significance, our main results (as well as some of the techniques we develop) have, by virtue of a well-known inversion argument via the Green function for the conformal Laplacian (cf. Schoen's solution of the Yamabe problem \cite{Sch84, LP87}) direct implications on the geometry of asymptotically flat spaces. In particular, one can derive path-connectedness results for various classes of time-symmetric (or, more generally, maximal) initial data sets for the Einstein field equations under the so-called \emph{dominant energy condition}. Roughly speaking, in that context the presence of a minimal boundary is related to the description of \emph{black-hole solutions}, namely to the trace of the event horizon on the spacelike slice in question (cf. \cite{HI01, Bra01, Bra02}). Such applications, which in substance are direct corollaries of the main theorem we obtain here in Section \ref{sec:MinPaths}, will be given elsewhere.
			In addition, our path-connectedness statements might, at least in principle, allow to obtain existence theorems (in particular, solvability results for partial differential equations) via the so-called method of continuity (see e. g. \cite{Aub98, SY94} for some remarkable, illustrative instances). This might be most likely when $X^3\simeq D^3$, in which case the endpoint metric (corresponding to a spherical cap) is particularly simple and well-understood.
		
			\subsection{Notation and conventions}
	For the convenience of the reader, we list here some conventions and notational principles we shall adopt throughout this article:
	\begin{itemize}
	\item {the manifolds we deal with are always assumed to be smooth (i.~e. $C^{\infty}$), as are their boundaries (if any);}
	\item {diffemorphisms between manifolds are assumed to be smooth and we shall write $X_1\simeq X_2$ to mean that $X_1$ and $X_2$ are diffeomorphic;}	
	\item {all metrics are Riemannian and smooth, and the space of metrics on a given (compact) manifold, and all subsets thereof, are endowed with the corresponding topology;}
	\item {given a Riemannian metric $g$ we let $d_g$ denote the corresponding distance, and by writing $B_r(p)$ we mean the metric ball of center $p$ and radius $r$;}
	\item {all submanifolds we deal with are embedded, and closed unless explicitly stated otherwise; in our applications they are always two-sided and orientable;}
	\item {we allow both ambient manifolds and submanifolds thereof to be disconnected; in particular this remark applies to reflexive manifolds, as per Definition \ref{def:Z2equiv};}
	\item {concerning the mean curvature, we adopt the following convention: the unit sphere in $\R^3$ has mean curvature equal to $2$; a domain is mean-convex if it has positive mean curvature, thus if an outward deformation with unit speed \emph{increases} area; we shall say that a manifold is \emph{weakly} mean-convex if it has non-negative mean curvature.}
	\end{itemize}	
		
		\

		\noindent \textit{Acknowledgements:} The authors would like to thank Fernando Cod\'a Marques for his interest in this work and for various helpful conversations.  We also wish to express our sincere gratitude to David Gabai, Pengzi Miao, Damin Wu and Boyu Zhang for several clarifications that have been very important at various stages of this project, and to the anonymous referees for carefully reading our manuscript and providing insightful remarks from which we greatly benefited. Furthermore, it is a pleasure for us to thank our advisor Richard Schoen for his constant support and invaluable guidance.
		
			A. C. is partly supported by the National Science Foundation (through grant DMS 1638352) and by the Giorgio and Elena Petronio Fellowship Fund. This project was developed at the Institute for Advanced Study during the special year \emph{Variational Methods in Geometry}: both authors would like to acknowledge the support of the IAS and the excellent working conditions.

		\section{Topological characterization theorems}\label{sec:TopChar}
		
		\subsection{Proof of Theorem \ref{thm:A}}
		
		 By virtue of Theorem 5.7 in \cite{GL80a}, Theorem \ref{thm:A} follows once
		we describe all the diffeomorphism types of manifolds with boundary $X$, such that the doubling $DX=M$ is a closed 3-manifold which supports metrics of positive scalar curvature. 
		We will prove the following topological statement: 
		
		\begin{thm}\label{theorem.topological.classification.of.3-mfld.with.boundary}
			Suppose $X$ is a 3-manifold with boundary, such that:
			\[M\simeq DX\simeq S^3/\Gamma_1 \#\cdots \#S^3/\Gamma_p \# \left(\#_{i=1}^q (S^2\times S^1)\right).\]
			Then there exist integers $a,b,c, d\ge 0$, such that:
			\[X\simeq P_{\g_1}\#\cdots\# P_{\g_a}\# S^3/\Gamma_1\#\cdots\# S^3/\Gamma_b\# \left(\#_{i=1}^c (S^2\times S^1)\right)\setminus (\sqcup_{i=1}^d B_i^3).\]
			Here: $P_{\g_i}$, $i\le a$, are genus $\g_i\geq 1$ handlebodies; $\Gamma_i$, $i\le b$, are finite subgroups of $SO(4)$ acting freely on $S^3$; $B_i^3$, $i\le d$, are disjoint $3$-balls in the interior.
		\end{thm}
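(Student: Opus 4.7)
The plan is to prove this by induction on $p+q$, the number of prime summands of $M := DX$, using the smooth doubling involution $\tau : M \to M$ whose fixed set is $\partial X$. For the base case $p + q = 0$, we have $M = S^3$; since $\tau$ is orientation-reversing on $S^3$ with $2$-dimensional fixed locus, the Lefschetz fixed-point formula forces $\chi(\partial X) = 2$, so $\partial X$ is a single $2$-sphere, which by Alexander's (smooth Schoenflies) theorem bounds a smooth $3$-ball on each side, yielding $X \simeq D^3 = P_0$.

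For the inductive step ($p + q \geq 1$), I would apply the equivariant sphere theorem (in the spirit of Meeks--Simon--Yau, or its purely topological version for finite group actions on orientable $3$-manifolds) to produce an essential $\tau$-equivariant $2$-sphere $\Sigma \subset M$, meaning either $\tau \Sigma = \Sigma$ or $\tau \Sigma \cap \Sigma = \emptyset$. A minimization (or elementary-move) argument ensures that $\Sigma$ can be chosen so that $\tau$ does not act freely on it: otherwise $\Sigma$ would project to an $\mathbb{RP}^2 \subset X$ whose tubular-neighborhood boundary is a non-invariant sphere pair usable in its stead (contributing an $\mathbb{RP}^3$-summand). I then distinguish three cases. \emph{(i)} If $\Sigma \cap \tau \Sigma = \emptyset$, the quotient $\Sigma/\tau$ is an embedded sphere $S \subset \mathrm{int}(X)$, and capping with balls yields $X = \hat X_1 \# \hat X_2$ (separating case) or $X = \hat X \# (S^2 \times S^1)$ (non-separating case), both with reduced factors whose doubles have strictly fewer prime summands than $M$. \emph{(ii)} If $\tau \Sigma = \Sigma$ with $\tau|_\Sigma$ a reflection along an equatorial circle, then $D := \Sigma/\tau$ is a properly embedded disk in $X$ with $\partial D \subset \partial X$, and essentialness of $\Sigma$ forces $D$ to be a compressing disk; cutting along $D$ either splits $X$ as a boundary-connected sum of two pieces or realizes $X$ as $X' \natural P_1$ for a simpler $X'$, and in either sub-case the corresponding doubled manifold has strictly fewer prime summands. \emph{(iii)} If $\Sigma \subset \partial X$, then $\Sigma$ is a $2$-sphere component of $\partial X$, and letting $\hat X = X \cup_\Sigma B^3$ gives $X = \hat X \setminus B^3$; the double $D\hat X$ has strictly fewer prime summands --- one fewer $S^2 \times S^1$ in the non-separating case $\partial X \supsetneq \Sigma$, or else $M = \hat X \# \hat X$ with $\hat X$ forced by uniqueness of prime decomposition to be of the prescribed form in the separating case $\partial X = \Sigma$.

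In each case the reduced manifold satisfies the inductive hypothesis, and reassembling recovers $X$ in the stated form: handlebody summands $P_{\gamma_i}$ originate from case (ii) and combine via the identities $P_\gamma \natural P_{\gamma'} = P_{\gamma + \gamma'}$ and $P_\gamma \natural (Y \setminus B^3) = P_\gamma \# Y$, interior spherical space-form or $S^2 \times S^1$ summands come from case (i), and the removed balls come from case (iii). The main obstacle is establishing the equivariant sphere theorem in precisely our setting --- a smooth orientation-reversing involution with $2$-dimensional fixed locus on an orientable $3$-manifold --- and carefully tracking, especially in case (ii), how each cut of $M$ along $\Sigma$ corresponds to a topological operation on $X$ under the doubling quotient, verifying that $D\hat X'$ is genuinely simpler than $M$ and that the reassembly remains within the prescribed class after the various boundary-to-interior rewritings.
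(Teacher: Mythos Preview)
Your outline is workable, but it takes a substantially heavier route than the paper and leaves a couple of points to be patched. The paper never works equivariantly in $M=DX$ at all: it stays in $X$ and argues that any boundary component $\Sigma$ of positive genus must be \emph{compressible} in $X$, because otherwise $\pi_1(\Sigma)\hookrightarrow\pi_1(X)\hookrightarrow\pi_1(M)$ would exhibit a two-sided incompressible surface of positive genus in a PSC $3$-manifold, contradicting Schoen--Yau. One then cuts $X$ along a compressing disk $D$ (supplied by the Loop Theorem), obtaining $Z$ with $DX\simeq DZ\#(S^2\times S^1)$; iterating drops $q$ by one each time until every boundary component of the resulting $Z_0$ is a sphere, at which point capping with balls and invoking Milnor's uniqueness of prime decomposition yields the stated form for $Z_0$, and $X$ is recovered from $Z_0$ by boundary connected sums with copies of $D^2\times[0,1]$ (producing the handlebody summands). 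Notice that your case~(ii) --- $\tau\Sigma=\Sigma$ with reflection action, so $\Sigma/\tau$ is a properly embedded disk --- is precisely this compression step; your invocation of the equivariant sphere theorem is doing a great deal of work merely to manufacture the compressing disk that the paper obtains directly and cheaply from the Loop Theorem.

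Two small gaps in your version deserve mention. First, the equivariant sphere theorem needs $\pi_2(M)\neq 0$, which fails exactly when $M$ is a single spherical space form; you should note that $DX\simeq S^3/\Gamma$ with $\Gamma$ nontrivial cannot occur (any embedded $S^2$ in $S^3/\Gamma$ bounds a ball, forcing the two halves of the double to be balls and hence $\Gamma=\{e\}$), so for $p+q\ge 1$ one always has $\pi_2(M)\neq 0$. Second, in your case~(i) the bookkeeping ``both with reduced factors whose doubles have strictly fewer prime summands'' is not immediate: when $\Sigma$ and $\tau\Sigma$ are disjoint you are really cutting $M$ along \emph{two} spheres simultaneously, and the count of prime summands after capping all four boundary spheres needs a short argument (e.g.\ via Kneser--Milnor uniqueness) rather than a one-line assertion.
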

		
		Of course, we note that the statement above is equivalent to the one given in the introduction since removing balls in the interior of $X$ corresponds to taking (interior) connected sums with genus zero handlebodies. On the other hand, while seemingly less homogeneous this formulation turns out to work somewhat better for the discussion we are about to present (due to the fact that any positive genus handlebody contributes to an $S^2\times S^1$ in the double, while the genus zero terms do not).
		Before embarking in the proof of Theorem \ref{theorem.topological.classification.of.3-mfld.with.boundary}, we present a collection of remarks illustrating the correspondence between $X$ and $DX$ in terms of the various constants $a,b,c,d$ and $p,q$ appearing in the statement.

		\begin{rmk}
			Following the usual convention in 3-manifold topology, the unit element for connected sum of prime 3-manifolds is $S^3$. As a result, when $a=0$, $b=0$ or $c=0$, we are just taking $S^3$ in place of $P_{\g_1}\#\cdots\# P_{\g_a}$, $S^3/\Gamma_1\#\cdots\# S^3/\Gamma_b$ or $\#_{i=1}^c (S^2\times S^1)$ respectively. Since we assume that $\partial X\ne \emptyset$, at least one of the inequalities $a\ge 1$ and $d\ge 1$ must hold.
		\end{rmk}

		\begin{ex}
			There are two diffeomorphism types of $X$ such that $DX\simeq S^2\times S^1$: $X_1\simeq D^2\times S^1$, or $X_2\simeq S^2\times [0,1]$. In the first case $X_1\simeq P_1$ (a genus one handlebody), which corresponds to $(a,b,c,d)=(1,0,0,0)$ and $\g_1=1$, while in the second case $X_2\simeq S^3\setminus (B_1^3 \sqcup B_2^3)$, which corresponds to $(a,b,c,d)=(0,0,0,2)$.
			This example shows, in particular, that $DX_1\simeq DX_2$ does not imply $X_1\simeq X_2$.
		\end{ex}
		
		\begin{rmk}\label{remark.effect.of.removing.balls.in.connected.sum}
			We now discuss the doubling of a general compact 3-manifold $X$ of the form
			\[X=P_{\g_1}\#\cdots\# P_{\g_a}\# S^3/\Gamma_1\#\cdots\# S^3/\Gamma_b\# \left(\#_{i=1}^c (S^2\times S^1)\right)\setminus (\sqcup_{i=1}^d B_i^3).\]
			The doubling $M=DX$ has a unique connected sum decomposition into prime components. In this decomposition, each spherical space form $S^3/\Gamma_i$, $i=1,\cdots,b$ appears twice, as it is the case for the components $\#_{i=1}^c (S^2\times S^1)$. For each $\g_i>0$, the component $P_{\g_i}$ becomes $\#_{i=1}^{\g_i} (S^2\times S^1)$. If $a>1$, the doubling of $P_{\g_1}\#\cdots \#P_{\g_a}$ is diffeomorphic to the connected sum of $(\#_{i=1}^{\g_1} S^2\times S^1)\#\cdots\# (\#_{i=1}^{\g_a} S^2\times S^1)$, with additional $a-1$ handles attached. Adding these extra handles is equivalent to a connected sum with $\#_{i=1}^{a-1} S^2\times S^1$.
			
			Let us now consider the contribution of removing interior 3-balls in performing the doubling. In general, let $Z$ be a 3-manifold. If $\partial Z= \emptyset$, then the doubling $D(Z\setminus B^3)$ is equal to the connected sum $Z\#Z$. If instead $\partial Z\ne \emptyset$, then the doubling $D(Z\setminus B^3)$ is diffeomorphic to $DZ$ with a handle attached, namely $D(Z\setminus B^3)=DZ\#(S^2\times S^1)$.
	
			Therefore, putting all these facts together we get
			\[M=DX\simeq (S^3/\Gamma_1\#S^3/\Gamma_1)\#\cdots \#(S^3/\Gamma_b\#S^3/\Gamma_b)\#\left(\#_{i=1}^e (S^2\times S^1)\right),\]
			where
			\[e=\begin{cases}\sum_{i=1}^a \g_i+a+2c+d-1 \quad &a> 0,\\ 2c+d-1 \quad &a=0.\end{cases}\]
			We thus conclude that, in either case, 
			\[e=\sum_{i=1}^a \g_i +a+2c+d-1,\] 
			with the convention that if $a=0$, then $\g_i=0$.
		\end{rmk}
	
\begin{proof}[Proof of Theorem \ref{theorem.topological.classification.of.3-mfld.with.boundary}]
	Assume $X$ is a 3-manifold with boundary such that
	\[M=DX\simeq S^3/\Gamma_1 \#\cdots \#S^3/\Gamma_p \# \left(\#_{i=1}^q (S^2\times S^1)\right).\]
	The idea behind the proof is to compress as many $S^1\subset \partial X$ as possible, and keep track of the topological change of $M=DX$ when performing a compression. Let us first recall some classical results in 3-manifolds topology needed for our purpose. For a standard reference the reader may consult the first two chapters of \cite{Jac80}. Assume $\Sigma^2\subset M^3$ is a closed surface, and $\Sigma\not\simeq S^2$. We call $\Sigma$ a compressible surface, if there is a disk $D$ in $M$, such that $\partial D\subset \Sigma$ does not bound a disk inside $\Sigma$. Otherwise, $\Sigma$ is called incompressible. By the Loop Theorem, if $\Sigma\xrightarrow{\iota} M$ is two-sided, and $\Sigma$ is not a sphere, then it is incompressible if and only if $\iota_*:\pi_1(\Sigma)\rightarrow \pi_1(M)$ is injective. Note that if $\Sigma$ is a component of $\partial X$, then $\Sigma\subset DX=M$ is a two-sided surface.
	
	We start by considering the case when every component of $\partial X$ is diffeomorphic to $S^2$. In this case, let $Y$
	be the closed 3-manifold obtained by gluing a copy of $B^3$ onto each component of $\partial X$. By the discussion presented in Remark \ref{remark.effect.of.removing.balls.in.connected.sum}, we have that
	\[M=DX\simeq Y\#Y\#\left(\#_{i=1}^{d-1} (S^2\times S^1)\right),\]
	where $d$ is the number of connected components of $\partial X$. Hence
	\[Y\#Y\#\left(\#_{i=1}^{d-1} (S^2\times S^1)\right)\simeq S^3/\Gamma_1 \#\cdots \#S^3/\Gamma_p \# \left(\#_{i=1}^q (S^2\times S^1)\right).\]
	Since the prime decomposition of any closed 3-manifold is unique (by Milnor \cite{Mil62}), we conclude that $Y$ can only be a connected sum of spherical space forms and $S^2\times S^1$'s. Therefore, we have, for suitable $b,c\ge 0$,
	\[X=Y\setminus \left(\sqcup_{i=1}^d B_i^3\right)\simeq S^3/\Gamma_1\#\cdots\# S^3/\Gamma_b\# \left(\#_{i=1}^c (S^2\times S^1)\right)\setminus (\sqcup_{i=1}^d B_i^3).\]

	Let us now proceed to the case when some connected component $\partial X$ is not an $S^2$. Suppose $\Sigma^2$ is a component of $\partial X$, and $\Sigma\not\simeq S^2$. We first verify that $\Sigma$ is compressible. Suppose otherwise, that $\Sigma\subset X$ is incompressible. Then the embedding $\iota_1: \Sigma\rightarrow X$ induces an injection of the fundamental groups. The embedding $\iota_2: X\rightarrow M$ and the projection $\pi:M\rightarrow X$ compose to the identity map. It follows that their induced homomorphisms
	\[\pi_1(X)\xrightarrow{(\iota_2)_*} \pi_1(M)\xrightarrow{\pi_*} \pi_1(X)\]
	compose to the identity map on $\pi_1(X)$. In particular, the map $(\iota_2)_*$ is injective. It follows that the composition
	\[\pi_1(\Sigma)\xrightarrow{(\iota_1)_*} \pi_1(X) \xrightarrow{(\iota_2)_*} \pi_1(M)\]
	is also injective. We therefore conclude that $\Sigma$ is an incompressible surface in $M$. However, since $M$ can be equipped with a metric of positive scalar curvature, by \cite{SY79a},
	$M$ cannot contain any two-sided incompressible surface that is not diffeomorphic to $S^2$, a contradiction.

	Therefore, let $\Sigma\subset \partial X$ be a compressible surface that is not diffeomorphic to $S^2$. Then, there is a simple closed curve $\Gamma$ on $\Sigma$ that bounds an embedded disk $D$ in $X$. Such a disk $D$ is a locally separating surface in a normal neighborhood $N_\e D\subset X$, and there exists a diffeomorphism $\varphi: D\times [-\e,\e]\rightarrow N_\e D$.

	To proceed, we cut the manifold $X$ along $D$, so that we will have two induced diffeomorphisms $\varphi_{+}: D\times [0,\e]\rightarrow N^{+}_\e D$ and $\varphi_{-}: D\times [0,\e]\rightarrow N^{-}_\e D$ with disjoint image. If we denote the resulting 3-manifold by $Z$, it is readily checked that $DX=DZ\# (S^2\times S^1)$. 
	Hence, from our topological assumption on $DX$, we have that
	\[DZ\simeq S^3/\Gamma_1 \#\cdots \#S^3/\Gamma_p \# \left(\#_{i=1}^{q-1} (S^2\times S^1)\right).\]
	We then proceed to consider the manifold $Z$ instead of $X$, and perform a compression on $Z$ whenever there is a connected component of $\partial Z$ is not diffeomorphic to $S^2$. However, this operation can only be performed finitely many times, as each time the double of the initial manifold loses an $S^2\times S^1$ factor in the prime decomposition. Therefore, after performing finitely many compressions on the boundary, the 3-manifold $Z_0$ only has $S^2$ boundary components. Hence
		\[Z_0\simeq S^3/\Gamma_1\#\cdots\# S^3/\Gamma_b\# \left(\#_{i=1}^c (S^2\times S^1)\right)\setminus (\sqcup_{i=1}^d B_i^3),\]
	for some $b,c\ge 0, d>0$.
	
	Notice that $X$ is obtained from $Z_0$ by taking finitely many \emph{boundary} connected sums of $Z_0$ with $D^2\times [0,1]$. However, for any $\gamma\ge 1$, the boundary connected sum 
	\[(S^3\setminus B^3)\underbrace{\#_\partial (D^2\times [0,1])\#_\partial \cdots\#_\partial (D^2\times [0,1])}_\text{$\gamma$ times}\]
	is diffeomorphic to a genus $\gamma$ handlebody. Thereby, reconstructing $X$ backwards from $Z_0$ we conclude the proof of Theorem \ref{theorem.topological.classification.of.3-mfld.with.boundary}.
\end{proof}
		
	\subsection{Rigidity versus deformability}
	
	 	In adapting the discussion above to derive corresponding results for the larger spaces $\mathcal{M}_{R>0, H\geq 0}, \mathcal{M}_{R\geq 0, H> 0}$ and $\mathcal{M}_{R\geq 0, H\geq 0}$ (the last one with an important \emph{caveat})
	we shall need the following trichotomy result.
		\begin{prop}\label{pro:Rigidity}
		Let $(X^3,g)$ be a connected, orientable, compact Riemannian manifold, such that the scalar curvature of $g$ is everywhere zero, and each boundary component is a minimal surface with respect to $g$. Then:\begin{itemize}
			\item[\emph{i)}] If $Ric_g$ is not identically zero, then there is a small (isotopic) smooth perturbation $g'$ of $g$, such that $g'$ has everywhere positive scalar curvature, and $\partial X$ is minimal with respect to $g'$;
			\item[\emph{ii)}] If $Ric_g$ is everywhere zero, and $\partial X$ is not totally geodesic with respect to $g$, then there is a small (isotopic) smooth perturbation $g'$ of $g$, such that the scalar curvature of $g'$ is everywhere zero and every connected component of $\partial X$ that is not totally geodesic becomes strictly mean-convex with respect to the outward unit normal vector field in $g'$ (while the other ones are kept totally geodesic);
			\item[\emph{iii)}] If $Ric_g$ is everywhere zero, and all the components of $\partial X$ are totally geodesic, then there exists a compact interval $I\subset \R$ such that $(X,g)$ is isometric to $S^1\times S^1\times I$ equipped with a flat metric.
		\end{itemize}
	\end{prop}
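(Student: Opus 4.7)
The plan is to address each of the three cases by a tailored argument, all of which exploit that $Ric_g\equiv 0$ is equivalent to flatness in dimension three.

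For case (i), I would perform a Bourguignon-style interior perturbation, leaving the boundary untouched, followed by a conformal rescaling. Pick a smooth cutoff $\phi\ge 0$ compactly supported in the interior of $X$ with $\phi>0$ on a subset where $Ric_g\neq 0$, and deform by $g_s=g-s\phi\,Ric_g$. Since $h=-\phi\,Ric_g$ vanishes in a collar of $\p X$, the boundary is unchanged and still minimal, so $H_{g_s}\equiv 0$ for all $s$. A short computation using $R_g\equiv 0$ and the twice-contracted Bianchi identity $\n^j Ric_{ij}=\tfrac12\n_i R=0$, followed by integration by parts (the boundary terms vanish because $\phi$ and $\n\phi$ are zero on $\p X$), yields
\[\int_X \dot R_{g_s}|_{s=0}\,dv_g=\int_X\phi|Ric_g|^2\,dv_g>0.\]
Standard perturbation theory for the first eigenvalue $\lambda_1(g_s)$ of the Neumann conformal Laplacian $L_gu=-8\Delta_gu+R_gu$, using that the eigenfunction at $s=0$ is constant, then gives $\dot\lambda_1|_{s=0}=\int_X\dot R_{g_s}|_{s=0}\,dv_g/\mathrm{vol}(X,g)>0$, so $\lambda_1(g_s)>0$ for small $s>0$. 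Conformally rescaling by the associated positive principal eigenfunction $u$, the metric $g'=u^4g_s$ then satisfies $R_{g'}=\lambda_1(g_s)u^{-4}>0$ with $H_{g'}=0$.

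For case (ii), I would exploit flatness together with a Jacobi eigenfunction push-in. Testing the stability inequality on any closed minimal component $\Sigma\subset\p X$ against $\phi\equiv 1$ gives $\int_\Sigma|A_\Sigma|^2\,dA\le 0$, so any \emph{stable} component is necessarily totally geodesic; hence each non-totally-geodesic component has $\mu_1(J_\Sigma)<0$ for the Jacobi operator $J_\Sigma=-\Delta_\Sigma-|A_\Sigma|^2$. Take the positive principal eigenfunction $\phi>0$ of $J_\Sigma$ and push the component inward by $s\phi$ in Fermi coordinates, leaving all totally-geodesic components fixed. The first-variation formula, with respect to the outward unit normal of the shrunken domain and using $Ric=0$, yields
\[\frac{d}{ds}\Big|_{s=0}H_{\Sigma_s}=\Delta_\Sigma\phi+|A_\Sigma|^2\phi=-\mu_1(J_\Sigma)\phi>0,\]
so for small $s>0$ each deformed component is strictly mean-convex. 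Pulling the (unchanged) ambient flat metric back via a diffeomorphism of $X$ onto itself isotopic to the identity produces the desired $g'$, with $R_{g'}\equiv 0$ preserved throughout.

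Case (iii) is the most delicate. By the Gauss equation combined with $Ric=0$ and totally-geodesicity, each boundary component is flat and, by orientability, must be a $2$-torus. The plan is to reflect across $\p X$ to build the smooth closed flat double $DX$, pass to the universal cover $\R^3$, and observe that the preimage of $\p X$ is a locally finite disjoint union of totally geodesic $2$-planes; being disjoint, distinct such planes in $\R^3$ are necessarily parallel, so the component of the universal cover corresponding to $X$ is a slab between two parallel planes. A Bieberbach-type argument together with orientability then yields an isometric identification $(X,g)\cong T^2\times I$ with a flat product metric. The hard part is this final rigidity step: one must rule out potential free involutions swapping the two faces of the slab and producing twisted $I$-bundle quotients, which is where orientability (and in our applications, the admissibility of $X$ in the topological class of Theorem~\ref{thm:A}) plays a decisive role.
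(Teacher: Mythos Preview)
Your treatments of (i) and (ii) are essentially the paper's own arguments: the interior deformation $g_s=g-s\phi\,Ric_g$ followed by a conformal rescaling via the first Neumann eigenfunction of the conformal Laplacian for (i), and the inward push along the first Jacobi eigenfunction of an unstable boundary component, followed by pulling back the ambient metric, for (ii). Both are correct.

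For (iii) your outline again coincides with the paper's --- double, pass to the flat universal cover $\R^3$, note that $\partial X$ lifts to a family of disjoint (hence parallel) planes, and identify a component of the lift of $X$ with a slab. You are right that the passage from ``the universal cover is a slab'' to ``$X\cong T^2\times I$'' is the delicate step, but your proposed fix via orientability does not close the gap. Let $\Gamma\subset\mathrm{Isom}(\R^2\times[0,L])$ be generated by $(x,y,h)\mapsto(x,y+1,h)$ and $\sigma:(x,y,h)\mapsto(x+\tfrac12,-y,L-h)$. The linear part of $\sigma$ is $\mathrm{diag}(1,-1,-1)$, of determinant $+1$; one checks that $\Gamma$ acts freely, properly discontinuously, and cocompactly by orientation-preserving isometries, so $X=(\R^2\times[0,L])/\Gamma$ is a compact \emph{orientable} flat $3$-manifold with a single totally geodesic torus boundary component --- it is the orientable twisted $I$-bundle over the Klein bottle. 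Its double is a closed orientable flat $3$-manifold whose fundamental group contains the (non-abelian) Klein bottle group, hence is not $T^3$. Thus orientability alone does not exclude this quotient, and you cannot invoke ``admissibility in the class of Theorem~\ref{thm:A}'' either, since that classification is logically downstream of this proposition. In fact the paper's own proof has the same lacuna: it asserts, without further justification, that $(DX,g)$ is a flat three-torus, which fails for the $X$ above. The statement of (iii) apparently requires this second exceptional case (or an additional hypothesis ruling it out).
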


	\begin{proof}
		Suppose the assumptions of \emph{i)} are satisfied. Then $Ric_g\ne 0$ in some small geodesic ball $U=B_r(x)$ in the interior of $M$. Take a cutoff function $\chi$ supported in $U$, such that $0\le \chi \le 1$ and $\chi=1$ in $B_{r/2}(x)$. Define, for small $t\in (-\e,\e)$, the metrics
		\[g_t=g+t\chi Ric_g.\]
		Then the scalar curvature $R_{g_t}$ satisfies
		\begin{equation}\label{formula.derivative.for.R}
		\frac{d}{dt}\bigg\vert_{t=0}R_{g_t}=-\Delta_g (\chi R_{g})+ \Div_g \Div_g (\chi Ric_{g})-\chi |Ric_{g}|^2. 
		\end{equation}
		Keeping in mind equation \eqref{eq:changeR} for the conformal change of scalar curvature, specified for $n=3$, we consider the elliptic eigenvalue problems
		\[\lambda_1(t)=\inf\left\{\frac{\int_X  \left(|\nabla_{g_t} \phi|^2+\frac{1}{8}R_{g_t}\phi^2\right) dVol_{g_t}}{\int_X \phi^2 dVol_{g_t}}: \phi\in H^1(X, g_t)\setminus\left\{0\right\}\right\}.\]

		By standard elliptic theory, the value $\lambda_1(t)$ is well-defined, and there exists a smooth first eigenfunction $\phi_t$ solving the Neumann boundary value problem
		\[\begin{cases}
		\Delta_{g_t}\phi_t-\frac{1}{8}R_{g_t}\phi_t=-\lambda_1(t)\phi_t\quad &\text{in} \ X,\\
		\frac{\p \phi_t}{\p \nu}=0 \quad &\text{on} \ \p X.
		\end{cases}\]
		Without loss of generality, assume that $\phi_t>0$ and $\|\phi_t\|_{L^2(X,g_t)}=1$ (this is the standard normalization condition we have stipulated in Appendix \ref{sec:Neu}). Then the map $t\in (-\e,\e)\mapsto \lambda_1(t)$	is smooth, and so is the map $t\in (-\e,\e)\mapsto \phi_t$ as a map into (say) $C^2$, see Lemma \ref{lem:contdepdata}. Since $R_{g_0}=R_{g}= 0$ we notice that $\lambda_1(0)=0$ and $\phi_0$ is a constant (in fact, by our normalization, $\phi^2_0=Vol^{-1}_g(X)$). As a result, one can further observe that
		\begin{equation}
		\begin{split}
		\lambda_1'(0)&=\frac{d}{dt}\bigg\vert_{t=0}\int_X \left(|\nabla_{g_t} \phi_t|^2+\frac{1}{8}R_{g_t}\phi_t^2\right) dVol_{g_t} \\
		&=\frac{1}{Vol_g(X)}\int_X \left(\frac{1}{8}\frac{d}{dt}\bigg\vert_{t=0}R_{g_t}\right) dVol_{g} \\
		&=-\frac{1}{Vol_g(X)}\int_X \frac{1}{8} \chi |Ric_g|^2 dVol_{g}<0
		\end{split}
		\end{equation}
		where in the last step we have used \eqref{formula.derivative.for.R}, the divergence theorem on $X$, and the fact that $\chi$ is compactly supported away from $\partial X$.
		As a result, we conclude that $\lambda_1(t)>0$ for $t\in (-\e,0)$. Then the metric $g_t'=Vol^2_g(X) \hspace{0.5mm} \phi_t^{4}g_t$ is a small smooth perturbation of $g$ such that $R_{g_t'}>0$ everywhere in the interior of $X$, and $\partial X$ is minimal with respect to $g'_t$ (thanks to equation \eqref{eq:changeH} since $g_t=g$ near $\partial X$, and $\p_{\nu} \phi_t=0$). This completes the discussion of part \emph{i)}.
		
		\
		
		We then proceed to prove part \emph{ii)}. Assume that $Ric_g= 0$ in $X$, and that some boundary component $\Sigma\subset \partial X$ is not totally geodesic. 

		Since $\Sigma$ is not totally geodesic, it is an unstable minimal surface and thus we can deform it inward employing the first Jacobi eigenfunction, thereby obtaining subdomains
		$X_t$ that are (strictly) mean-convex, for $t$ small enough;
		also, there is a natural diffeomorphism $\varphi_t: X\rightarrow X_t$. We therefore observe that the metric on $X$ given by $g_t=\varphi_t^*(g)$ is scalar-flat, and that in $(X,g_t)$ the boundary component $\Sigma$ is strictly mean-convex. This concludes the proof of part \emph{ii)}.
		
		\
		
		Finally we study the case \emph{iii)}. Assume that the manifold $(X^3,g)$ is Ricci flat with totally geodesic boundary. Since $X$ has dimension three, the condition $Ric_{g}=0$ implies that all sectional curvatures of $g$ must vanish, hence $g$ is a flat metric. By the Gauss equation, we further conclude that each component of $\partial X$ is intrinsically flat as well, thus is isometric to a flat torus $S^1\times S^1$. Let then $DX$ denote, as usual, the doubling of $X$ across its boundary. It follows from the previous assertions that the doubled metric, also denoted by $g$, is smooth and flat. As a result, $(DX,g)$ is isometric to a flat three-torus, its  universal cover is Euclidean $\R^3$ and we have the (metric) identification
		$DX=\R^3/\Gamma$,
		where $\Gamma$ is the group generated by translations in three linearly independent vectors. Any connected component of the lift of each boundary component of $\partial X$ is a totally geodesic submanifold of the Euclidean space, hence necessarily a plane. Since different components of $\partial X$ are disjoint, their lifts in $\R^3$ are parallel planes. Since any two parallel planes in $\R^3$ bound a connected region, and $X$ is assumed to be connected, we conclude that there can only be two components in $\partial X$. This implies that the lift of $X$ is a slab in $\R^3$, and that $X$ is isometric to a flat product of the form $S^1\times S^1\times I$.
	\end{proof}
	
	\begin{cor}
		\label{cor:Equivalence}

	Let $X^3$ be a connected, orientable, compact manifold with boundary. Then the following three assertions are equivalent:
	\begin{enumerate}
	\item [i)]{$\mathcal{M}_{R>0, H>0}\neq\emptyset$;}
	\item [ii)]{$\mathcal{M}_{R>0, H\geq 0}\neq\emptyset$;}
	\item [iii)]{$\mathcal{M}_{R\geq 0, H>0}\neq\emptyset$.}
	\end{enumerate}	
Furthermore, each of these conditions is equivalent to
	\begin{enumerate}
	\item [iv)]{$\mathcal{M}_{R\geq 0, H\geq 0}\neq\emptyset$,}
\end{enumerate}	
unless $X^3\simeq S^1\times S^1\times I$ (in which case the space $\mathcal{M}_{R\geq 0, H\geq 0}$ only contains flat metrics, making the boundary totally geodesic).
	\end{cor}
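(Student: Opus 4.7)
The implications (i) $\Rightarrow$ (ii), (iii), (iv) are immediate from the set inclusions $\mathcal{M}_{R>0,H>0} \subseteq \mathcal{M}_{R>0,H\geq 0} \cap \mathcal{M}_{R\geq 0,H>0} \subseteq \mathcal{M}_{R\geq 0,H\geq 0}$. The converses require separate perturbation arguments, which I would address in the order (ii) $\Rightarrow$ (i), then (iii) $\Rightarrow$ (i), and finally (iv) $\Rightarrow$ (i) together with the identification of the exceptional case. For (ii) $\Rightarrow$ (i), the plan is to invoke directly the ``push-in'' construction of Appendix \ref{sec:push-in}: a small inward normal perturbation of the boundary preserves positive scalar curvature by continuity while creating strict mean-convexity.

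For (iii) $\Rightarrow$ (i), given $g\in\mathcal{M}_{R\geq 0, H>0}$, I would distinguish two sub-cases according to whether $R_g\equiv 0$. If $R_g\not\equiv 0$, consider the first Neumann eigenvalue $\lambda_1$ of the conformal Laplacian $L_g=-8\Delta_g+R_g$, whose Rayleigh quotient $\int_X(8|\nabla u|^2+R_g u^2)/\int_X u^2$ is strictly positive (otherwise the minimizer would be constant and would force $R_g\equiv 0$, contradicting the assumption); the conformal change $\tilde g=u_1^{4}g$ by the positive first eigenfunction then satisfies $R_{\tilde g}=\lambda_1 u_1^{-4}>0$ and, using $\partial_\nu u_1=0$ together with the conformal formula \eqref{eq:changeH}, $H_{\tilde g}=u_1^{-2}H_g>0$. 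If instead $R_g\equiv 0$, let $v$ solve the Dirichlet problem $\Delta_g v=-1$, $v|_{\partial X}=0$: by the maximum principle and Hopf lemma, $v>0$ in the interior with $\partial_\nu v\leq 0$ on $\partial X$, so the conformal change $u=1+\varepsilon v$ for $\varepsilon>0$ small yields $R_{\tilde g}=8\varepsilon u^{-5}>0$ and $H_{\tilde g}|_{\partial X}=4\varepsilon\partial_\nu v+H_g>0$ by the uniform positivity of $H_g$ on the compact boundary.

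For (iv) $\Rightarrow$ (i), take $g\in\mathcal{M}_{R\geq 0, H\geq 0}$. When $(R_g,H_g)\not\equiv(0,0)$, the natural conformally invariant modification is to replace the Neumann condition above with the Robin condition $4\partial_\nu u+H_g u=0$, which (after integration by parts) leads to the Rayleigh quotient
\[
\lambda_1=\inf_{u\not\equiv 0}\frac{8\int_X|\nabla u|^2+\int_X R_g u^2+2\int_{\partial X} H_g u^2}{\int_X u^2};
\]
this is strictly positive whenever at least one of $R_g, H_g$ is nontrivial, since a vanishing infimum would force the minimizer to be a nonzero constant with both $R_g\equiv 0$ and $H_g\equiv 0$. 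The corresponding conformal change produces a metric in $\mathcal{M}_{R>0, H=0}\subset\mathcal{M}_{R>0, H\geq 0}$, and one concludes via the previously established (ii) $\Rightarrow$ (i). The remaining degenerate case $R_g\equiv 0$ and $H_g\equiv 0$ is precisely where Proposition \ref{pro:Rigidity} enters the picture: if $Ric_g\not\equiv 0$, part (i) gives a perturbation into $\mathcal{M}_{R>0, H=0}$ (the deformation being supported in the interior preserves $H\equiv 0$) and we conclude as above; if $Ric_g\equiv 0$ but some boundary component is not totally geodesic, part (ii) restores $(R,H)\not\equiv(0,0)$ and the Robin eigenvalue argument applies; only part (iii) admits no escape, and it forces $X\simeq S^1\times S^1\times I$, the exceptional case in the statement. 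The main technical point I expect to require care is the existence of a smooth, strictly positive first eigenfunction for the Robin problem on a compact $3$-manifold with boundary, which however follows from the self-adjointness of the associated quadratic form, standard elliptic regularity, and a Hopf-type boundary argument in the same spirit as Lemma \ref{lem:contdepdata}.
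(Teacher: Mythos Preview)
Your argument for the equivalence of (i)--(iv) when $X\not\simeq S^1\times S^1\times I$ is correct, and in fact more explicit than the paper's. For (ii) $\Rightarrow$ (i) you both invoke the push-in lemma of Appendix~\ref{sec:push-in}. For (iii) $\Rightarrow$ (i) the paper appeals to Lemma~\ref{lem:EigenfDef}, which uses a boundary cutoff to make $R>0$ somewhere and then the Neumann eigenfunction to spread it; your two-subcase treatment (Neumann eigenfunction if $R_g\not\equiv 0$, and the Dirichlet solution of $\Delta v=-1$, $v|_{\partial X}=0$ if $R_g\equiv 0$) is an equally valid alternative, and the Dirichlet trick is arguably more direct. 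For (iv) $\Rightarrow$ (i) the paper just writes ``follows via Proposition~\ref{pro:Rigidity}'', which is terse since that proposition only applies once one has already reduced to $R\equiv 0$, $H\equiv 0$; your Robin eigenvalue problem with boundary condition $4\partial_\nu u+H_g u=0$ is a clean, self-contained way to effect that reduction whenever $(R_g,H_g)\not\equiv(0,0)$, landing in $\mathcal{M}_{R>0,H=0}$ and then invoking (ii) $\Rightarrow$ (i).

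There is, however, a genuine gap: you have not addressed the parenthetical clause, namely that when $X\simeq S^1\times S^1\times I$ the space $\mathcal{M}_{R\geq 0,H\geq 0}$ contains \emph{only} flat metrics with totally geodesic boundary. Your argument shows that if $g\in\mathcal{M}_{R\geq 0,H\geq 0}$ is not of this rigid type, then one of your perturbations produces a metric in $\mathcal{M}_{R>0,H>0}$; what is missing is the assertion that $\mathcal{M}_{R>0,H>0}(S^1\times S^1\times I)=\emptyset$, so that no such non-rigid $g$ can exist. The paper closes this by a geometric obstruction: if $(S^1\times S^1\times I,g)$ had $R_g>0$ and $H_g>0$, one could minimize area in the isotopy class of a cross-sectional torus (the boundary is a barrier) to obtain a stable two-sided minimal torus in the interior, which is forbidden under positive scalar curvature by the Schoen--Yau second-variation argument. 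You should add this step to complete the proof of the full statement.
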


\begin{proof} We employ some basic deformation results proved in Appendix \ref{sec:push-in}. More precisely,
the implication $\emph{ii)}\Rightarrow \emph{i)}$ follows from Lemma \ref{lem:pushMet}, while $\emph{iii)}\Rightarrow \emph{i)}$ follows from Lemma \ref{lem:EigenfDef}. Since the converse implications are trivial, this provides the equivalence of the conditions $\emph{i)}, \emph{ii)}, \emph{iii)}$. Similarly, if $X^3\not\simeq S^1\times S^1\times I$ the equivalence with condition \emph{iv)} follows via Proposition \ref{pro:Rigidity}.
Let us then justify the very last assertion, concerning the elements of $\mathcal{M}_{R\geq 0, H\geq 0}$ in the case of $S^1\times S^1\times I$. If this set contained a metric that \emph{either} is not (Ricci) flat \emph{or} does not make the boundary totally geodesic, then we could again invoke Proposition \ref{pro:Rigidity}, possibly in combination with Lemma \ref{lem:pushMet} and Lemma \ref{lem:EigenfDef} to conclude the existence of another metric, say $g$, belonging to the smaller space $\mathcal{M}_{R>0, H>0}$. At that stage, we could construct a stable minimal torus in the interior of $(X^3, g)$, which is not possible due to the fact that the positivity of the scalar curvature $R_g$ forces any stable (two-sided) minimal surface to a sphere. This contradiction proves our claim.
\end{proof}

Hence, we can derive a topological characterization theorem for any of the four spaces of metrics listed above.

\begin{cor}
	\label{cor:GenTopChar}
Let $X^3$ be a connected, orientable, compact manifold with boundary, such that any of the following equivalent conditions i), ii), iii) holds true, or iv) under the additional requirement that $X\not\simeq S^1\times S^1\times I$. 
Then there exist integers $A,B,C\ge 0$, such that $X$ is diffeomorphic to a connected sum of the form
\[ P_{\g_1}\#\cdots\# P_{\g_A}\# S^3/\Gamma_1\#\cdots\# S^3/\Gamma_B\# \left(\#_{i=1}^C (S^2\times S^1)\right).\]
Here $P_{\g_i}$, $i\le A$, are genus $\g_i\geq 0$ handlebodies, and $\Gamma_i$, $i\le B$, are finite subgroups of $SO(4)$ acting freely on $S^3$. Viceversa, any such manifold supports Riemannian metrics of positive scalar curvature and mean-convex boundary.
\end{cor}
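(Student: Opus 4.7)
The plan is to deduce this corollary as an essentially immediate consequence of two ingredients already established in the excerpt: the trichotomy-based equivalence encoded in Corollary \ref{cor:Equivalence} (with its $S^1\times S^1\times I$ caveat) and the topological classification Theorem \ref{thm:A} (equivalently, Theorem \ref{theorem.topological.classification.of.3-mfld.with.boundary}). So the role of this corollary is primarily to upgrade Theorem \ref{thm:A} from the single space $\mathcal{M}=\mathcal{M}_{R>0,H>0}$ to the entire hierarchy of four natural curvature conditions.

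For the forward direction I would argue as follows. Suppose $X^3$ satisfies any one of conditions \emph{i)}, \emph{ii)}, \emph{iii)}; by Corollary \ref{cor:Equivalence} all three are equivalent, so in particular $\mathcal{M}_{R>0,H>0}\neq\emptyset$. If instead condition \emph{iv)} holds together with the assumption $X\not\simeq S^1\times S^1\times I$, then the last assertion of Corollary \ref{cor:Equivalence} guarantees again that $\mathcal{M}_{R>0,H>0}\neq\emptyset$. In either case, Theorem \ref{thm:A} applies directly and produces the claimed connected-sum decomposition of $X$ in terms of handlebodies, spherical space forms, $S^2\times S^1$-handles, and removed interior $3$-balls.

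The converse is even simpler: if $X$ has the prescribed topological form, then Theorem \ref{thm:A} (or, concretely, the construction recalled in Remark \ref{rmk:Lawson} together with the Gromov--Lawson connected sum procedure of \cite{GL80b}) produces a metric in $\mathcal{M}_{R>0,H>0}$. Since $\mathcal{M}_{R>0,H>0}$ is contained in each of the larger spaces $\mathcal{M}_{R>0,H\geq 0}$, $\mathcal{M}_{R\geq 0,H>0}$, $\mathcal{M}_{R\geq 0,H\geq 0}$, every one of the four conditions \emph{i)}--\emph{iv)} is automatically verified, regardless of whether $X$ is diffeomorphic to $S^1\times S^1\times I$ or not.

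There is no serious obstacle here, since the real analytic and topological work has been absorbed into Proposition \ref{pro:Rigidity}, Corollary \ref{cor:Equivalence} and Theorem \ref{theorem.topological.classification.of.3-mfld.with.boundary}. The only minor subtlety worth flagging explicitly in the write-up is the status of $X^3\simeq S^1\times S^1\times I$ in the forward implication from \emph{iv)}: one must remember that on this exceptional manifold condition \emph{iv)} is satisfied by the flat product metric, yet this manifold is indeed of the form $P_{\gamma_1}$ with $\gamma_1=1$ minus nothing (in fact $S^1\times S^1\times I$ is itself a handlebody-like piece, embeddable in the stated list), so no genuine counterexample to the forward topological statement arises; the caveat is only about reversing the implications between conditions \emph{i)}--\emph{iii)} and \emph{iv)}, not about the topological conclusion of the corollary itself.
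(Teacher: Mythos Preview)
Your main argument is correct and matches the paper's implicit approach: the corollary is an immediate consequence of Corollary~\ref{cor:Equivalence} combined with Theorem~\ref{thm:A}, and the paper does not supply a separate proof for exactly this reason.

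However, your final paragraph contains a genuine error. The manifold $S^1\times S^1\times I$ is \emph{not} of the form $P_{\gamma_1}$ with $\gamma_1=1$: a genus-one handlebody is a solid torus $D^2\times S^1$, whose boundary is a single torus, whereas $S^1\times S^1\times I$ has two torus boundary components. More importantly, $S^1\times S^1\times I$ does not appear anywhere in the stated list. Its double is the $3$-torus $T^3$, which admits no positive scalar curvature metric and hence is not a connected sum of spherical space forms and copies of $S^2\times S^1$; by Theorem~\ref{theorem.topological.classification.of.3-mfld.with.boundary} this rules it out. So $S^1\times S^1\times I$ \emph{is} a genuine counterexample to the forward topological implication from condition \emph{iv)} alone, and the exclusion $X\not\simeq S^1\times S^1\times I$ is needed precisely for the topological conclusion, not merely for the equivalence of the four conditions. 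You should delete that last paragraph; the proof stands without it.
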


	\section{Deforming the boundary to totally geodesic}\label{sec:elldef}

In this section we prove Proposition \ref{thm:tgdef}. The path of metrics we wish to construct is obtained via a suitable \emph{smoothing procedure}, based on \cite{Miao02}, of a doubling construction by Gromov-Lawson (Theorem 5.7 in \cite{GL80a}), which we shall now revisit pointing out a (previously unnoticed) fundamental feature.

\subsection{Revisiting a construction by Gromov-Lawson}\label{subs:GL} 

Given $(X^3,g)$ as in the statement of Proposition \ref{thm:tgdef}, we consider $\e_0>0$ such that there exists a tubular neighborhood of width $5\e_0$  for $\partial X\subset X$ and, in addition, all equidistant intermediate surfaces are themselves (strictly) mean-convex. In particular, we define (for $d_g:X\times X\to\R$ the Riemannian distance associated to the metric $g$)
\[
X':= \left\{x\in X \ : \ d_g(x,\partial X)\geq 2\e_0 \right\}, \ X^{''}:= \left\{x\in X \ : \ d_g(x,\partial X)\geq 4\e_0 \right\}, 
\]
and consider an homotopy of smooth maps (defined via the exponential map restricted to the normal bundle of $\partial X$) 
\[
\Psi: X\times [0,4\e_0]\to X, \text{with} \ \Psi_0 = id_X, 
\]
and such that for each $t\in [0,4\e_0]$ the maps $\Psi_{t}=\Psi(\cdot,t)$ is a diffeomorphism onto $X_{t}$, the set of points of $X$ at distance at least $t$ from the boundary.

The maps $\Psi_{t}$ determine induced actions on metrics. In particular, we can consider the continuous path of metrics (on our manifold $X$) given by $g_t = (\Psi_t)^{\ast}g $. What we are about to do is then to produce a path of metrics on $X$ connecting $g_{2\epsilon_0}$ to a suitable metric with positive scalar curvature and totally geodesic boundary, the whole path happening (except for the endpoint) in the class of metrics with positive scalar curvature and positive boundary mean curvature. 

\

We introduce the smooth manifold $M$ that is obtained as topological double of $X$ (so that we shall henceforth write $M=DX$) endowed with the following differentiable structure. Set $S=\partial X'$, consider on $X_{\ast}=S\times [-\pi\e/2,\pi\e/2]$ the metric $g_{\ast}$ induced on the half-tube $T_{\e}(\partial X')\cap ((X\setminus \text{Int}(X'))\times\R)$ where $T_{\e}(\partial X')$ is the set of points at small distance $\e>0$ from $\partial X'\times\left\{0\right\}$ in the Riemannian product $X\times\R$. Thus, take two copies of $X$ (which we shall denote by $X_+, X_{-}$), their subsets $X'_{+}, X'_{-}$ defined as above and identify, in the union $X'_+\sqcup X_{\ast} \sqcup X'_{-}$, the boundary components $\partial X'_{+}\cong S\times\left\{\pi \e/2\right\}$ and similarly $\partial X'_{-}\cong S\times\left\{-\pi \e/2\right\}$. On such a manifold, we consider the smooth structure $\left\{\text{Int}(X'_+),U_{+},U_{-},\text{Int}(X'_-)\right\}$ where $U_{+}=U^+_{-3\e_0}\cup X^{+}_{\ast}$ and $U^+_{-3\e_0}$ stands for the set of points in $X'_{+}$ at distance less than $3\e_0$ from the boundary $\partial X'_+$ and is identified with $S\times (-3\e_0,0]$ in the obvious fashion using the normal exponential map in metric $g$, while the set $X^{+}_{\ast}=S\times [-\pi\e/2,\pi\e/2)$ is identified with $S\times [0, \pi \e)$ via the normal exponential map in metric $g_{\ast}$. Similarly, we declare analogous identifications for $U_{-}$ in lieu of $U_{+}$.

With slight abuse of notation we shall still write $X'_+, X''_{+}$ (respectively $X'_{-}, X''_{-}$) to denote the images of these sets in $M$. We further let $u_{+}: U_{+} \to \R$ denote the \emph{signed} distance from $\partial X'_{+}$ with the agreement that such a distance is positive on $X_{\ast}$ (where it is determined by the metric $g_{\ast}$), negative on $X'_{+}$ (where it is determined by the restriction of $g$ to that set), and vanishes on their common boundary). We analogously define $u_{-}$, so that in particular for $x\in U_{+}\cap U_{-}$ if $u_{+}(x)=t$ then $u_{-}(x)=\pi\e-t$. For any $u\in [-2\e_0,\pi\e]$ we agree that $S^{+}_{u}$ (respectively $S^{-}_{u}$) is the level set $u^{-1}_{+}(u)$ (respectively $u^{-1}_{-}(u)$), and that $M^{+}_{u}$ (respectively $M^{-}_{u}$) is the connected component of $M$ with boundary $S^{+}_{u}$ (respectively $S^{-}_{u}$) and containing $X''_{+}$ (respectively $X''_{-}$).

We endow this smooth manifold with a (non-smooth) Riemannian metric $g^M=g^M(\e)$ by identifying it (as in Theorem 5.7 of \cite{GL80a}) with the set
\[
T_{\e}(X')=\left\{(x,h)\in X\times\R : d^{X\times \R}((x,h), X'\times\left\{0\right\})=\e \right\},
\]
for $\e\in (0,\e_0)$.

It is straightforward to check that the metric $g^M$ is $C^0$, and clearly smooth away from the interfaces $S^+_0$ and $S^-_0$. We emphasize that the smooth coordinate $u_{+}$ coincides from the signed distance from $S^{+}_{0}$ in $(M,g^{M})$. This follows, via a standard argument, from the fact that each meridian circle of the tube in question is a closed geodesic, hence the tube is completely foliated by such geodesics and the length-minimizing path on $(M,g^M)$ connecting couples of points on any given meridian circle is the shortest arc on such circle.

 \begin{rmk}\label{rem:2ndff}
 	We observe that the interface $S^+_0$ is given the same second fundamental form (with respect to the same unit normal to such interface) when regarded as the boundary of the two domains
 	$X'_{+}$ and $X_{\ast}$. (For instance, we can take the outward-pointing normal to $X'_{+}$, which is inward-pointing with respect to $X_{\ast}$).
 As a result, the metric $g^M$ has a well-defined first order expansion in Fermi coordinates for $S^{+}_0$. Analogously for $S^{-}_0$.
 \end{rmk}

 We shall adapt the method presented by Miao in Section 3 of \cite{Miao02} to deal with the singularity of the interfaces by means of a localized smoothing procedure, followed by a conformal deformation based on the tools presented in Appendix \ref{sec:Neu}. Such modifications will be performed so to preserve the two geometric properties stated in the following lemma.

\

\begin{lem}\label{lem:singpath}
	In the setting above,  there exists $\e_1\in (0,\e_0)$ such that for all $\e\in (0,\e_1]$ the metric $g^M(\e)$ on $M$ satisfies the following two properties:
	\begin{enumerate} 
		\item [\emph{i})]{the scalar curvature is positive on $M\setminus \left\{S^+_0, S^-_0\right\}$;}
		\item[\emph{ii})] {the mean curvature of $S^+_u$ is positive for all $u\in [-2\e_0,\e_0)$ and vanishes on the surface $S^+_{\e_0}$, which is totally geodesic.}
	\end{enumerate}	
\end{lem}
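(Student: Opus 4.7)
The plan is to split the analysis of $g^M(\e)$ between the two ``sheet'' pieces $M'_\pm$ and the ``rim'' of $T_\e(X')$ around $\partial X' \times \{0\}$. On the sheets the map $\Xi$ is an isometry of $M'_\pm$ onto $X' \times \{\pm \e\} \subset X \times \R$, so that $g^M$ pulls back to $g\vert_{X'}$: this immediately gives $R_{g^M} = R_g > 0$ on $M'_\pm \setminus S^\pm_0$, and for $u \in [-2\e_0, 0)$ identifies $S^+_u$ with the equidistant surface in $X'$ at $g$-distance $2\e_0 + |u| \in [2\e_0, 4\e_0]$ from $\partial X$, which is strictly mean-convex by our initial choice of $\e_0$. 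All the substance of the lemma is then concentrated on the rim. There I would work in Fermi coordinates $(r,y)$ on a tubular neighborhood of $\partial X' \subset X$ (so that $g = dr^2 + g_r(y)$) together with the angular parameterization $(\theta,y)$ given by $(r,h) = (\e\cos\theta, \e\sin\theta)$, in which the induced rim metric reads
\[
g^M\big\vert_{\mathrm{rim}} = \e^2\, d\theta^2 + g_{\e\cos\theta}(y),
\]
and $S^+_u$ corresponds to the slice $\{\theta = \theta(u)\}$ with $\sin\theta(u) = f_2(u)/\e$, so that $\theta$ decreases from $\pi/2$ (at $u=0$) to $0$ (at $u=\e_0$).

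On the rim a direct computation with this metric gives, for the mean curvature of $S^+_u$ with unit normal in the direction of increasing $u$,
\[
H_{S^+_u} = \sin\theta(u)\cdot H^{X,g}_{\{r = \e\cos\theta(u)\}},
\]
where $H^{X,g}_{\{r = r_0\}}$ denotes the mean curvature of the $r$-equidistant surface in $(X, g)$ taken with outward normal $\partial_r$. For $u \in (0, \e_0)$ both factors are positive once $\e$ is small enough that $H^{X,g}_{\{r = r_0\}}$ stays uniformly close to the strictly positive $H_{\partial X'}$; the two one-sided limits at $u = 0$ both equal $H_{\partial X'} > 0$, consistently with Remark \ref{rem:2ndff}. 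At $u = \e_0$ we have $\theta = 0$, and the rim metric $\e^2\, d\theta^2 + g_{\e\cos\theta}(y)$ is invariant under the isometric involution $\theta \mapsto -\theta$; its fixed point set is exactly $S^+_{\e_0}$, hence this surface is totally geodesic.

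For the scalar curvature on the rim, I would apply the Gauss equation to $T_\e(X') \hookrightarrow X \times \R$ with unit normal $\nu = \cos\theta\,\partial_r + \sin\theta\,\partial_h$. A direct computation of the second fundamental form gives a principal curvature $1/\e$ along $\partial_\theta$ and principal curvatures $\cos\theta \cdot \mu_i$ along the $y$-directions (with $\mu_i$ those of $\{r = \e\cos\theta\}$ in $(X,g)$), whence
\[
H_{T_\e}^2 - |A_{T_\e}|^2 = \frac{2\cos\theta}{\e}\,H^{X,g}_{\{r = \e\cos\theta\}} + 2\cos^2\theta\,\mu_1\mu_2.
\]
Combined with the Gauss equation and uniform $C^0$-bounds on $\mathrm{Ric}_g$ and the $\mu_i$ near $\partial X'$, this yields
\[
R_{T_\e} \geq R_0 - C\cos^2\theta + \frac{2\cos\theta}{\e}\,H^{X,g}_{\{r = \e\cos\theta\}},
\]
with $R_0 > 0$ the minimum of $R_g$ on $X$ and $C$ a constant independent of $\e$. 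Splitting into the regimes $\cos^2\theta \leq R_0/(2C)$ (where the constant $R_0/2$ survives on its own) and $\cos^2\theta \geq R_0/(2C)$ (where, for $\e$ small, the $1/\e$-term dominates by strict mean-convexity of $\partial X'$) yields $R_{T_\e} > 0$ throughout the rim off the singular interfaces. Setting $\e_1$ to be the minimum of the thresholds arising in these estimates completes the proof. The main subtlety, and what dictates the choice of $\e_1$, is that the dominant $1/\e$-bonus degenerates exactly where the metric ceases to be smooth (at $\theta = \pm\pi/2$, corresponding to $S^\pm_0$), so one must balance $\cos\theta$ against the bounded negative terms using the quantitative positivity $R_g \geq R_0 > 0$ on all of $X$; by contrast the mean curvature side is not similarly delicate, since $\sin\theta$ stays bounded below throughout the transition region $u \in [0,\e/2]$ and $H^{X,g}_{\{r = r_0\}}$ stays close to $H_{\partial X'}$.
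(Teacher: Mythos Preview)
Your proof is correct. For part~(i) you follow essentially the same route as the paper: both apply the Gauss equation to the tube $T_\e(\partial X')\hookrightarrow X\times\R$, identify the principal curvatures as $1/\e$ (in the angular direction) and $\cos\theta\cdot\mu_i$ (tangential to $\partial X'$), and use the dominant $\frac{2\cos\theta}{\e}H'$ term together with the background $R_g>0$ to handle, respectively, the regime where $\cos\theta$ is bounded below and the regime $\cos\theta\approx 0$. The paper packages this as $R^{T_\e}=R_g+\cos\theta\bigl[\tfrac{2}{\e}(H'+O(\e))+O(1)\bigr]$ with the bracket positive for small $\e$, while you split into two $\cos\theta$--regimes; the mechanism is the same.

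For part~(ii) your argument is genuinely more direct than the paper's. Writing the rim metric explicitly as $\e^2\,d\theta^2+g_{\e\cos\theta}$, you read off the exact formula $H_{S^+_u}=\sin\theta\cdot H^{X,g}_{\{r=\e\cos\theta\}}$, from which positivity for $u\in(0,\e_0)$ and total geodesy at $u=\e_0$ (via the reflection $\theta\mapsto-\theta$) follow immediately. The paper instead argues in two steps: it uses the traced second variation identity $\tfrac{1}{\e}(H^{Z_\theta})'=\tfrac12(R^{T_\e}-R^{Z_\theta}+|A|^2+|H|^2)$ together with the scalar curvature estimate to show $(H^{Z_\theta})'>0$ on $(0,\pi/4]$, and then invokes a separate continuity argument on $[\pi/4,\pi/2]$ viewing $Z_\theta$ as a small normal graph over $\partial X'\times\{0\}$. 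Your closed formula bypasses both steps, makes the matching at $u=0$ transparent, and shows that (unlike for the scalar curvature) no careful balancing against $\e$ is needed for the mean-curvature claim beyond ensuring $H^{X,g}_{\{r=r_0\}}>0$ for $r_0\in[0,\e]$.
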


\begin{proof}

Concerning the first claim, since $(X,g)$ is assumed to have positive scalar curvature, we only need to study the $\e$-tube $T_{\e}(\partial X')$ consiting of the set of points $X\times \R$ having distance exactly equal to $\e$ from $\partial X'\times\left\{0\right\}$. We notice that this set is smooth for any $\e>0$ sufficiently small.
By a suitable version of the Gauss Lemma, we have that this set can be parametrized as follows:
\[
T_{\e}(\partial X')=\left\{ exp_{x'}(\e(\cos(\theta)\nu'(x')+\sin(\theta)\xi(x'))) \ : \ x'\in \partial X', \ \theta\in S^1\right\},
\]
where we have denoted by $\nu'(x')$ the outward-pointing unit normal vector to $\partial X'\times\left\{0\right\}\subset X\times \left\{0\right\}$ and by $\xi(x')$ the upward-pointing unit normal vector to $X\times \left\{0\right\}\subset X\times\R$.

The geometry of tubes has been extensively studied, see the monograph \cite{Gr04} and references therein. For our purposes we only need a very simple fact: in a convenient orthonormal frame $\left\{E_1, E_2, E_3\right\}$ diagonalizing the second fundamental form of $T_{\e}(\partial X')$, the principal curvatures $\lambda_1, \lambda_2, \lambda_3$ satisfy, in terms of the principal curvatures $\mu'_1, \mu'_2$ of $\partial X'$, the estimate
\begin{equation}\label{eq:eigenv}
\lambda_i=\begin{cases}
(\mu'_i+O(\e))\cos(\theta) & \text{if} \ i=1,2 \\
1/\e & \text{if} \ i=3.
\end{cases}
\end{equation}

The Gauss equation, applied to the tube in question, gives
\begin{equation}\label{eq:gauss1}
R^{T_{\e}(\partial X')}=R^{X\times \R}-2 Ric^{X\times \R}(E_0, E_0)+2(\lambda_1\lambda_2+\lambda_1\lambda_3+\lambda_2\lambda_3)
\end{equation}
where $R^{T_{\e}(\partial X')}$ (resp. $R^{X\times \R}$) denotes the scalar curvature of $T_{\e}(\partial X')$ (resp. $X\times \R$) and $Ric^{X\times \R}$ the Ricci tensor of the ambient manifold, which is evaluated along the outward-pointing unit normal vector $E_0$. Now, $Ric^{X\times \R}(E_0, E_0)=O(1)\cos(\theta)$, hence if we employ \eqref{eq:eigenv} and in \eqref{eq:gauss1} and rearrange the terms we finally get
\begin{equation}\label{eq:EstScal}
R^{T_{\e}(\partial X')}=R^{X\times\R}+\cos(\theta)\left[\frac{2}{\e}\left(H'+O(\e)\right)+O(1)\right]
\end{equation}
where $H'=\mu'_1+\mu'_2$ is the mean curvature of $\partial X'$.
As a result, $R^{T_{\e}(\partial X')}$ is positive for any sufficiently small $\e>0$ thanks to our assumptions that $g\in\mathcal{M}_{R>0, H>0}$.

\

Let us then justify our second claim, concerning the positivity of the mean curvature of $S^+_{u}$. Once again, it suffices to prove the claim for $u\in [0,\e_0)$ where this is a statement about the tube $T_{\e}(\partial X')\subset X\times\R$.

Thus, we need to study the mean curvature $H^{Z_\theta}$ of the level sets
$Z_{\theta}\subset T_{\e}(\partial X')$ corresponding to a fixed value of $\theta\in [0,\pi/2]$. 
 Clearly, $Z_0$ is totally geodesic, hence $H^{Z_0}=0$ and we will now show that $(H^{Z_\theta})'>0$
 for all $\theta\in (0,\pi/4]$ provided we take $\e>0$ small enough (independently of $\theta$), which will then imply $H^{Z_{\theta}}>0$ for this same range of values of the angle. 
Relying on the second variation formula for the area functional and the Schoen-Yau rearrangement trick, we have
	\begin{equation}\label{eq:FirstVar}
	\frac{1}{\e}(H^{Z_\theta})'=\frac{1}{2}(R^{T_{\e}(\partial X')}-R^{Z_{\theta}}+|A^{Z_\theta}|^2+|H^{Z_\theta}|^2)\geq \frac{1}{2}(R^{T_{\e}(\partial X')}-R^{Z_{\theta}}).
	\end{equation}
If we write down the Gauss equation for $Z_{\theta}\subset X\times \left\{\sin(\theta)\right\}$, 
we find at once that the quantity $R^{Z_{\theta}}$ is uniformly bounded, as we let $\e$ vary in $[0,\e_0]$. As a result, if we plug-in \eqref{eq:EstScal} in \eqref{eq:FirstVar} we derive the claim. To complete the proof for $[\pi/4,\pi/2]$, we observe that the mean curvature vector of the surface $\partial X'\times\left\{0\right\}\subset X\times \R$ is horizontal, outward-pointing, and has length bounded away from zero. The surface $Z_{\theta}$ is a normal graph over $\partial X'\times\left\{0\right\}$ in $X\times \R$, thus for $\e>0$ small enough the conclusion that $H^{Z_\theta}>0$ at each point follows by continuity.
\end{proof}	

\subsection{Smoothing \`a la Miao}\label{subs:Miao}

Before stating the main result of this section, concerning the existence of a family of regularizations of the metrics in question which preserve the two properties stated in Lemma \ref{lem:singpath}, let us introduce some additional notation. 

\

First of all, we convene here that the parameter $\e$ below is fixed, once and for all, based on the statement of Lemma \ref{lem:singpath} (for instance we can take $\e=\e_1$). Throughout this section, we let the constant $C$ to depend on such a choice of $\e$, and we are concerned about uniform estimates in the parameter $\delta$ associated with the convolution we perform. Furthermore, for the sake of notational convenience, we agree to write $u, S_{u}$ in lieu of $u_{+}, S^+_{u}$ respectively. It is tacitly understood that the regularization procedure (which is, as we shall see, local near any given interface) is implemented in the same way, symmetrically for both singular interfaces. 

\

We have that the metric $g^M$ takes (around the interface $S_0$) the form
\[
g=du\otimes du + a(u),
\]
where $a(u)$ denotes, for $u\in [-2\e_0,\pi\e/2]$, a continuous family of smooth Riemannian metrics on $S_0$. Recalling that the quantity
\[
\frac{1}{2}\left[\frac{\partial a}{\partial u}\right]_{u=0}
\]
equals the second fundamental form of $S_0$ in $(M,g^M)$, Remark \ref{rem:2ndff} implies that $a(u)$ is in fact a $C^1$ path of metrics.
As a result, the following two estimates hold for $s_1, s_2\in [-\delta,\delta]$:
\begin{equation}\label{eq:1st}
\|a(s_1)-a(s_2)\|_{C^0}\leq L |s_1-s_2|
\end{equation}
\begin{equation}\label{eq:Lag}
\left\|\frac{\partial a}{\partial u}(s_1)-\frac{\partial a}{\partial u}(s_2)\right\|_{C^0}\leq \omega(|s_1-s_2|)
\end{equation}
where $L:=\sup_{u\in[-\delta,\delta]}\|\partial a/\partial u\|_{C^0}$ and $\omega:[0,2\delta]\to \R$ is a continuous, non-decreasing function vanishing at $s=0$ (the associated modulus of continuity). It is understood that all norms of tensors are measured with respect to a given background metric on $S_0$, which we agree to be $a(0)$.

\

We consider functions $\phi, \psi\in C^{\infty}_c(\R)$ taking values in $[0,1]$ and supported in $(-1,1), (-1/2,1/2)$ respectively, such that $\int_{\R} \phi(s)\, ds=1$ and $\psi(u)=1/\Lambda $ on the interval $[-1/4,1/4]$, for $\Lambda$ a large positive constant that is fixed once and for all throughout the argument.
Given any $0<\delta<\e^2$ we shall further set $\psi_{\delta}(s)=\delta^2\psi(s/\delta)$, and 
\begin{equation}\label{def:1}
a_{\delta}(u)=\int_{\R} a(u-\psi_{\delta}(u)s)\phi(s)\,ds
\end{equation}
so that
\begin{equation}\label{def:2}
a_{\delta}(u)=\begin{cases}
\int_{\R} a(s)\left(\frac{1}{\psi_{\delta}(u)}\phi\left(\frac{u-s}{\psi_{\delta}(u)}\right)\right)\,ds & \ \ \text{if} \ \psi_{\delta}(u)\neq 0 \\
a(u) & \ \ \text{if} \ \psi_{\delta}(u)=0.
\end{cases}
\end{equation}
and, in particular, for $u\in [-\delta/4,\delta/4]$
\begin{equation}\label{def:conv}
a_{\delta}(u)= \int_{\R}a(s)\left(\frac{\Lambda}{\delta^2}\phi\left(\frac{\Lambda(u-s)}{\delta^2}\right)\right)\,ds 
\end{equation}
to be interpreted as a \emph{localized} fiberwise convolution. Distinguishing between $u\notin [-\delta^2,\delta^2]$, in which case we rely on equation \eqref{def:1}, and $u\in [-\delta/4,\delta/4]$, in which case we rely on equation \eqref{def:conv}, one checks at once that
for any $\delta\in (0,\e^2)$ we have that
 $a_{\delta}$ is also a smooth path of smooth metrics on $S_0$.
 
 We further observe that the following two uniform estimates will hold true:
 \begin{equation}\label{eq:est1}
 \|a_{\delta}(u)-a(u)\|_{C^0}\leq L \left(\frac{\delta^2}{\Lambda}\right)
 \end{equation}
 \begin{equation}\label{eq:est2}
 \left\|\frac{\partial a_{\delta}}{\partial u}(u)-\frac{\partial a}{\partial u}(u)\right\|_{C^0}\leq  \omega(\delta^2/\Lambda)+L\delta \|\psi'\|_{C^0} \int_{\R} |s|\phi(s)\,ds.
 \end{equation}
 The first one is justified by simply going back to the definition \eqref{def:1} and exploiting the Minkowski inequality: 
 \begin{multline*}
 \|a_{\delta}(u)-a(u)\|_{C^0}\leq \left\|\int_{\R} (a(u-\psi_{\delta}(u)s)-a(u))\,\phi(s)\,ds \right\|_{C^0} \\
 \leq \int_{\R} \|a(u-\psi_{\delta}(u)s)-a(u)\|_{C^0}\phi(s)\,ds\leq L \left(\frac{\delta^2}{\Lambda}\right)
 \end{multline*} 	
 where the last inequality relies on \eqref{eq:1st}.
 Concerning the second inequality, we use the Minkowski inequality again together with \eqref{eq:Lag}:
 \begin{align*}
 \left\|\frac{\partial a_{\delta}}{\partial u}-\frac{\partial a}{\partial u}\right\|_{C^0}  &=\left\|\int_{\R}\left[(1-\psi'_{\delta}(u)s)\frac{\partial a}{\partial u}(u-\psi_{\delta}(u)s)-\frac{\partial a}{\partial u}(u)\right]\,\phi(s)\,ds\right\|_{C^0} \\
 &\leq \int_{\R}\left\|\frac{\partial a}{\partial u}(u-\psi_{\delta}(u)s)- \frac{\partial a}{\partial u}(u)\right\|_{C^0}\phi(s)\,ds \\
 &+\int_{\R} \|\psi'_{\delta}(u)\|_{C^0}\left\|\frac{\partial a}{\partial u}(u-\psi_{\delta}(u)s)\right\|_{C^0}|s|\phi(s)\,ds  \\
 &\leq \omega(\delta^2/\Lambda)+ L\delta \|\psi'\|_{C^0} \int_{\R} |s|\phi(s)\,ds.
 \end{align*}	
 Furthermore, we can conveniently compute the second derivative in $u$ as follows:
 \begin{multline*}
 \frac{\partial^2 a_{\delta}}{\partial u^2}=\frac{\partial}{\partial u}
 \int_{\left\{u\geq 0\right\}} \frac{\partial a}{\partial u}(u-\psi_{\delta}(u)s)\left(1-\delta \psi'\left(\frac{u}{\delta}\right)s\right)\phi(s)\,ds \\
 +\frac{\partial}{\partial u}\int_{\left\{u\leq 0\right\}}  \frac{\partial a}{\partial u}(u-\psi_{\delta}(u)s)\left(1-\delta \psi'\left(\frac{u}{\delta}\right)s\right)\phi(s)\,ds 
 \end{multline*}
 which implies
 \begin{multline*}
 \frac{\partial^2 a_{\delta}}{\partial u^2}=\int_{\R} \frac{\partial^2 a}{\partial u^2}(u-\psi_{\delta}(u)s)\left(1-\delta \psi'\left(\frac{u}{\delta}\right)s\right)^2\,\phi(s)\,ds\\
 + \int_{\R} \frac{\partial a}{\partial u}(u-\psi_{\delta}(u)s)\left(- \psi''\left(\frac{u}{\delta}\right)s\right)\,\phi(s)\,ds.
 \end{multline*}
 By inspecting the various terms in this formula we can infer a uniform upper bound of the form
 \begin{multline}\label{eq:2ndDerAD}
 \sup_{u\in [-\delta/2,\delta/2]}\left\|\frac{\partial^2 a_{\delta}}{\partial u^2}\right\|_{C^0}
\\ \leq \|\psi''\|_{C^0}\sup_{u\in [-\delta,\delta]}\left\|\frac{\partial a}{\partial u}\right\|_{C^0}+(1+\delta \|\psi'\|_{C^0})^2\sup_{u\in [-\delta,0)\cup (0,\delta]}\left\|\frac{\partial^2 a}{\partial u^2}\right\|_{C^0}
 \end{multline}

 After collecting these estimates, we consider the smooth metrics given by
\[
g^M_{\delta}= du\otimes du + a_{\delta}(u)
\]
which obviously extend (provided we perform the same operation on both interfaces) to a smooth metric on $M$, which we shall also denote by $g^M_{\delta}$. 

\begin{prop}\label{pro:miao}
In the setting above, there exists $\delta_0\in (0,\e^2)$ such that for any $\delta\in (0,\delta_0)$ the smooth metric $g^M_{\delta}$ on $M$ satisfies the following two properties:
\begin{enumerate} 
	\item {the scalar curvature has a positive lower bound away from the two sets defined by $|u_+|\leq \delta/2$ and $|u_-|\leq \delta/2$, and is uniformly bounded from below in such regions;}
	\item {the mean curvature of $S^{+}_{u}$ is positive for all $u\in [-2\e_0,\e_0)$ and vanishes on the surface $S^{+}_{\e_0}$, which is totally geodesic.}
\end{enumerate}	
\end{prop}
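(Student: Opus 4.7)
The plan is to split $M$ into the \emph{smoothing region} $\Omega_\delta := \{|u_+| \le \delta/2\}\cup\{|u_-| \le \delta/2\}$, on which the regularization modifies the metric, and its complement $M \setminus \Omega_\delta$, on which $g^M_\delta = g^M$. Each of the two claims is then verified piece by piece.

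On $M \setminus \Omega_\delta$ both properties follow essentially for free from Lemma \ref{lem:singpath}. Since $g^M_\delta$ agrees with $g^M$ there, the scalar curvature is positive, and by compactness of the closure $\overline{M\setminus\Omega_\delta}$ (which stays bounded away from the singular interfaces $S_0^\pm$) it admits a uniform positive lower bound. Similarly, the mean curvature of each $S_u^+$ with $u \in [-2\e_0, \e_0)$ and $|u| > \delta/2$ coincides with the original one, hence is strictly positive, and $S_{\e_0}$ remains totally geodesic.

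The core of the argument takes place inside $\Omega_\delta$. Near each interface, the smoothed metric has the explicit warped form $g^M_\delta = c_\delta(u)\, du\otimes du + a_\delta(u)$ with both $c_\delta$ and $a_\delta$ smooth. The scalar curvature $R(g^M_\delta)$ is a universal algebraic expression in $c_\delta, c_\delta', c_\delta''$ and in the components of $a_\delta, a_\delta^{-1}, \partial_u a_\delta, \partial_u^2 a_\delta$, together with intrinsic curvature quantities of $a_\delta(u)$. The bounds \eqref{eq:factorc}, \eqref{eq:C1conv}, \eqref{eq:C2bound}, together with \eqref{eq:est1}, \eqref{eq:est2}, \eqref{eq:2ndDerAD}, force all these ingredients to remain uniformly bounded as $\delta \to 0$, with $c_\delta \ge 1$ and $a_\delta$ uniformly non-degenerate. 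Consequently $R(g^M_\delta)$ is uniformly bounded from below in $\Omega_\delta$, which combined with the previous step yields claim (1). For the mean curvature, one uses the identity
\[
H^\delta(u) = \frac{1}{2\sqrt{c_\delta(u)}}\,\mathrm{tr}_{a_\delta(u)}\!\bigl(\partial_u a_\delta(u)\bigr).
\]
By Remark \ref{rem:2ndff}, the second fundamental form of $S_0$ matches from both sides of the interface, so $H^M$ extends continuously across $u=0$ and is strictly positive by Lemma \ref{lem:singpath}; in particular there is $c_0>0$ such that $H^M \geq c_0$ in a fixed neighborhood of $u=0$. The estimates \eqref{eq:est1}, \eqref{eq:est2}, \eqref{eq:C1conv} then give $\sup_{|u| \le \delta/2} |H^\delta(u) - H^M(u)| \to 0$ as $\delta \to 0$, from which $H^\delta > 0$ throughout $\Omega_\delta$ for $\delta$ sufficiently small, while $S_{\e_0}$ is untouched and remains totally geodesic.

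The main technical obstacle is precisely to ensure that the second derivatives of $a_\delta$ and $c_\delta$ do not blow up as the smoothing scale $\delta^2/\Lambda$ shrinks, since a priori mollification at scale $h$ amplifies second derivatives by $h^{-1}$. This uniform $C^2$-control is provided by the fact that $a$ and $c$ are $C^1$ across the interface (by Remark \ref{rem:2ndff}) with piecewise bounded second derivatives, together with Miao's localized mollification design in \eqref{def:1}--\eqref{def:conv}; the resulting uniform estimates \eqref{eq:C2bound} and \eqref{eq:2ndDerAD}, already isolated in the previous subsection, are exactly what drives the argument above.
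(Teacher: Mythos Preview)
Your proof is correct and follows essentially the same approach as the paper: both split into the smoothing region and its complement, both verify claim (2) via the explicit mean-curvature formula $H^{S_u}_{g^M_\delta}=\tfrac{1}{2}(dt_\delta/du)^{-1}\,\mathrm{tr}_{a_\delta}(\partial_u a_\delta)$ together with the $C^1$-convergence estimates \eqref{eq:est1}, \eqref{eq:est2}, \eqref{eq:C1conv}, and both reduce claim (1) to the uniform $C^2$-bounds \eqref{eq:2ndDerAD}, \eqref{eq:C2bound}. The only cosmetic difference is that the paper writes out the scalar curvature explicitly via the Gauss equation and the second variation formula for the foliation $\{S_u\}$, whereas you invoke the same fact more abstractly as a ``universal algebraic expression'' in $c_\delta, c_\delta', c_\delta'', a_\delta, a_\delta^{-1}, \partial_u a_\delta, \partial_u^2 a_\delta$ and the intrinsic curvature of the leaves.
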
	

\begin{proof}

Claim (2) in the statement follows at once from the equation
\[
H^{S_u}_{g^M_\delta}=\frac{1}{2}(a_{\delta})^{-1}\frac{\partial a_{\delta}}{\partial u}
\]
 thanks to Lemma \ref{lem:singpath}, by virtue of the equations  \eqref{eq:est1} and \eqref{eq:est2}  which ensure that $H^{S_u}_{g^M_\delta}\to H^{S_u}_{g^M}$ uniformly as one lets $\delta\to 0$.

Let us justify the assertion concerning the scalar curvature. The second variation formula for the area functional, when combined with the Gauss equation for $S_u$ (with respect to the metric $g^M_{\delta}$), gives 
\[
R_{g^M_\delta}=2K^{S_u}_{g^M_\delta}-(|A^{S_u}_{g^M_\delta}|^2+|H^{S_u}_{g^M_\delta}|^2)-2\frac{\partial}{\partial u}H^{S_u}_{g^M_\delta}.
\]
Arguing as above for Claim (2), the first three terms on the right-hand side are uniformly bounded for $\delta\in (0,\e^2)$ (notice that the term involving the Gauss curvature is intrinsic to the given slice, so we just appeal to \eqref{eq:est1}) thus we only need to study the last one, and in the sole strip of points satisfying $|u|\leq \delta/2$. The chain-rule allows to rewrite such a term as
\begin{equation}\label{eq:chainrule}
\frac{\partial (a_{\delta})^{-1}}{\partial u}  \frac{\partial a_{\delta}}{\partial u} +(a_{\delta})^{-1}\frac{\partial^2 a_{\delta}}{\partial u^2}
\end{equation}
which implies Claim (1) thanks to the uniform bound \eqref{eq:2ndDerAD}  for $\frac{\partial^2 a_{\delta}}{\partial u^2}$.
Thereby the proof is complete.
\end{proof}

\subsection{Proof of Proposition \ref{thm:tgdef}}\label{subs:proof1}	

We will now employ conformal deformations (cf. Appendix \ref{sec:Neu} and \ref{sec:ConvexCond}) to obtain a continuous path of smooth metrics having the two properties stated in Proposition \ref{thm:tgdef}.
\begin{proof}
The isotopy $(g_{\mu})$ is obtained as concatenation of the three paths that are defined as follows:
\begin{enumerate}
    \item [\emph{i)}] for $t\in [0,2\epsilon_0+2\e]$, simply the metric $g$, pulled back through the diffeomorphisms $\Psi_t: X\rightarrow X_t$,  defined in Subsection \ref{subs:GL}; thereby we obtain a continuous path of metrics on $X$, all lying in $\mathcal{M}$, interpolating between $g_{0}=g$ and $g_1=(\Psi_{2\epsilon_0+2\e})^{\ast}g$;
    \item [\emph{ii)}] for $\mu\in [0,1]$, the metrics $(1-\mu+\mu(\psi_{2(\epsilon_0+\e)})^4 g_1$, where $\psi_{2\epsilon_0+2\epsilon}$ is the first Neumann eigenfunction of the conformal Laplace operator on $X_{2(\epsilon_0+\e)}$, pulled back through the diffeomorphism $\Psi_{2\epsilon_0+2\e}$; the fact that, for any $\mu\in [0,1]$ these metrics have positive scalar curvature and (strictly) mean-convex boundary follows from the convexity results in Appendix \ref{sec:ConvexCond};
    \item [\emph{iii)}] for $u\in [-2\epsilon, \pi\epsilon/2]$, the pull back of the metrics $(M_u^+,(\psi^{(\delta)}_u)^4 g^M_{\delta})$, where $\psi^{(\delta)}_u$ is the first Neumann eigenfunction of the conformal Laplace operator on $M_u^+$, through a suitable diffeomorphism from $X'_{+}$ to $M_u^+$, and then via $\Psi_{2\e_0}$; here we agree that $\delta$ is chosen small enough that the Proposition \ref{pro:miao} is applicable.
\end{enumerate}

More precisely, for any $u\in [-2\epsilon, \pi\epsilon/2]$ we define a diffeomorphism $X'_{+}\to M^{+}_u$ so that the level sets of the distance function from $\partial X'_{+}$ in $X'_{+}$ correspond bijectively to the level sets of the distance function from $S^{+}_{u}$ in $(M, g^{M}_{\delta})$ near the boundaries, that it equals the identity map well away from it and smoothly interpolates in the transition region in the middle. It follows from Lemma \ref{lem:contdepdata}, Lemma \ref{lem:poseigenv} and Proposition \ref{pro:miao}  that, this definition
gives a path of smooth metrics on $X$ that have positive scalar curvature and (strictly) mean-convex boundary, except for $u=\pi\epsilon/2$ (be $\overline{g}$ the corresponding metric on $X$) in which case the boundary is totally geodesic.

Now, we have that, by the construction presented in Subsection \ref{subs:GL} and Subsection \ref{subs:Miao}, the metric $g^M_{\delta}$ is a smooth metric on $M$, hence smoothness of the (Riemannian) double of $(X,\overline{g})$ follows by simply observing that the even extension to $M$ of the pull-back  of the function $\psi^{(\delta)}_{\pi\epsilon/2}$ to the upper half of $M$, simply denoted by $\psi$, is a $C^1$ function that solves, in a pointwise sense, a linear elliptic equation of the form
\[
\Delta_{g^M_{\delta}} \psi-\frac{1}{8}R_{g^M_{\delta}}\psi = -\lambda_1\psi
\]
hence it is actually a smooth function on $M$ by standard elliptic regularity.
It follows that we can attach two copies of $(X,\overline{g})$, by pointwise identification of the boundary points, to obtain a smooth Riemannian manifold of positive scalar curvature. 
\end{proof}	 

    \section{Ancillary results and tools}\label{sec:ancill}
    
    \subsection{Key definitions and basic facts}\label{sec:EquivMan}
    
    We start by collecting some of the definitions we will refer to throughout the article, and a few elementary facts.
    
    \begin{defi}\label{def:inv}
    Let $M$ be  a compact manifold without boundary. We shall say that a smooth map $f:M\to M$ is an involution if $f^2=id$, and $f$ is not the identity map itself. We further define:
    \begin{itemize}
    \item {$Fix(f)=\left\{x\in M \ : \ f(x)=x\right\}$, the set of fixed points of $f$;}	
    \item {$\mathcal{D}(M,f)=\left\{\varphi\in \diff(M) \ : \ f\circ \varphi=\varphi\circ f \right\}$, the set of $f$-equivariant diffeomorphisms of $M$.}	
    \end{itemize}	
If $g$ is a Riemannian metric on $M$ and $f^{\ast}g=g$ we shall say that $f$ is an isometric involution of $(M,g)$ and, equivalently, that the metric $g$ is $f$-compatible.
    	\end{defi}

\begin{rmk}\label{rmk:examples}
	The definition of involution leaves some considerable freedom for what concerns the structure of $Fix(f)$. For instance, let
	$(S^3, g_{round})$ be the unit round sphere, and consider the following examples of \emph{isometric} involutions:
	\begin{multline*}
	f(x_1, x_2, x_3, x_4)=(-x_1, -x_2, -x_3, -x_4) \ \Rightarrow \\
Fix(f)=\emptyset;
	\end{multline*} 
	\begin{multline*}
	f(x_1, x_2, x_3, x_4)=(x_1, -x_2, -x_3, -x_4) \ \Rightarrow \\ 
	Fix(f)=\left\{x\in\R^4 \ : \ |x|^2=1, \ x_2=x_3=x_4=0\right\};
	\end{multline*} 
	\begin{multline*}
	f(x_1, x_2, x_3, x_4)=(x_1, x_2, -x_3, -x_4) \ \Rightarrow \\ Fix(f)=\left\{x\in\R^4 \ : \ |x|^2=1, \ x_3=x_4=0\right\}
\end{multline*} 
	\begin{multline*}
	f(x_1, x_2, x_3, x_4)=(x_1, x_2, x_3, -x_4) \ \Rightarrow \\ Fix(f)=\left\{x\in\R^4 \ : \ |x|^2=1, \ x_4=0\right\}.
	\end{multline*}
\end{rmk}
    
    Our next definition singles out the behaviour corresponding to the last of the four maps described above, which precisely encodes the structure of the manifolds produced as output of Section \ref{sec:elldef}.

    \begin{defi}\label{def:Z2equiv}
    	We shall define a reflexive $n$-manifold to be a triple $(M,g,f)$ such that:
    	\begin{enumerate}
    		\item [i)]{$M$ is a compact, orientable, smooth manifold of dimension $n\geq 3$ without boundary;}
    		\item[ii)] {$g$ is a smooth Riemannian metric on $M$;}
    		\item[iii)] {$f\in C^{\infty}(M,M)$ is an isometric involution of $(M,g)$ and 
    			\begin{itemize}
    			\item{if $M$ is connected, then $Fix(f)$ is a smooth, embedded, closed separating hypersurface;}
    			\item{if $M$ is disconnected, then there exists $2d$ connected components 
    				\[
    				\left\{M^{\ast}_1, M^{\ast\ast}_1,\ldots, M^{\ast}_d, M^{\ast\ast}_d \right\}
    				\] such that $\left\{M^{\ast}_i, M^{\ast\ast}_i\right\}$ are isometric under $f$ for any $i=1,\ldots, d$, and any other component $M^{\bullet}$ in $M$ is such that the triple $(M^{\bullet}, g^{\bullet}, f^{\bullet})$ is itself a connected reflexive triple (with $g^{\bullet}, f^{\bullet}$ denoting the restriction of the metric $g$ and the involution $f$ to $M^{\bullet}$, respectively).}	
    		\end{itemize}
    	}
    	\end{enumerate}	
    \end{defi}

\begin{rmk}
	\label{rmk:separating}
If $M$ is a connected manifold (or a connected component of a disconnected one) we say that a (possibly disconnected) smooth, closed, embedded hypersurface $\Sigma$ is separating if $M\setminus{\Sigma}$ consists of exactly two connected components. For instance, we agree that if $X\simeq S^2\times I$ and $M=DX$ then the disconnected surface $\Sigma= S^2\times\partial I$ (that consists of two spheres) is separating in $M$.
\end{rmk}

Now, we add a couple of elementary lemmata providing some basic information about \emph{connected} reflexive manifolds.

\begin{lem}\label{lem:mix1}
	Let $(X^n,g)$ be a connected manifold with boundary, equipped with an isometry $f$, such that $\partial X\subset Fix(f)$. Then $f=id$ on $X$.
\end{lem}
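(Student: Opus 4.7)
The plan is to first show that $f$ agrees with the identity to first order at every point of $\partial X$, then use this to see that $f$ fixes a collar neighborhood of $\partial X$, and finally extend to all of $X$ by the standard rigidity of isometries.

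Fix an arbitrary point $p\in \partial X$, and let $\nu_p$ denote the inward-pointing unit normal at $p$. Since $f|_{\partial X}$ equals the identity, the differential $df_p$ restricts to the identity on $T_p(\partial X)$. Because $f$ is a Riemannian isometry of $(X,g)$ that fixes $p$, the differential $df_p$ is a linear isometry of $T_p X$ that preserves the hyperplane $T_p(\partial X)$; hence it sends $\nu_p$ to $\pm\nu_p$. Now $f$ is a diffeomorphism of the manifold with boundary $X$ onto itself, so it must send inward-pointing vectors to inward-pointing vectors; thus $df_p(\nu_p)=\nu_p$, and therefore $df_p=\operatorname{id}_{T_pX}$.

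Next, using Fermi coordinates based on $\partial X$, there exists $\eta>0$ such that the map $(p,t)\in \partial X\times[0,\eta)\mapsto \exp_p(t\nu_p)\in X$ is a diffeomorphism onto a collar neighborhood $U$ of $\partial X$. For any $p\in\partial X$ and $t\in[0,\eta)$, the isometry property of $f$ together with $f(p)=p$ and $df_p(\nu_p)=\nu_p$ gives
\[
f(\exp_p(t\nu_p))=\exp_{f(p)}(t\,df_p(\nu_p))=\exp_p(t\nu_p).
\]
Hence $f=\operatorname{id}$ on $U$; in particular, $f$ agrees with the identity on the non-empty open subset $U\cap \operatorname{Int}(X)$ of the interior.

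Finally, consider the set $A=\{x\in X : f(x)=x \text{ and } df_x=\operatorname{id}_{T_xX}\}$. This set is closed by continuity of $f$ and $df$. It is also open: at any point $q\in A\cap \operatorname{Int}(X)$, choose a geodesic ball around $q$ inside $\operatorname{Int}(X)$ on which $\exp_q$ is a diffeomorphism; then for any $v\in T_qX$ of sufficiently small norm,
\[
f(\exp_q(v))=\exp_{f(q)}(df_q(v))=\exp_q(v),
\]
so $f$ equals the identity on a neighborhood of $q$, and the differential condition propagates as well. Since $A$ contains $U\cap \operatorname{Int}(X)\neq \emptyset$ and $\operatorname{Int}(X)$ is connected, $A\supseteq \operatorname{Int}(X)$; taking closures and using continuity, $A=X$, i.e. $f=\operatorname{id}$ on $X$.

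The only subtlety is the need to handle the boundary at the level of differentials, which is resolved in the first paragraph by the observation that the inward/outward distinction forces $df_p(\nu_p)=+\nu_p$ rather than $-\nu_p$; the remainder of the argument is a routine application of the rigidity of Riemannian isometries on connected manifolds.
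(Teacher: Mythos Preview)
Your proof is correct, but it takes a different route from the paper's. The paper proceeds by induction on the dimension $n$: the case $n=1$ is handled directly, and for $n\geq 2$ one picks $p\in\partial X$, looks at the geodesic sphere $S_\e(p)$ (which is itself an $(n-1)$-dimensional manifold with boundary on which $f$ acts isometrically and fixes the boundary), applies the inductive hypothesis to conclude $f=\mathrm{id}$ on $S_\e(p)$, and iterates. Your argument instead works directly in any dimension: the key step is the observation that a self-diffeomorphism of a manifold with boundary must send inward-pointing vectors to inward-pointing vectors, which forces $df_p(\nu_p)=+\nu_p$ and hence $df_p=\mathrm{id}$ at each boundary point; from there you invoke the standard ``isometries are determined by their $1$-jet at a point'' rigidity via the exponential map and an open--closed argument on the connected interior.

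Both arguments are short. Yours is closer to the textbook rigidity statement for Riemannian isometries and avoids induction entirely; the paper's inductive scheme, on the other hand, sidesteps any explicit discussion of collars or normal derivatives by reducing dimension.
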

\begin{proof}
	Induction on $n$. When $n=1$ we can assume, without loss of generality, that $X^1$ is $([-1,1],du^2)$. Then $f:[-1,1]\rightarrow [-1,1]$ is a diffeomorphism, and $f(\pm 1)=\pm 1$. Therefore $f$ is monotone increasing. Since $f$ is an isometry, $(f'(u))^2=1$. Therefore $f'(u)=1$ everywhere, and hence $f=id$.
	
	Suppose the statement is true for dimension $n-1$. Take $\e<r_0$, where $r_0$ is the injectivity radius of $X$. Take $p\in \partial X$. Denote the set $\Gamma=S_{\e}(p)$ the geodesic sphere of radius $\e$ centered at $p$, equipped with the induced metric. Since $f$ is an isometry, it maps $\Gamma$ to itself. Moreover, $\Gamma$ is a Riemannian manifold with boundary of dimension $n-1$. Restricted to $\Gamma$, $f$ is an isometry such that $\partial \Gamma\subset Fix(f)$. By induction, $f$ is the identity map when restricted to $\Gamma$. In fact, this argument shows that $f=id$ in a tubular neighborhood of $\Gamma$, hence in a neighborhood of $p$, and the full conclusion comes by suitably repeating this construction.
\end{proof}

\begin{lem}\label{lem:mix2}
	Suppose we have $(M,g,f)$ a connected reflexive manifold.
	Then for any point $p\notin Fix(f)$ and for any simple curve $\s$ connecting $p$ to $f(p)$ the intersection $\s \cap Fix(f)$ is not empty.
\end{lem}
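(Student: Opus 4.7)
The plan is to use the separating property of $Fix(f)$ together with the previous lemma to show that $f$ must interchange the two components of $M \setminus Fix(f)$, from which the claim follows by an elementary connectedness argument.

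Since $(M,g,f)$ is a connected reflexive $n$-manifold, by Definition \ref{def:Z2equiv} the set $\Sigma := Fix(f)$ is a smooth, embedded, closed, \emph{separating} hypersurface. Hence (cf. Remark \ref{rmk:separating}) the complement $M \setminus \Sigma$ splits as a disjoint union $M_+ \sqcup M_-$ of exactly two open, connected subsets. Denote by $\overline{M}_\pm$ their closures, which are compact submanifolds with boundary $\Sigma$. Because $f$ is a diffeomorphism fixing $\Sigma$ pointwise, it must either preserve the decomposition (sending $M_+$ to $M_+$ and $M_-$ to $M_-$) or interchange the two pieces.

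Suppose, for contradiction, that $f$ preserves the decomposition. Then $f$ restricts to an isometry of the connected Riemannian manifold with boundary $(\overline{M}_+, g|_{\overline{M}_+})$, and this restriction fixes $\partial \overline{M}_+ = \Sigma$ pointwise. By Lemma \ref{lem:mix1} we conclude $f = id$ on $\overline{M}_+$, and by the same reasoning $f = id$ on $\overline{M}_-$, so $f = id$ on $M$, contradicting the requirement in Definition \ref{def:inv} that an involution be distinct from the identity. Therefore $f$ must swap $M_+$ and $M_-$.

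Given any $p \notin \Sigma$, say $p \in M_+$, we then have $f(p) \in M_-$. Let $\sigma:[0,1] \to M$ be any (continuous, simple) curve with $\sigma(0)=p$ and $\sigma(1)=f(p)$. Consider the continuous function $t \mapsto \sigma(t)$ into $M$, and suppose for contradiction that $\sigma([0,1]) \cap \Sigma = \emptyset$. Then $\sigma$ factors through the open set $M \setminus \Sigma = M_+ \sqcup M_-$, whose two pieces are separated in the topology of $M$. The image $\sigma([0,1])$ is connected, hence must lie entirely in one of $M_+$ or $M_-$, contradicting the fact that its endpoints lie one in each piece. Thus $\sigma \cap Fix(f) \neq \emptyset$, which completes the proof.

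The only conceptually non-trivial step is the first one, namely ruling out the possibility that $f$ preserves each component of $M \setminus \Sigma$; once this is established via Lemma \ref{lem:mix1}, the rest is a direct application of the definition of a separating hypersurface.
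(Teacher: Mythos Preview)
Your proof is correct and follows essentially the same approach as the paper: both argue that $f$ must swap the two components of $M\setminus Fix(f)$ (ruling out the alternative via Lemma~\ref{lem:mix1}), and then conclude by connectedness of the image of $\sigma$.
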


\begin{proof}
	Let $\Omega_1,\Omega_2$ be the two (open) connected components of $M\setminus Fix(f)$. Since $f$ is a diffeomorphism, by connectedness we shall have \emph{either} $f(\Omega_1)\subset \Omega_1$ \emph{or} $f(\Omega_1)\subset \Omega_2$. However, we cannot have $f(\Omega_1)\subset \Omega_1$ (equivalently: $f(\Omega_1)=\Omega_1$), for otherwise $f$ would be an isometry on the Riemannian manifold with boundary $M_1=\overline{\Omega}_1$, such that $\partial M_1\subset Fix(f)$. By Lemma \ref{lem:mix1}, this would imply that $f=id$ on $M_1$, hence on $M$. Contradiction. Therefore we must have that $f(\Omega_1)=\Omega_2$, and hence $f(\Omega_2)=\Omega_1$. 
	
	If $\s$ is disjoint from $Fix(f)$, then it is entirely contained in one of $\Omega_1$ and $\Omega_2$, but this is impossible since $p$ and $f(p)$ are not both in $\Omega_1$ or $\Omega_2$.
\end{proof}

\begin{lem}\label{lem:LocalDesc}
	Let $(M,g,f)$ be a connected reflexive manifold, let $U$ be a tubular neighborhood of $Fix(f)$ and let $(x,t)$ for $x\in \Sigma, t\in (-\e,\e)$ be the associated coordinates on $U$. Then $f(x,t)=(x, -t)$ for any $x\in \Sigma, \ t\in (-\e,\e)$.
\end{lem}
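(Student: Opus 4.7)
The plan is to exploit the naturality of the exponential map under isometries, together with a careful analysis of $df_x$ along $\Sigma := Fix(f)$. The tubular neighborhood coordinates take the form $(x,t) = \exp_x(t\nu(x))$ for some smooth unit normal field $\nu$ along $\Sigma$, and the fundamental identity $f\circ \exp_x = \exp_{f(x)}\circ df_x$, valid for any isometry $f$, shows that, since $f(x) = x$ for $x \in \Sigma$, the lemma reduces entirely to proving $df_x(\nu(x)) = -\nu(x)$ for every $x \in \Sigma$.

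To that end, I would first observe that $df_x$ preserves the orthogonal decomposition $T_xM = T_x\Sigma \oplus N_x\Sigma$: indeed, differentiating any curve lying inside $\Sigma$ and passing through $x$ shows that $df_x$ fixes $T_x\Sigma$ pointwise, and the orthogonal complement is automatically preserved because $f$ is an isometry. Since $N_x\Sigma$ is one-dimensional, necessarily $df_x(\nu(x)) = \varepsilon(x)\nu(x)$ with $\varepsilon(x) \in \{+1,-1\}$; by continuity, the sign is locally constant on each connected component of $\Sigma$.

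The heart of the argument is then to rule out $\varepsilon \equiv +1$. Here I would invoke the reasoning behind Lemma \ref{lem:mix2}: the separating hypothesis yields $M \setminus \Sigma = \Omega_1 \sqcup \Omega_2$, and $f$ must swap $\Omega_1$ with $\Omega_2$ (for otherwise Lemma \ref{lem:mix1} would force $f = id$ on $M$, contradicting the very definition of involution). In a tubular neighborhood, the two local sides of $\Sigma$ near any point $x$ lie in distinct components of $M \setminus \Sigma$; hence, for small $t > 0$, the points $(x,t)$ and $f(x,t)$ lie on opposite sides of $\Sigma$, forcing the $t$-coordinate of $f(x,t)$ to be negative and therefore $\varepsilon(x) = -1$. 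Substituting into the exponential-map identity gives
\[
f(x,t) \;=\; f(\exp_x(t\nu(x))) \;=\; \exp_x\!\bigl(t\,df_x(\nu(x))\bigr) \;=\; \exp_x(-t\nu(x)) \;=\; (x,-t),
\]
which is exactly the claim.

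The only genuinely nontrivial step is the sign determination in the previous paragraph; the rest is a mechanical consequence of the naturality of $\exp$ with respect to isometries. Notice that this is also precisely where the \emph{separating} hypothesis on $Fix(f)$ in Definition \ref{def:Z2equiv} enters the picture, so any attempt to relax that hypothesis would need an alternative way to fix the sign of $df_x$ on $N_x\Sigma$.
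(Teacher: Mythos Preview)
Your proof is correct and follows essentially the same route as the paper's: both arguments show that $df_x$ restricts to the identity on $T_x\Sigma$ and to $\pm 1$ on the normal line, rule out the $+1$ case via the swapping of the two components $\Omega_1,\Omega_2$ (i.e.\ the content of Lemma \ref{lem:mix2}), and then conclude via naturality of the exponential map under isometries. The only cosmetic difference is that the paper phrases the last step as uniqueness in the Cauchy--Lipschitz theorem for the two geodesics $\gamma(s)=\exp_{(x,0)}(s\nu)$ and $\gamma^f(s)=f(\exp_{(x,0)}(-s\nu))$, which is of course equivalent to your formula $f\circ\exp_x=\exp_{f(x)}\circ df_x$.
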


\begin{proof}
	With some (convenient) abuse of language we shall notationally identify points in $U$ with their coordinates in the normal bundle of $\Sigma:=Fix(f)$ in $M$, associated with the tubular neighborhood in question.	Said $\nu$ a unit normal to $\Sigma$ at $(x,0)$, we notice that the linear map $df: T_{(x,0)}M\to T_{(x,0)}M$ acts as a linear isometry and since it restricts to the identity on the subspace $T_{(x,0)}\Sigma$ it will be either $df(\nu)=\nu$ or $df(\nu)=-\nu$. However, the former alternative is easily ruled out thanks to Lemma \ref{lem:mix2}, thus the latter will hold. Now, let $\gamma(s)=exp_{(x,0)}(s\nu)$ and $\gamma^{f}(s)=f(exp_{(x,0)}(-s\nu))$. It follows from our information on $df$ that the two curves in question will emanate at $s=0$ from the same point and with the same velocity. Thus, by the uniqueness part of the Cauchy-Lipschitz theorem we must conclude that they coincide, which in particular implies that for any point $(x,t)$ in the tubular neighborhood we will have $f(x,t)=(x,-t)$, as desired.
\end{proof}	

\begin{cor}\label{cor:UniqueF}
	Let $M$ be connected and assume that both $(M,g,f)$ and $(M,g,\tilde{f})$ are reflexive manifolds. If $Fix(f)=Fix(\tilde{f})$ then $f=\tilde{f}$.
\end{cor}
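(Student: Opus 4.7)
The plan is to exhibit the composition $h := \tilde f \circ f$ as an isometry of $(M,g)$ that equals the identity in a neighborhood of $\Sigma := \fix(f) = \fix(\tilde f)$, and then to propagate the equality globally. Once $h=\mathrm{id}$ on $M$ is established, the conclusion $\tilde f = f$ is immediate from $\tilde f = f^{-1} = f$, where the last equality uses that $f$ is an involution.

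For the local step, I would apply Lemma \ref{lem:LocalDesc} to both reflexive triples $(M,g,f)$ and $(M,g,\tilde f)$ simultaneously. Choose $\varepsilon>0$ small enough that Fermi coordinates $(x,t)$ with $x\in\Sigma$ and $t\in(-\varepsilon,\varepsilon)$ are defined on a common tubular neighborhood $U$ of $\Sigma$. Lemma \ref{lem:LocalDesc} then says that in these coordinates \emph{both} $f$ and $\tilde f$ act as $(x,t)\mapsto (x,-t)$; in particular $\tilde f\big|_U = f\big|_U$, and therefore $h = \tilde f\circ f$ equals the identity on $U$.

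For the global step, note that $h$ is a composition of isometries of $(M,g)$, hence itself an isometry. Since $M$ is connected and $U$ is a non-empty open subset on which $h=\mathrm{id}$, the classical rigidity of isometries (which follows from the fact that an isometry of a connected Riemannian manifold is uniquely determined by its value and differential at any single point, via the exponential map) forces $h = \mathrm{id}$ on all of $M$, completing the argument. Alternatively, one can avoid this appeal to isometry rigidity by arguing as in Lemma \ref{lem:mix2}: both $f$ and $\tilde f$ swap the two components $\Omega_1,\Omega_2$ of $M\setminus \Sigma$, so $h$ preserves each $\overline{\Omega}_i$ and fixes its boundary pointwise; Lemma \ref{lem:mix1}, applied to $h$ restricted to the connected manifold-with-boundary $\overline{\Omega}_i$, then yields $h\big|_{\overline{\Omega}_i} = \mathrm{id}$ for $i=1,2$, whence $h=\mathrm{id}$ on $M$.

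No real obstacle is anticipated: the only point that requires attention is the passage from the local equality $h\big|_U = \mathrm{id}$ to the global one, and this is handled either by the standard isometry-rigidity principle on a connected manifold, or (staying within the tools developed in this section) by the alternative argument using Lemma \ref{lem:mix1} just described.
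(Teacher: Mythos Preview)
Your proposal is correct, and your alternative argument (via Lemma~\ref{lem:mix2} and Lemma~\ref{lem:mix1}) is exactly the paper's proof: both $f$ and $\tilde f$ swap the closures $M_1,M_2$ of the components of $M\setminus\Sigma$, so $\tilde f\circ f$ restricts to an isometry of $M_1$ fixing $\partial M_1$ pointwise, and Lemma~\ref{lem:mix1} forces it to be the identity.

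Your primary route through Lemma~\ref{lem:LocalDesc} plus isometry rigidity is a mild variation: it front-loads the local normal-form description (which the paper proves \emph{after} this corollary in the logical flow, but independently, so there is no circularity) and then appeals to the standard fact that an isometry of a connected Riemannian manifold agreeing with the identity on an open set is the identity. This works fine; the paper's argument is marginally more self-contained since it stays entirely within Lemmas~\ref{lem:mix1}--\ref{lem:mix2} and does not invoke the exponential-map rigidity principle.
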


\begin{proof}
Let $\Omega_1, \Omega_2$ be the connected components of $M\setminus Fix(f)$, and let $M_1, M_2$ be their closures, respectively. It follows from Lemma \ref{lem:mix2} that both $f$ and $\tilde{f}$ interchange $M_1, M_2$ namely $f: M_1\to M_2$ is a diffeomorphism (of manifolds with boundary), and the same for $\tilde{f}$. Thus we can consider the restriction of the composition $\tilde{f}\circ f$ to $M_1$: Lemma \ref{lem:mix1} ensures that such composition equals the identity, hence $f=\tilde{f}$.	
\end{proof}	

Thanks to the local description provided in Lemma \ref{lem:LocalDesc} a simple interpolation argument allows to prove the following statement:

\begin{cor}
	\label{cor:RedStandard}
Let $(M,g,f)$ and $(M',g',f')$ be connected reflexive triples. Assume that there exists a diffeomorphism $\varphi: M_1\to M'_1$ where $M_1$ (resp. $M'_1$) is the closure of one of the connected components of $M\setminus Fix(f)$ (resp. $M'\setminus Fix(f')$). Then there exists a diffeomorphism $\overline{\varphi}: M\to M'$ such that $\overline{\varphi}\circ f= f'\circ\overline{\varphi}$. As a result  $(M, (\overline{\varphi})^{\ast}g',f)$ is also a connected reflexive triple.
\end{cor}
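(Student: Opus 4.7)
The plan is to extend $\varphi$ from $M_1$ to all of $M$ by exploiting the two involutions in an equivariant fashion; the only non-trivial issue is ensuring smoothness across the fixed-point set $\Sigma := Fix(f)$, and this is precisely where the interpolation argument alluded to in the statement enters.

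First, by Lemma \ref{lem:LocalDesc} I would fix tubular neighborhoods $U \cong \Sigma \times (-\varepsilon, \varepsilon)$ of $\Sigma$ in $M$ and $U' \cong \Sigma' \times (-\varepsilon', \varepsilon')$ of $\Sigma' := Fix(f')$ in $M'$, in coordinates for which both involutions act as $(x,t) \mapsto (x,-t)$. The restriction $\psi := \varphi|_\Sigma$ is a diffeomorphism $\Sigma \to \Sigma'$. By a standard collar-straightening argument (a cutoff-weighted interpolation, supported in $\Sigma \times [0, \varepsilon/2]$, between $\varphi$ and the linear product map $(x,t) \mapsto (\psi(x), \lambda t)$ for some $\lambda > 0$), I can replace $\varphi$ by a diffeomorphism $M_1 \to M'_1$, still denoted $\varphi$, that agrees with the original outside $U$ and satisfies $\varphi(x,t) = (\psi(x), \lambda t)$ on the sub-collar $\Sigma \times [0, \varepsilon/4]$. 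For $\lambda$ and $\varepsilon$ chosen suitably the perturbation is small enough in $C^1$ to retain the diffeomorphism property.

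Having arranged this normal form, I would define $\overline{\varphi}: M \to M'$ by
\[
\overline{\varphi}(y) = \begin{cases} \varphi(y), & y \in M_1, \\ f' \circ \varphi \circ f(y), & y \in M_2, \end{cases}
\]
where $M_2$ denotes the closure of the other component of $M \setminus \Sigma$. The two prescriptions agree on $\Sigma$ (where $f$ and $f'$ restrict to the identity), and on the full collar $\Sigma \times (-\varepsilon/4, \varepsilon/4)$ the map reads $(x,t) \mapsto (\psi(x), \lambda t)$, which is manifestly smooth across $\Sigma$. Being a diffeomorphism on each of $M_1$ and $M_2$ separately, $\overline{\varphi}$ is a global diffeomorphism.

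Equivariance is then a one-line check using $f^2 = id$ and $(f')^2 = id$: for $y \in M_1$ one computes $\overline{\varphi}(f(y)) = f'(\varphi(f(f(y)))) = f'(\varphi(y)) = f'(\overline{\varphi}(y))$, and the computation on $M_2$ is symmetric. The second assertion follows immediately: setting $\overline{g} := (\overline{\varphi})^{\ast}g'$, the equivariance gives
\[
f^{\ast}\overline{g} = (\overline{\varphi} \circ f)^{\ast} g' = (f' \circ \overline{\varphi})^{\ast} g' = (\overline{\varphi})^{\ast}(f')^{\ast} g' = \overline{g},
\]
using that $f'$ is an isometry of $g'$, so $f$ is an isometric involution of $(M, \overline{g})$; since $Fix(f)$ and the ambient topology of $M$ are unchanged, $(M, \overline{g}, f)$ fulfills all the axioms in Definition \ref{def:Z2equiv} and is thus a connected reflexive triple. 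The one genuinely technical step in this outline is the collar-straightening; all the rest is essentially bookkeeping.
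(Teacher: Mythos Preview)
Your proposal is correct and follows precisely the approach the paper indicates: the paper gives no detailed proof, only the one-line hint that ``thanks to the local description provided in Lemma~\ref{lem:LocalDesc} a simple interpolation argument'' suffices, and your collar-straightening plus the equivariant extension $\overline{\varphi}|_{M_2} = f' \circ \varphi \circ f$ is exactly that argument spelled out. The one place you are slightly loose is the justification that the interpolated map remains a diffeomorphism (``small enough in $C^1$''); a cleaner phrasing would invoke the isotopy uniqueness of collar neighborhoods, but this is standard and does not affect correctness.
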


For instance, this corollary ensures that whenever we have a reflexive triple of the form $(M^3\simeq S^3, g, f)$ we can always assume that the involution $f$ equals the standard reflection on $S^3\subset \R^4$ provided we replace $g$ with its pull-back by means of a suitable diffeomorphism. Since Theorem \ref{thm:B} is a statement about the moduli space of metrics this replacement can always be made without losing any generality.

\

We conclude this section by introducing a suitable notion of isotopy in the category of objects we will be dealing with.

\begin{defi}\label{def:Z2isot}
	\begin{itemize}
		\item {A reflexive isotopy is a (continuous) path $\mu\in[0,1]\mapsto (M,g_\mu,f)$ such that for any $\mu\in [0,1]$ the triple $(M,g_{\mu},f)$ is a reflexive $n$-manifold.}
		\item {Given a reflexive $n$-manifold $(M,g,f)$ let $\mathcal{D}(M,f)$ (resp. $\mathcal{D}_+(M,f)$) denote the class of diffeomorphisms (resp. orientation-preserving) diffeomorphisms $\varphi$ of $M$ that are $f$-invariant in the sense that $\varphi\circ f=f\circ\varphi$.
			If $g'$ is a Riemannian metric such that $(M,g',f)$ is a reflexive $n$-manifold we let
			$[g']$ denote the corresponding equivalence class modulo $\mathcal{D}(M,f)$. A reflexive isotopy of classes is a (continuous) path $\mu\in[0,1]\mapsto (M,[g_\mu],f)$ such that for any $\mu\in [0,1]$ the triple $(M,g_{\mu},f)$ is a reflexive $n$-manifold.}
	\end{itemize}
\end{defi}

It is straightforward to check that the last definition, concerning isotopies of classes, is well-posed i.~e. it does not depend on the choice of the representatives in the moduli space.

    \subsection{Equivariant developing maps}\label{subs:equivKuip}
    
    \begin{prop}\label{pro:ConfDevMap}
    	Let $(M\simeq S^3, g, f)$ be a locally conformally flat, reflexive manifold for $f=\rho$ the reflection of $S^3\subset \R^4$ defined by $\rho(x', t)=(x',-t)$. Then there exists a conformal diffeomorphism $\varphi: (M,g)\to (M, g_{round})$ (the round unit sphere) that satisfies $\varphi\circ f=f\circ \varphi$ (that is to say $\varphi\in\mathcal{D}(M,f)$).
    	Furthermore, there exists a reflexive isotopy $(g_\mu)$ connecting $(M, g, f)$ to $(M, \varphi^{*}g_{round}, f)$. If $g\in\mathcal{R}$ then $g_\mu\in\mathcal{R}$ for any $\mu\in [0,1]$.
    \end{prop}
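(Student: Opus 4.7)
The plan is to modify Kuiper's developing map so that it intertwines the two copies of the involution $f$, and then to produce the reflexive isotopy through a linear interpolation of the resulting conformal factor.

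For the first step, since $M\simeq S^3$ is simply connected and $g$ is locally conformally flat, Kuiper's theorem yields a conformal diffeomorphism $\Phi_0\colon (M,g)\to (S^3,g_{round})$. The pushed-forward involution $\tilde{f} := \Phi_0\circ f\circ\Phi_0^{-1}$ is an element of order two in $\text{Conf}(S^3)$ with codimension-one fixed set, but it need not be an isometry of the round metric. To remedy this, I would invoke the identification $\text{Conf}(S^3)\cong O^+(4,1)$, whose Poincar\'e extension makes $\text{Conf}(S^3)$ act by isometries on hyperbolic $4$-space $\mathbb{H}^4$. The finite cyclic group $\langle\tilde{f}\rangle$ is compact, hence admits a fixed point in $\mathbb{H}^4$ by the Bruhat--Tits/Cartan fixed point theorem for CAT(0) spaces. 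This fixed point corresponds to a round metric $g_\ast=A^\ast g_{round}$ on $S^3$ preserved by $\tilde{f}$, so $A^{-1}\tilde{f}A\in O(4)$. Being an isometric involution of $(S^3,g_{round})$ with a separating $2$-sphere fixed set, $A^{-1}\tilde{f}A$ is the reflection across a totally geodesic great $2$-sphere, and a further element $B\in O(4)$ maps that great sphere onto the equator so that $B(A^{-1}\tilde{f}A)B^{-1}=\rho = f$. Setting $\varphi := B\circ A^{-1}\circ\Phi_0$ then provides an equivariant conformal diffeomorphism $\varphi\colon (M,g)\to(S^3,g_{round})$ satisfying $\varphi\circ f=f\circ\varphi$.

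For the second step, write $\varphi^\ast g_{round}=\phi^4 g$ with $\phi\in C^\infty(M)$ positive. Equivariance of $\varphi$ and the fact that $\rho$ is an isometry of $g_{round}$ give
\[ f^\ast(\phi^4 g)=f^\ast\varphi^\ast g_{round}=(\varphi\circ f)^\ast g_{round}=(f\circ\varphi)^\ast g_{round}=\varphi^\ast g_{round}=\phi^4 g,\]
which, combined with $f^\ast g=g$, forces $\phi\circ f=\phi$. Define $\psi_\mu := (1-\mu)+\mu\phi$ and $g_\mu := \psi_\mu^4 g$ for $\mu\in[0,1]$; each $\psi_\mu$ is positive and $f$-invariant, so $(M,g_\mu,f)$ is a reflexive triple for every $\mu$, with $g_0=g$ and $g_1=\varphi^\ast g_{round}$. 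When in addition $g\in\mathcal{R}$, the scalar curvature along this path is controlled by the conformal transformation law in dimension three:
\[ R_{g_\mu}=\psi_\mu^{-5}\bigl(R_g\psi_\mu-8\Delta_g\psi_\mu\bigr)=\psi_\mu^{-5}\bigl((1-\mu)R_g+\mu(R_g\phi-8\Delta_g\phi)\bigr)=\psi_\mu^{-5}\bigl((1-\mu)R_g+\mu\phi^5 R_{g_1}\bigr),\]
which is strictly positive because $R_g>0$ by assumption and $R_{g_1}\equiv R_{g_{round}}>0$ everywhere. Thus $g_\mu\in\mathcal{R}$ for every $\mu\in[0,1]$.

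The hard part of this argument is the equivariance step: Kuiper's map is unique only up to composition with a M\"obius transformation, and the route I propose requires exploiting the symmetric-space structure of the conformal model (equivalently, the nonpositive curvature of $\mathbb{H}^4$) in order to produce a $\tilde{f}$-invariant round metric on $S^3$. Once this modification is achieved, the subsequent linear interpolation of the conformal factor and the scalar-curvature verification are both direct calculations.
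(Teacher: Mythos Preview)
Your proof is correct, and the isotopy step via linear interpolation of conformal factors (together with the explicit scalar-curvature computation) matches the paper's argument exactly. The difference lies in how you obtain the equivariant developing map.

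The paper argues more directly: since $f^\ast g=g$, both $\varphi$ and $\varphi\circ f$ are developing maps for $(M,g)$, hence by Kuiper's uniqueness $\varphi\circ f=\vartheta\circ\varphi$ for some M\"obius transformation $\vartheta$ of $S^3$. But $\varphi$ and $\varphi\circ f$ agree on $Fix(f)$, so $\vartheta$ fixes $\varphi(Fix(f))$ pointwise; an elementary argument then forces $\varphi(Fix(f))$ to be a totally umbilic $2$-sphere, and after post-composing with a single M\"obius map one has $\vartheta=\rho=f$. Your route instead pushes the involution forward to $\tilde f\in\text{Conf}(S^3)$ and conjugates it into $O(4)$ via the Cartan fixed-point theorem on $\mathbb{H}^4$. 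This is a standard and clean principle (any compact subgroup of $\text{Conf}(S^n)$ is conjugate into $O(n+1)$), and it generalizes immediately to larger symmetry groups; the paper's approach avoids the hyperbolic-geometry machinery and stays closer to the Liouville/Kuiper framework already in play. Both arguments require, at the end, the observation that an isometric involution of round $S^3$ with a separating $2$-sphere as fixed set is a reflection in a great $2$-sphere, which you state correctly.
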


    \begin{proof}
    	It follows from Kuiper's work (cf. \cite{Kui49}, see also \cite{Kui50} for later extensions) that one can define a conformal diffeomorphism $\varphi: (M,g)\to (M, g_{round})$, the so-called \emph{developing map}. This map is obtained by patching together (thanks to Liouville's theorem classifying conformal automorphisms of spheres) the local definitions that we are granted since we are assuming that $(M,g)$ is locally conformally flat.

    	Now, since $f^{\ast}g=g$, it follows that the two (a priori distinct) developing maps $\varphi$ and $\varphi\circ f$ 
    	must coincide on the sphere $Fix(f)$. On the other hand we also know, by the uniqueness part of Kuiper's theorem, that they must differ by a M\"obius transformation, say $\vartheta$, of $S^3\equiv \overline{\mathbb{R}^3}$, i.~e. one shall have $\varphi\circ f=\vartheta\circ\varphi$.
    	
    	An elementary argument shows that these two things together are only possible if $\varphi(Fix(f))$ is a totally umbilic sphere in round $S^3$ and thus, possibly by composing $\varphi$ with a M\"obius transformation and renaming it accordingly, if $\varphi(Fix(f))$ is an equatorial sphere and also $\vartheta=\rho(=f)$. Therefore, $\vartheta\circ\varphi= \varphi\circ f$ is the same as  $f\circ \varphi=\varphi\circ f$, and this equation precisely encodes the desired equivariance property of $\varphi$. 
    	\
    	
    	Once we know that  $\varphi\in\mathcal{D}(M,f)$, the reflexive isotopy is simply obtained by considering a linear interpolation of the conformal factors: we set
    	\[
    	g_{\mu}=\phi^4_{\mu}g, \ \text{for} \ \phi_{\mu}=(1-\mu)+\mu \phi
    	\]	
    	where, $\phi$ is the conformal factor associated to the pulled-back round metric $\varphi^{\ast}g_{round}$.
    	The fact that the corresponding isotopy of metrics actually occurs inside $\mathcal{R}$ if $g$ belongs to such a set follows directly, as a very special case, from the convexity results provided in Appendix \ref{sec:ConvexCond} (cf. Remark \ref{rem:ConvClosedCase}).
    \end{proof}	

\subsection{Equivariant $\e$-necks}\label{subs:Necks}

\begin{defi}
	\label{def:EquivNeck}
	Let $(M^3,g,f)$ be a reflexive manifold (in the sense of Definition \ref{def:Z2equiv}). Given $0<\e<1$, we say that a reflexive $\e$-neck is the datum of:
	\begin{enumerate}
	\item {a point $z\in Fix(f)$;}
	 \item {an $f$-invariant set $N\subset M$;}
	 \item {a diffeomorphism $\psi: N\to S^2\times (-1/\e,1/\e)$ such that:
	 	\begin{itemize}
	 		\item {the metric $R_g(z)(\psi^{-1})^{\ast}g$ is $\e$-close in the $C^{[1/\e]}-$topology to the cylindrical metric $g_{cyl}$ where the first factor is scaled so to have scalar curvature one;}
	 		\item {the pulled-back action of $f$ is an isometry of $(S^2\times (-1/\e,1/\e), g_{cyl})$.}
	 	\end{itemize}	}
	 \end{enumerate}
 We shall say that $z$ is the center of the neck, and that $h:=R_g(z)^{-1/2}$ is its scale.
\end{defi}

The above definition is essentially taken from the work of Dinkelbach-Leeb (cf. Section 3.3 in \cite{DL09}), that is devoted to the study of equivariant Ricci flows to the scope of classifying smooth actions on certain classes of geometric manifolds. 

  \begin{rmk}\label{rmk:local modelsNeck}
	If $(M^3,g,f)$ is a reflexive triple, $N\subset M$ is a reflexive neck (with slight abuse of language), and $\psi: N\to S^2\times (-1/\e,1/\e)$ is the corresponding neck chart, then $\Sigma_N=\psi(Fix(f))$ is a totally geodesic surface in $S^2\times (-1/\e,1/\e)$. Notice that such a surface is not empty because it will contain at least the center of the neck, which is (by definition) a point of $Fix(f)$.
	Hence, it follows (by explicit classification of totally geodesic surfaces in a round cylinder) that only two local models are possible: 
	\begin{enumerate}
	\item[] {\emph{Type T:} the surface $\Sigma_N$ is a product of the form $S^1\times (-1/\e,1/\e)\subset S^2\times (-1/\e,1/\e) $, where it is understood that $S^1$ sits inside round $S^2$ as an equatorial circle;}
	\item[]	{\emph{Type C:} the surface $\Sigma_N$ is the central sphere of the neck, i.~e. we have that $\Sigma_N=S^2\times\left\{0\right\}\subset S^2\times (-1/\e,1/\e)$.}
	\end{enumerate}

	Correspondingly, the classification of isometric actions of $\mathbb{Z}/2\mathbb{Z}$ on a round cylinder allows to explicitly determine the pulled-back action of $f$ on the cylinder.  In the case of type $T$ the action is given by the map $\sigma:S^2\times (-1/\e,1/\e)\to S^2\times (-1/\e,1/\e)$ acting at the level of the single (spherical) fibers as the standard reflection of $S^2\subset \R^3$ with respect to the plane defined by vanishing of the last coordinate, while in the case of type $C$ the action is a symmetry with respect to the central sphere of the neck (which we shall denote by $\kappa$).
	Notice that, in case of a neck of type $T$, we can always pre-compose the map $\psi$ with an isometry of the cylinder so that $\psi(Fix(f))$ equals (in the corresponding chart) the set $Fix(\sigma)$.
	\emph{From now onwards we shall convene that all reflexive necks we deal with satisfy such an additional condition.} 
	
	\begin{figure}[htbp]
		\centering
		\includegraphics[width=\textwidth]{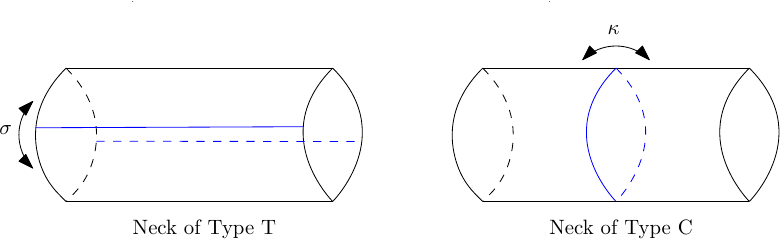}
		\caption{Equivariant necks.}
		\label{fig:NecksType}
	\end{figure}
	\end{rmk}

We complete this section recasting, in our equivariant setting, the definition of \emph{structured chain of $\e$-necks} and providing a simple lemma that ensures that the elements of any such chain can be combined into a unique neck structure, with the same equivariance property.

\begin{defi}
	\label{def:EquivChain}
	Given a reflexive manifold $(M^3,g,f)$, we say that an ordered $k$-tuple of reflexive $\e$-necks $(N_1, N_2,\ldots, N_k)$ in $M$ is a 	reflexive chain of $\e$-necks if the following three conditions are satisfied:
	\begin{enumerate}
		\item {said $z_i\in Fix(f)$ the center of $N_i$ one has $s_{N_i}(z_{i+1})=0.9/\e$ for any $i=1,2,\ldots, k-1$;}
		\item {$g(\partial/\partial s_{N_i}, \partial/\partial s_{N_{i+1}})>0$ for any $i=1,2,\ldots, k-1$;}
		\item {$s_{N_1}^{-1}((-1/\e, -0.5/\e))\cap N_{i}=\emptyset$ for any $i=2,\ldots, k$.}
	\end{enumerate}	
\end{defi}

Notice that in this definition we have employed the standard notation that $s_{N_i}: N_i\to (-1/\e, 1/\e)$ is the projection onto the second factor associated to $\e$-neck structure $(N_i, \psi_i)$. This notation will be systematically adopted in the sequel of this article.

We remark that the above definition is somewhat more restrictive here than it is in the non-equivariant setting. In particular, we note the following fact:

\begin{lem}
	\label{lem:TypeChain}
	Let $(M^3,g,f)$ be a reflexive manifold and let $(N_1, N_2,\ldots, N_k)$ be a reflexive structured chain of $\e$-necks therein. Then each $\e$-neck is of type $T$ (cf. Remark \ref{rmk:local modelsNeck}) and on any overlapping region $N_i\cap N_{i+1}$ (for $i=1,\ldots, k-1$) we have that the transition map $\varphi=\psi_{i+1}\circ\psi^{-1}_i$ satisfies the equation
	$\varphi\circ\sigma=\sigma\circ\varphi$.
	In particular, on the overlapping region $Fix(f)$ is mapped by $\psi_i, \psi_{i+1}$ to the same set modulo a translation of the second coordinate.
\end{lem}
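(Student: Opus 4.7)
The plan is to establish the three assertions in the order stated. The heart of the lemma is the type T claim, which then forces the transition map to commute with $\sigma$ essentially by algebra, and the assertion about $Fix(f)$ follows from the rigidity of cylindrical isometries.

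For the type T claim I distinguish two cases. When $i \in \{1, \ldots, k-1\}$, condition (1) of Definition \ref{def:EquivChain} places $z_{i+1} \in Fix(f)$ inside $N_i$ with $s_{N_i}(z_{i+1}) = 0.9/\e \neq 0$. Remark \ref{rmk:local modelsNeck} gives only two possibilities for $Fix(f) \cap N_i$: the central sphere $\psi_i^{-1}(S^2 \times \{0\})$ (type C) or the longitudinal cylinder $\psi_i^{-1}(Fix(\sigma))$ (type T); the existence of a fixed point with $s_{N_i}$-coordinate away from $0$ rules out the first possibility, so $N_i$ is of type T. For the terminal neck $N_k$ the chain condition provides no such direct information, so I would argue via the overlap with $N_{k-1}$ (which is now known to be type T). The longitudinal cylinder $Fix(f) \cap N_{k-1}$ passes through $z_k \in N_{k-1} \cap N_k$, and in the $\psi_{k-1}$-chart it contains a smooth curve through $\psi_{k-1}(z_k)$ with tangent $\partial/\partial s_{N_{k-1}}$. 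Since the rescaled metrics $(\psi_{k-1}^{-1})^* g$ and $(\psi_k^{-1})^* g$ are both $C^{[1/\e]}$-close to the cylindrical model, the transition map $\psi_k \circ \psi_{k-1}^{-1}$ is correspondingly close to an isometry of $(S^2 \times \mathbb{R}, g_{cyl})$; such isometries preserve the vertical direction up to sign, and condition (2) of Definition \ref{def:EquivChain} fixes the sign positively. Therefore the image of the curve under $\psi_k$ leaves the central sphere $\{s_{N_k} = 0\}$, exhibiting fixed points of $f$ in $N_k$ with $s_{N_k} \neq 0$. The dichotomy of Remark \ref{rmk:local modelsNeck} then forces $N_k$ to be of type T as well.

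Granted that every neck is of type T, the normalization in Remark \ref{rmk:local modelsNeck} gives $\psi_j \circ f = \sigma \circ \psi_j$ for every $j$, which rearranges (using that $f$ and $\sigma$ are involutions) to $f \circ \psi_j^{-1} = \psi_j^{-1} \circ \sigma$. Consequently
\[
\varphi \circ \sigma = \psi_{i+1} \circ \psi_i^{-1} \circ \sigma = \psi_{i+1} \circ f \circ \psi_i^{-1} = \sigma \circ \psi_{i+1} \circ \psi_i^{-1} = \sigma \circ \varphi
\]
on the overlap $N_i \cap N_{i+1}$. For the final assertion, the commutation just proved shows that $\varphi$ preserves $Fix(\sigma) = S^1 \times (-1/\e, 1/\e)$, so both images $\psi_i(Fix(f) \cap N_i \cap N_{i+1})$ and $\psi_{i+1}(Fix(f) \cap N_i \cap N_{i+1})$ lie inside $Fix(\sigma)$. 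The classification of isometries of $(S^2 \times \mathbb{R}, g_{cyl})$ commuting with $\sigma$ shows that they factor as (a rotation/reflection of $S^2$ preserving the equatorial plane) times (a translation or reflection of $\mathbb{R}$); condition (2) forces the $\mathbb{R}$-component of the approximate factorization of $\varphi$ to be a translation rather than a reflection. Since rotations of $S^1$ preserve each slice $S^1 \times \{s\}$ setwise, the two images coincide as subsets of $Fix(\sigma)$ up to a translation in the second coordinate. The main obstacle in the plan is the overlap argument for $N_k$: the combinatorial trick based on off-center fixed points is unavailable there, and one must genuinely exploit the rigidity of approximate cylindrical isometries together with the orientation condition (2) to propagate the longitudinal direction of $Fix(f)$ from $N_{k-1}$ into $N_k$.
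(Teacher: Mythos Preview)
Your argument is correct and essentially follows the paper's. For $i<k$ your contradiction via $s_{N_i}(z_{i+1})=0.9/\e\neq 0$ is exactly the paper's, and your derivation of $\varphi\circ\sigma=\sigma\circ\varphi$ from the type~$T$ normalization is the paper's one-line justification written out in full. The one place you go beyond the paper is in treating the terminal neck $N_k$ separately: the paper's written argument only directly rules out $N_i$ being of type~$C$ for $i<k$ (it uses $z_{i+1}$, which does not exist for $i=k$) and does not isolate the $N_k$ case. Your propagation of the longitudinal direction of $Fix(f)$ from $N_{k-1}$ into $N_k$, using condition~(2) to force the $s_{N_k}$-coordinate to vary along the curve through $z_k$, correctly fills this in. A slightly more direct phrasing of that step: condition~(2) together with the $\e$-closeness of the rescaled metric to $g_{cyl}$ gives $ds_{N_k}(\partial/\partial s_{N_{k-1}})>0$ at $z_k$, so the longitudinal curve $t\mapsto\psi_{k-1}^{-1}(\theta_0,t)\subset Fix(f)$ immediately leaves $\{s_{N_k}=0\}$, exhibiting fixed points in $N_k$ with nonzero $s_{N_k}$.
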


\begin{proof}
	To prove the first statement, there are in principle two different scenarios to be ruled out: first an overlapping of two (consecutive) $\e$-necks of type $C$, second an overlapping of an $\e$-neck of type $C$ with an adjacent neck of type $T$. However, they can both be handled with the same argument: it is enough to notice that, said (as above) $z_{i+1}\in Fix(f)$ the center of $N_{i+1}$ we should have $s_{N_i}(z_{i+1})=0$ (because $Fix(f)$ is mapped to the central spherical leaf of $N_i$ by definition of type $C$), but on the other hand $s_{N_i}(z_{i+1})=0.9/\e$ by condition (1) above, a contradiction.
	
	Concerning the second statement, we know (since we are dealing with two equivariant necks of type $T$, and we are adopting the convention given in Remark \ref{rmk:local modelsNeck}) that $f$-related points in $M$ correspond to $\sigma$-related points in the $\e$-neck chart (this assertion being true for each of the two charts in question). The conclusion follows at once.
\end{proof}

\begin{lem}\label{lem:Combo1}  (cf. Lemma 4.1 in \cite{Mar12})
	There exists $\e_1\in (0,1)$ such that the following assertion is true.	
	Let $(M^3,g,f)$ be a reflexive manifold and let
	$(N_1,\psi_1), (N_2,\psi_2)$ be two reflexive $\e$-necks structures of type $T$, having scales $h_1, h_2$ respectively, such that
	$\Lambda:=s^{-1}_{N_1}((-0.95/\e, 0.95/\e))\cap s^{-1}_{N_2}((-0.95/\e, 0.95/\e))\neq\emptyset$ and $g(\partial/\partial s_{N_1},\partial/\partial s_{N_2})>0$.
	Suppose further that any embedded two-dimensional sphere separates $M$. If $0<\e\leq \e_1$, for any $z\in\Lambda$
	then there exists a set $N\subset N_1\cup N_2 $ and a diffeomorphism $\psi: N\to S^2\times (-1/\e, \beta)$ for $\beta=1/\e+s_{N_1}(z)-s_{N_2}(z)$ such that:
	\begin{enumerate}
		\item {the set $N$ is $f$-invariant and the pulled-back action is an isometry of $S^2\times (-1/\e, \beta)$ with the cylindrical metric $g_{cyl}$, where the round $S^2$-factor is normalized so to have scalar curvature one;}
		\item {$\psi^{-1}(\theta, t)=\psi_1^{-1}(\theta, t)$ for all $(\theta, t)\in S^2\times (-1/\e, s_{N_1}(z)-0.0025/\e)$;}
		\item {$\psi^{-1}(\theta, t)=\psi_2^{-1}(\theta, t+1/\e-\beta)$ for all $(\theta, t)\in S^2\times (\beta+s_{N_2}(z)-0.975/\e, \beta)$;}
		\item {there exists a continuous path of metrics $(g_{\mu})$ on $S^2\times (-1/\e, \beta)$, all of positive scalar curvature, with $g_0=(\psi^{-1})^{\ast}(g)$, $g_1$ rotationally symmetric, and restricting to the lineary isotopy $g_{\mu}=(1-\mu)(\psi_1^{-1})^{\ast}(g)+\mu h^2_1g_{cyl}$ on $S^2\times (-1/\e, s_{N_1}(z)-0.0025/\e)$ and $g_{\mu}=(1-\mu)(\psi_2^{-1})^{\ast}(g)+\mu h^2_2g_{cyl}$ on $S^2\times (\beta+s_{N_2}(z)-0.975/\e, \beta)$.}
	\end{enumerate}	
\end{lem}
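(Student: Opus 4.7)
The plan is to first obtain the combined neck chart by mimicking the non-equivariant construction from Lemma 4.1 of \cite{Mar12}, and then to verify that the $f$-equivariance is automatically preserved thanks to the restrictive structure of type $T$ necks established in Lemma \ref{lem:TypeChain}. Since both necks are of type $T$, we may, without any loss of generality, work in coordinates so that on each $N_i$ the pulled-back action of $f$ is the standard reflection $\sigma$ in the $S^2$ factor and $Fix(f) \cap N_i = Fix(\sigma)$.

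First I would produce the set $N$ and the diffeomorphism $\psi$. Consider the map $\Phi := \psi_2 \circ \psi_1^{-1}$ defined on $\psi_1(\Lambda) \subset S^2 \times (-1/\e, 1/\e)$. Since both $g_i := (\psi_i^{-1})^{\ast} g$ are $\e$-close to cylindrical metrics of comparable scale (consequence of $\Lambda \neq \emptyset$ and the positivity of $g(\p/\p s_{N_1}, \p/\p s_{N_2})$), one shows, exactly as in \cite{Mar12}, that the spherical leaves $\left\{s_{N_1} = \text{const}\right\}$ and $\left\{s_{N_2} = \text{const}\right\}$ are $C^1$-close, and that $\Phi$ is $C^1$-close to an isometry $(\theta, t) \mapsto (R\theta, t + c)$ of the cylinder, where $c = s_{N_1}(z) - s_{N_2}(z)$ and $R \in O(3)$. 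Crucially, the hypothesis that any embedded 2-sphere separates $M$ rules out the possibility that the combined region closes into a nontrivial circle bundle: each leaf of $N_1$ meets each leaf of $N_2$ in a single transverse intersection surface, so the gluing yields a manifold diffeomorphic to $S^2 \times (-1/\e, \beta)$. I would then define $\psi^{-1}$ by taking $\psi_1^{-1}$ on $S^2 \times (-1/\e, s_{N_1}(z) - 0.0025/\e)$, taking $\psi_2^{-1}(\cdot, \cdot + 1/\e - \beta)$ on $S^2 \times (\beta + s_{N_2}(z) - 0.975/\e, \beta)$, and interpolating smoothly in between through a family of $C^1$-close diffeomorphisms, which is standard.

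Next I would upgrade equivariance. By Lemma \ref{lem:TypeChain}, the transition $\Phi$ commutes with $\sigma$, hence the isometry $(\theta, t) \mapsto (R\theta, t + c)$ it approximates also commutes with $\sigma$, so $R$ preserves the equator of $S^2$. Since the endpoints $\psi_1^{-1}$ and $\psi_2^{-1}(\cdot, \cdot + 1/\e - \beta)$ used in the definition of $\psi$ are $\sigma$-equivariant by our normalization, the interpolation can (and I would) be carried out in the space of $\sigma$-equivariant diffeomorphisms: at each intermediate time one averages by the $\mathbb Z/2\mathbb Z$ action, which is harmless since we start and end with equivariant maps and the averaging only moves things in an equivariant neighborhood of the identity in the diffeomorphism group of the cylinder. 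This shows $N$ is $f$-invariant and that $(\psi^{-1})^{\ast} f = \sigma$, giving item (1), while items (2) and (3) hold by construction.

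For item (4), the isotopy $g_\mu$ in $N_1$ and in $N_2$ is a linear interpolation to the respective rescaled cylindrical metrics $h_i^2 g_{cyl}$, which is equivariant because each $(\psi_i^{-1})^{\ast} g$ is $\sigma$-invariant and $g_{cyl}$ is $\sigma$-invariant. In the intermediate region we use a warped-product deformation: first isotope each fiber metric $g_\mu\big|_{S^2\times\{t\}}$ to a round metric of appropriate area by the uniformization path (which is $\sigma$-equivariant since the starting metric is), then isotope the warping data to a single smooth positive function of $t$ matching with the endpoints. Positivity of scalar curvature along the path is guaranteed by choosing $\e_1$ small enough, since for $\e \leq \e_1$ all metrics involved are $C^2$-close to cylindrical metrics of positive scalar curvature, and positive scalar curvature is an open condition. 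The main technical point to verify carefully is the compatibility of the equivariant interpolation with the rotational symmetrization in the middle region, which is where the choice of the threshold $\e_1$ and the buffer zones $0.0025/\e$ and $0.975/\e$ become essential; this is a quantitative refinement of the argument in \cite{Mar12} and is the step where I expect the most bookkeeping.
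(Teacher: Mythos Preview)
Your proposal is correct and follows essentially the same route as the paper: invoke Lemma 4.1 of \cite{Mar12} verbatim for the construction of $N$, $\psi$, and the isotopy $(g_\mu)$, and then check that the equivariance in item (1) comes for free from Lemma \ref{lem:TypeChain}. The paper's proof is in fact a two-sentence remark to exactly this effect.

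The one point where you diverge slightly is in how you guarantee that the \emph{approximation} $\tilde\varphi$ of the transition map (the map interpolated with the identity on page 829 of \cite{Mar12}) commutes with $\sigma$. You propose a $\mathbb Z/2\mathbb Z$-averaging of the interpolating diffeomorphisms; the paper instead observes that, thanks to the normalization convention of Remark \ref{rmk:local modelsNeck} (both charts send $Fix(f)$ to $Fix(\sigma)$), the spherical isometry $A$ appearing in Marques' approximation $\tilde\varphi$ is forced to be the identity, so $\tilde\varphi$ commutes with $\sigma$ on the nose and no averaging is needed. Your averaging argument is valid (the maps being averaged are $C^1$-close to $\sigma$-equivariant ones, so the average remains a diffeomorphism), but the paper's observation is cleaner and explains why the convention was set up in the first place. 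Your additional detail on item (4) (uniformization of fibers, warped-product interpolation) is more than the paper provides---it simply defers to \cite{Mar12}---but is not incorrect.
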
	

\begin{proof}
	One can follow \emph{verbatim} the proof of Lemma 4.1 in \cite{Mar12}, the only issue being verifying that the definition of the map $\psi:N \to S^2\times (-1/\e,\beta)$, provided at page 829 therein, satisfies the equivariance property given in item (1) above. This fact is justified by observing that both the transition map $\varphi=\psi_2\circ \psi^{-1}_1$ and its approximation $\tilde{\varphi}$ are commuting with $\sigma$.
	
	The first assertion is indeed a consequence of Lemma \ref{lem:TypeChain} above, while the second one relies on the convention that we have stipulated, in Remark \ref{rmk:local modelsNeck} for type $T$ equivariant necks, ensuring that the isometry $A$ therein (which appears when changing the neck chart) must be the identity. 
\end{proof}	

This observation allows, in particular, to combine the elements of a structured chain of necks:

\begin{lem}\label{lem:Combo2}
	Let $(M^3,g,f)$ be a reflexive manifold and let
	$(N_1, N_2,\ldots, N_{a+1})$ be a reflexive structured chain of $\e$-necks for some positive integer $a$. 
		If $0<\e\leq \e_1$ (with $\e_1$ as prescribed in the previous statement) then there exists a diffeomorphism $\psi: \cup_{i=1}^{a+1} N_i\to S^2\times (-1/\e, (1+a(0.9))/\e)$ such that:
	\begin{enumerate}
		\item {$\psi^{-1}(\theta, t)=\psi_1^{-1}(\theta, t)$ for all $(\theta, t)\in S^2\times (-1/\e,0.25/\e)$;}
		\item {$\psi^{-1}(\theta, t)=\psi_{a+1}^{-1}(\theta, t)$ for all $(\theta, t)\in S^2\times ((-0.25+a(0.9))/\e,(1+a(0.9))/\e)$;}
		\item {there exists a continuous path of metrics $(g_{\mu})$ of positive scalar curvature on $S^2\times (-1/\e, (1+a(0.9))/\e)$ such that $g_0=(\psi^{-1})^{\ast}g$, $g_1$ is rotationally symmetric, and restricting to the lineary isotopy $g_{\mu}=(1-\mu)(\psi_1^{-1})^{\ast}(g)+\mu h^2_1g_{cyl}$ on $S^2\times (-1/\e, 0.25/\e)$ and $g_{\mu}=(1-\mu)(\psi_{a+1}^{-1})^{\ast}(g)+\mu h^2_{a+1} g_{cyl}$ on $S^2\times ((-0.25+a(0.9))/\e,(1+a(0.9))/\e)$.}	
	\end{enumerate}	
\end{lem}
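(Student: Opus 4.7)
The proof proceeds by induction on $a\geq 1$, iteratively applying Lemma \ref{lem:Combo1}. For the base case $a=1$ we apply Lemma \ref{lem:Combo1} to the pair $(N_1,N_2)$ at $z=z_2\in Fix(f)$: the hypotheses are satisfied since both necks are of type $T$ by Lemma \ref{lem:TypeChain}, the overlap contains $z_2$ because $s_{N_1}(z_2)=0.9/\e<0.95/\e$ and $s_{N_2}(z_2)=0<0.95/\e$, and the positivity condition $g(\partial/\partial s_{N_1},\partial/\partial s_{N_2})>0$ is part of the definition of a reflexive structured chain. With this choice of $z$ the resulting parameter is $\beta=1/\e+s_{N_1}(z_2)-s_{N_2}(z_2)=1.9/\e=(1+1\cdot 0.9)/\e$, matching the required range.

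For the inductive step, assume the statement for chains of length $a$ and consider a chain $(N_1,\ldots,N_{a+1})$ of length $a+1$. Apply the induction hypothesis to the initial sub-chain $(N_1,\ldots,N_a)$, yielding a diffeomorphism $\tilde\psi:\bigcup_{i=1}^{a}N_i\to S^2\times(-1/\e,(1+(a-1)(0.9))/\e)$ satisfying (1)--(3); separately, apply Lemma \ref{lem:Combo1} to $(N_a,N_{a+1})$ at $z=z_{a+1}$ to obtain $\omega:N_a\cup N_{a+1}\to S^2\times(-1/\e,1.9/\e)$. On the intersection $N_a$ the right end of $\tilde\psi$ coincides with $\psi_a$ shifted by $(a-1)(0.9)/\e$, while the left end of $\omega$ coincides with $\psi_a$; consequently the shifted map $(\theta,t)\mapsto\omega^{-1}(\theta,t-(a-1)(0.9)/\e)$ agrees with $\tilde\psi^{-1}$ throughout the open annular region of $N_a$ corresponding to $\psi_a$-parameters in $(-0.25/\e,0.25/\e)$. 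We can therefore set
\[
\psi^{-1}(\theta,t)=\begin{cases}\tilde\psi^{-1}(\theta,t), & t\leq (a-1)(0.9)/\e,\\ \omega^{-1}(\theta,t-(a-1)(0.9)/\e), & t\geq (a-1)(0.9)/\e,\end{cases}
\]
which defines a smooth diffeomorphism onto $S^2\times(-1/\e,(1+a(0.9))/\e)$, since $1.9/\e+(a-1)(0.9)/\e=(1+a(0.9))/\e$. Properties (1) and (2) are inherited at once from the corresponding properties of $\tilde\psi$ and $\omega$; the $f$-equivariance of $\psi$ is inherited from that of the two building blocks in view of Lemma \ref{lem:TypeChain} and the normalization convention of Remark \ref{rmk:local modelsNeck}.

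The isotopy $(g_\mu)$ required by property (3) is assembled by concatenating the isotopies supplied by $\tilde\psi$ and by $\omega$. Near the two extreme ends of the cylinder the composite isotopy has, by construction, the prescribed linear form interpolating to $h_1^2g_{cyl}$ on the left and to $h_{a+1}^2g_{cyl}$ on the right. The genuine difficulty lies in the central gluing region, where the two parent isotopies are both defined but terminate at rotationally symmetric cylindrical metrics of slightly different scales: they must be merged while preserving positivity of the scalar curvature for every $\mu\in[0,1]$. This is handled by the same partition-of-unity interpolation scheme used in the proof of Lemma \ref{lem:Combo1}; since consecutive neck scales differ by a factor $1+O(\e)$ and every intermediate metric is $C^{[1/\e]}$-close to cylindrical, the interpolated metrics have positive scalar curvature provided $\e$ is sufficiently small, and one further applies a smooth reparametrisation of the cylindrical coordinate to produce at $\mu=1$ a genuinely rotationally symmetric metric on the whole cylinder. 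This compatibility of the two isotopies under the positive scalar curvature constraint is the expected main obstacle of the proof.
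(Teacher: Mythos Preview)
Your induction is exactly the paper's argument: the paper's proof reads in full ``This follows at once by applying Lemma~\ref{lem:Combo1} above exactly $a$ times, picking $z=z_{i+1}$ at the $i$-th application,'' and your base case plus inductive step unwind precisely this iteration.

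One correction, though: the ``genuine difficulty'' you describe in the final paragraph does not exist. By the inductive hypothesis (property~(3) for the chain of length $a$) the isotopy coming from $\tilde\psi$ restricts, on the right end $S^2\times((-0.25+(a-1)(0.9))/\e,(1+(a-1)(0.9))/\e)$, to the linear isotopy $(1-\mu)(\psi_a^{-1})^{\ast}g+\mu h_a^2 g_{cyl}$. By property~(4) of Lemma~\ref{lem:Combo1} applied to $(N_a,N_{a+1})$ at $z=z_{a+1}$, the isotopy coming from $\omega$ restricts, on its left end $S^2\times(-1/\e,0.8975/\e)$, to the \emph{same} linear isotopy $(1-\mu)(\psi_a^{-1})^{\ast}g+\mu h_a^2 g_{cyl}$ --- same chart $\psi_a$, same scale $h_a=R_g(z_a)^{-1/2}$. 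After the shift by $(a-1)(0.9)/\e$ these two regions overlap in an open set containing your gluing slice, and on that overlap the two isotopies coincide identically for every $\mu$. Hence the glued isotopy is automatically smooth and has positive scalar curvature, with $g_1$ rotationally symmetric throughout; no partition-of-unity interpolation, no rescaling, and no further smallness of $\e$ is required. This seamless matching is the whole point of the end conditions built into Lemma~\ref{lem:Combo1}, and recognizing it is why the paper can dismiss the proof in one line.
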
	

\begin{proof}
	This follows at once by applying Lemma \ref{lem:Combo1} above exactly $a$ times, picking $z=z_{i+1}$ at the $i$-th application.
\end{proof}

\subsection{Equivariant surgery}\label{subs:Surgery}

We will mostly stick to the notation of the monograph by Morgan-Tian \cite{MT07} (see in part. Chapter 13 for the part on Perelman's notion of surgery).

We let $g_{std}$ denote the \emph{standard initial metric on $\R^3$} with tip at the origin. The metric in question is rotationally symmetric, has non-negative sectional curvature and we recall the existence of $A_0>0$ such that $\psi: (\R^3\setminus B_{A_0}(0), g_{std})\to (S^2\times (-\infty, 4], g_{cyl})$ given by $\psi(x)=(x/|x|, |x|)$ is an isometry.

\

We let, for purely notational convenience, $K_{std}$ denote the smooth manifold obtained as \[
((S^2\times (-4,4))\cup B_{A_0+4}(0))/((x,t)\sim \psi^{-1}(x,t)), \ (x,t)\in S^2\times (-4,4)
\]
that is clearly diffeomorphic to $\R^3$. There is a well-defined function $s:K_{std}\to \R$ that extends the projection onto the second factor associated to $\psi$; such a function takes values in $(-\infty, 4+A_0]$). We further consider the involution $\tau$ on $K_{std}$ defined by letting $\tau=\sigma$ on  $S^2\times (-4,4)$ and $\tau(x_1, x_2, x_3)=\tau(x_1, x_2, -x_3)$ on $B_{A_0+4}(0)$ (regarded as an open ball in $\R^3$). 

Now, let $h$ be a Riemannian metric on $(S^2\times (-4,4))$ that is $\sigma$-invariant (meaning that $\sigma$ is an isometry of $(S^2\times (-4,4),h)$). If we further assume, based on the notion of $\e$-neck, that it be $\e$-close in $C^{[1/\e]}$ to $g_{cyl}$ then the whole discussion presented in Section 5 of \cite{Mar12} applies. In particular, we collect for later reference the following two statements.

\begin{lem}\label{lem:Surg}
There exists $\e_2>0$ such that if $h$ is a Riemannian metric on $(S^2\times (-4,4))$ that is $\sigma$-invariant, and $\e$-close in $C^{[1/\e]}$ to $g_{cyl}$ for some $0<\e\leq\e_2$ one can define a metric $h_{surg,\e}$ on $K_{std}$ that satisfies the following properties:
\begin{enumerate}
	\item {$\tau: K_{std}\to K_{std}$ is an isometry of $h_{surg,\e}$;}
	\item {$h_{surg,\e}=h$ on $s^{-1}((-4,0])$;}
	\item {the restriction of $h_{surg,\e}$ to $s^{-1} [1,4+A_0]$ has positive sectional curvature;}
	\item {the scalar curvature of $h_{surg,\e}$ satisfies $R_{h_{surg,\e}}\geq R_{h}$ on $s^{-1}((-4,1])$; in particular the metric $h_{surg,\e}$ has positive scalar curvature (if $\e$ is small enough);}
	\item {the smallest eigenvalue of the curvature operator of $h_{surg,\e}$ (seen as an endomorphism in the standard fashion) is greater or equal that the smallest eigenvalue of the curvature operator of $h$ on $s^{-1}((-4,1])$; in particular the metric $h_{surg,\e}$ has positive sectional curvature if $h$ does.}
\end{enumerate}	
\end{lem}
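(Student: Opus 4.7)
The plan is to follow, step by step, the non-equivariant surgery construction carried out in Section 5 of \cite{Mar12}, and to verify that each of its ingredients respects the involution $\tau$. Indeed, the entire content of the lemma, apart from property (1), is proved there; our task is essentially to argue that the \emph{same} metric produced by Marques is automatically $\tau$-invariant under our hypotheses.

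First, I would unwind how $\tau$ acts on the pieces that compose $K_{std}$. On the cylindrical side $S^2\times(-4,4)$ the map $\tau$ reduces to $\sigma$, hence $h$ is $\tau$-invariant by assumption; on the cap $B_{A_0+4}(0)\subset\mathbb{R}^3$ the map $\tau$ is the reflection $(x_1,x_2,x_3)\mapsto (x_1,x_2,-x_3)$, which is an isometry of the rotationally symmetric metric $g_{std}$. The identification $\psi$ that glues the two pieces to form $K_{std}$ intertwines these two actions by construction, so $\tau$ is a well-defined smooth involution of $K_{std}$ under which both building blocks of the surgery are separately invariant. Moreover, the function $s:K_{std}\to\mathbb{R}$ is $\tau$-invariant, because $\sigma$ fixes the second factor of $S^2\times(-4,4)$ and the reflection $(x_1,x_2,x_3)\mapsto(x_1,x_2,-x_3)$ preserves $|x|$.

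Second, I would recall that the Marques construction produces $h_{surg,\e}$ by (i) multiplying $h$ by a conformal factor of the form $e^{-2f(s)}$, with $f$ depending only on $s$, on a transition region $s^{-1}((0,A])$ for a suitable $A$, (ii) interpolating between the resulting metric and a radially rescaled copy of $g_{std}$ via a cutoff function again depending only on $s$, and (iii) capping off by $g_{std}$ past the neck. Each of these operations uses \emph{only} functions of $s$. Since a function of $s$ is $\tau$-invariant, a conformal change of a $\tau$-invariant metric by a $\tau$-invariant factor stays $\tau$-invariant, and the same holds for convex combinations of $\tau$-invariant metrics with $\tau$-invariant coefficients. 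It follows that the output of the procedure is $\tau$-invariant, which gives (1). Properties (2)--(5) are then exactly the conclusions of the Marques construction, which are preserved verbatim since we have not modified the metric at all compared to the non-equivariant setting.

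The only genuine point requiring care is confirming, by inspection of \cite{Mar12}, that no step inadvertently introduces a non-radial cutoff or a choice depending on the spherical coordinates of $S^2$; once this check is carried out, the proof is complete. As a safety net, even if one were worried about some implicit non-radial ingredient, one could symmetrize at the very end by replacing the output $\hat h_{surg,\e}$ with $\tfrac12(\hat h_{surg,\e}+\tau^*\hat h_{surg,\e})$: properties (2) and (3) are preserved because $\hat h_{surg,\e}=h$ on $s^{-1}((-4,0])$ and agrees with $g_{std}$ on the cap $s^{-1}([1,4+A_0])$, where both pieces are already $\tau$-invariant, while (4) and (5) survive because the relevant scalar-curvature and curvature-operator bounds are preserved under averaging two metrics that both satisfy them, via the convexity of the set of metrics with a prescribed lower bound on $R$ and on the lowest eigenvalue of the curvature operator (up to choosing $\e_2$ a bit smaller to absorb the higher-order errors in the averaging). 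This redundancy is not strictly needed, but it guarantees that the construction can always be made equivariant, and it identifies the only real technical obstacle: a careful bookkeeping of the radial dependence of Marques's cutoff and interpolation data.
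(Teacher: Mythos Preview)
Your main argument is correct and is exactly the route taken (implicitly) in the paper: the authors simply declare that ``the whole discussion presented in Section~5 of \cite{Mar12} applies'' and then state the lemma without further proof. Your elaboration---checking that $\tau$ is well-defined on $K_{std}$, that $s$ is $\tau$-invariant, and that every ingredient in Marques's construction depends on the neck only through functions of $s$---is the substance behind that one-line invocation, and it is sound.

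However, your ``safety net'' paragraph contains a genuine error that you should remove. The claim that properties~(4) and~(5) survive the averaging $\tfrac12(\hat h_{surg,\e}+\tau^*\hat h_{surg,\e})$ ``via the convexity of the set of metrics with a prescribed lower bound on $R$ and on the lowest eigenvalue of the curvature operator'' is false: neither of these sets is convex in the space of all metrics. The convexity result in Appendix~\ref{sec:ConvexCond} of the paper applies only \emph{within a fixed conformal class}, and the averaging you propose is not a conformal deformation. So this fallback does not work as stated, and since (as you yourself note) it is unnecessary once the radial-dependence check is done, the cleanest fix is simply to delete it.
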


This same \emph{extension} procedure can also be applied to families of metrics, in particular to paths corresponding to isotopies interpolating between an almost-cylindrical metric $h$ and $g_{cyl}$.

\begin{lem}	\label{lem:Ext}
	Let $h$ be a Riemannian metric on $(S^2\times (-4,4))$ that is $\sigma$-invariant, and $\e$-close in $C^{[1/\e]}$ to $g_{cyl}$ for some $0<\e\leq\e_2$. Then there exists a continuous path of positive scalar curvature metrics on $K_{std}$ starting at $h_{surg,\e}$, ending at a rotationally symmetric metric and restricting to a linear isotopy on $s^{-1}(-4,0]$ for all $\mu\in[0,1]$. Furthermore, the map $\tau$ is an isometry for any $\mu\in [0,1]$.
	\end{lem}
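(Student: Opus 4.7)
The plan is to linearly interpolate $h$ to the model cylindrical metric and apply the surgery extension of Lemma \ref{lem:Surg} to each member of the resulting family, so that the isotopy on $K_{std}$ is essentially obtained by running the surgery construction ``in parameter''.

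Concretely, for $\mu\in[0,1]$ define
\[
h_\mu := (1-\mu)h + \mu\, g_{cyl},
\]
which is a convex combination of two $\sigma$-invariant metrics on $S^2\times(-4,4)$, hence $\sigma$-invariant. By linearity of the interpolation,
\[
\|h_\mu - g_{cyl}\|_{C^{[1/\e]}} = (1-\mu)\,\|h - g_{cyl}\|_{C^{[1/\e]}} \le \e,
\]
so each $h_\mu$ satisfies the hypotheses of Lemma \ref{lem:Surg}. Applying that lemma to each $h_\mu$ yields a metric $g_\mu := (h_\mu)_{surg,\e}$ on $K_{std}$. Property (1) of Lemma \ref{lem:Surg} ensures that $\tau$ is an isometry of every $g_\mu$; property (2) ensures that $g_\mu$ agrees with $h_\mu$ on $s^{-1}((-4,0])$, which is precisely the required linear isotopy on that region; and property (4) ensures $R_{g_\mu}>0$ throughout $K_{std}$, provided $\e\le\e_2$.

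The endpoint $g_1 = (g_{cyl})_{surg,\e}$ is rotationally symmetric: on $s^{-1}((-4,0])$ it equals $g_{cyl}$; on the capped region $s^{-1}([4,4+A_0])$ it equals the (rotationally symmetric) standard initial metric $g_{std}$; and on the transition region $s^{-1}((0,4))$ Perelman's construction (see Chapter 13 of \cite{MT07}) glues these two rotationally symmetric ingredients via a cutoff-based interpolation that preserves $SO(3)$-symmetry when the input metric is rotationally symmetric. The main technical point, as in the non-equivariant analogue used in \cite{Mar12}, is continuous dependence of the surgery extension on the input metric; this is immediate from the explicit formulae defining the cap, since all ingredients (cutoff functions and transition warping factors) depend smoothly on the neck metric. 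The principal obstacle to anticipate is not the construction itself but the compatibility of the parameter-dependent extension with the $\mathbb Z/2\mathbb Z$-action: one checks that $\tau$ restricts to $\sigma$ on $S^2\times(-4,4)$ and to the reflection $(x_1,x_2,x_3)\mapsto(x_1,x_2,-x_3)$ on the ball, both of which preserve all the rotationally symmetric ingredients of the construction, so $\tau$-equivariance is automatic along the whole path.
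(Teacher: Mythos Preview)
Your proof is correct and follows exactly the approach the paper uses: after the lemma the authors simply write that one takes $h_\mu=(1-\mu)h+\mu g_{cyl}$ and sets $h'_\mu=(h_\mu)_{surg,\e}$. You have supplied the same construction with the verifications (of $\sigma$-invariance, $\e$-closeness, positive scalar curvature, $\tau$-equivariance, and rotational symmetry of the endpoint) spelled out in more detail than the paper does.
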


Indeed, if one lets $h_{\mu}=(1-\mu)h+\mu g_{cyl}$ (for which we notice that $\sigma$ is an isometry for any $\mu\in [0,1]$), it suffices to take
$ h'_{\mu}=(h_{\mu})_{surg,\e}. $

We can now explain what we mean, in our context, by equivariant surgery.

\begin{defi}\label{def:EquivSurg}
Given two reflexive triples $(M,g,f)$ and $(\tilde{M},\tilde{g},\tilde{f})$ and $\e>0$ we will say that $(\tilde{M},\tilde{g},\tilde{f})$ is obtained from $(M,g,f)$ by performing a \emph{simple} equivariant surgery if one of the following three statements applies:
\begin{enumerate}
\item {there exists a reflexive $\e$-neck $N\subset M$ of type $T$ and $(\tilde{M},\tilde{g},\tilde{f})$  is obtained from $(M,g,f)$ by cutting along the central sphere of $N$ and gluing caps as decribed above (cf. Lemma \ref{lem:Surg});}
\item {there exists a reflexive $\e$-neck $N\subset M$ of type $C$ and $(\tilde{M},\tilde{g},\tilde{f})$ is obtained from $(M,g,f)$ by cutting along the central sphere of $N$ and gluing caps on both sides (Section 5 of \cite{Mar12}) in the same fashion;}
\item {there exist two disjoint $\e$-necks $N_1, N_2\subset M$ that are $f$-isometric and such that $(N_1\cup N_2)\cap Fix(f)=\emptyset$, and $(\tilde{M},\tilde{g},\tilde{f})$ is obtained from $(M,g,f)$ by cutting along the central spheres of $N_1, N_2$ and gluing caps on both sides to each of them in the same fashion.}	
\end{enumerate}	
In particular, we require (in any of the three cases) that $\tilde{f}=f$ on $\text{Int}(M\cap \tilde{M})$ (the set of interior points of the manifold with boundary $M\cap \tilde{M}$). 
More generally, we will say that $(\tilde{M},\tilde{g},\tilde{f})$ is obtained from $(M,g,f)$ by performing equivariant surgery if it can be realized by finitely many simple equivariant surgeries (one of the three moves above).
\end{defi}

We need to make sure that the previous notion is indeed well-posed, in the following sense: 

\begin{lem}\label{lem:WellPosed}
	If $(M,g,f)$ is a 3-reflexive manifold and equivariant surgery is performed, then one obtains a triple $(\tilde{M},\tilde{g},\tilde{f})$ that is a reflexive 3-manifold.
\end{lem}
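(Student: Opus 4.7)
By definition, equivariant surgery is a finite concatenation of simple equivariant surgeries, so by induction it suffices to treat one surgery of each of the three types in Definition \ref{def:EquivSurg}. Topologically, $\tilde M$ is obtained from $M$ by standard 3--dimensional surgery along 2--spheres (cutting and gluing in copies of $K_{std}$), hence remains a compact orientable 3--manifold without boundary. The smoothness of $\tilde g$ is immediate from Lemma \ref{lem:Surg}(2): the cap metric $h_{surg,\e}$ coincides with the original neck metric $h$ on the overlap region $s^{-1}((-4,0])$. The fact that $\tilde f$ is a smooth isometric involution follows from Lemma \ref{lem:Surg}(1), combined with the observation that on a type $T$ neck, $f$ coincides in the neck chart with $\sigma$ (Remark \ref{rmk:local modelsNeck}), and that $\tau|_{S^2\times(-4,4)} = \sigma$: thus extending $f$ to the new cap(s) via $\tau$ matches $C^\infty$--smoothly across the gluing interface in Case 1. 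In Cases 2 and 3, the extension $\tilde f$ is defined instead to \emph{exchange} the new caps (rather than act within each), which is consistent because on a type $C$ neck $f$ originally exchanged the two sides of the central sphere, and in Case 3 the $f$--isometry $N_1 \to N_2$ naturally extends to the caps glued in ``the same fashion''.

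It remains to verify clause iii) of Definition \ref{def:Z2equiv} on $Fix(\tilde f)$. In Case 1, the set $Fix(f) \cap N$ corresponds to the strip $\psi^{-1}(S^1 \times (-1/\e,1/\e))$; cutting severs this strip along a circle, and each added cap contributes $Fix(\tau)$, which is a smooth 2--disk bounded by that circle. Thus $Fix(\tilde f)$ is a smooth closed embedded hypersurface, and the separating property is inherited from $Fix(f)$ by continuity across the surgery region. In Case 3 the necks are disjoint from $Fix(f)$, so $Fix(\tilde f) = Fix(f)$ under the obvious identification, and any new connected components of $\tilde M$ arise in $\tilde f$--paired form, landing us in the disconnected clause of Definition \ref{def:Z2equiv}.

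Case 2 requires the most care: the central sphere $\Sigma$ of the type $C$ neck is itself a component of $Fix(f)$, and after surgery $\tilde f$ exchanges the two added caps and so fixes no points on them. One distinguishes two sub--cases according to whether $\Sigma$ alone separates $M$. If so, $\tilde M$ splits as a disjoint union $\tilde M_1 \sqcup \tilde M_2$ with $\tilde f$ interchanging the two pieces, which meets the disconnected clause of Definition \ref{def:Z2equiv}. If not, $\tilde M$ remains connected, $Fix(\tilde f) = Fix(f) \setminus \Sigma$ is again a smooth closed embedded hypersurface, and one must check it is still separating.

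The main obstacle is precisely this last verification in Case 2b: one must show that removing one component of the separating set $Fix(f)$ and inserting two disjoint caps does not destroy the separating property. The plan is to invoke Lemma \ref{lem:mix2} (every path joining $p$ to $f(p)$ meets $Fix(f)$) together with a Mayer--Vietoris decomposition $\tilde M = (M \setminus N) \cup (K_{std} \sqcup K_{std})$ to identify $\tilde M \setminus Fix(\tilde f)$: the two complementary components $\Omega_1, \Omega_2$ of $M \setminus Fix(f)$ each attach to exactly one of the two caps (since $\tilde f$ swaps the caps and swaps $\Omega_1, \Omega_2$), yielding exactly two connected components of $\tilde M \setminus Fix(\tilde f)$, as required.
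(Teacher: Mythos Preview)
Your approach differs from the paper's: you proceed case-by-case through the three surgery types, explicitly tracking $Fix(\tilde f)$ and the separating property, whereas the paper gives a single unified argument. Your Case~2 analysis is correct and in fact more explicit about the separating property than the paper's own proof.

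However, there is a genuine gap in your Case~3. The assertion that ``any new connected components of $\tilde M$ arise in $\tilde f$--paired form'' is false in general. Take $M$ connected reflexive with $N_1 \subset \Omega_1$, $N_2 = f(N_1) \subset \Omega_2$, and suppose the central sphere $S_1$ separates $M$ as $A \sqcup B$ with (say) $A \subset \Omega_1$. Then $S_2 = f(S_1)$ also separates, and $M \setminus (S_1 \cup S_2)$ can have three pieces $A$, $f(A)$, and $C := B \cap f(B)$; the middle piece $C$ is $f$-invariant and contains all of $Fix(f)$. After surgery the corresponding component $\tilde C$ is $\tilde f$-invariant, not paired, and you still owe a verification that $(\tilde C, \tilde g|_{\tilde C}, \tilde f|_{\tilde C})$ is a connected reflexive triple. (Your Case~1 has a milder version of the same issue: if the type~$T$ central sphere separates $M$, both resulting components are $\tilde f$-invariant, and ``inherited by continuity'' is too vague to cover this.)

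The paper's argument sidesteps all of this by not splitting into cases. For \emph{any} $\tilde f$-invariant component $\tilde M^*$, one observes that $\tilde M^* \cap M$ is connected (it is $\tilde M^*$ with finitely many open balls---the caps---removed), picks $p \in \text{Int}(\tilde M^* \cap M) \setminus Fix(f)$, joins $p$ to $\tilde f(p) = f(p)$ by a path inside $\tilde M^* \cap M \subset M$, and invokes Lemma~\ref{lem:mix2} on the \emph{original} reflexive triple to force the path to meet $Fix(f)$. Since $Fix(\tilde f) \supset Fix(f) \cap \text{Int}(\tilde M \cap M)$, this gives $Fix(\tilde f) \cap \tilde M^* \neq \emptyset$ in one stroke for all three surgery types, including the $\tilde f$-invariant component $\tilde C$ your Case~3 missed. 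Your gap is fixable by exactly this move.
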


\begin{proof}
	Clearly, it is enough to check that we can partition the connected components of $\tilde{M}$ as described in item \emph{iii)} of Definition \ref{def:Z2equiv}. Also, without loss of generality we can deal with the case of \emph{simple} equivariant surgeries.
	
	By the way surgeries are defined, we need to prove that \emph{if}  $\tilde{M}^*$ is a connected component of $\tilde{M}$ such that $\tilde{f}(\tilde{M}^*)\subset \tilde{M}^*$, then $Fix(\tilde{f})\cap \tilde{M}^*\neq\emptyset$. 
	Recall that, in any of the three cases of simple surgery above, we can exploit the equation
		\[
		Fix(\tilde{f})=Fix(f)\cap \text{Int}(\tilde{M}\cap M)
		\]
		to get
		\[
		Fix(\tilde{f})\cap \tilde{M}^* = Fix(f)\cap \text{Int}(\tilde{M}^*\cap M)
		\]
		and thus it suffices to check that if $\tilde{f}(\tilde{M}^*)\subset \tilde{M}^*$  then the right-hand side is not empty. But this is simple.
	First, one just needs to observe that $\tilde{M}^*\cap M$ is connected (roughly speaking: intersecting $\tilde{M}^*$ with $M$ corresponds to removing at most two disjoint open balls from a connected manifold).
	Second, one can pick any point $p\in  \text{Int}(\tilde{M}^*\cap M)\setminus Fix(f)$ and join it to $\tilde{f}(p)$ by a curve $\sigma$ that is entirely contained in the interior of $\tilde{M}^*\cap M\subset M$. By Lemma \ref{lem:mix1} and Lemma \ref{lem:mix2} that same curve $\sigma$, regarded as a subset of $M$, must intersect $Fix(f)$.
		\end{proof}

\begin{rmk}\label{rmk:spheres}	
It follows by the way the surgery procedure is defined that if $Fix(f)$ is diffeomorphic to $S^2$, then either $Fix(\tilde{f})=\emptyset$ or  the same conclusion holds true for \emph{each connected component} of $Fix(\tilde{f})$: the set of fixed points loses one connected component in case (2), and is modified in case (1) but not in case (3).
\end{rmk}
    	
    \subsection{Equivariant Ricci flow with surgery}\label{subs:RFS}
    
    Given a compact Riemannian manifold $(M^3,g)$, without boundary, it is well-known that Hamilton's Ricci flow (see \cite{Ham82})
    \begin{equation}\label{eq:RF}
    \begin{cases}
    \partial_t g(t)=-2Ric_{g(t)} \\
    g(0)= g
    \end{cases}
    \end{equation}
    has a unique, locally defined smooth solution up to a maximal time $t^{\ast}>0$; as a result of the uniqueness statement the flow happens equivariantly with respect to any given group of isometries of the initial metric $g$. It was observed in \cite{DL09}, Section 5.1, that the Ricci flow with surgery presented by Perelman can also be constructed equivariantly: since the regions of high curvature, the \emph{horns} and the \emph{continuing regions} are, by their very definitions, invariant under the group action in question, one can perform surgery (i.~e. remove necks, add spherical caps and extend the metric) in an equivariant fashion. We shall now present the results that will be needed in the proof of our main theorems, specified to our setting. The reader may wish to consult Section 7 of \cite{Mar12} for the basic definitions we employ, and the monographs \cite{MT07} and \cite{KL08} for a comprehensive introduction to the Ricci flow with surgery. 
    
    \begin{defi}\label{def:CanNeigh}
    	Let $(M^3,g,f)$ be a reflexive triple as per Definition \ref{def:Z2equiv} and let $g(t), t\in [a,b)$ evolve by Ricci flow with initial condition $g(0)=g$. Given positive constants $C, \e$ we shall say that $(M,g,f)$ satisfies the $(C,\e)$ reflexive canonical neighborhood property with parameter $r>0$ if every point $x\in M$ such that for some $t\in [a,b)$ the inequality $R_{g(t)}\geq r^{-1}$ is satisfied at $x$ has a canonical neighborhood (in the sense of Perelman) and, in addition:
    	\begin{itemize}
    	\item {if $z\in Fix(f)$ is the center of an $\e$-neck, then it is also the center of a reflexive $\e$-neck (Definition \ref{def:EquivNeck});}
    	\item {if $z\in Fix(f)$ is contained in the core of a $(C,\e)$-cap, then there exists an $f$-invariant $(C,\e)$-cap
    		$K=Y\cup N$ whose core $Y$ contains $z$, and such that the pull-back action restricts to an isometry of the reflexive $\e$-neck $N$; furthermore in the latter case the boundary of the core $\partial Y$ is the central sphere of a reflexive $\e$-neck.}
    	\end{itemize}	
    \end{defi}

\begin{rmk}\label{rmk:EquivComp}
	It follows from Section \ref{sec:EquivMan} that if instead, in the setting above, $U$ is a $C$-component or an $\e$-round component, then one of the following two assertions applies:
	\begin{itemize}
	\item {\emph{either} $(U,g^{\bullet}, f^{\bullet})$ is a reflexive triple (which happens if $Fix(f)\cap U\neq\emptyset$);}
	\item {\emph{or} there exists another component of the same type of $U$ that is $f$-isometric to $U$ (which happen if instead $Fix(f)\cap U=\emptyset$).}	
	\end{itemize}	
\end{rmk}

\begin{rmk}\label{rmk:localmodelsCaps}
	In the same setting, let now $K\subset M$ be a $(C,\e)$-cap that is reflexive in the sense explained in Definition \ref{def:CanNeigh}. In this case, since $Fix(f)\cap Y\neq\emptyset$ (by assumption), a standard connectedness argument shows that $N$ must be a neck of type $T$.
\end{rmk}

    \begin{thm}\label{thm:RFS}(cf. Section 5.1 in \cite{DL09},  Theorem 7.1 in \cite{Mar12})
    	Let $(M^3, g,f)$ be a connected reflexive triple, having positive scalar curvature. Then there exist:
    	\begin{enumerate}
    		\item [a)]{positive constants $C, \epsilon$;}
    		\item [b)] {canonical neighborhood parameters $\textbf{r}= \left\{r_i\right\}^{j}_{i=0}$ such that
    			\[
    			r_0=\epsilon\geq r_1\geq r_2\geq \ldots\geq r_j>0;
    			\]
    		}	
    		\item[c)]{surgery control parameters $\Delta= \left\{\delta_i\right\}^{j}_{i=0}$ such that $\delta_0\leq \epsilon/6$ and
    			\[
    			\delta_0\geq \delta_1\geq \delta_2\geq \ldots\geq \delta_{j}>0;
    			\]}	
    	\end{enumerate}	
    	and a Ricci flow with surgery $(M_i, g_i(t)_{t\in[t_i,t_{i+1})}, f_i)$ with $0\leq i\leq j$ such that
    	\begin{enumerate}
    		\item {$M_0=M, \ \ g_0(0)=g, \ \ f_0=f$;}	
    		\item {for every $i=0,\ldots, j$ and $t\in[t_i, t_{i+1})$ the triple $(M_i, g_i, f_i)$ is a reflexive manifold;}
    		\item {the flow becomes extinct at finite time $t_{j+1}<\infty$;}
    			\item {the scalar curvature of $g_i(t)$ is bounded from below by $\inf_{x\in M} R_g(x)$ for all $i=0,\ldots, j$ and $t\in [t_i, t_{i+1})$;}
    		\item {for every $i=0,\ldots, j$ 
    			 the Ricci flow $(M_i,g_i(t),f_i), t\in[t_i,t_{i+1})$ satisfies the $(C,\epsilon)$ reflexive canonical neighborhood property with parameter $r_i$, for all $0\leq i\leq j$;}
    		\item {for every $i=0,\ldots, j$ the Riemannian manifold $(M_{i+1},g_{i+1}(t_{i+1}))$ is obtained from the Ricci flow $(M_{i},g_{i}(t))_{t\in [t_i, t_{i+1})}$ by performing equivariant surgery at time $t_{i+1}$ with parameters $r_i$ and $\delta_i$,
    			and $f_{i+1}=f_i$ on $\text{Int}(M_{i+1} \cap M_i)$.}
    	\end{enumerate}	
    \end{thm}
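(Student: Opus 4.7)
The plan is to import wholesale the Perelman--Morgan--Tian--Kleiner--Lott construction of Ricci flow with surgery (in the form packaged as Theorem 7.1 of \cite{Mar12}) and verify that each step of the construction can be carried out compatibly with the involution $f$. The key input is the uniqueness of solutions to the Ricci flow initial value problem: if $g(0)$ is $f$-invariant, then both $g(t)$ and $f^{\ast}g(t)$ solve \eqref{eq:RF} with the same initial datum, so they coincide for as long as the smooth flow exists. Consequently, on each maximal smooth interval $[t_i,t_{i+1})$, the triple $(M_i,g_i(t),f_i)$ remains reflexive, and all geometric quantities (scalar curvature, curvature operator, etc.) are $f_i$-equivariant functions/tensors.

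First I would fix canonical neighborhood parameters $(C,\epsilon)$ and the sequences $\mathbf{r},\Delta$ exactly as in the non-equivariant theorem, selected to satisfy Perelman's estimates for the initial normalized metric. One then runs the flow $g_0(t)$ with $g_0(0)=g$; by positivity of the initial scalar curvature and the evolution equation for $R$, the lower bound $\inf R_g$ is preserved for all later times, forcing the flow to become singular in finite time. On the first surgical time $t_1$, the canonical neighborhood property is verified by the same contradiction/compactness argument of \cite{MT07, KL08}; the equivariance of the limits obtained from pointed Hamilton compactness is automatic because $f_0$ is an isometry of bounded geometry. This delivers, for every point $x$ with $R(x)\geq r_0^{-1}$, a canonical neighborhood in Perelman's sense.

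Next I would upgrade these canonical neighborhoods to reflexive ones at points of $\mathrm{Fix}(f_0)$. If $z\in\mathrm{Fix}(f_0)$ is the center of an $\epsilon$-neck $N$ with chart $\psi\colon N\to S^2\times(-1/\epsilon,1/\epsilon)$, then $f_0(N)$ is another $\epsilon$-neck centered at $z$; averaging or invoking the local uniqueness of necks (see \cite{DL09}, Section 3.3), one can replace $N$ by an $f_0$-invariant neck whose pulled-back involution is an isometry of $(S^2\times(-1/\epsilon,1/\epsilon),g_{cyl})$, landing in one of the two local models of Remark \ref{rmk:local modelsNeck}. The analogous argument for $(C,\epsilon)$-caps exhibits an $f_0$-invariant cap $K=Y\cup N$ whose collar $N$ is a reflexive neck of type $T$ (cf.\ Remark \ref{rmk:localmodelsCaps}). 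Away from $\mathrm{Fix}(f_0)$, a canonical neighborhood of $x$ and its $f_0$-image form an $f_0$-interchanged pair, as in Remark \ref{rmk:EquivComp}. This covers $M_0$ by equivariant canonical neighborhoods of the prescribed types.

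The surgery step at time $t_1$ is performed by selecting horns and choosing surgery cross-sections inside the associated reflexive $\delta_0$-necks. The central sphere of any reflexive neck is automatically $f_0$-invariant (it is either the fixed sphere in a type $C$ neck or the slice $\{s=0\}$ of a type $T$ neck under the convention fixed in Remark \ref{rmk:local modelsNeck}); pairs of necks disjoint from $\mathrm{Fix}(f_0)$ are treated together as in case (3) of Definition \ref{def:EquivSurg}. Gluing Perelman's standard cap equivariantly is handled by Lemma \ref{lem:Surg} (whose involution $\tau$ on $K_{std}$ extends $\sigma$), and the resulting post-surgery triple $(M_1,g_1(t_1),f_1)$ is reflexive by Lemma \ref{lem:WellPosed}. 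One then restarts the smooth flow from $g_1(t_1)$ and iterates; finite extinction in at most $j+1$ steps follows from the standard $\int R^{-1}$ argument of Perelman, which is insensitive to the equivariance.

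The principal obstacle, and the reason this is not completely formal, is ensuring that the \emph{same} parameters $(C,\epsilon,\mathbf{r},\Delta)$ that work for the non-equivariant flow suffice in the equivariant setting: the canonical neighborhood theorem is proven by a backward contradiction argument running over all surgery times, and one must check that the upgrade from a Perelman canonical neighborhood to a reflexive one never forces a refinement of the parameters. This is precisely where \cite{DL09} does the work, and it is the step I would invoke as a black box rather than redo. The remaining content of the theorem—positivity of scalar curvature being preserved, finite extinction, preservation of the involution across each surgery, and the identification $f_{i+1}=f_i$ on $\mathrm{Int}(M_{i+1}\cap M_i)$—follows from the maximum principle, the extinction argument, and the definition of equivariant surgery respectively.
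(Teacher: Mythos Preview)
Your proposal is correct and matches the paper's approach: the paper does not give an independent proof of this theorem but states it as imported from Section~5.1 of \cite{DL09} together with Theorem~7.1 of \cite{Mar12}, adding only the remark (Remark~\ref{rmk:equivNeighb}) that the reflexive canonical neighborhood property in item~(5) is secured by Lemma~3.8 of \cite{DL09} applied with $G=H=\mathbb{Z}/2\mathbb{Z}$. Your sketch is in fact more detailed than what appears in the paper, but it identifies exactly the same black box---the Dinkelbach--Leeb upgrade of canonical neighborhoods to equivariant ones without refining the parameters---as the substantive step, and handles the remaining items (uniqueness of the flow preserving the involution, equivariant surgery via Lemma~\ref{lem:Surg} and Lemma~\ref{lem:WellPosed}, finite extinction) just as the paper implicitly does by citation.
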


    \begin{rmk}\label{rmk:equivNeighb}
    	The fact that condition (5) above can be accomodated relies on the discussion presented in Section 3.3 of \cite{DL09}, see in particular Lemma 3.8 therein (applied for $G=H=\mathbb{Z}/2\mathbb{Z}$, the group with two elements). 
    \end{rmk}

    \begin{rmk}\label{rem:cases} 
    	 Concerning the position of any given connected component $\Sigma$ of the set $Fix(f_i)$ with respect to the region affected by surgery at time $t_{i+1}$, we shall distinguish the following cases:
    	\begin{enumerate}
    		\item [\emph{i})]{$\Sigma\subset C_{t_{i+1}}$, i.~e. $\Sigma$ is contained in the continuing region of the Ricci flow with surgery, and is therefore unaffected by the surgery procedure at time $t_{i+1}$;}
    		\item [\emph{ii})]{$\Sigma\subset C^{c}_{t_{i+1}}$, i.~e. $\Sigma$ is contained in the complement of the continuing region of the Ricci flow with surgery, thus (by the way this region is defined) the scalar curvature at any of its points is larger than $\rho^{-2}_i$ and so larger than $r^{-2}_i$ as well (for $\rho_i=\delta_i r_i$ and $\delta_i\in (0,1)$, by definition): thus $\Sigma$ is contained in the union of finitely many $(C,\epsilon)$-canonical neighborhoods, where the parameters $C$ and $\epsilon$ are those mentioned in the statement of Theorem \ref{thm:RFS}. Hence, there are the following subcases:
    			\begin{enumerate}
    				\item[\emph{ii.a})] {$\Sigma$ is contained in a $C$-component, or in an $\epsilon$-round component; in both cases the ambient component has positive sectional curvature and will become extinct in finite time;}
    				\item[\emph{ii.b})]{$\Sigma$ is entirely contained in an $\epsilon$-neck: in this case there exists a possibly different reflexive $\epsilon$-neck of type $C$ for which $\Sigma$ is the central sphere.}
    			\end{enumerate}
    		}
    		\item [\emph{iii})]{if none of the two applies, then (since the continuing regions are chosen to be equivariant with respect to $f_i$) the closed surface $\Sigma$ will meet $\partial C_{t_{i+1}}$ transversely, in fact orthogonally (in the neck charts) along either one or two closed curves.}	
    	\end{enumerate}	
    \end{rmk}

	\subsection{Equivariant Gromov-Lawson connected sums}\label{subs:equivGL}
	Let $(M^n,g)$ be a compact Riemannian manifold without boundary, of positive scalar curvature. Given $p\in M$, $\{e_i\}\subset T_p M$ an orthonormal basis, and $r_0>0$, the Gromov-Lawson construction defines a positive scalar curvature metric $g'$ on $B_{r_0}(p)\setminus \{p\}$ that coincides with $g$ near the boundary $\partial B_{r_0}(p)$, and such that $(B_{r_2}(p)\setminus \{p\},g')$ is isometric to a half cylinder for some $r_2\in (0,r_0)$.
	
	We refer the reader to Section 1 of \cite{GL80b} for the details of the construction, see also Section 6 of \cite{Mar12}. For our purposes, we just state the result we need. We identify the neighborhood $B_{r_0}(p)$ with $B_{r_0}(0)\subset \R^n$ via the choice of an orthonormal frame $\{e_k\}$ and the exponential normal coordinates. Then given $r_0<\frac{1}{2}\min\{\inj_M(p),1\}$, there exists a planar curve $\gamma\subset \R^2$, depending on the lower bound of scalar curvature of $g$, and the upper bound of the $C^2$ norm of $g$, that satisfies the following four properties:
	
	\begin{enumerate}
		\item the image of $\gamma$ is contained in the region $\{(r,t):r\ge 0,t\ge 0\}$;
		\item the image of $\gamma$ contains the half-line $r\ge r_1,t=0$ for some $r_1\in (0,r_0)$;
		\item the image of $\gamma$ contains the half-line $r=r_2,t\ge t_2$ for some $r_2\in (0,r_1)$ and $t_2>0$;
		\item the induced metric on $M'=\{(x,t):(|x|,t)\in \gamma\}$ as a submanifold of the Riemannian product $B_{r_0}(p)\times \R$ has positive scalar curvature.
	\end{enumerate}
	
	By choosing $r_2$ sufficiently small, the induced metric on the tubular piece $r=r_2,t\ge t_2$ is a perturbation of the standard cylindrical metric on $S_{r_2}^{n-1}(0)\times \R$. Then with a cutoff function, we can slightly modify the induced metric and obtain a positive scalar curvature metric $g'$ of the form $g_{ij}(x,t)dx_i dx_j+dt^2$, which coincides with the original metric near $t_2$, and is isometric to $S_{r_2}^{n-1}(0)\times [t_3,\infty)$ for some $t_3>t_2$.
	
	\begin{rmk}\label{remark.GL.connected.sum.locally.conformall.flat}
		If the metric $g$ has constant positive sectional curvature on $B_{r_0}(p)$, then the metric $g'$ is rotationally symmetric on $B_{r_0}(p)\setminus \{0\}$. As a result, any connected sum of round spheres (or isometric quotients thereof) is locally conformally flat.
	\end{rmk}
	
	The above construction can be done in an equivariant way. Precisely, let $(M,g,f)$ be a \emph{connected} reflexive $n$-manifold. Assume $p\in \Sigma:=Fix(f)$. We may choose the local orthonormal basis at $p$, such that $\{e_i\}_{i=1}^{n-1}$ is a local orthonormal basis for $T_p\Sigma$, and $e_n$ is normal to $\Sigma$. Based on the local description of $f$ near its fixed locus, see Lemma \ref{lem:LocalDesc}, the choice of the planar curve $\gamma$ can be made so that the induced metric on $M'=\{(x,t):(|x|,t)\in \gamma\}$ is invariant under the coordinate change $(x_1,x_2,\cdots,x_n,t)\mapsto (x_1,x_2,\cdots,-x_n,t)$. We may then proceed to consider a modification $g'=g'_{ij}(x,t)dx_idx_j+dt^2$, such that $g'$ is also invariant under the same coordinate change, in addition to all the previous properties. As a result, the manifold $(M\setminus \{p\},g',f)$ has positive scalar curvature, and is itself a reflexive manifold.
	
	\
	
	Let now $(M_1,g_1)$ and $(M_2,g_2)$ be \emph{connected} compact manifolds, without boundary, of positive scalar curvature. Assume further that $(M_1\sqcup M_2,g_1\sqcup g_2, f)$ is a reflexive manifold for some smooth $f$.  We distinguish two possible types of equivariant Gromov-Lawson connected sums:
	
	\textbf{Type T}: assume that $f(M_1)=M_1$, thus also $f(M_2)=M_2$. In this case, $M_1,M_2$ are both reflexive components. Thus, $f|_{M_j}$, $j=1,2$, has a fix point set $\Sigma_j$, where $\Sigma_j$ is a totally geodesic separating hypersurface of $M_j$. Given $p_j\in \Sigma_j$, take orthonormal bases $\{e_k\}\subset T_{p_1}M_1$, $\{\overline{e}_k\}\subset T_{p_2} M_2$, such that $e_n$, $\overline{e}_n$ are normal to $\Sigma_1$, $\Sigma_2$, respectively. There exist $0<r_0<\frac{1}{2}\min \{\inj_{M_1},\inj_{M_2},1\}$, so that the previous construction with $\gamma$ applies to both manifolds. As a result, we can glue the manifolds $(B_{r_0}(p_1)\setminus \{p_1\},g_1')$ and $(B_{r_0}(p_2)\setminus \{p_2\}, g_2')$ along the spheres where $t=t_3+1$ with reverse orientations. The result is a positive scalar curvature, reflexive manifold ($M_1\#M_2, g_1\# g_2, f')$, such that the fix point set of $f'$ is diffeomorphic to $\Sigma_1\#\Sigma_2$. The metric $g_1\# g_2$ depends only on $g_1,g_2$, and the choice of parameters.
	
		\textbf{Type C}: assume that $f(M_1)=M_2$, thus also $f(M_2)=M_1$ and $Fix(f)=\emptyset$. Given $p_1\in M_1$, take $p_2=f(p_1)\in M_2$. Take an orthonormal basis $\{e_k\}\subset T_{p_1}M_1$. Then the push forward $\{\overline{e}_k=(df)_* (e_k)\}\subset T_{p_2}M_2$ is also an orthonormal basis. Applying the non-equivariant Gromov-Lawson construction with the same curve $\gamma$  to both manifolds, we may then glue $(B_{r_0}(p_1)\setminus \{p_1\},g_1')$ and $(B_{r_0}(p_2)\setminus \{p_2\}, g_2')$ along the spheres where $t=t_3+1$ with reverse orientations. As a result, we construct a positive scalar curvature metric on $M_1\#M_2$, equipped with an isometry $f'$. Here $f'=f$ on $M_j\setminus B_{r_0}(p_j)$ and, after gluing, $f'$ has a connected fix point set: it is the sphere $S_{r_2}^{n-1}(0)\times \{t_3+1\}$ in the above construction. This construction also depends only on $g_1,g_2$, and the choice of parameters.
	
\
	
	We further consider a third type of operation, which (with some abuse of language) we shall call \emph{type D double connected sum}. In this case we start with two \emph{possibly disconnected} compact manifolds, without boundary, of positive scalar curvature $(M_1,g_1)$ and $(M_2,g_2)$; we suppose that $(M_1\sqcup M_2,g_1\sqcup g_2, f)$ is a reflexive manifold for some smooth $f$.

	\textbf{Type D}: in this case we assume that both pieces $(M_j, g_j, f|_{M_j}), j=1,2$ be reflexive, but we contemplate three possible scenarios, depending on whether they are both connected, both disconnected, or $(M_1, g_1, f|_{M_1})$ is connected while $(M_2, g_2, f|_{M_2})$ is not.
	The manifold $(M,g,f)$ is obtained by joining two distinct points, located away from $Fix(f)$, $p_1\in M_1, p_2\in M_2$ via a non-equivariant neck (namely: through a standard Gromov-Lawson connected sum), and then performing the very same construction on the couple of $f$-related points namely picking $q_1=f(p_1)\in M_1, q_2=f(p_2)\in M_2$, and correspondly $f$-associated bases. That is to say: 
	we take orthonormal bases $\{e_k\}\subset T_{p_1}M_1$, $\{\overline{e}_k\}\subset T_{p_2} M_2$, and we consider $\left\{(df)_* (e_k) \right\}\subset T_{q_1}M_1$ and $\left\{(df)_* (\overline{e}_k) \right\}\subset T_{q_2} M_2$; we then
	glue the manifolds $(B_{r_0}(p_1)\setminus \{p_1\},g_1')$ and $(B_{r_0}(p_2)\setminus \{p_2\}, g_2')$ along the spheres where $t=t_3+1$ with reverse orientations, and analogously we 
	glue $(B_{r_0}(q_1)\setminus \{q_1\},g_1')$ and $(B_{r_0}(q_2)\setminus \{q_2\}, g_2')$ along the spheres where $t=t_3+1$ with reverse orientations. The result is a positive scalar curvature, reflexive manifold ($M_1\#M_2, g_1\# g_2, f')$, such that the fix point set of $f'$ is equal to the disjoint union $\Sigma_1\sqcup\Sigma_2$.

	\begin{figure}[htbp]
		\centering
		\includegraphics[width=\textwidth]{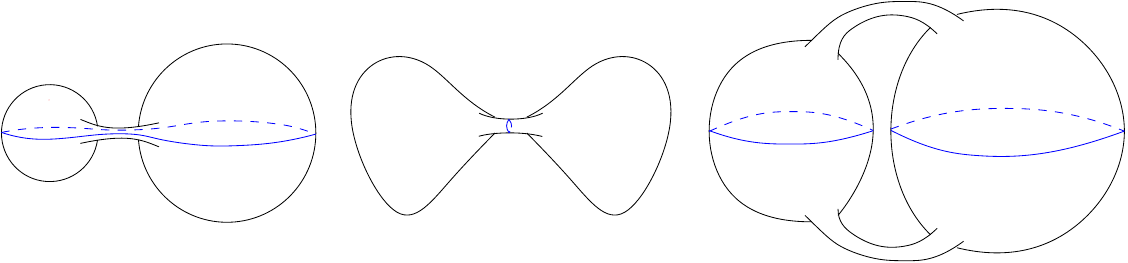}
		\caption{A schematic illustration of the three possible types of  Gromov-Lawson equivariant connected sums.}
		\label{fig:EquivGLsums}
	\end{figure}

	Notice that in any of the three cases we have $f'=f$ on $M_1\sqcup M_2$ minus the basepoints for the connected sum, namely $f'=f$ on $M_1\sqcup M_2\setminus\left\{p_1, p_2\right\}$ in the first and second cases, while $f'=f$ on $M_1\sqcup M_2\setminus\left\{p_1, p_2, q_1, q_2\right\}$ in the third one.
	
	\
	
		\begin{rmk}
		\label{rmk:Deg}
		For any of the three types of equivariant connected sum described above, we also allow $(M_1, g_1, f_1)=(M_2, g_2, f_2)$ provided $(M_1, g_1, f_1)$ is a reflexive manifold (that is additionally required to be connected for type $T$ or $C$ connected sum). More precisely, it is here understood that we have \emph{one} input manifold, not two copies of the same manifold, and we are essentially attaching a handle to a reflexive manifold (type T or type C, where in both cases $f_1(M_1)\subset M_1$ hence $Fix(f_1)\neq\emptyset$, but the basepoints lie on $Fix(f_1)$ in the first case but not on the second) or two handles instead (type D, where the basepoints are two pairs lying in the same connected component of $M_1\setminus Fix(f_1)$ and the handles do not contribute to the symmetry locus of the resulting manifold). The results stated in this section also apply to this (important) special case.
	\end{rmk}
	
	By choosing the parameters continuously, the above construction applies to families of reflexive metrics.  Consider a continuous family of reflexive manifolds $(M_1\sqcup M_2, g_{1,\mu}\sqcup g_{2,\mu},f)$, $\mu\in [0,1]$, points $p_{i,\mu}\in M_1\sqcup M_2$ and orthonormal bases $\{e_k^{(i)}(\mu)\}$ of $T_{p_{i,\mu}}(M_1\sqcup M_2)$, $i\in I$.  We call $(p_{i,\mu},\{e_k^{(i)}(\mu)\})$, $i\in I$ continuous $f$-equivariant choices of data, if one of the following assertions (corresponding to the three possible operations above) hold true:
	\begin{itemize}
		\item {$M_1,M_2$ are reflexive components, $I=\left\{1,2\right\}$, $p_{i,\mu}\in Fix(f)$, and $e_n^{(i)}(\mu)\perp Fix(f)$ for all $\mu\in [0,1]$, $i=1,2$;}
		\item {$M_1,M_2$ are non-reflexive components, $I=\left\{1,2\right\}$, $p_{2,\mu}=f(p_{1,\mu})$, and $e_k^{(2)}(\mu)=(df)_* (e_k^{(1)}(\mu))$ for all $\mu\in [0,1]$, $k=1,\ldots, n$.}
			\item {$M_1,M_2$ are (possibly disconnected) reflexive components, $I=\left\{1,2, 3, 4\right\}$, $p_{1,\mu}, p_{3,\mu}\in M_1\setminus Fix(f)$, $p_{2,\mu}, p_{4,\mu}\in M_2\setminus Fix(f)$  and for all $\mu\in [0,1]$ we have
			\[
			p_{3,\mu}=f(p_{1,\mu}), \ p_{4,\mu}=f(p_{2,\mu}), 
			\]
			\[ e_k^{(3)}(\mu)=(df)_* (e_k^{(1)}(\mu)), \ e_k^{(4)}(\mu)=(df)_* (e_k^{(2)}(\mu)) \ \ k=1,\ldots, n;
			\] 
		}
	\end{itemize}

\begin{figure}[htbp]
	\centering
	\includegraphics[scale=0.86]{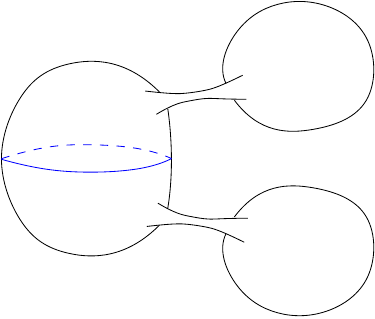}
	\caption{A type $D$ double connected sum involving a connected reflexive manifold $(M_1, g_1, f_1)$ and a disconnected reflexive pair $(M_2, g_2, f_2)$.}
	\label{fig:EquivGLsumsD}
\end{figure}

	\begin{lem}\label{lemma.equivariant.GL.construction}
		Let $\mu:[0,1]\mapsto g_{i,\mu}$ be continuous paths of positive scalar curvature metrics on the compact manifolds without boundary $M_1^n, M_2^n$. Assume that $(M_1\sqcup M_2,g_{1,\mu}\sqcup g_{2,\mu}, f)$ is a reflexive $n$-manifold for all $\mu\in [0,1]$ for some smooth map $f$. Given continuous $f$-equivariant choices of data $(p_{i,\mu},\{e_{k}^{(i)}(\mu)\})$, there exists $r_0>0$ such that the Gromov-Lawson connected sums of $(M_1, g_{1,\mu})$ and$(M_2,g_{2,\mu})$ constructed with $\gamma=\gamma(r_0)$ and $\{e_{k}^{(i)}(\mu)\}$ at $p_{i,\mu}$, form a family of reflexive manifolds $(M_1\#M_2, g_{1,\mu}\# g_{2,\mu}, f')$ whose metrics vary continuously as one lets $\mu\in [0,1]$.
	Furthermore $g_{1,\mu}\# g_{2,\mu}=g_{1,\mu}$, $f'=f$ on $M_1\setminus B_{r_0}(p_{1,\mu})$; $g_{1,\mu}\# g_{2,\mu}=g_{2,\mu}$, $f'=f$ on $M_2\setminus B_{r_0}(p_{2,\mu})$, for all $\mu\in [0,1]$. 
	\end{lem}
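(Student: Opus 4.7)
The plan is to reduce the statement to a uniform-parameter version of the equivariant Gromov-Lawson construction carried out fiberwise in $\mu$, then verify continuity of the output. First, I would fix notation and exploit compactness: since $[0,1]$ is compact and $\mu\mapsto g_{i,\mu}$ is continuous, there are uniform positive lower bounds on $R_{g_{i,\mu}}$ and uniform upper bounds on $\|g_{i,\mu}\|_{C^2}$ on small geodesic balls around the $p_{i,\mu}$. By the $\mu$-independence of the quantitative inputs to the Gromov-Lawson construction (cf. the discussion recalled just above, paraphrasing Section 1 of \cite{GL80b} and Section 6 of \cite{Mar12}), this allows the choice of a \emph{single} injectivity radius $r_0\in(0,\tfrac{1}{2}\min\{\inj_{M_1},\inj_{M_2},1\})$ and a \emph{single} planar curve $\gamma=\gamma(r_0)$ such that, for every $\mu\in[0,1]$ and each basepoint $p_{i,\mu}$ with the given orthonormal frame $\{e^{(i)}_k(\mu)\}$, the four properties (1)--(4) listed before Remark \ref{remark.GL.connected.sum.locally.conformall.flat} hold simultaneously.

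Next, I would verify equivariance case by case using the structure of the data. In the \emph{type T} case, $p_{i,\mu}\in Fix(f)$ with $e_n^{(i)}(\mu)\perp Fix(f)$; by Lemma \ref{lem:LocalDesc} the map $f$ acts in the chosen normal coordinates as $(x_1,\ldots,x_{n-1},x_n)\mapsto(x_1,\ldots,x_{n-1},-x_n)$, so since $\gamma$ depends only on the radial variable $|x|$, the submanifold $M'=\{(x,t):(|x|,t)\in\gamma\}$ and the cutoff-modified metric $g'_{ij}(x,t)dx_idx_j+dt^2$ are invariant under the induced involution, and the same is true on both sides of the sum so that $f'$ is well-defined across the gluing. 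In the \emph{type C} case, the condition $e^{(2)}_k(\mu)=(df)_{\ast} e^{(1)}_k(\mu)$ and $p_{2,\mu}=f(p_{1,\mu})$ guarantees that the construction at $p_{1,\mu}$ pushes forward under $f$ to exactly the construction at $p_{2,\mu}$; consequently gluing along the central spheres with reversed orientation produces a global involution whose fixed locus is precisely the central sphere of the neck, as recalled just before Lemma \ref{lemma.equivariant.GL.construction}. The \emph{type D} case combines these two reasonings, as the four basepoints come in two $f$-related pairs and the non-equivariant surgery at each pair is transported by $f$ to the other.

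The continuity of $\mu\mapsto g_{1,\mu}\#g_{2,\mu}$ then follows from the continuity of each individual ingredient: the exponential maps $\exp^{g_{i,\mu}}_{p_{i,\mu}}$, the associated normal coordinate charts (because $p_{i,\mu}$ and $\{e_k^{(i)}(\mu)\}$ are continuous in $\mu$), the induced metrics on the graph $M'$ over the fixed curve $\gamma$, and the fixed cutoff procedure used to pass from the induced metric to the straightened cylindrical form $g'_{ij}(x,t)dx_idx_j+dt^2$. Since all these are assembled by smooth compositions from data continuous in $\mu$, the resulting metric varies continuously. The fact that $g_{1,\mu}\#g_{2,\mu}=g_{i,\mu}$ and $f'=f$ outside $B_{r_0}(p_{i,\mu})$ is built into the construction since $\gamma$ coincides with the half-line $t=0$ for $r\geq r_1$, so the modification is supported in $B_{r_0}(p_{i,\mu})$.

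The main obstacle I anticipate is purely bookkeeping: checking that the compactness argument yields a \emph{single} radius $r_0$ and curve $\gamma$ valid for all $\mu$ and \emph{both} basepoints simultaneously in each of the three cases, and, in the type $D$ case, that the two independent non-equivariant connected sums can be performed with the same parameters without their gluing necks overlapping, which is ensured by taking $r_0$ smaller than half the minimum of $d_{g_{i,\mu}}(p_{i,\mu},f(p_{i,\mu}))$ over $\mu\in[0,1]$ (a positive quantity by compactness, since $p_{i,\mu}\notin Fix(f)$ for all $\mu$).
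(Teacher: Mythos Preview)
Your proposal is correct and matches the paper's approach. In fact, the paper states this lemma \emph{without proof}, relying entirely on the detailed description of the equivariant Gromov-Lawson construction that immediately precedes it in Section~\ref{subs:equivGL} together with the one-line remark ``By choosing the parameters continuously, the above construction applies to families of reflexive metrics.'' Your write-up is precisely the verification the paper leaves implicit: uniform choice of $r_0$ and $\gamma$ by compactness of $[0,1]$, case-by-case equivariance via Lemma~\ref{lem:LocalDesc} and the $f$-relatedness of the frames, and continuity of the output from continuity of the ingredients.
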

	
	\begin{rmk}\label{remark:GL.connected.sum.and.equivariant.Kuiper}
		Recall that if $(M\simeq S^3 ,g,f)$ is a \emph{locally conformally flat} reflexive manifold with $f=\rho$ (which, by Corollary \ref{cor:RedStandard} one can always assume), then there exists a conformal diffeomorphism $\phi:(M,g,f)\rightarrow (M,g_{round},f)$, such that $\phi^* g_{round}= u^{4}g$, and $f^* u=u$. Therefore, if $(M_1, g_1),(M_2, g_2)$ are both round spheres and one peforms a type $T$ or type $C$ Gromov-Lawson connected sum, then under the assumption of Lemma \ref{lemma.equivariant.GL.construction}, $(M_1\#M_2, g_{1}\#g_{2},f)$ is isotopic, through reflexive metrics, to the standard round sphere.
	\end{rmk}
	
	We can employ the equivariant Gromov-Lawson construction to \emph{reconstruct} those necks that are cut by surgery. Precisely, let $(S^2\times (-4,4),h,f)$ be an equivariant neck, with $f=\sigma$ (the reflection of type $T$ necks) and assume $h$ is within $\e$ of the cylindrical metric $g_{cyl}$ in the $C^{[1/\e]}$-topology. We then apply surgery along the central sphere $S^2\times \{0\}$ on this neck, glue standard caps, and obtain two manifolds $(S^{-},h_{surg,\e}^{-})$ and  $(S^{+},h_{surg,\e}^{+})$ as explained in Section \ref{subs:Surgery}. Notice that the (disconnected) manifold $(S^{-}\sqcup S^{+},h_{surg,\e}^{-}\sqcup h_{surg,\e}^{+})$ also comes equipped with an isometric involution $f'$ that makes it a reflexive manifold.
	We can take the tips $p^{-},p^{+}$ of the spherical caps, and apply the equivariant Gromov-Lawson construction to $(S^{-}\sqcup S^{+},h_{surg,\e}^{-}\sqcup h_{surg,\e}^{+},f')$ at $p^{-},p^{+}$, obtaining a connected sum $(S^{-}\#S^{+}, h_{surg,\e}^{-}\# h_{surg,\e}^{+})$ of positive scalar curvature. Notice that the resulting manifold $(S^{-}\#S^{+}, h_{surg,\e}^{-}\# h_{surg,\e}^{+})$ is also equipped with the isometric involution $f$ (that makes the triple $(S^{-}\#S^{+}, h_{surg,\e}^{-}\# h_{surg,\e}^{+},f)$ a connected reflexive manifold).
The case when instead we start with a neck to type $C$, and perform (after the surgery cutting along the central sphere) a type $C$ connected sum is similar.
	
	\
	
	The same proofs as in Lemma 6.2 and Lemma 6.3 in \cite{Mar12} apply in the equivariant setting, and we conclude the following important result:
	
	\begin{lem}\label{lemma.GL.and.RFsurgery.are.isotopic}
		Fix $\e>0$ sufficiently small. Let $(S^2\times (-4,4),h,f)$ be a reflexive neck, where either $f=\sigma$ (type $T$) or $f=\kappa$ (type $C$). Suppose $h$ is within $\e$ of the cylindrical metric $g_{cyl}$ on $S^2\times (-4,4)$ in the $C^{[1/\e]}$ topology, then $(S^2\times (-4,4),h,f)$ can be continuously deformed to $(S^{-}\#S^{+}, h_{surg,\e}^{-}\# h_{surg,\e}^{+},f)$, through a reflexive isotopy of positive scalar curvature metrics which all coincide with $h$ near the ends of $S^2\times (-4,4)$.
	\end{lem}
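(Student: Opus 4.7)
The plan is to import the non-equivariant argument of Lemma 6.2--6.3 in \cite{Mar12} and verify that every step of the construction can be carried out $f$-invariantly, treating the two cases $f=\sigma$ (type $T$) and $f=\kappa$ (type $C$) in parallel. First, I would recall the structure of the reconstructed metric: the surgery of Lemma \ref{lem:Surg} is performed along the central sphere $S^{2}\times\{0\}$, which is $f$-invariant in both cases, and the extended metric is invariant under the lift $\tau$ of $f$ to the standard caps; in the type $T$ case each of $(S^{\pm},h_{surg,\epsilon}^{\pm})$ is separately reflexive and the tips $p^{\pm}$ lie in $Fix(\tau)$, while in the type $C$ case the involution swaps the two pieces and $p^{+}$ is the $f$-image of $p^{-}$. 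Thus the equivariant Gromov-Lawson connected sum of Section \ref{subs:equivGL}, of type $T$ or type $C$ respectively, applies to reconnect the tips, and produces the target reflexive metric $h^{-}_{surg,\epsilon}\#\, h^{+}_{surg,\epsilon}$.

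Next, I would construct the reflexive isotopy itself. Following \cite{Mar12}, both metrics can, in a neighborhood of the central slab $S^{2}\times[-1,1]$, be approximated by warped products of the form $\phi(t)^{2}g_{S^{2}}+dt^{2}$, and the two profile functions (the $\epsilon$-cylindrical profile of $h$ on one side, and the patched concatenation of the surgery profile of Lemma \ref{lem:Surg} with the Gromov-Lawson bending curve $\gamma$ on the other) can be interpolated linearly while retaining positive scalar curvature, provided $\epsilon$ is sufficiently small. All relevant cutoff functions, bump functions, and the planar curve $\gamma$ used in Subsection \ref{subs:equivGL} can be chosen continuously in the interpolation parameter and so as to commute with the local model of $f$ given in Remark \ref{rmk:local modelsNeck}. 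Consequently the intermediate metrics remain invariant under the same involution $f$ on $S^{2}\times(-4,4)$.

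Concerning the boundary behaviour: the surgery modification is supported in a neighborhood of $S^{2}\times\{0\}$ and coincides with $h$ on $s^{-1}((-4,-\eta])\cup s^{-1}([\eta,4))$ for some small $\eta>0$, and the Gromov-Lawson step only alters the metric inside small balls around the tips $p^{\pm}$ which lie entirely inside the freshly attached caps. Accordingly, the whole interpolating family can be arranged so that each of its metrics agrees with $h$ on $S^{2}\times((-4,-3)\cup(3,4))$, as required.

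The main obstacle is to coordinate the parameters of the interpolation so that positive scalar curvature, $f$-invariance, and the prescribed endpoint conditions are all maintained simultaneously. The delicate point is that the surgery profile and the Gromov-Lawson profile produce spheres of slightly different radii in the transition region, and matching them through a one-parameter family of profile functions requires $\epsilon$ to be taken sufficiently small in terms of the universal constants governing both constructions (this is exactly where the smallness hypothesis enters). Once this is in place, equivariance is automatic because every building block in the argument, namely the surgery cap of Lemma \ref{lem:Surg}, the bending curve $\gamma$ of Subsection \ref{subs:equivGL}, and the cylindrical leaves of the neck, carries a natural $f$-action already built into its definition, so the scheme of \cite{Mar12} descends verbatim to the reflexive category.
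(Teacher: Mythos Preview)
Your proposal is correct and follows essentially the same approach as the paper, which simply states that ``the same proofs as in Lemma 6.2 and Lemma 6.3 in \cite{Mar12} apply in the equivariant setting'' without further elaboration. You have in fact supplied more detail than the paper does: the case distinction between the type $T$ and type $C$ symmetries, the identification of where the tips $p^{\pm}$ sit relative to $Fix(f)$, and the explanation of why the interpolation stays $f$-invariant and fixed near the ends are all consistent with the construction outlined in Sections~\ref{subs:Surgery} and~\ref{subs:equivGL}.
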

	
	\begin{rmk}
		\label{rmk:typeDreconstruction}
	When we have a reflexive triple $(M,g,f)$ containing two $f$-isometric $\e$-necks (disjoint from $Fix(f)$) one can also obtain a suitable \emph{reconstruction lemma} by simply invoking Lemma 6.3 in \cite{Mar12} for a type $D$ Gromov-Lawson connected sum.
	\end{rmk}

	\section{Proof of Theorem \ref{thm:B}}\label{sec:path}
	
	Following the general scheme presented in the introduction, we shall now proceed to the proof of Theorem \ref{thm:B} by first discussing the 
	necessary steps in the special case when $X^3\simeq D^3$ (these steps are the object of the next three sections, see below for the key statements, provided at the beginning of each section). We will then describe the general model triples we employ (Section \ref{subs:ModelMet}) and present, in Section \ref{subs:PathGen}, the necessary modifications needed to handle the case when $X^3$ is not necessarily a disk (but has a topology compatible with what is stated in Theorem \ref{thm:A}). We wrap up everything in Section \ref{subs:pcmet}, where we also present the analogous path-connetedness results for $\mathcal{M}_{R>0, H\geq 0}, \mathcal{M}_{R\geq 0, H>0}$ and $\mathcal{M}_{R\geq 0, H\geq 0}$.

	\subsection{Basis of the induction: a manifold covered by canonical neighborhoods}\label{subs:basis}
	
	It is convenient to introduce a simple definition:
	
	\begin{defi}
		\label{def:SphMod}
	A connected reflexive triple $(M,g,f)$ is called a (spherical) model triple if $M=S^3$, $g$ is the standard (unit) round metric, and $f$ is the standard reflection $\rho$. More generally, we shall say that a reflexive triple $(M,g,f)$ is a (spherical) model triple if each connected component is endowed, when restricting $g$, with a standard (unit) round metric. 
	\end{defi}

\begin{rmk}
	\label{rmk:Lift}
Let us consider the statement of Theorem \ref{thm:B} in the special case when $M\simeq S^3$ and $Ric_g>0$. In this scenario, we know (by \cite{Ham82}) the existence of a reflexive isotopy connecting $(M,g,f)$ to an endpoint triple of the form $(M,\tilde{g},f)$ where $\tilde{g}$ is a round metric (which we can always normalize to have, say, scalar curvature equal to 6). Note that $(M,\tilde{g},f)$ will not, at least in general, be a model triple. Yet, there exists a diffeomorphism $\varphi\in\mathcal{D}$ such that $\varphi^{\ast}\tilde{g}$ is indeed the standard round metric, and thus (by Corollary \ref{cor:UniqueF}) we also know that $\varphi^{\ast}f=\rho$. As a result, possibly modifying the given isotopy via the action of a diffeomorphism we will indeed obtain a continuous path ending at the (spherical) model triple. All arguments we are about to present, which concern isotopies of classes (i.~.e. paths happening in the moduli space $\mathcal{M}/\mathcal{D}$) should always be read in relation to this \emph{lifting argument}, which applies to the most general situation discussed in Section \ref{subs:PathGen} and \ref{subs:pcmet}.
\end{rmk}
	
	The key result we prove here can be stated as follows:
	
	\begin{prop}\label{pro:can}
	Let $(M\simeq S^3,g,f)$ be a \emph{connected} reflexive 3-manifold having positive scalar curvature, and being covered by $(C,\e)$ reflexive canonical neighborhoods. Then there exists a reflexive isotopy of classes connecting it to a (spherical) model triple.
	\end{prop}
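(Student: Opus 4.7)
The plan is to classify $(M,g,f)$ by the way it is covered by canonical neighborhoods and, in all cases, produce a reflexive isotopy of classes to a finite equivariant Gromov--Lawson connected sum of round spheres; the latter will then be connected to the model triple by the equivariant developing map of Proposition \ref{pro:ConfDevMap}. The argument proceeds by induction on the minimal number $N$ of reflexive canonical neighborhoods required to cover $M$ (summed over connected components, once the analogous statement for possibly disconnected reflexive triples is in place).

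For the base case, if $(M,g,f)$ reduces to a single $\varepsilon$-round component or a single $C$-component, then $Ric_g>0$ everywhere and $f$ is preserved by Ricci flow by uniqueness, as already observed in \cite{DL09}. Hamilton's convergence theorem, applied equivariantly, provides a reflexive isotopy connecting $(M,g,f)$ to a round reflexive triple, and by Remark \ref{rmk:Lift} this lifts to a reflexive isotopy of classes with endpoint the model triple.

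For the inductive step, one selects, by a maximality/covering argument of the type carried out in Section 7 of \cite{Mar12} implemented equivariantly, either a reflexive $\varepsilon$-neck or a reflexive $(C,\varepsilon)$-cap inside the chosen covering. Using Remarks \ref{rmk:local modelsNeck} and \ref{rmk:localmodelsCaps} together with Remark \ref{rem:cases}, the case analysis produces an equivariant central sphere along which surgery can be performed in the sense of Definition \ref{def:EquivSurg}: if a neck of type $T$ is available (a $(C,\varepsilon)$-cap whose core meets $Fix(f)$ always supplies one via the central sphere of its boundary neck) we apply surgery of the first kind; if a neck of type $C$ is available, we apply surgery of the second kind; if the chosen neck is entirely disjoint from $Fix(f)$, then by Lemma \ref{lem:mix2} its $f$-image is a disjoint $f$-isometric twin and we apply a simultaneous surgery of the third kind. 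When a single chosen neck is too short to be cut equivariantly, one first combines it with adjacent necks into a reflexive structured chain and applies Lemma \ref{lem:Combo2} to produce a longer type $T$ neck fit for surgery. In each case the resulting triple $(\widetilde M,\widetilde g,\widetilde f)$ has strictly smaller complexity and, by Lemma \ref{lem:WellPosed}, is still reflexive; the inductive hypothesis, applied to each of its connected components, yields reflexive isotopies of classes to the respective models, arranged so that $f$-related components are isotoped consistently.

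Finally, one uses Lemma \ref{lemma.GL.and.RFsurgery.are.isotopic} (complemented by the type $D$ analogue mentioned in Remark \ref{rmk:typeDreconstruction}) to ensure that the original $(M,g,f)$ is reflexively isotopic, through positive scalar curvature metrics, to the equivariant Gromov--Lawson connected sum of the surgery pieces; the isotopies produced inductively can then be propagated through this connected sum by Lemma \ref{lemma.equivariant.GL.construction}. The net outcome is a reflexive isotopy of $(M,g,f)$ to an equivariant Gromov--Lawson connected sum of round spheres, which by Remark \ref{remark.GL.connected.sum.locally.conformall.flat} is locally conformally flat on $S^3$; Proposition \ref{pro:ConfDevMap}, used in the form of Remark \ref{remark:GL.connected.sum.and.equivariant.Kuiper}, then conveys it to the model triple. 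The main technical obstacle is the covering/case analysis of the inductive step: the presence of $Fix(f)$ severely restricts the admissible equivariant surgeries, and one must verify that in every configuration of necks and caps an equivariant central sphere can be located (possibly after first combining a chain of necks into a longer reflexive one), which is the non-trivial part of the argument where the equivariance requirement genuinely departs from the treatment in \cite{Mar12}.
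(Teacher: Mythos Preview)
Your overall strategy---perform equivariant surgery along suitable central spheres, invoke the reconstruction lemma, and finish via the equivariant developing map---is indeed the paper's strategy. However, the inductive framework you propose, on the minimal number $N$ of reflexive canonical neighborhoods needed to cover $M$, is not how the paper proceeds and is not well-founded as stated: after surgery you glue in standard caps, and there is no reason the minimal covering number of the resulting pieces is strictly smaller than that of $M$. You assert ``strictly smaller complexity'' without justification, and this is precisely where the argument would stall.

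The paper avoids any such induction at this stage. Instead it performs a direct structural dichotomy on how $\Sigma=Fix(f)$ sits relative to the canonical neighborhoods: either (Case~1) every point of $\Sigma$ lies in the core of a reflexive cap, which forces $M$ to have positive sectional curvature; or (Case~2a) some reflexive neck of type $C$ contains all of $\Sigma$, which decouples the problem into non-equivariant pieces plus a central rotationally symmetric piece; or (Case~2b) every reflexive neck centered on $\Sigma$ meets $\Sigma$ in its boundary. In Case~2b the paper proves a key claim you do not address: some point of $\Sigma$ must lie in the core of a reflexive cap, for otherwise a distance-comparison argument shows every point of $M$ is the center of an $\varepsilon$-neck, forcing $M\simeq S^2\times S^1$, contradicting $M\simeq S^3$. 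From this claim one builds, by a maximal-cap argument, an explicit cover $M=K\cup N_2\cup\cdots\cup N_a\cup\tilde K$ by two reflexive caps and a reflexive structured chain of type~$T$ necks, and then performs all surgeries at once on this cover (following \cite{Mar12} pages 845--849 equivariantly). Your final paragraph correctly identifies this covering/case analysis as ``the main technical obstacle,'' but your proposal does not actually carry it out; the paper's contribution is precisely to execute it via the dichotomy above and the $S^2\times S^1$ contradiction argument, rather than via an induction on covering complexity.
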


\begin{rmk}
	\label{rem:Consistency}
	Consistently with Definition \ref{def:CanNeigh}, when we say that $(M\simeq S^3,g,f)$ is covered by $(C,\e)$ reflexive canonical neighborhoods we mean that any point $x\in M$ has a canonical neighborhood, and if $x\in Fix(f)$ then such neighborhood can always be taken to be equivariant in the sense explained above (that is to say: the neighborhood in question is a reflexive $\e$-neck, a reflexive $(C,\e)$-cap, or a reflexive component as per Remark \ref{rmk:EquivComp}).
\end{rmk}	

\begin{rmk}
	\label{rmk:TopSigma}
	A simple but important remark, to be kept in mind throughout the proof we are about to present, is that if $(M\simeq S^3,g,f)$ is a \emph{connected} reflexive 3-manifold, then necessarily $Fix(f)\simeq S^2$. Indeed, $Fix(f)$ is assumed to be separating (cf. Remark \ref{rmk:separating}) thus it must on the one end be connected, and on the other be two-sided thus necessarily (being $M$ orientable) a genus $\gamma$ surface for some $\gamma\geq 0$. If we then denote by $X$ (the closure of) any of the two fundamental domains of $f$ (i.~e. either of the two compact manifolds with boundary determined by $Fix(f)\subset M$) we have that $X$ must be a genus $\gamma$ handlebody.
	If we double $(X,g)$, in the standard sense of pointwise identification of boundary points, then it is easily checked that the double (namely, in this case, $M$) is homeomorphic to the connected sum of exactly $\gamma$ copies of $S^2\times S^1$ unless $\gamma=0$, in which case we obviously obtain $M\simeq S^3$).
\end{rmk}

\begin{rmk}\label{rmk:disjointCC}
	In view of the discussion to be presented in Section \ref{subs:conncomp}, let us add some comments about the applicability of the statement above. Thus, let $(M_i, g_i, f_i)$ be a Ricci flow with surgery (cf. Theorem \ref{thm:RFS}): we know that short before the extinction time $t_{j+1}<\infty$, say at time $t_{j+1}-\eta$ the (possibly disconnected) manifold $M_j$ will be covered by canonical neighborhoods, that are indeed of reflexive type when centered at points of $Fix(f_j)$. Now, assuming we started with $M_0\simeq S^3$, all connected components of $M_j$ will also be diffeomorphic to $S^3$, and can be divided into two families: the reflexive ones (for which we shall apply Proposition \ref{pro:can} above), and the non-reflexive ones, that come in pairs (for which we shall instead apply, without any modification, Proposition 8.1 in \cite{Mar12}). Of course, when dealing with a pair of non-reflexive components, say $M^{*}_j, M^{**}_j$ if $\mu\in [0,1]\mapsto [g^*_\mu]$ is an isotopy of classes starting at $(M^{*}_j, g(t_{j+1}-\eta))$ and ending at a round metric (as provided Proposition 8.1 in \cite{Mar12}), we agree to consider for $M^{**}_j$ the isotopy given by $\mu\in [0,1]\mapsto [f^{\ast}_j(g^*_\mu)]$.
	
	As a result, once Proposition \ref{pro:can} is proven we shall have an isotopy at the level of the disjoint union of connected components (of course, possibly reparametrizing, we can and we shall assume that all isotopies in question are defined over a common time interval, which we typically convene to be $[0,1]$).
\end{rmk}

		\begin{proof}
First of all, if $(M,g)$ is a $C$-component or an $\e$-round component, the desired equivariant isotopy is provided by Hamilton's normalized Ricci flow (since in both cases the manifold in question would have positive sectional curvature). This case being ruled out, we have that \emph{each point $z$ of $\Sigma:=Fix(f)$} has a canonical neighborhood that is either a reflexive $\e$-neck (centered at $z$) or a reflexive $(C,\e)$-cap (whose core contains $z$). 

\

\emph{Case 1: for all $z\in\Sigma$ \textbf{only} the latter alternative holds, i.~e. $z$ belongs to the core of a reflexive $(C,\e)$-cap $K=K(z)$.}

\

In this case, take any $z\in\Sigma$ and let $K$ be the $(C,\e)$-cap in question (for which we write $K=Y\cup N$ with, as it is standard notation, $Y$ the core and $N$ the neck of the cap). The two-dimensional sphere $\partial Y$ does not intersect $\Sigma$, (otherwise, by definition of $(C,\e)-$cap, one would find $z'\in\partial Y\cap\Sigma$ center of a reflexive $\e$-neck) thus it must enclose it (due to the fact that $z\in Y\cap \Sigma$ by assumption), so that clearly $M=Y\cup f(Y)$. If both caps are of type $A$ (in the sense of \cite{Mar12}, page 837) then $M$ has positive sectional curvature and the isotopy is, once again, provided by flowing via normalized Ricci flow. Else, if both caps are of type $B$, then one shall perform two surgeries on the corresponding necks (well away from $\Sigma$), thereby obtaining three pieces (a reflexive pair, and a central connected reflexive manifold) with the property that each of them can be isotoped to a rotationally symmetric Riemannian metric. Then, this scenario can be handled exactly as in the last paragraph of Case 2a below. 

\

\emph{Case 2: there exists  $z\in\Sigma$ that is the center of a reflexive $\e$-neck $N=N(z)$.}

\

This can be further divided into two subcases.

\

\emph{Case 2a: there exists $z\in \Sigma$ that is the center of a reflexive $\e-$neck $N$ containing $\Sigma$.}

\

Let $\varphi: N\to S^2\times (-1/\e,1/\e)$ be the parametrization of the neck in question (notice that the neck is necessarily of type $C$, and in particular $\Sigma=s_N^{-1}(0)$). We can perform surgery along the sphere $s_N^{-1}(8)$ as described in Section 5 of \cite{Mar12}: we glue two spherical caps (endowed with the \emph{standard initial metric}) on either side of such a slice, interpolating the metric $g$ with the $g_{std}$ on the domains $s_N^{-1}([4,8])$ and $s_N^{-1}([8,12])$.
Then we repeat the same procedure, in a perfectly symmetric fashion, along the sphere $s_N^{-1}(-8)$.

Thereby we get a new reflexive manifold $(\tilde{M},\tilde{g}, \tilde{f})$ that consists of three connected components: a  pair $(M_L, g_L), (M_R, g_R)$ (two isometric, non-reflexive pieces), and the reflexive component $(M_C, g_C)$, to be thought as the `central piece' of the manifold.

By Corollary 5.2 in \cite{Mar12} we know that $(M_L, g_L)$ has positive scalar curvature, hence by Corollary 1.1 therein we know the existence of an isotopy $((g_L)_{\mu})$ of metrics connecting it to a unit round sphere. We can then consider the isotopy defined by letting $(g_R)_\mu:=\tilde{f}^{\ast}(g_L)_{\mu}$ on $M_R$ (where it is understood that we are here considering the restricted map $\tilde{f}:M_R\to M_L$).

On the other hand, using the fact that $g$ is $\e$-close to being cylindrical in $N$, one can construct a reflexive isotopy $(g_C)_{\mu}$, relying on Lemma 5.3 in \cite{Mar12}, to connect $(M_C, g_C)$ to a rotationally symmetric metric $(g_C)_1$ on $M_C$.

At this stage, we can pick two $(g_C)_1$-antipodal points  $x_C, y_C=f(x_C)$ in $M_C$ (corresponding to the tips of the caps in the surgery process), together with the other tips $x_L\in M_L$ and $M_R\ni y_R=f(x_L)$ and consider (for any $\mu\in [0,1]$) the type D Gromov-Lawson connected sum $(M_L, (g_L)_{\mu})\# (M_C, (g_C)_{\mu})\# (M_R, (g_R)_{\mu})$; we convene that the balls where the metrics are modified are centered at $x_L, x_C$ and $y_C, y_R$ respectively. Thanks to Remark \ref{rmk:typeDreconstruction}, we know that $(M,g,f)$ can be connected, via a reflexive isotopy to $(M_L\# M_C\# M_R, g_L\#g_C\# g_R, f)$, while by Lemma \ref{lemma.equivariant.GL.construction}  we get that $(M_L\# M_C\# M_R, g_L\#g_C\# g_R, f)$ can be connected via a reflexive isotopy to $(M_L\# M_C\# M_R, (g_L)_1\# (g_C)_1\# (g_R)_1, f)$. On the other hand, this Riemannian manifold is locally conformally flat (see Remark \ref{remark.GL.connected.sum.locally.conformall.flat} and \ref{remark:GL.connected.sum.and.equivariant.Kuiper}), hence, thanks to Proposition \ref{pro:ConfDevMap}, isotopic (through reflexive triples) to a (spherical) model triple, which completes the proof.

So, to summarize, in this case the presence of a neck $N$ containing $\Sigma$ allows to `decouple' the problem, with the consequence that one can conclude the argument using the tools that come up in the non-equivariant setting.

\

\emph{Case 2b: for any $z\in \Sigma$ and any reflexive $\e-$neck $N=N(z)$ centered at $z$ one has $\partial N\cap \Sigma\neq\emptyset$  (thus this intersection shall necessarily consist, by Remark \ref{rmk:local modelsNeck} of two circles, the intersection being orthogonal in the neck chart).}

\

\emph{Claim: there exists $z\in\Sigma$ that is contained in the core $Y$ of a reflexive $(C,\e)$-cap $K$}.

We prove this assertion arguing by contradiction: we will now show that if that were not the case then $M$ would be diffeomorphic to $S^2\times S^1$, contrary to our assumption (we are working with a sphere). 
In fact, if the claim above were false then we would prove that any $x\in M$ is the center of an $\e$-neck (which is known, by Appendix A of \cite{MT07}, to imply that $M\simeq S^2\times S^1$).

Let us see why this is the case, so assume the claim in question does not hold. First, it must indeed be $M=\cup_{z\in \Sigma}N(z)$. If not, one could find $x_{\ast}\in M$ that is not contained in any reflexive $\e$-neck centered at some point of $\Sigma$. Let then $z_{\ast}\in\Sigma$ denote a point of $\Sigma$ of least distance from $x_{\ast}$ (this might not be unique, but this is irrelevant for the argument), and let $\Lambda$ be a minimizing gedesic segment from $x_{\ast}$ to $z_{\ast}$.

 Let $N_{\ast}:=N(z_{\ast})$ be a reflexive $\e$-neck centered at the point $z_{\ast}$, so that (in particular) $x_{\ast}\notin N_{\ast}$. Since each of the two boundary spheres of $N_{\ast}$ is separating (for $M\simeq S^3$) the curve $\Lambda$ must intersect (at least) one of the two spheres, say $\Gamma$, and let $y_{\ast}$ be the first intersection point (when moving from $x_{\ast}$ to $z_{\ast}$). If we then let $z'_{\ast}\in\Sigma\cap \Gamma$ be a point of least distance from $y_{\ast}$, a 
 standard distance-comparison argument shows that 
 \[
 d_g(x_{\ast},z'_{\ast})\leq d_g(x_{\ast}, y_{\ast})+d_g(y_{\ast}, z'_{\ast})<d_g(x_{\ast}, y_{\ast})+d_g(y_{\ast}, z_{\ast})=d_g(x_{\ast}, z_{\ast})
 \]
 for any $\e$ small enough (which we are always assuming). Such an inequality obviously contradicts the minimizing property of $z_{\ast}$.
 
 Thus any point of $M$ is covered by an $\e$-neck. But we need something even stronger, namely that any point of $M$ is actually the center of an $\e$-neck. Given $x_{\ast}\in M$ and said $z_{\ast}\in \Sigma$ a point of least distance, the very definition of $\e$-neck implies that in fact $|s_{N_{\ast}}(x_{\ast})-s_{N_{\ast}}(z_{\ast})|<\zeta\e$ for some constant $\zeta>0$ only depending on the ambient manifold.
 (Indeed: if $(\varphi^{-1})^{\ast}g$ were exactly cylindrical the point $\varphi(x_{\ast})$ would belong to the central sphere of the cylinder, and in general $(\varphi^{-1})^{\ast}g$ shall be $\e$-close to the cylindrical metric). Let us say, for notational simplicity, that $\zeta=1$. Then on the pre-image set $\tilde{N}:=\varphi^{-1}(S^2\times (-1/\e, 1/\e -2\e))$ one can define an $\e$-neck structure $\tilde{\varphi}: \tilde{N}\to S^2\times (-1/\e,1/\e)$ (the neck being now centered at $x_{\ast}$) by letting, in the coordinates $(x,t)$ cooresponding to $(N_{\ast},\varphi)$,
 \[
 \tilde{\varphi}(x,t)=(x, \ell(t)), \ \ \ell(t)=(1-\e^2)\left(t-\frac{\e}{1-\e^2} \right).
 \]
 Thereby our claim is justified.
 
\

So let $z\in\Sigma$ be contained in the core $Y$ of a reflexive $(C,\e)$-cap $K$.
We can proceed with the argument assuming that $\partial Y\cap\Sigma\neq\emptyset$ (else one argues as in Case 1 above). Now, there exists a reflexive $(C,\e)$-cap renamed $K$ such that no point on the sphere $\partial K^{0.9}=s^{-1}_N(0.9/\e)$ is contained in the core of a reflexive $(C,\e)$-cap that contains $K$. This is proven exactly as Claim 1 as at page 843 of \cite{Mar12} (the `competitors' being only the equivariant caps), so we omit the details.

One can then further obtain a structured chain of reflexive $\e$-necks $(N_1=N, N_2,\ldots, N_a)$ and a reflexive $(C,\e)$-cap $\tilde{K}$ such that the following properties are satisfied:
\begin{enumerate}
\item [\emph{a)}] {$(s^{-1}_{N_a}(0.9/\e)\cap\Sigma)\cap \tilde{Y}\neq\emptyset$;}	
\item [\emph{b)}] {$s^{-1}_{\tilde{N}}(0)\subset \overline{Y}\cup s_{N_1}^{-1}(-1/\e,0.9/\e)\cup\ldots\cup s_{N_a}^{-1}(-1/\e,0.9/\e) $}
\item [\emph{c)}]{$M=K\cup N_2\cup\ldots\cup N_a\cup \tilde{K}$.}
\end{enumerate}	 
Here we have adopted the usual notation for caps: $\tilde{K}=\tilde{Y}\cup\tilde{N}$, with $\tilde{Y}$ the core and $\tilde{N}$ the neck. We notice that the case $a=1$ is also contemplated, in which case $M=K\cup \tilde{K}$ is the union of two caps.
The chain is constructed iteratively: by definition of $K$, we know that every point in $s^{-1}_{N}(0.9/\e)$, hence in particular any point in $s^{-1}_{N}(0.9/\e)\cap\Sigma$ (we are intersecting with $Fix(f)$), is contained in either a reflexive $\e$-neck or in a reflexive $(C,\e)$-cap that does \emph{not} contain $K$. If there is some $s^{-1}_{K}(0.9/\e)\cap\Sigma$ for which the former alternative holds we take a reflexive $\e$-neck $N_2$ centered at that point, and check that this cannot intersect the left-most quarter of $N=N_1$ (the neck part of $K$) by distance-comparison. Observe that, as a result, $N_2\cap Y=\emptyset$ since $s_N^{-1}(-0.25/\e)$ is separating (for we are assuming $M\simeq S^3$).

 We then repeat the process considering $s^{-1}_{N_2}(0.9/\e)\cap\Sigma$ and so on. In the structured chain of necks one has that $s^{-1}_{N_i}((-0.25/\e,0.25/\e))\cap s^{-1}_{N_{i+1}}((-0.25/\e,0.25/\e))=\emptyset$ for any index $i$, so by volume comparison the process must terminate after finitely many steps. We thereby get a reflexive structured chain of $\e$-necks $(N_1=N, N_2,\ldots, N_a)$ such that any point $z\in s^{-1}_{N_a}(0.9/\e)\cap\Sigma$ is contained in the core of a reflexive $(C,\e)$-cap. 

So let us pick one point $\tilde{z}\in s^{-1}_{N_a}(0.9/\e)\cap\Sigma$, and let $\tilde{K}$ be a reflexive $(C,\e)$-cap to whose core it belongs. We claim that \emph{for such choice of $\tilde{K}$} properties b) and c) hold as well.
Indeed, set $Q=\overline{Y}\cup s_{N_1}^{-1}(-1/\e,0.9/\e)\cup\ldots\cup s_{N_a}^{-1}(-1/\e,0.9/\e)$ one has that:
\begin{itemize}
\item {by distance comparison $s^{-1}_{\tilde{N}}(0)\cap s^{-1}_{N_a}(0.9/\e)=\emptyset$;}
\item {by the previous step, and by the way we have defined $K$ one has that $s^{-1}_{\tilde{N}}(0)\subset Q$ (property b));}
\item {by the previous two steps, if we set $\tilde{K}^0$ to be the connected component of $\tilde{K}\setminus s^{-1}_{\tilde{N}}(0)$ containing $\tilde{Y}$ one has that $Q\cup \tilde{K}^0$ is open and closed in $M$ thus, by connectedness of $M$, we conclude that $M=Q\cup \tilde{K}^0\subset K\cup N_2\cup\ldots\cup N_a\cup \tilde{K}$, thus property c).}	
\end{itemize}	
Once such a distiguished cover of $M$ is obtained, one can follow the procedure presented in the non-equivariant case (pages 845-849 of \cite{Mar12}) with the following modifications:
\begin{itemize}
\item {reflexive necks (of type $T$) are combined into a single neck structure using Lemma \ref{lem:Combo2}, provided in Section \ref{subs:Necks};}
\item {reflexive surgery is performed as described in Section \ref{subs:Surgery};}
\item {locally conformally flat reflexive components are isotoped employing Proposition \ref{pro:ConfDevMap}.}
\end{itemize}	
We notice that in the case we are dealing with (\emph{Case 2b} in the ramification above) all connected components one obtains by means of surgery are reflexive. If $(M_A, g_A, f_A)$ and $(M_B, g_B, f_B)$ are two reflexive manifolds resulting from performing one surgery on $(M,g,f)$ and if $(g^A_{\mu})$ and $(g^{B}_{\mu})$ are reflexive isotopies connecting, respectively, such reflexive triples to (spherical) model triples, then one can invoke on the one hand Lemma \ref{lemma.GL.and.RFsurgery.are.isotopic} to assert that $(M_A \# M_B, g_A \# g_B, f)$ is reflexively isotopic to $(M,g,f)$, and on the other hand, Lemma \ref{lemma.equivariant.GL.construction} and Proposition \ref{pro:ConfDevMap} allow to conclude that $(M_A \# M_B, g_A \# g_B, f)$ is connected, via a reflexive isotopy, to a (spherical) model triple. Concatenating these two isotopies allows to conclude the proof. The case when more than one surgery is performed is handled following the very same scheme modulo notational changes.

	\end{proof}	
	
	\subsection{The induction argument, I: connected components}\label{subs:conncomp}
	Let $(M_i^3,g_i(t),f_i)$ be a reflexive Ricci flow with surgery as  in Theorem \ref{thm:RFS}, where the flow becomes extinct at a finite time $t_{j+1}$. For sufficiently small $\eta$, the manifold $(M_j,g_j(t_{j+1}-\eta),f_j)$ satisfies the canonical neighborhood assumption and hence it can be deformed, via an isotopy of classes going through reflexive metrics, to some, possibly disconnected, (spherical) model triple. By backwards induction on the surgery time $t_i$, we will prove that $(M_0,g_0,f_0)$ is isotopic, in the moduli space, to a (spherical) model triple.
	
	We start with the following lemma.
	
	\begin{lem}\label{lemma.equivariant.connected.sums.canonical}
		Assume $(M^3\simeq S^3,g,f)$ is a connected reflexive manifold. Suppose also that $(M^3,g,f)$ is obtained, by performing equivariant Gromov-Lawson connected sums, from $(S_1\sqcup \cdots\sqcup S_N, g_1\sqcup\cdots\sqcup g_N,f')$. If there is a reflexive isotopy of classes connecting $(S_1\sqcup \cdots\sqcup S_N, g_1\sqcup\cdots\sqcup g_N,f')$ to $(S_1\sqcup \cdots\sqcup S_N, g_{1,1}\sqcup\cdots\sqcup g_{N,1},f')$ where the latter is a (spherical) model triple,
	then there exists a reflexive isotopy of classes connecting $(M^3,g,f)$ to a (spherical) model triple.
	\end{lem}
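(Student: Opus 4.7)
The idea is to propagate the given isotopy of classes from the disjoint pieces $(S_1 \sqcup \cdots \sqcup S_N, g_1 \sqcup \cdots \sqcup g_N, f')$ up through the sequence of equivariant Gromov--Lawson operations that assemble $(M, g, f)$, and then to exploit local conformal flatness at the endpoint to land on a spherical model triple. Since a reflexive isotopy of classes in the moduli space (Definition \ref{def:Z2isot}) lifts to a continuous path of representatives in the space of $f'$-compatible metrics, I would first fix such a lift $\mu \in [0,1] \mapsto (g_{1,\mu} \sqcup \cdots \sqcup g_{N,\mu})$, noting that the involution $f'$ is left unchanged along the path.

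The core step is the parametric equivariant Gromov--Lawson construction. By hypothesis $(M, g, f)$ is built from finitely many equivariant connected sums of types T, C, and D applied to the pieces $(S_j, g_j, f'|_{S_j})$. For each such operation I choose a continuous $f'$-equivariant datum in the sense of Section \ref{subs:equivGL}: basepoints in $Fix(f'|_{S_j})$ for type T, paired basepoints related by $f'$ for types C and D, all independent of $\mu$, together with orthonormal frames that vary continuously with the metric. Applying Lemma \ref{lemma.equivariant.GL.construction} iteratively, one operation at a time, produces a continuous path $\mu \mapsto (M, \tilde{g}_\mu, f)$ of reflexive positive scalar curvature metrics. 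At $\mu = 0$ the metric $\tilde{g}_0$ agrees with the original $g$ up to the choice of GL parameters (curve $\gamma$, radius $r_0$, choice of frames at the basepoints); these can be joined to the original choices by an elementary linear interpolation through reflexive metrics of positive scalar curvature, or equivalently absorbed as an element of $\mathcal{D}(M, f)$ when passing to classes.

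At $\mu = 1$ each factor $g_{j,1}$ is a unit round metric, so $\tilde{g}_1$ is an equivariant GL connected sum of round spherical metrics and hence locally conformally flat by Remark \ref{remark.GL.connected.sum.locally.conformall.flat}. Since $M \simeq S^3$, Proposition \ref{pro:ConfDevMap} together with Remark \ref{remark:GL.connected.sum.and.equivariant.Kuiper} supplies a further reflexive isotopy from $(M, \tilde{g}_1, f)$ to a spherical model triple in the sense of Definition \ref{def:SphMod}. Concatenation of the parameter-adjustment segment, the parametric GL isotopy, and the final Kuiper-type deformation, all read in the moduli space, yields the desired reflexive isotopy of classes.

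The main obstacle is the bookkeeping for the iterated parametric GL construction when several operations of mixed types T, C, and D are composed: one has to verify that a single continuous $f'$-equivariant choice of data can be made globally, that the basepoints for distinct operations can be kept pairwise disjoint throughout the path, and that each application of Lemma \ref{lemma.equivariant.GL.construction} preserves the data already chosen for later operations. This is routine, as each GL operation only modifies the ambient metric inside small balls around its basepoints and as $f'$ is fixed along the deformation, but it is the only place where the proof requires genuine care.
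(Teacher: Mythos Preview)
Your approach is correct and in fact more direct than the paper's. The paper argues by induction on $N$: it first observes that since $M\simeq S^3$, the graph of connected sums must be a tree (a cycle would give a noncontractible $S^1$), so there is always a leaf component $S_1$ attached by a single neck. The inductive step peels off $S_1$ (together with its $f'$-image $S_2$ when $S_1$ is non-reflexive), applies the inductive hypothesis to the remaining $N-1$ or $N-2$ pieces to round them off, and then reattaches via a single application of Lemma~\ref{lemma.equivariant.GL.construction}, finishing with Remarks~\ref{remark.GL.connected.sum.locally.conformall.flat} and~\ref{remark:GL.connected.sum.and.equivariant.Kuiper}.

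Your route bypasses the induction entirely by pushing the whole isotopy through all the GL operations at once, then invoking Proposition~\ref{pro:ConfDevMap} directly at the endpoint since an iterated equivariant GL sum of round spheres is locally conformally flat. This is cleaner conceptually and makes the role of Kuiper's developing map more transparent. The price you pay is exactly the bookkeeping you flag: iterating Lemma~\ref{lemma.equivariant.GL.construction} across several operations and reconciling the GL parameters at $\mu=0$ with those implicit in the given $g$. The paper's inductive scheme avoids both issues by only ever invoking the parametric lemma for a single operation between two pieces, at the cost of a longer argument and the (mild) combinatorial step of locating a leaf. Either approach proves the lemma; yours is the shorter one once the iterated parametric GL construction and the parameter-matching step are made precise.
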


Recall that asserting the existence of an isotopy of classes to a model triple is the same as stating the existence of an isotopy of metrics, ending at a model triple, provided we allow modifying the data $(M,g,f)$ via a diffeomorphism $\varphi\in\mathcal{D}$.
	
	\begin{proof}
		We argue by induction on $N$.
		
		For the base case, we verify the assertion for $N=1,2$. We need to discuss $N=1$ and $N=2$ for the base cases, as the induction step might possibly reduce the case $N$ to the case $N-2$. When $N=1$, there is nothing to prove. When $N=2$, let $(S_1,g_1)$, $(S_2,g_2)$ be the two connected components. We face an alternative: either both components in question are non-reflexive or they are both reflexive. In the former case, we invoke Lemma \ref{lemma.equivariant.GL.construction}, to perform a type $C$ equivariant Gromov-Lawson construction to the family of equivariant metrics $(S_1\#S_2, g_{1,\mu}\#g_{2,\mu}, f)$, and conclude that $(M,g,f)$, which we are assuming to be $(S_1\#S_2, g_1\#g_2,f)$, is isotopic to $(S_1\#S_2, g_{1,1}\#g_{2,1},f)$, where $g_{1,1}$ and $g_{2,1}$ are round metrics. By Remark \ref{remark.GL.connected.sum.locally.conformall.flat} and \ref{remark:GL.connected.sum.and.equivariant.Kuiper}, the connected sum $(S_1\#S_2, g_{1,1}\#g_{2,1},f)$ is locally conformally flat, hence there exists a reflexive isotopy to a (spherical) model triple. Thus, the conclusion comes by virtue of the discussion presented in Remark \ref{rmk:Lift}. In the second case, we also apply Lemma \ref{lemma.equivariant.GL.construction} to perform a type $T$ equivariant Gromov-Lawson construction. With the very same argument one can conclude the base case.
		
		Assume the assertion is true up to a positive integer $N-1$, where $N>2$. Since $M^3$ is diffeomorphic to $S^3$, and is obtained by taking equivariant connected sums of $S_1,\cdots,S_N$, there must exists a component, which without loss of generality may assumed to be $S_1$, that is only connected to one neck. In fact, if all $S_j$, $j=1,\cdots,N$, are connected with at least two necks, then there exists a loop $S_{j_1},\cdots,S_{j_k}$, such that each pair $(S_{j_i},S_{j_{i+1}})$, and $(S_{j_k},S_{j_1})$ are connected via a neck. This produces a non-contractible $S^1$ in $M$, contradicting the fact that $M\simeq S^3$. By assumption, one of the following two possibilities must happen to the component $S_1$.
		
		\emph{Case 1}: $S_1$ is non-reflexive in the disjoint union $(S_1\sqcup \cdots\sqcup S_N, g_1\sqcup\cdots\sqcup g_N,f')$. Without loss of generality, assume $f'(S_1)=S_2$. Since there is only one neck connecting $S_1$ in $(M,g,f)$, there can also be only one neck connecting $S_2$. Notice that since $N>2$, $S_1$ and $S_2$ are not connected via this neck. As a result, the same equivariant Gromov-Lawson construction, applied to $(S_3\sqcup \cdots \sqcup S_N, g_3\sqcup\cdots \sqcup g_N,f')$, produces a connected reflexive manifold $(M',g',f|_{M'})$. By induction, there is an isotopy $(M',g'_\mu, f|_{M'})$, $\mu\in [0,1]$, with $g'_0=g'$, $g'_1$ is the round metric. Now we consider $M$ obtained via a type D connected sum, and invoke Lemma \ref{lemma.equivariant.GL.construction} to conclude that $(M,g,f)$ is isotopic to $(S_1\#S_2\#M',g_{1,1}\#g_{2,1}\#g'_1,f)$, where $g_{1,1},g_{2,1},g'_1$ are all the round metrics. Once again, by Remark \ref{remark.GL.connected.sum.locally.conformall.flat} and \ref{remark:GL.connected.sum.and.equivariant.Kuiper}, we conclude relying on the fact that the metric $g_{1,1}\#g_{2,1}\#g'_1$ is locally conformally flat. 
		
		\emph{Case 2}: $S_1$ is reflexive in the disjoint union $(S_1\sqcup \cdots\sqcup S_N, g_1\sqcup\cdots\sqcup g_N,f')$. Since there is only one neck connecting $S_1$, there exists a connected reflexive triple $(M',g',f|_{M'})$ that can be realized by performing Gromov-Lawson connected sums from the disjoint union $(S_2\sqcup \cdots \sqcup S_N, g_2\sqcup\cdots \sqcup g_N,f')$. Thus, by induction, there exists a reflexive isotopy connecting such a triple to a round one.
		 At that stage, we employ Lemma \ref{lemma.equivariant.GL.construction} to perform a type $T$ Gromov-Lawson connected sum of $(M',g',f|_{M'})$ and $(S_1, g_1, f'|_{S_1})$: since both parts can be isotoped to the (spherical) model triple, by Remark \ref{remark.GL.connected.sum.locally.conformall.flat} and \ref{remark:GL.connected.sum.and.equivariant.Kuiper}, the resulting connected sum can be connected, via a reflexive isotopy of classes, to the (spherical) model triple. Possibly appealing, once again, to Remark \ref{rmk:Lift}, the assertion is proved.
	\end{proof}

	\subsection{The induction argument, II: backward in time}\label{subs:backw}
	Let us consider the reflexive Ricci flow with surgery. $(M_i,g_i(t),f_i)$, $0\le i\le j$, and let $\mA_i$ be the assertion:
	
	\vspace{1.5mm}
	
	\begin{center} 
		
		\emph{there is a reflexive isotopy of classes connecting
				$(M_i,g_i(t),f_i)$ to $(M_i, g_i, f_i)$, a (spherical) model triple.}
	\end{center}

		We also assume that:
	
	\begin{prop}
		$\mA_j$ holds.
	\end{prop}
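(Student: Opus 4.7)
The idea is to exploit the fact that no surgery occurs on the interval $[t_j, t_{j+1})$, so that smooth Ricci flow itself provides a reflexive isotopy up to a time just before extinction, at which point the entire manifold is covered by reflexive canonical neighborhoods; then the component-wise results of Section \ref{subs:basis} (Proposition \ref{pro:can}) together with the non-equivariant result of \cite{Mar12} close the argument.

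First, I would note that $t \in [t_j, t_{j+1}-\eta]\mapsto g_j(t)$ is, for any $\eta \in (0, t_{j+1}-t_j)$, a smooth path of Riemannian metrics on $M_j$ of positive scalar curvature (item (4) of Theorem \ref{thm:RFS}), and that each $g_j(t)$ is $f_j$-invariant by uniqueness of solutions to the Ricci flow equation. This directly yields a reflexive isotopy from $(M_j, g_j(t_j), f_j)$ to $(M_j, g_j(t_{j+1}-\eta), f_j)$.

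Next, I would pick $\eta > 0$ so small that $\inf_{M_j} R_{g_j(t_{j+1}-\eta)} > r_j^{-2}$; by item (5) of Theorem \ref{thm:RFS} this ensures that every point of $(M_j, g_j(t_{j+1}-\eta))$ lies in a $(C, \epsilon)$-canonical neighborhood of reflexive type. I would then decompose $M_j$ into connected components and, following Remark \ref{rmk:disjointCC}, partition them into reflexive components and pairs of $f_j$-swapped non-reflexive components. In the current setting we are treating the case $X \simeq D^3$ of Theorem \ref{thm:B}, so $M_0 \simeq S^3$; since the Ricci flow with surgery on $S^3$ (together with the scalar-curvature lower bound propagating through surgeries) keeps every surviving component diffeomorphic to $S^3$, the hypotheses of Proposition \ref{pro:can} are satisfied on each reflexive component. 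On each such component I would invoke Proposition \ref{pro:can} to obtain a reflexive isotopy of classes ending at a (spherical) model triple. On each non-reflexive pair $(M_\ell^*, M_\ell^{**})$ with $f_j(M_\ell^*) = M_\ell^{**}$, I would apply Proposition 8.1 of \cite{Mar12} to $M_\ell^*$ to get an isotopy $\mu \mapsto g_{\ell,\mu}^*$ to the round metric, and propagate it equivariantly by setting $g_{\ell,\mu}^{**} = (f_j|_{M_\ell^*})^* g_{\ell,\mu}^*$ on the partner. Concatenating with the Ricci-flow isotopy above produces the assertion $\mathcal{A}_j$.

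The step I would expect to require the most care is assembling the component-wise isotopies of \emph{classes} into a single reflexive isotopy of classes on $M_j$: the diffeomorphism adjustments produced by Proposition \ref{pro:can} on distinct reflexive components need to be merged into a single element of $\mathcal{D}(M_j, f_j)$, and on the non-reflexive pairs equivariance must be maintained. However, since the components are pairwise disjoint, each local $f_j$-equivariant diffeomorphism extends by the identity on the complement, and the resulting global diffeomorphism lies in $\mathcal{D}(M_j, f_j)$; similarly, defining $g_{\ell,\mu}^{**}$ as the $f_j$-pullback of $g_{\ell,\mu}^*$ automatically arranges equivariance on each pair. Hence the obstacle is only apparent, and concatenation delivers $\mathcal{A}_j$.
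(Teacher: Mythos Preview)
Your proposal is correct and follows precisely the approach the paper takes: the paper's proof is the single sentence ``such an assertion is just rephrasing Proposition \ref{pro:can},'' relying on the surrounding Remark \ref{rmk:disjointCC} to handle the decomposition into reflexive components and $f_j$-paired non-reflexive components, together with the smooth Ricci flow on $[t_j, t_{j+1}-\eta]$. You have simply made explicit the content of that remark and the Ricci-flow step, including the assembly of the componentwise isotopies into a global one.
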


Indeed, notice that such an assertion is just rephrasing Proposition \ref{pro:can}, which we proved in Section \ref{subs:basis}.
	Here we carry out the backward induction argument, and later conclude our main theorem.
	
	\begin{prop}\label{proposition.backward}
		If $i<j$ and $\mA_{i+1}$ holds, so does $\mA_i$.
	\end{prop}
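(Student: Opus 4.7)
The plan is to interpolate backward in time across the surgery at $t_{i+1}$ by combining three ingredients: equivariance of the smooth Ricci flow on the interval $[t_i, t_{i+1})$, the reconstruction lemma that expresses the pre-surgery metric as an equivariant Gromov-Lawson connected sum of the post-surgery pieces, and the propagation Lemma \ref{lemma.equivariant.connected.sums.canonical} which lifts isotopies of the pieces to an isotopy of the connected sum.

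First, I fix $\eta>0$ small enough that the canonical neighborhood assumption at parameter $r_i$ already holds throughout the region that is about to be affected by surgery, and run the smooth Ricci flow from $(M_i, g_i(t_i), f_i)$ to $(M_i, g_i(t_{i+1}-\eta), f_i)$. By uniqueness of the smooth flow on a closed manifold, the isometry $f_i$ is preserved at every time, so this produces a reflexive isotopy of classes between the two endpoints. Second, I decompose $(M_i, g_i(t_{i+1}-\eta), f_i)$ into the continuing region $C_{t_{i+1}}$ (which, after cutting along the central spheres of the surgical reflexive $\epsilon$-necks and attaching standard caps, becomes $(M_{i+1}, g_{i+1}(t_{i+1}), f_{i+1})$) and the extinct components, each of which is a $C$-component or $\epsilon$-round component; by Remark \ref{rmk:EquivComp} each extinct component is either reflexive or belongs to an $f_i$-symmetric pair, and in both cases has positive sectional curvature. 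Let $\tilde{M}_{i+1}$ denote the disjoint union of $M_{i+1}$ with the collection of extinct components.

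The third step is the reconstruction: I classify each surgical neck in $(M_i, g_i(t_{i+1}-\eta), f_i)$ as being of type $T$, $C$, or $D$ according to its position relative to $\mathrm{Fix}(f_i)$ (cf. Remark \ref{rem:cases} and Section \ref{subs:equivGL}). Applying Lemma \ref{lemma.GL.and.RFsurgery.are.isotopic} at each type $T$ or type $C$ surgical neck, together with Remark \ref{rmk:typeDreconstruction} at each type $D$ pair, I obtain a reflexive isotopy of classes connecting $(M_i, g_i(t_{i+1}-\eta), f_i)$ to the corresponding equivariant Gromov-Lawson connected sum of the components of $\tilde{M}_{i+1}$. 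Because the surgical necks are pairwise disjoint and the Gromov-Lawson modifications are strictly local, these reconstructions can be performed simultaneously without interference.

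Fourth, each component of $\tilde M_{i+1}$ admits a reflexive isotopy of classes to a spherical model triple: for the components originating from $M_{i+1}$, this is the inductive assumption $\mathcal{A}_{i+1}$ applied componentwise (using, as in Remark \ref{rmk:disjointCC}, the pulled-back isotopy by $f_i$ on each member of an $f_i$-symmetric pair of non-reflexive components); for the extinct components, positivity of the sectional curvature implies via Hamilton's theorem \cite{Ham82} that the normalized Ricci flow provides an isotopy to a round metric, done equivariantly by uniqueness. Finally, applying Lemma \ref{lemma.equivariant.connected.sums.canonical} to the equivariant Gromov-Lawson connected sum produced in the third step promotes these componentwise isotopies to a single reflexive isotopy of classes ending at a spherical model triple. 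Concatenating with the smooth Ricci flow path of the first step yields $\mathcal{A}_i$.

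The main technical obstacle is the third step: one must verify that the reconstruction lemma can be applied uniformly at all surgical necks (of any of the three types, and possibly in the degenerate self-connected-sum scenario of Remark \ref{rmk:Deg}) while keeping the resulting path of metrics reflexive, and that the parameters of the Gromov-Lawson construction can be chosen coherently across the finitely many necks so that the resulting isotopy is continuous and restricts to the identity away from the surgical regions.
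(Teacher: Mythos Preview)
Your overall architecture matches the paper's: smooth Ricci flow on $[t_i,t')$, a reconstruction step at $t'$ near $t_{i+1}$, componentwise isotopies, and Lemma \ref{lemma.equivariant.connected.sums.canonical} to reassemble. But there is a genuine gap in your second and fourth steps, concerning what the complement of the continuing region actually is.

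You write that $M_i$ decomposes as the continuing region $C_{t_{i+1}}$ together with ``extinct components, each of which is a $C$-component or $\epsilon$-round component'' with positive sectional curvature, and you then handle these via Hamilton's theorem. This is not correct. The complement $M_i\setminus C_{t_{i+1}}$ consists primarily of the \emph{horn tips} $\mathcal{H}_k^+$ (together with possible entire extinct components). After performing surgery on both sides --- i.e.\ capping off along the surgical spheres $S_k$ --- the horn-tip pieces become closed $3$-spheres $(P_k,g_{P_k})$. These $P_k$ are covered by canonical neighborhoods because every point there satisfies $R_{g_i}(x,t')>r_i^{-2}$, but ``covered by canonical neighborhoods'' means covered by $\epsilon$-necks and $(C,\epsilon)$-caps, not that they are $C$-components or $\epsilon$-round components; in particular they need not have positive sectional curvature, so Hamilton's convergence theorem is not available. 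The paper instead invokes Proposition \ref{pro:can} (the base of the induction) for the $P_k$, which is exactly what that proposition is designed for.

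Relatedly, your $\tilde M_{i+1}$ as defined is missing the $P_k$'s entirely, so the Gromov--Lawson connected sum of its components cannot reproduce $(M_i,g_i(t'),f_i)$: the necks you are ``reconstructing'' connect $M_{i+1}$ to the $P_k$'s, not to closed extinct components. Once you replace your ``extinct components'' by the capped-off pieces $P_k$ (plus any genuine extinct components) and replace the appeal to Hamilton by an appeal to Proposition \ref{pro:can}, your argument aligns with the paper's. One minor additional point: the continuing region with caps at time $t'$ gives a metric $\tilde g_{t'}$ on $M_{i+1}$ which is not literally $g_{i+1}(t_{i+1})$; the paper bridges this with the cap-extension Lemma \ref{lem:Ext} before invoking $\mathcal{A}_{i+1}$.
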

	
	\begin{proof}
		To start, let us recall some basic notions involved in the equivariant Ricci flow with surgery. Let, as above, $\{r_i\}_{i=0}^j$ be the canonical neighborhood parameters, $\{\de_i\}_{i=0}^j$ be the surgery control parameters and $\rho_i=\delta_i r_i$. We set $\Omega(t_{i+1}):=\{x\in M_i: \lim\inf_{t\rightarrow t_{i+1}} R_{g(t)}(x)<+\infty\}$, $\Omega_{\rho_i} (t_{i+1}):=\{x\in \Omega(t_{i+1}): R_{g_i(t_{i+1})}(x)\leq \rho_i^{-2}\}$, and define $\Omega^{big}(t_{i+1})$ as the union of the finitely many components of $\Omega(t_{i+1})$ that intersect $\Omega_{\rho_i}(t_{i+1})$. Also recall that $\Omega^{\text{big}}(t_{i+1})$ contains a finite disjoint collection of open sets, called $2\e$-horns, each diffeomorphic to $S^2\times [0,1)$, denoted by $\mH_1,\cdots,\mH_l$. The $\de_i$-neck in each $\mH_k$ is denoted by $N_k$, and centered at $y_k$ with $R_{g_i(t_{i+1})}(y_k)=h^{-2}(r_i,\delta_i)$. We denote the central sphere through $y_k$ by $S_k$. Let $\mH_k^{+}$ the component, separated by $S_k$, that contains the end of the horn. We also denote $N_k^+$ the halves of $N_k$ that are contained $\mH_k^{+}$, and $N_k^{-}$ the other halves. Finally, denote $C_{t_{i+1}}$ the continuing region, defined as the complement of $\sqcup_{k=1}^l \mH_k^+$ in $\Omega^{big}(t_{i+1})$. In particular, we do not change the region $C_{t_{i+1}}$ in the surgery.
		
		Let us now briefly describe the surgery process in the form proposed by Perelman to continue the Ricci flow beyond singularities. The region $C_{t_{i+1}}$ has boundary $\sqcup_{k=1}^l S_k$, and $g_i(t_{i+1})$ is a sufficiently small perturbation of the standard cylindrical metric in $N_k$. We perform surgery along $S_k$, remove $N_k^{+}$, add a spherical cap $B_k$, and obtain $M_{i+1}=C_{t_{i+1}}\cup (\sqcup_{k=1}^l B_k)$. By the topological classification of singularities, it is known that the complement of $C_{t_{i+1}}$ in $M_i$ is a disjoint union of regions diffeomorphic to $S^2\times (0,1)$. On these regions, we perform the surgery along $(M_i\setminus C_{t_{i+1}})\cup (\sqcup_{k=1}^l N_k^{-})$, remove the neck regions $N_k^{-}$, add spherical caps, and obtain finitely many closed three spheres $(P_k, g_{P_k})$, $k=1,\cdots,m$. As a result, we can recover the manifold $M_i$ by performing the connected sum of $M_{i+1}$ with $\{P_k\}_{k=1}^m$.

		Consider a time $t'\in (t_i,t_{i+1})$ sufficiently close to $t_{i+1}$, such that $R_{g_i}(x,t')>\rho_{i}^{-2}/2$ for any $x\in M_i\setminus C_{i+1}$. The isometric involution $f_i$ on $(M_i,g_i(t'))$ induces the involution $f_{i+1}$ on $M_{i+1}$, and an involution $f'_{i+1}$ on $(\sqcup P_k,\sqcup g_{P_k})$, such that $Fix(f_i)$, $Fix(f_{i+1})$ and $Fix(f')$ are all equal, in their common regions of definition. Let $\tilde{g}_{t'}$ be the metric on $M_{i+1}$, obtained after gluing spherical caps to $(C_{t_{i+1}},g_i(t'))$. Then $f_{i+1}$ is also an isometric involution on $(M_{i+1},\tilde{g}_{t'})$. By Lemma \ref{lem:Ext} (allowing to extend isotopies from necks to caps) and its non-equivariant counterpart, $(M_{i+1},\tilde{g}_{t'},f_{i+1})$ and $(M_{i+1},g_{i+1}(t_{i+1}),f_{i+1})$ are isotopic. By the induction hypothesis $\mA_{i+1}$, it then follows that $(M_{i+1},\tilde{g}_{t'},f_{i+1})$ can be connected, through a reflexive isotopy (through positive scalar curvature metrics) to a (spherical) model triple.
		
		By the choice of $\Omega_{\rho_i}(t_{i+1})$, each point $x\in M_i\setminus C_{i+1}$ satisfies $R_{g_i(t')}(x)>\rho_i^{-2}/2>r_i^{-2}$. As a result, every such $x$ has a $(C,\e)$ canonical neighborhood at time $t'$. In particular, the regions $P_k$, $k=1,\cdots,m$, are covered by canonical neighborhoods. By Proposition \ref{pro:can} (possibly in combination with Remark \ref{rmk:Lift}), the manifold $(\sqcup P_k,g_{P_k},f'_{i+1})$ can also be connected, through a reflexive isotopy (through positive scalar curvature metrics) to a (spherical) model triple.

		By Lemma \ref{lemma.equivariant.connected.sums.canonical}, the equivariant Gromov-Lawson connected sum, obtained from \[\left(M_{i+1}\sqcup(\sqcup_{k=1}^m P_k),\tilde{g}_{t'}\sqcup(\sqcup_{k=1}^m g_{P_k}) ,f_{i+1}\sqcup f'_{i+1}\right)\]
		by suitably reconstructing the necks where surgery has been performed, can be connected
		through a reflexive isotopy (of positive scalar curvature metrics) to a (spherical) model triple.
		 On the other hand, note that $\left(M_{i+1}\sqcup(\sqcup_{k=1}^m P_k),\tilde{g}_{t'}\sqcup(\sqcup_{k=1}^m g_{P_k}) ,f_{i+1}\sqcup f_i'\right)$ is also obtained from $(M_i,g_i(t'),f_i)$ by performing surgery. On each neck surgery region parameterized by $S^2\times (-4,4)$, the surgery metric
		\[(S^{-}\#S^{+},(g_i(t'))_{surg,\de}^{-}\#(g_i(t'))_{surg,\de}^+,f_i)\]
		is isotopic to the corresponding equivariant Gromov-Lawson construction, by Lemma \ref{lemma.GL.and.RFsurgery.are.isotopic}. We therefore conclude that $(M_i,g_i(t'),f_i)$ is isotopic, through reflexive triples of positive scalare curvature, to a (spherical) model triple.
		Finally, we may isotope $(M_i,g_i(t_i),f_i)$ to $(M_i,g_i(t'),f_i)$ through smooth equivariant Ricci flow. Thereby, the assertion $\mA_i$ is proved.
	\end{proof}

    \subsection{Description of the model metrics}\label{subs:ModelMet}
    We now wish to extend the previous discussion to general 3-manifolds $X^3$ for which $\mathcal{M}\neq\emptyset$, see Theorem \ref{thm:A}). To that scope, the first (fundamental) step is to define the model triples that will play the role of natural endpoints for the isotopies we construct.
    
    Let $(M_1,g_1)$ and $(M_2, g_2)$ be \emph{possibly disconnected} 3-manifolds and assume that their disjoint union, denoted by $(M, g, f)$ is a reflexive 3-manifold. Notice that we also want to consider the degenerate case when there is only one reflexive triple that serves as input for the construction, as explained above.
  Recall that there are three different types of Gromov-Lawson operations, between $M_1$ and $M_2$, we need to consider:
    \begin{enumerate}
    \item [\emph{i)}] {a type $T$ connected sum, that is obtained by joining two distinct points on $Fix(f)$, $p_1\in M_1, p_2\in M_2$ by means of an equivariant neck (whose central portion is) of type $T$;}
    \item [\emph{ii)}]  {a type $C$ connected sum, that is obtained by connecting two distinct $f$-related points, $p_1\in M_1, p_2=f(p_1)\in M_2$ via an equivariant neck (whose central portion is) of type $C$;}
    \item [\emph{iii)}]  {a type $D$ double connected sum, that is obtained by joining two distinct points, away from $Fix(f)$, $p_1\in M_1, p_2\in M_2$ via a non-equivariant neck, and then performing the very same construction on the couple of $f$-related points namely $f(p_1)\in M_1, f(p_2)\in M_2$.}
     \end{enumerate}	
 We notice that not all of the three operations may in general be allowed, for instance  a type $T$ connected sum is only possible if both $(M_1,g_1,f_1)$ and $(M_2, g_2, f_2)$ have at least one reflexive connected component.
 
 Now, given $X^3$ such that $\mathcal{M}\neq\emptyset$, and using the same notation as in Theorem \ref{theorem.topological.classification.of.3-mfld.with.boundary}, we can describe the family of \emph{model metrics} on $M=DX$.
 
 Let us first discuss the simpler case when $a=0$, namely when
 \[
 X\simeq S^3\# S^3/\Gamma_1\#\cdots\# S^3/\Gamma_b\# \left(\#_{i=1}^c (S^2\times S^1)\right)\setminus (\sqcup_{i=1}^d B_i^3)
 \]
 In this case, a model metric is any metric that is obtained as follows:
 \begin{enumerate}
 	\item {we consider a standard round reflexive 3-manifold, which we will name \emph{reference sphere} (we agree that the reference sphere is normalized so to have unit radius);}
 	\item {we perform exactly $d-1$ Gromov-Lawson connected sums of type $C$, each connecting a pair of $f$-related points in the reference sphere;}
 	\item {we perform, for any $i=1,\ldots, b$, a type $D$ Gromov-Lawson connected sum of the reference sphere with a reflexive pair consisting of the disjoint union of two copies of $S^3/\Gamma_i$ picking a basepoint in the upper (open) hemisphere of the reference sphere, and then the corresponding point in the lower (open) hemisphere;}
 	\item {we perform exactly $c$ Gromov-Lawson type $D$ connected sums of the reference sphere with itself picking $c$ pairs of basepoints in the upper (open) hemisphere, and then $c$ pairs of corresponding points in the lower (open) hemisphere (thereby attaching a total of $2c$ handles to the reference sphere).}
 \end{enumerate}

 In general, let
  \[X\simeq P_{\g_1}\#\cdots\# P_{\g_a}\# S^3/\Gamma_1\#\cdots\# S^3/\Gamma_b\# \left(\#_{i=1}^c (S^2\times S^1)\right)\setminus (\sqcup_{i=1}^d B_i^3).\]
  
  \begin{rmk}\label{rem:Ordering}
     Without loss of generality, we can (and we shall) assume that in the equation above $\gamma_1\leq\gamma_2\leq\ldots\leq \gamma_a$. This convention, although unnecessary, is helpful in avoiding any possible ambiguity in the construction described below; if we worked with unordered tuples we would end up with possibly different, yet anyway isotopic model metrics.
  \end{rmk}
  
  We first need to describe what we mean by \emph{genus $\g$ handle of mixed type $CT$}: if we are given a connected reflexive 3-manifold $(M,g,f)$ we consider a type $C$ Gromov-Lawson connected sum and then we further perform, along the central sphere of the neck (in particular: in the region where the metric is exactly cylindrical) exactly $\g\geq 1$ type $T$ connected sums, with endpoints on such central sphere. Of course, one needs to work at two different scales (which are fixed once and for all) in order for this operation to be well-defined.

  \begin{figure}[htbp]
  	\centering
  	\includegraphics[scale=0.95]{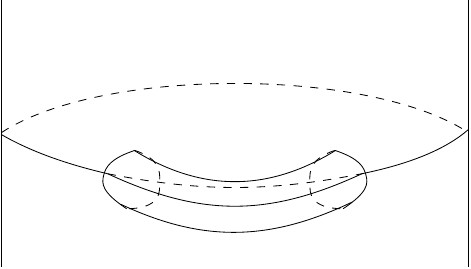}
  	\caption{An equivariant neck of mixed type $CT$.}
  	\label{fig:NeckTypeCT}
  \end{figure}
  
  That being said, a model metric is any metric on $M$ that is obtained by following this procedure: 
  \begin{enumerate}
  	\item {we consider a standard round reflexive 3-manifold, which we will refer to as \emph{reference sphere};}
  	\item {we perform exactly $\g_1$ Gromov-Lawson connected sums of type $T$, connecting pairs of points on the equator of the reference sphere;}
  	\item {we perform exactly $d$ Gromov-Lawson connected sums of type $C$, each connecting a pair of $f$-related points in the reference sphere;}
  	\item {we perform exactly $a-1$ Gromov-Lawson connected sums of mixed type $CT$, each connecting a pair of $f$-related points in the reference sphere, and having genera $\g_2, \g_3, \ldots, \g_a$ respectively;}
  	\item {we perform, for any $i=1,\ldots, b$, a type $D$ Gromov-Lawson connected sum of the reference sphere with a reflexive pair consisting of the disjoint union of two copies of $S^3/\Gamma_i$ picking a basepoint in the upper (open) hemisphere of the reference sphere, and then the corresponding point in the lower (open) hemisphere;}
  	\item {we perform exactly $c$ Gromov-Lawson type $D$ connected sums of the reference sphere with itself picking $c$ pairs of basepoints in the upper (open) hemisphere, and then $c$ pairs of corresponding points in the lower (open) hemisphere (thereby attaching a total of $2c$ handles to the reference sphere).}
  \end{enumerate}

\begin{figure}[htbp]
	\centering
	\includegraphics[width=\textwidth]{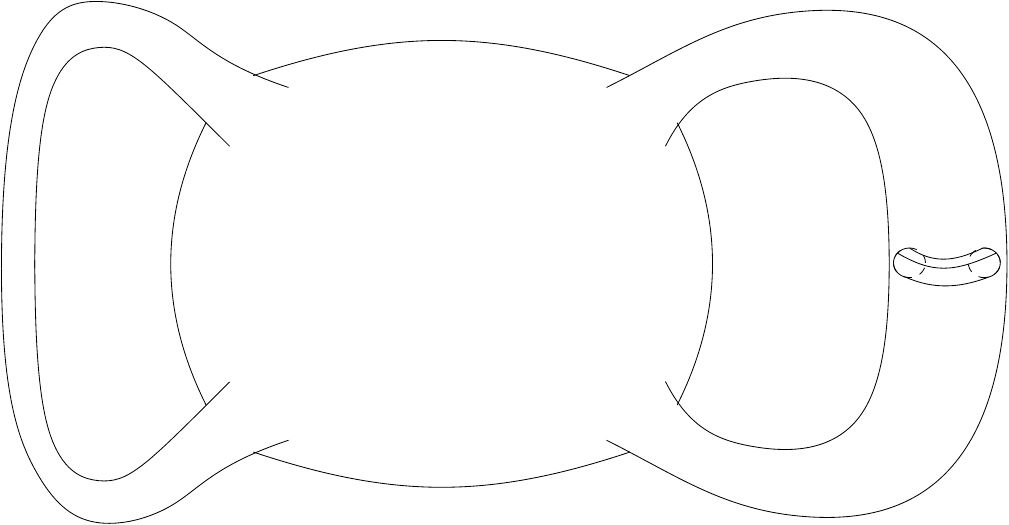}
	\caption{A model metric.}
	\label{fig:ModMetric}
\end{figure}

 \begin{defi}
 	\label{def:ModTriple}
 We shall say that a reflexive triple $(M,g,f)$ is a model triple if each of its connected component is endowed, when restricting $g$, with a model metric as defined above.	
 \end{defi}

We recall from Section \ref{sec:EquivMan}, Corollary \ref{cor:UniqueF}, that if $(M,g,f)$ is a reflexive triple then $f$ is uniquely determined by $g$ so the involution does not add additional degrees of freedom in the parametrization of model triples.

    \subsection{Handling the general case}\label{subs:PathGen}
    
    Here we wish to first prove the following assertion, pertaining those manifolds (of positive scalar curvature) covered by canonical neighborhoods.
    
    	\begin{prop}\label{pro:canGEN}
    	Let $(M^3,g,f)$ be a \emph{connected} reflexive 3-manifold having positive scalar curvature, and being covered by $(C,\e)$ reflexive canonical neighborhoods. Then $M^3$ is diffeomorphic to one of the three manifolds $S^3, S^2\times S^1, \mathbb{RP}^3\# \mathbb{RP}^3$ and it can be connected, via a smooth reflexive isotopy of classes, through metrics of positive scalar curvature to a model triple.
    \end{prop}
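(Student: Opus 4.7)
The plan is to follow the scheme of Proposition \ref{pro:can}, splitting into cases based on the reflexive canonical neighborhoods that cover $\Sigma := Fix(f)$, and extracting from each case both the topological classification and the reflexive isotopy simultaneously.

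First, if $(M,g)$ is a $C$-component or an $\e$-round component, then $M$ has positive sectional curvature and hence is diffeomorphic to a spherical space form $S^3/\Gamma$. The reflexivity hypothesis forces $M = DX$ for some compact orientable $3$-manifold $X$ with $\partial X = \Sigma$, and a van Kampen analysis combined with the classification of finite subgroups of $SO(4)$ acting freely on $S^3$ (together with the constraint that boundary components of the orientable manifold $X$ must be orientable surfaces) shows that $\Gamma$ must be trivial, so $M \simeq S^3$. The smooth equivariant Ricci flow of Dinkelbach-Leeb then deforms $(M,g,f)$ through reflexive metrics of positive scalar curvature to a round model triple.

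In the remaining cases, every point of $\Sigma$ lies either in the core of a reflexive $(C,\e)$-cap or at the center of a reflexive $\e$-neck, and I split the argument exactly as in the proof of Proposition \ref{pro:can}. When every point of $\Sigma$ lies in the core of a reflexive cap (the analogue of Case 1 there), a parallel argument shows that $M = K \cup f(K)$ for a single cap $K$; accordingly $M$ is diffeomorphic either to $S^3$ or to $\mathbb{RP}^3 \# \mathbb{RP}^3$ depending on whether the underlying manifold of $K$ is $B^3$ or $\mathbb{RP}^3 \setminus B^3$, and Hamilton's Ricci flow on the positively curved ambient space, combined with Lemma \ref{lem:Ext}, produces the required reflexive isotopy to a model triple. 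When instead some $z \in \Sigma$ is the center of a reflexive type-$C$ neck containing a connected component of $\Sigma$ as its central sphere (the analogue of Case 2a), equivariant surgery splits $(M,g,f)$ into a reflexive central piece together with an $f$-symmetric pair of non-reflexive pieces, each of which is covered by canonical neighborhoods and is isotopic to a round model via Proposition \ref{pro:can} or its non-equivariant counterpart in \cite{Mar12}. The reconstruction provided by Lemmata \ref{lemma.equivariant.GL.construction} and \ref{lemma.GL.and.RFsurgery.are.isotopic} then assembles these isotopies into the desired path, and the resulting topology is one of $S^3$, $S^2 \times S^1$, or $\mathbb{RP}^3 \# \mathbb{RP}^3$ according to the topology of the pieces.

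The final subcase (analogous to Case 2b of Proposition \ref{pro:can}) requires the construction of a structured chain of reflexive $\e$-necks terminating either at a cap or closing into a loop. I will combine the necks into a single long neck structure via Lemma \ref{lem:Combo2}, perform equivariant surgery along the central sphere, argue that the resulting pieces are each isotopic to round models, and then reconstruct via Lemma \ref{lemma.GL.and.RFsurgery.are.isotopic} to obtain the global isotopy; Proposition \ref{pro:ConfDevMap} is invoked on the locally conformally flat pieces produced by Gromov-Lawson sums of round spheres. The main obstacle — and the only genuinely new feature compared to Proposition \ref{pro:can} — is the closed-loop scenario (corresponding to $M \simeq S^2 \times S^1$), for which the distance-comparison step used in Case 2b of Proposition \ref{pro:can} must be modified to account for the fact that embedded two-spheres in $M$ need no longer be separating; this will be handled by directly deforming the loop to a rotationally symmetric metric on $S^2 \times S^1$, followed by an equivariant identification with a $C$-type (or $D$-type) model triple attached to a central round sphere.
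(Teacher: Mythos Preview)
Your overall plan is reasonable and tracks the structure of Proposition \ref{pro:can}, but there is a genuine error in your treatment of $\mathbb{RP}^3\#\mathbb{RP}^3$, and your handling of $S^2\times S^1$ is both vague and more complicated than necessary.

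\textbf{The $\mathbb{RP}^3\#\mathbb{RP}^3$ gap.} In your Case~1 analogue you assert that when every point of $\Sigma$ lies in a cap core, one can have $M\simeq \mathbb{RP}^3\#\mathbb{RP}^3$ (the cap being of $\mathbb{RP}^3\setminus B^3$ type) and then apply ``Hamilton's Ricci flow on the positively curved ambient space.'' This fails: $\pi_1(\mathbb{RP}^3\#\mathbb{RP}^3)\cong \mathbb{Z}/2\ast\mathbb{Z}/2$ is infinite, so $\mathbb{RP}^3\#\mathbb{RP}^3$ carries no metric of positive sectional curvature and Hamilton's convergence theorem is unavailable. The correct observation is that this scenario simply cannot occur: if $M=Y\cup f(Y)$ with $Y$ a cap core (hence positively curved), then $M$ has positive sectional curvature and by \cite{Ham82} is a spherical space form, contradicting $M\simeq\mathbb{RP}^3\#\mathbb{RP}^3$. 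The paper uses exactly this contradiction to rule out the two-cap covering, and then handles $\mathbb{RP}^3\#\mathbb{RP}^3$ entirely through the analogues of Cases~2a and~2b, replacing $B^3$-type caps by $\mathbb{RP}^3\setminus B^3$-type caps; moreover Case~2b is immediately excluded because $Fix(f)$ is separating (it is a single sphere, since $X\simeq\mathbb{RP}^3\setminus B^3$).

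\textbf{Organization and the $S^2\times S^1$ case.} Rather than re-running the entire case split of Proposition~\ref{pro:can} from scratch, the paper first invokes the known topological classification (manifolds covered by necks and caps lie in $\{S^3,\,S^2\times S^1,\,\mathbb{RP}^3,\,\mathbb{RP}^3\#\mathbb{RP}^3\}$, and Theorem~\ref{theorem.topological.classification.of.3-mfld.with.boundary} eliminates the two spherical space forms with nontrivial $\Gamma$), then treats each surviving topology separately. For $S^2\times S^1$ the paper's argument is much cleaner than your proposed ``direct deformation to a rotationally symmetric metric'': by Proposition~A.21 of \cite{MT07}, when $M\simeq S^2\times S^1$ is covered by canonical neighborhoods, \emph{every} point is the center of an $\e$-neck; in particular every point of $Fix(f)$ is the center of a reflexive $\e$-neck. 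One then performs a single equivariant surgery (type~$T$ if $Fix(f)\simeq S^1\times S^1$, corresponding to $X\simeq D^2\times S^1$; type~$C$ if $Fix(f)$ is two spheres, corresponding to $X\simeq S^2\times I$), reducing to a reflexive $S^3$ covered by canonical neighborhoods, where Proposition~\ref{pro:can} applies. Lemma~\ref{lemma.GL.and.RFsurgery.are.isotopic} and Lemma~\ref{lemma.equivariant.GL.construction} then reconstruct the isotopy. This bypasses entirely the non-separating-sphere and loop-deformation issues you flag.
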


\begin{proof}
First of all, if the manifold in question is a $C$-component or an $\e$-round component then it follows from Hamilton's work (cf. Remark \ref{rmk:Lift}) that there exists a reflexive isotopy with endpoint a model triple corresponding to a spherical space form $S^3/\Gamma$ (with round metric). However, by Theorem \ref{theorem.topological.classification.of.3-mfld.with.boundary} this is only possible if $\Gamma$ is the trivial group (all non-trivial spherical space forms must come in pairs in $M=DX$).

At this stage, we further know (cf. \cite{MT07}) that any 3-manifold having positive scalar curvature, and being entirely covered by  canonical neighborhoods that are either $\e$-necks or $(C,\e)$-caps, is diffeomorphic to one of the following four: $S^3, S^2\times S^1, \mathbb{RP}^3, \mathbb{RP}^3\# \mathbb{RP}^3$. If we further assume that the 3-manifold in question is reflexive (and connected) then, again by Theorem \ref{theorem.topological.classification.of.3-mfld.with.boundary}, the only possible options are $S^3, S^2\times S^1, \mathbb{RP}^3\# \mathbb{RP}^3$. We have already discussed about $S^3$ (see Proposition \ref{pro:can}), so let us consider the other two cases. 

We know, by our topological results, that $M=\mathbb{RP}^3\# \mathbb{RP}^3$
is only possible when $X$ is obtained by removing a ball from $\mathbb{RP}^3$ (that is to say when $a=0, b=1, c=0, d=1$ and the only spherical space quotient is determined by the group $\mathbb{Z}/2\mathbb{Z}$). Observe that $Fix(f)\subset M$ cannot be entirely covered by the core of $(C,\e)$-caps of type $A$, for in this case it would have positive sectional curvature and thus \cite{Ham82} would force $M$ to be a spherical quotient, a contradiction. If instead $Fix(f)$ were covered by the core of $(C,\e)$-caps of type $C$, then one would perform two surgeries (distant from $Fix(f)$), thereby obtaining three locally conformally flat pieces, so a reflexive pair of $\mathbb{R}\mathbb{P}^3$ and an $S^3$. Each piece can be conformally deformed to have a round metric, so the conclusion follows the conceptual scheme we have already described. Those cases being dealt with, one can follow, with minor modifications, the discussion of cases \emph{2a} and \emph{2b} we presented for $M\simeq S^3$ (Section \ref{subs:basis}) replacing type B caps by means of type C caps, both in the covering argument and in the subsequent construction of the isotopy (for the relevant definitions see \cite{Mar12}, Section 7). Actually, we note that \emph{Case 2b} can be ruled out, when $M=\mathbb{RP}^3\# \mathbb{RP}^3$, because of the separating property we always postulate for $Fix(f)$.

Concerning $M\simeq S^2\times S^1$ we have that either $X\simeq D^2\times S^1$ (in other words: $X$ is a genus one handlebody) or $X\simeq S^2\times I$ (in other words: a sphere minus two balls). Recall from Proposition A.21 in \cite{MT07} that, either way, when $M\simeq S^2\times S^1$ is covered by canonical neighborhoods then any point of $M$ actually lies in the central sphere of some $\e$-neck. In particular, by \cite{DL09} (cf. our discussion in Section \ref{subs:Necks}), any point in $Fix(f)$ lies on the central sphere of some reflexive $\e$-neck. Therefore, if $X\simeq D^2\times S^1$ we have $Fix(f)\simeq S^1\times S^1$ hence any point of $Fix(f)$ is the center of a type $T$ reflexive $\e$-neck. If we fix one such neck and perform surgery along the corresponding central sphere, then we obtain a reflexive triple that is topologically a 3-sphere, hence by Proposition \ref{pro:can} there exists a reflexive isotopy to a model triple that is just a round sphere. Hence, the conclusion comes by unwinding the previous surgery operation, namely by invoking Lemma \ref{lemma.GL.and.RFsurgery.are.isotopic} followed by Lemma \ref{lemma.equivariant.GL.construction}. 
Instead, if  $X\simeq S^2\times I$ we have that $Fix(f)$ is the disjoint union of two spheres, hence we can find a type $C$ reflexive $\e$-neck whose central sphere coincides with one connected component of $Fix(f)$. If we then perform surgery along such a sphere, we can easily conclude as above.
\end{proof}

    We now discuss how to modify the induction arguments presented when dealing with the case of a general manifold $X^3$.
    
    	\begin{lem}\label{lemma.equivariant.connected.sums.canonical.general}
    	Assume $(M^3,g,f)$ is a connected reflexive manifold. Suppose also that $(M^3,g,f)$ is obtained, by performing equivariant Gromov-Lawson connected sums, from $(S_1\sqcup \cdots\sqcup S_N, g_1\sqcup\cdots\sqcup g_N,f')$. If there is a reflexive isotopy of classes connecting $(S_1\sqcup \cdots\sqcup S_N, g_1\sqcup\cdots\sqcup g_N,f')$ to $(S_1\sqcup \cdots\sqcup S_N, g_{1,1}\sqcup\cdots\sqcup g_{N,1},f')$ where the latter is a model triple,
    	then there exists a reflexive isotopy of classes connecting $(M^3,g,f)$ to a model triple.
    \end{lem}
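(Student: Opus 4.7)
The plan is to argue by induction on $N$, generalizing the proof of Lemma \ref{lemma.equivariant.connected.sums.canonical}. The model triples in the general setting are richer than in the case $M\simeq S^3$—they involve type $D$ double connected sums (one for each removed ball in the decomposition of $X$), mixed $CT$ necks (for the handlebody factors $P_{\gamma_i}$ with $i\geq 2$), and non-equivariant connected sums attached to the central sphere (for the $S^3/\Gamma_i$ and $S^2\times S^1$ summands)—so the combinatorial bookkeeping is more involved, but the key tools remain Lemma \ref{lemma.equivariant.GL.construction}, Remark \ref{remark.GL.connected.sum.locally.conformall.flat}, Remark \ref{remark:GL.connected.sum.and.equivariant.Kuiper}, and Proposition \ref{pro:ConfDevMap}.

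For the base cases, $N=1$ is immediate. For $N=2$ there are now three possibilities (rather than two as in Lemma \ref{lemma.equivariant.connected.sums.canonical}): the two pieces may be connected by a type $T$, a type $C$, or a type $D$ operation. In each case, Lemma \ref{lemma.equivariant.GL.construction} propagates the hypothesized isotopy on $(S_1\sqcup S_2,\ldots)$ through the connected sum to an isotopy on $(M,g,f)$ ending at the Gromov-Lawson sum of round/model pieces. Remark \ref{remark.GL.connected.sum.locally.conformall.flat} ensures the endpoint is locally conformally flat, so Proposition \ref{pro:ConfDevMap} together with Remark \ref{rmk:Lift} produces a further reflexive isotopy of classes terminating at a model triple.

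For the induction step, I would locate a component $S_k$ that is joined to the rest via a \emph{single} Gromov-Lawson operation (of type $T$, $C$, or $D$), which exists for the following reason: the graph whose vertices are the pieces $S_i$ and whose edges are the equivariant necks cannot have all vertices of degree $\geq 2$, for otherwise the graph would contain a cycle contributing an extra $S^1$-factor to $\pi_1(M)$ beyond what Theorem \ref{theorem.topological.classification.of.3-mfld.with.boundary} permits (each independent cycle raises the count of $S^2\times S^1$ summands in $M=DX$, which is fixed by the topological data). Removing such a leaf $S_k$, we apply the inductive hypothesis to the remaining disjoint union; then we reattach $S_k$ by the corresponding equivariant Gromov-Lawson operation via Lemma \ref{lemma.equivariant.GL.construction}. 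The attaching point on the model-triple side can be moved to the locus prescribed in the definition of a model metric via a diffeomorphism $\varphi\in\mathcal{D}(M,f)$, so the argument runs in the moduli space $\mathcal{M}/\mathcal{D}$.

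The main obstacle is ensuring that the endpoint of the concatenated isotopy is \emph{exactly} a model triple (in the sense of Definition \ref{def:ModTriple}), rather than merely reflexively isotopic to one. Two points demand care: first, in the model the attaching configuration is prescribed (the $S^3/\Gamma_i$ and $S^2\times S^1$ summands are attached to the central sphere, the $P_{\gamma_i}$ with $i\geq 2$ arise through mixed $CT$ necks, etc.), and the induction may produce necks situated elsewhere or chained in non-standard ways; second, the inductive step may produce \emph{higher-order} chains of equivariant necks, which never appear in model triples. Both issues are resolved by combining the reflexive chain-combining statement (Lemma \ref{lem:Combo2}) with the local conformal flatness of round connected sums and Proposition \ref{pro:ConfDevMap}: any chain of necks produced inductively can be reflexively isotoped to a single standard neck of the appropriate type, and any mislocated summand can be repositioned via an $f$-equivariant diffeomorphism. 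This is the same simplification philosophy alluded to in Section \ref{subs:ModelMet} (the discussion of iterated necks and Figures \ref{fig:InflateSphere} and \ref{fig:Simplify}), and it ultimately reduces the verification to finitely many configurations already handled by the base case and the tools of Section \ref{subs:equivGL}.
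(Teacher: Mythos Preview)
Your proposal has two genuine gaps.

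First, your base case $N=2$ invokes Proposition \ref{pro:ConfDevMap} on the endpoint $(M,g_{1,1}\#g_{2,1},f)$ to conclude it is reflexively isotopic to a model triple. But Proposition \ref{pro:ConfDevMap} is stated and proved only for $M\simeq S^3$; in the general setting $M$ need not be a sphere (it may have $S^3/\Gamma$ or $S^2\times S^1$ summands), so the developing-map argument does not apply to all of $M$. The paper handles this differently: it applies Proposition \ref{pro:ConfDevMap} only to the locally conformally flat \emph{subregion} consisting of the two central spheres and the connecting neck (which together do form an $S^3$), merging these into a single round central sphere, and then invokes Lemma \ref{lemma.equivariant.GL.construction} to carry along the remaining decorations (the $S^3/\Gamma_i$, $S^2\times S^1$, and $CT$ pieces hanging off the central spheres). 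For a type $D$ connection the paper does something more elaborate still: it migrates the decorations of $S_2$ through the two necks onto $S_1$, then ``deflates'' the stripped central sphere of $S_2$ into a type $C$ or $CT$ neck (Example \ref{example:Inflate} and Remark \ref{rmk: Inflate}).

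Second, your inductive step rests on the existence of a leaf component, justified by claiming that a cycle in the connection graph would force an extra $S^2\times S^1$ summand ``beyond what the topology permits''. In Lemma \ref{lemma.equivariant.connected.sums.canonical} this worked because $M\simeq S^3$ has no such summands at all. Here, however, $M$ genuinely can contain $S^2\times S^1$ factors, and nothing prevents some of them from arising via cycles in the connection graph rather than from the pieces $S_i$ themselves; a configuration with all vertices of degree $\geq 2$ is not excluded. The paper sidesteps this by inducting on $k$, the number of connecting operations between distinct pieces, rather than on $N$: at each step it either removes a type $T$ or $C$ neck (separating or not) or, when only type $D$ operations remain, either finds a doubled pair of type $D$ connections or absorbs one piece into another via the deflation trick. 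This scheme never needs a tree structure.
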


Before presenting the proof, let us discuss an enlightening special case and then add two useful remarks which will allow for a more direct argument.

\begin{ex}
	\label{example:Inflate}
	Let us consider the following toy problem. Let $(M,g,f)$ denote a pair of round spheres connected by means of a type $D$ Gromov-Lawson double connected sum (so by \emph{two} necks). 
	We claim that this triple can be connected, via a reflexive isotopy, to 
	a  round sphere with one handle attached (as a result of a type $C$ Gromov-Lawson connected sum), denoted by $(M,\tilde{g},f)$.
	The way the isotopy is constructed is pictorially described in Figure \ref{fig:InflateSphere} below. One can first perform surgery, in a $f$-equivariant fashion, on the central spheres of the two necks of $M$ thereby obtaining two connected reflexive 3-manifolds denoted by $(M_1\simeq S^3, g_1, f_1)$ and $(M_2\simeq S^3, g_2, f_2)$. We observe that $(M_2, g_2, f_2)$ is locally conformally flat, hence can be isotoped to a (compact) round cylinder $(M_2,g_{2,1}, f_2)$ capped off by performing surgery along the two boundary spheres (this follows by the fact that both triples can be equivariantly isotoped to a round sphere using Kuiper's developing map). Now, it follows by applying the reconstruction lemma (cf. Lemma \ref{lemma.GL.and.RFsurgery.are.isotopic}, and Remark \ref{rmk:typeDreconstruction}) that $(M,g,f)$ can be connected via a reflexive isotopy to the type $D$ connected sum $(M_1\simeq S^3, g_1, f_1)\# (M_2\simeq S^3, g_2, f_2)$; on the other hand the same lemma implies that the type $D$ connected sum $(M_1\simeq S^3, g_1, f_1)\# (M_2\simeq S^3, g_{2,1}, f_2)$ is isotopic to the triple $(M, \tilde{g}, f)$. We conclude by simply concatenating the two isotopies.
\end{ex}

\begin{figure}[htbp]
	\centering
	\includegraphics[width=\textwidth]{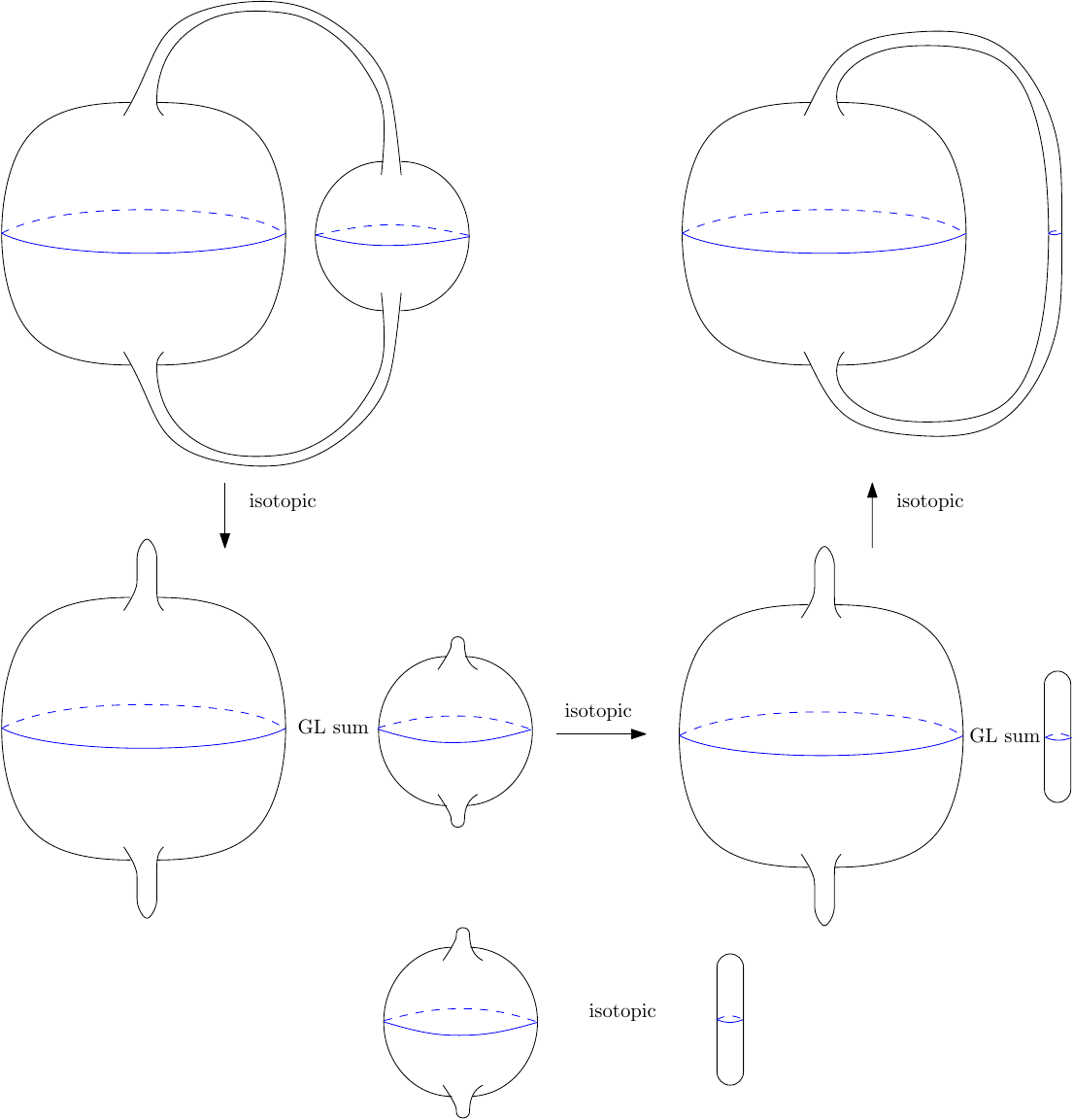}
	\caption{A standard isotopy: inflating/deflating a sphere.}
	\label{fig:InflateSphere}
\end{figure}

\begin{rmk}
	\label{rmk: UnaffArea}
	With respect to the statement of Lemma \ref{lemma.equivariant.connected.sums.canonical.general}, consider the case $N=2$, that is to say a Gromov-Lawson equivariant connected sum of two model triples. Now, let us denote by $S'_i$ the subset of the reference sphere of $S_i$ that is unaffected by the connected sums defining $(S_i, g_i, f')$, so the region of $S_i$ consisting of those points having (at least) a positive, preassigned, distance from the boundary of each ball where the Gromov-Lawson constructions are performed. Furthermore, for $i=1,2$ let $N'_i\subset N_i$ be the set of points on the union $N_i$ of type $C$ necks of $S_i$ that are unaffected by potential, subsequent type $T$ connected sums determining necks of mixed type $CT$. 
	Then, let us consider a Gromov-Lawson equivariant connected sum of $(M_1, g_1, f_1)$ and $(M_2, g_2, f_2)$. We observe that by suitably moving the basepoints for the construction, and pulling back the resulting metrics via diffeomorphisms, we can always assume that any basepoint $p_1$ belongs to $S'_1 \cup N'_1$, and similarly any basepoint $p_2$ belongs to $S'_2\cup N'_2$. This relies on Proposition \ref{lemma.equivariant.GL.construction}.
\end{rmk}

\begin{rmk}
	\label{rmk: Inflate}
The operation of moving basepoints, however, does not allow to `resolve' the following situation. With the same notation as above, we might have a type $T$ connected sum of the two model triples, with basepoints located (either in one or in both cases) on the unaffected area of type $C$ necks, namely on $N'_1\subset N_1$ and $N'_2\subset N_2$. We wish to prove that such a reflexive 3-manifold, denoted by $(M,g,f)$ and which is \emph{not} a model triple can be connected, via a reflexive isotopy, to a model triple. The idea is that, following the argument presented in Example \ref{example:Inflate} above, we can first construct a reflexive isotopy that \emph{inflates} the necks $N_1, N_2$ to round spheres, and then (using Kuiper's develeoping map, cf. Proposition \ref{pro:ConfDevMap}) we can isotope the type $T$ connected sums of those two spheres (formerly $N_1, N_2$) into a single round sphere. By repeating this operation finitely many times, one can then always reduce to the case when a Gromov-Lawson equivariant connected sum of two model triples has basepoints in the unaffected region of the reference spheres (what we denoted by $S'_i\subset S_i$).
\end{rmk}

\begin{figure}[htbp]
	\centering
	\includegraphics[width=\textwidth]{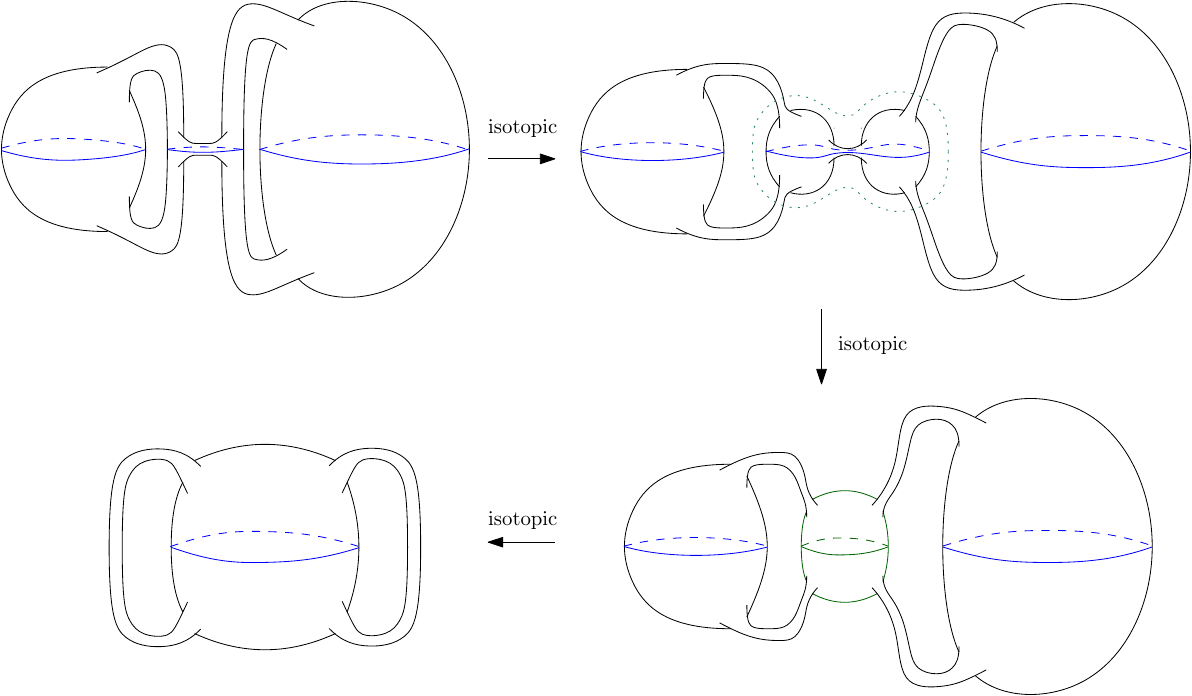}
	\caption{Reduction to model metrics.}
	\label{fig:Simplify}
\end{figure}

We now build on these observations to prove Lemma \ref{lemma.equivariant.connected.sums.canonical.general}.

\begin{proof}
	
We let $k\in\mathbb{N}$	denote the number of equivariant Gromov-Lawson operations connecting any possible pair of \emph{distinct} reflexive 3-manifolds in our connected sum, namely $(S_i, g_i, f_i)$ to $(S_j, g_j, f_j)$ for $i,j\in\left\{1,\ldots, N,\right\}$ and $i\neq j$. (We notice that with our conventions $k=k_T+k_C+k_D/ 2$ where $k_T, k_C, k_D$ denote the numbers of connecting necks of type $T, C, D$ respectively).

Thanks to Remark \ref{rmk: UnaffArea} and \ref{rmk: Inflate} above, we can assume, without loss of generality, that the basepoints for each connecting neck (of any type) are taken in the unaffected domain $S'_i$ of the reference sphere of $S_i$, for any $i\in\left\{1,\ldots, N\right\}$.

Now we proceed by induction on $k\geq 0$ and let $\mathcal{P}(k)$ denote the proposition we wish to prove. If $k=0$ there is nothing to prove, for there are no connected sums performed and thus necessarily $N=1$ so we already have a model triple. We further need to discuss, as base case, $k=1$ since the inductive scheme might employ the assertion $\mathcal{P}(k-2)$ to obtain $\mathcal{P}(k)$. So, when $k=1$ there is only one connected sum. Thus necessarily $N=2$. If the Gromov-Lawson connected sum is of type $T$ or of type $C$, we simply observe that the two reference spheres of $S_1$ and $S_2$, together with that connecting neck, can be isotoped to a round sphere (again using the developing map, see Proposition \ref{pro:ConfDevMap}, so that the conclusion comes by simply invoking Lemma \ref{lemma.equivariant.GL.construction}). If instead the equivariant Gromov-Lawson connected sum in question is of type $D$, we need more care.
Since $(M,g,f)$ is assumed to be connected, we can assume (without loss of generality) that $(S_1, g_1, f'_{|S_1})$ is a \emph{connected} reflexive manifold. There are two cases: either $(S_2, g_2, f'_{|S_2})$ is also a connected reflexive manifold, or it shall instead consist of exactly two connected components. Let us start discussing the first case.
 To fix the notation, say that $S_2$ is obtained by attaching to the reference sphere the other pieces following the procedure described in Section \ref{subs:ModelMet}.
So, we consider all connected sums that are performed to produce $S_2$ starting from its reference sphere only and transfer all pieces, \emph{except the necks associated to type $T$ connected sums}, to $S_1$ moving the basepoints, in an $f$-equivariant way, through the two necks connecting $S_1$ and $S_2$. This operation is well-defined by virtue of Lemma \ref{lemma.equivariant.GL.construction}. Thus, we have reduced to the case when $S_2$ is just a reference sphere with finitely many type $T$ necks attached along its equator. If we \emph{deflate} it, reverting the isotopy constructed in Example \ref{example:Inflate} we obtain a mixed neck to type $CT$ (possibly just a neck of type $C$) and we have that $(M,g,f)$ can be connected, via a reflexive isotopy, to a model triple.  
In the case when $(S_2, g_2, f'_{|S_2})$ is a disconnected reflexive manifold, we can employ Lemma \ref{lemma.equivariant.GL.construction} to isotope, through metrics of positive scalar curvature, $(M,g,f)$ to the type $D$ connected sum of a (connected) model triple and two round spheres: using the developing map we can then merge the reference sphere of the model triple and such two spheres, through an isotopy, to a single sphere; it follows, again by Lemma \ref{lemma.equivariant.GL.construction}, that $(M,g,f)$ ca  be isotoped to a model triple.
Thereby we have completed the discussion of the base of the induction and can move to the inductive step.

So, let us assume the implication is true for $k=0,1,\ldots, a-1$ and let us prove it for $k=a\geq 2$. If $(M,g,f)$ is obtained by performing \emph{at least one connected sum of type $T$ or type $C$} then we argue in the same fashion, as we shall now indicate. If the central 2-sphere of the neck in question is not separating then by the induction hypothesis and Lemma \ref{lemma.equivariant.GL.construction} we have that $(M,g,f)$ is isotopic to a model triple with one extra equivariant neck attached, corresponding to either a type $T$ connected sum or a type $C$ connected sum: either way, by the very definition of model metrics, we have that $(M,g,f)$ is itself a model triple. If instead the central 2-sphere of the neck in question is separating, then we have that, by neglecting the neck in question, $M$ gives rise to two equivariant triples, to each of which the inductive hypothesis applies: hence $(M,g,f)$ is connected, via a reflexive isotopy, to the type $T$ or type $C$ connected sum of two model triples, at which stage one can conclude arguing as we did above for the $k=1$ case. Finally, we need to consider the case when $(M,g,f)$ is obtained from the building blocks by only performing equivariant connected sums of type $D$. Here we have two subcases. If there exists a pair indices $i\neq j$ such that $S_i, S_j$ are connected by means of at least two type $D$ connected sums (i.~e. at least four necks) we can use the inductive hypothesis to equivariantly isotope $(M,g,f)$ to a model triple with two $f$-associated handles attached, so again a model triple.

If instead any pair $S_i, S_j$ is connected by exactly one pair of necks then we consider $S_1, S_2$, and assume (possibly renaming, without loss of generality) that $(S_1, g_1, f'_{|S_1})$ is a connected reflexive triple. We then
 repeat the argument for the type $D$ connected sum in the $k=1$ analysis and isotope that pair to a model triple; thus, we can conclude thanks to the inductive hypothesis, applied for $k-1$.
\end{proof}	
    
  The second inductive argument, presented in Section \ref{subs:backw}, can be transplanted (with purely notational changes) to the case of general $X^3$. Hence, Proposition \ref{pro:canGEN} and Lemma \ref{lemma.equivariant.connected.sums.canonical.general} allow to complete the proof of Proposition \ref{pro:Conclusive}.

 \subsection{Conclusions: path-connectedness theorems}\label{subs:pcmet}

We will now exploit the equivariant isotopy obtained above to complete the proof of Theorem \ref{thm:B}. Given $X^3$ as in that statement, we let $\mathcal{O}$ be the associated subset of model metrics. We start with two remarks. First, observe that if $X$ can be written as a connected sum of pieces as in the statement of Theorem \ref{thm:A} in two different ways, i.~e. if one has an orientation-preserving diffeomorphism
\begin{equation}\label{eq:CompDiffType}
X_1\#\ldots \# X_k \simeq X'_1\#\ldots \# X'_\ell
\end{equation}
then in fact $k=\ell$ and, possibly by re-labeling the summands, there are orientation-preserving diffeomorphisms $X_i\simeq X'_i$ for $i=1,2,\ldots k$. This assertion follows as a special case of the uniqueness of prime decomposition for compact manifolds with boundary as given in Theorem 3.21 of \cite{Hem04}. Such a conclusion is, in our special setting (cf. Theorem \ref{thm:A}), intuitively clear since
one can first look at the boundary components (thereby obtaining, for any $\g \geq 0$, a bijective correspondence between the set of handlebodies of genus $\g$ appearing on either side of \eqref{eq:CompDiffType}), then compare e.~g. the fundamental groups of the left-hand side and of the right-hand side to show that there is the same number of handles ($S^2\times S^1$), and finally conclude that there must be the same spherical space forms. In any event, at that stage a classical theorem by de Rham \cite{deR50} ensures that $X_i$ is actually isometric to $X'_i$ whenever $X_i$ is a spherical space form.

This first remark being given, we then note that, as a result of the way we constructed the model metrics in Subsection \ref{subs:ModelMet} (and also keeping in mind Remark \ref{rem:Ordering} therein), for any given $X$ the set of necks (or: handles) of any given type (type $C$, type $D$, type $T$, type $CT$) for model metrics on $DX$ is uniquely determined by the diffeomorphism type of $X$. For instance, the number of necks of type $CT$ (which, by the way, are the only iterated necks that appear in the model metrics) equals the number of \emph{strictly positive} genus handlebodies of $X$ minus one, and similar simple criteria exist for the other types of necks.

Therefore, once we fix the background manifold $X$ any two model metrics on $M=DX$ will only differ by the data that appear in the Gromov-Lawson connected sum constructions, specifically by the basepoints and the definining orthonormal bases of all summands. Hence, 
appealing to Lemma \ref{lemma.equivariant.GL.construction} and emplyoing, for notational convenience, the letter $\mathcal{O}$ to refer to the subset of model metrics \emph{restricted to $X$} (i.~e. with slight abuse of language we are consider the `upper half' of a model manifold $M=DX$)
we conclude in particular that, for any fixed $X$, the quotient $\mathcal{O}_{\mathcal{D}}:=\mathcal{O}/\mathcal{D}$ is path-connected.

That said, we shall now proceed to the proof of Theorem \ref{thm:B}.

\begin{proof} 
	First of all, we observe that the construction we presented in Appendix \ref{sec:push-in} (see, in particular, Lemma \ref{lem:pushMet}) allows to define a continuous map $\mathcal{J}: \mathcal{O}\times [0,1]\to\mathcal{M}$ (where, as in the statement of our theorem, $\mathcal{M}=\mathcal{M}_{R>0, H>0}$). In particular, this map (evaluated at time $t=1$) descends to a well-defined continuous map at the level of quotients $\mathcal{J}^{(1)}_{\mathcal{D}}:\mathcal{O}_{\mathcal{D}}\to \mathcal {M}/\mathcal{D}$, whose image shall be denoted by $\widetilde{\mathcal{O}_{\mathcal{D}}}$. These are equivalence classes of \emph{deformed} model metrics, still with positive scalar curvature but also with (strictly) mean-convex boundary.
	Observe that since $\mathcal{O}_{\mathcal{D}}$ is path-connected then $\widetilde{\mathcal{O}_{\mathcal{D}}}$ shall be path-connected as well.

	Given $(X^3, g_0)$ a manifold of positive scalar curvature and (strictly) mean-convex boundary, we can
combine Proposition \ref{thm:tgdef} and Proposition \ref{pro:Conclusive} to derive a continuous path of metrics connecting, in $\mathcal{M}_{R>0, H\geq 0}$, $\varphi^{\ast}g_0$ (for a suitable diffeomorphism $\varphi$) to a model metric $g_1\in\mathcal{O}$; if we then concatenate such a path with the path $\mu\mapsto \mathcal{J}(g_1, t)$ we obtain a path, henceforth denoted $(\alpha_{\mu})$ connecting $\varphi^{\ast}g_0$ to a deformed model metric.

Hence, if we directly apply part ii) of
Lemma \ref{lem:pushFam} (with the path $(\alpha_{\mu})$ as input), we can then obtain a new 
continuous path of metrics $(\tilde{\alpha}_{\mu})$ connecting, now in $\mathcal{M}_{R>0, H>0}$, the same initial metric $\tilde{\alpha}_0=\alpha_0$ to the same final metric $\tilde{\alpha}_{1}=\alpha_1$. It follows that there exists an
isotopy of classes starting at $[g_0]$ and ending at an element of $\widetilde{\mathcal{O}_{\mathcal{D}}}$, which is $\mathcal{J}^{(1)}_{\mathcal{D}}([g_1])$. As a result, given the arbitrariness of $[g_0]\in\mathcal{M}/\mathcal{D}$ and the aforementioned path-connectedness of $\widetilde{\mathcal{O}_{\mathcal{D}}}$, we conclude that $\mathcal{M}/\mathcal{D}$ is also path-connected, which is what we had to prove.
\end{proof}
 
More generally, we have the following statement:

\begin{thm}\label{thm:PathConnOther}
Let $X^3\not\simeq S^1\times S^1\times I $ be a connected, orientable, compact manifold with boundary. Then each of the following spaces, when not empty, is path-connected
\begin{enumerate}
	\item [i)]{$\mathcal{M}_{R>0, H>0}/\mathcal{D}$;}
	\item [ii)]{$\mathcal{M}_{R>0, H\geq 0}/\mathcal{D}$;}
	\item [iii)]{$\mathcal{M}_{R\geq 0, H>0}/\mathcal{D}$;}
	\item [iv)]{$\mathcal{M}_{R\geq 0, H\geq 0}/\mathcal{D}$.}
\end{enumerate}	
In the special case when $X^3\simeq D^3$ then the corresponding four spaces of metrics are also path-connected.
\end{thm}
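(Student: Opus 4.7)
The plan is to reduce each of (i)--(iv) to Theorem \ref{thm:B}. In all cases the strategy is the same: given a metric $g$ in any of the three larger spaces $\mathcal{M}_{R>0,H\geq 0}$, $\mathcal{M}_{R\geq 0,H>0}$, $\mathcal{M}_{R\geq 0,H\geq 0}$, one constructs a continuous path $\mu\in[0,1]\mapsto g_\mu$, lying inside the ambient large space, with $g_0=g$ and $g_1\in\mathcal{M}_{R>0,H>0}$. Once such an \emph{ambient retraction} is available, any two classes $[g^{(0)}],[g^{(1)}]$ in the larger quotient can be joined by concatenating the retraction paths with an isotopy of classes inside $\mathcal{M}_{R>0,H>0}/\mathcal{D}$ supplied by Theorem \ref{thm:B}. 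Since $X^3\not\simeq S^1\times S^1\times I$, Corollary \ref{cor:Equivalence} ensures the smaller space is nonempty whenever any of the four spaces is.

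For (ii) the retraction is provided by the push-in deformation of Lemma \ref{lem:pushMet}: one applies the diffeomorphism isotopy $\Psi_\mu''$ of Subsection \ref{subs:GL} to a very thin collar, producing the path $g_\mu=(\Psi_\mu'')^{\ast} g$ whose boundary becomes strictly mean-convex for $\mu>0$ while $R>0$ is preserved. For (iii) the retraction comes from the conformal deformation of Lemma \ref{lem:EigenfDef}: one sets $g_\mu=((1-\mu)+\mu\phi_0)^4 g$ with $\phi_0$ the normalized first Neumann eigenfunction of the conformal Laplacian of $g$; the positivity of the first eigenvalue (guaranteed by $H>0$ and $R\geq 0$) together with the Neumann condition implies $g_\mu\in\mathcal{M}_{R\geq 0,H>0}$ for all $\mu$ and $g_1\in\mathcal{M}_{R>0,H>0}$. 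The convexity arguments of Appendix \ref{sec:ConvexCond} ensure that both deformations are truly paths in the ambient space, and Lemma \ref{lem:contdepdata} guarantees continuous dependence on the starting datum.

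The hard part is (iv), where both inequalities are only weak. For a generic $g\in\mathcal{M}_{R\geq 0,H\geq 0}$ one can perturb either the scalar or the mean curvature to become strictly positive at some point and then invoke (ii) or (iii). The delicate situation occurs when simultaneously $R_g\equiv 0$ and $\partial X$ is minimal; this is precisely the setting of Proposition \ref{pro:Rigidity}, and the trichotomy there dictates the path: in case (i) one perturbs to a metric in $\mathcal{M}_{R>0,H\geq 0}$ and applies (ii), in case (ii) one perturbs to a metric in $\mathcal{M}_{R\geq 0,H>0}$ and applies (iii), while case (iii)---where the flat slab $S^1\times S^1\times I$ appears---is excluded by assumption. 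The main obstacle is verifying that Proposition \ref{pro:Rigidity}'s perturbations can be cast as continuous paths starting at the given $g$, and that the subsequent invocations of Lemma \ref{lem:pushMet} and Lemma \ref{lem:EigenfDef} compose into a continuous retraction; this is again a matter of continuous dependence (Lemma \ref{lem:contdepdata}) for the eigenvalue problems and for the cutoff constructions used in \emph{i)} and \emph{ii)} of that proposition.

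Finally, the $D^3$ case: Cerf's theorem \cite{Cer68} gives contractibility of $\mathcal{D}_+(D^3)$, hence the fibration $\mathcal{M}\to\mathcal{M}/\mathcal{D}_+$ has contractible fibers and so induces a bijection on $\pi_0$; combining with the fact that every element of $\mathcal{M}$ inherits an orientation (so that $\mathcal{M}/\mathcal{D}=\mathcal{M}/\mathcal{D}_+$) one lifts the path-connectedness from the moduli space to $\mathcal{M}$ itself, and the same lifting applies to each of the four spaces above. The only subtlety is that the initial retraction paths constructed for cases (ii)--(iv) must be chosen so as to be constructed from canonical (functorial) deformations---which they are, since the push-in map, the first eigenfunction, and the perturbations in Proposition \ref{pro:Rigidity} are all canonically associated to $g$.
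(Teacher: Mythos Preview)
Your overall architecture is exactly the paper's: case (i) is Theorem \ref{thm:B}, cases (ii) and (iii) are reduced to (i) via Lemma \ref{lem:pushMet} and Lemma \ref{lem:EigenfDef} respectively, case (iv) goes through Proposition \ref{pro:Rigidity}, and the $D^3$ upgrade uses Cerf. So the strategy is correct and coincides with the paper.

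That said, your \emph{descriptions} of the two auxiliary deformations are inaccurate and, as written, would not work. Lemma \ref{lem:pushMet} is not the diffeomorphism pull-back $(\Psi_\mu'')^\ast g$; pushing the boundary inward along equidistants does \emph{not} force strict mean-convexity when one only starts from $H\geq 0$ (the normal evolution of $H$ involves $|A|^2+Ric(\nu,\nu)$, which need not be positive). The actual lemma is a conformal cutoff deformation. Likewise, for (iii) your claim that ``positivity of the first eigenvalue is guaranteed by $H>0$ and $R\geq 0$'' is false for the Neumann problem you wrote down: $H$ does not enter the homogeneous Neumann condition, and if $R_g\equiv 0$ the principal eigenvalue is zero with constant eigenfunction. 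Lemma \ref{lem:EigenfDef} first performs a preliminary conformal cutoff that makes $R\geq 0$ with $R>0$ somewhere (at the cost of slightly lowering, but keeping positive, $H$), and only then uses the eigenfunction step. Once you invoke the lemmas as actually stated rather than as you paraphrased them, the proof goes through verbatim as in the paper.
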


We recall that the borderline case when $X^3\not\simeq S^1\times S^1\times I $ has already been characterized in Corollary \ref{cor:Equivalence}.

\begin{proof}
The path-connectedness of the space \emph{i)} is precisely the content of Theorem \ref{thm:B}. The conclusion for \emph{ii)} and \emph{iii)} follow from Theorem \ref{thm:B} by means of Lemma \ref{lem:pushMet} and Lemma \ref{lem:EigenfDef}, respectively. The conclusion for \emph{iv)} is obtained by reduction to one of the previous cases by means of on Proposition \ref{pro:Rigidity}.
The final statement, concerning the three-dimensional disk, is derived from the previous one by simply invoking the theorem of J. Cerf \cite{Cer68} ensuring the path-connectedness of $\mathcal{D}_{+}(D^3)$.
\end{proof}

\section{Motion through PSC metrics with minimal boundary}\label{sec:MinPaths}

We shall now turn our attention to spaces of metrics with minimal boundary. In particular, let us denote by $\mathcal{H}_{R>0, H=0}$ (respectively: $\mathcal{H}_{R\geq 0, H=0}$) the set of Riemannian metrics on $X$ having positive (respectively: non-negative) scalar curvature and minimal boundary.
The main statement we prove in this section is as follows:

\begin{thm}\label{thm:B'}
		Let $X^3\not\simeq S^1\times S^1\times I $ be a connected, orientable, compact manifold with boundary. Then each of the following moduli spaces, when not empty, is path-connected
		\begin{enumerate}
			\item [i)]{$\mathcal{H}_{R>0, H=0}/\mathcal{D}$;}
			\item [ii)]{$\mathcal{H}_{R\geq 0, H=0}/\mathcal{D}$.}
		\end{enumerate}	
		In the special case when $X^3\simeq D^3$ then the corresponding two spaces of metrics are also path-connected.
	\end{thm}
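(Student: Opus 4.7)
My plan is to replicate the architecture of the proof of Theorem \ref{thm:B}, replacing the key initial deformation (Proposition \ref{thm:tgdef}) by a variant that preserves the exact minimality condition $H = 0$. The authors already observe that Proposition \ref{pro:Conclusive} produces reflexive isotopies through totally geodesic (hence minimal) boundaries; the only missing ingredient is thus a deformation lemma that brings an arbitrary $g \in \mH_{R>0,H=0}$ to a metric $g_1$ with totally geodesic boundary, while keeping every intermediate metric in $\mH_{R>0,H=0}$.

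The construction of such a path must, I believe, abandon the Gromov-Lawson tube construction of Subsection \ref{subs:GL}: the proof of Lemma \ref{lem:singpath} uses in an essential way that the dominant term of \eqref{eq:EstScal} is proportional to the mean curvature $H'$ of $\p X'$, and the strict positivity of $H'$ is what guarantees positivity of the scalar curvature of the tube after shrinking $\e$. This safety margin evaporates when $H_g = 0$. Instead, I would form the doubling $(DX,\tilde g)$ directly: since the two sides of $\p X$ share the same first fundamental form and the jump in mean curvature vanishes, the doubled metric is of class $C^{0,1}$ with zero distributional scalar-curvature contribution at the seam. A Miao-type regularization, patterned on Subsection \ref{subs:Miao} but centered at the seam $\p X \subset DX$ itself rather than at the two interfaces of a tube, then yields a one-parameter family $(\tilde g_\delta)$ of smooth reflexive metrics on $DX$ whose scalar curvature is uniformly bounded from below and strictly positive off a $\delta$-neighborhood of the seam. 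A conformal correction $u_\delta^{4}\tilde g_\delta$ using the first Neumann eigenfunction of the conformal Laplacian restores positivity everywhere: exactly as in Subsection \ref{subs:proof1}, the uniform bounds provided by Lemma \ref{lem:contdepdata} guarantee that the family depends continuously on $\delta$, and the crucial identity $H_{u^4 g} = u^{-3}(H_g u + 4\,\p_\nu u)$ combined with the Neumann condition $\p_\nu u_\delta = 0$ guarantees that $H = 0$ is preserved \emph{exactly} throughout the deformation. Letting $\delta$ decrease to $0$ produces the desired isotopy in $\mH_{R>0,H=0}$ from $g$ to a metric with totally geodesic boundary.

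With this analog of Proposition \ref{thm:tgdef} available, the rest of the proof mirrors Section \ref{sec:path}: the doubling of $(X,g_1)$ is a smooth reflexive triple of positive scalar curvature, Proposition \ref{pro:Conclusive} produces a reflexive isotopy of classes to a model triple, and restriction to a fundamental domain gives a path in $\mH_{R>0,H=0}/\mathcal{D}$ connecting $[g]$ to the class of a half model metric; path-connectedness of the set of half model metrics modulo $\mathcal{D}$ is verified as in Subsection \ref{subs:pcmet}. Part (ii) reduces to part (i) because Proposition \ref{pro:Rigidity} (i) produces a small isotopic perturbation into $\mH_{R>0,H=0}$ whenever $Ric_g \not\equiv 0$, while $Ric_g \equiv 0$ with $H_g = 0$ on every boundary component forces the boundary to be totally geodesic---a closed minimal surface in the flat universal cover of $(X,g)$ must be a plane---and hence, by Proposition \ref{pro:Rigidity} (iii), $X \simeq S^1\times S^1\times I$, the case excluded by hypothesis. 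For $X^3 \simeq D^3$, Cerf's theorem upgrades the moduli-space statement to path-connectedness of the spaces themselves. The principal obstacle in this program is the adaptation of the Miao smoothing: in the strictly mean-convex setting of Section \ref{sec:elldef}, positivity of the scalar curvature of the tube followed from a cheap perturbative argument once $\e$ was small, whereas in the $H = 0$ setting positivity survives only after the conformal correction, and ensuring that the correction varies continuously (and keeps $H = 0$) simultaneously with the smoothing parameter $\delta$ is the most delicate technical step.
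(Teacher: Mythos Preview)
Your overall architecture is right, and the reduction of (ii) to (i) via a rigidity analysis and of the $D^3$ case via Cerf are both on target. But the heart of your proposal---the direct doubling followed by Miao smoothing at the seam---has a genuine gap: it does not produce a continuous path starting at $g$.

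Here is the issue. When you double $(X,g)$ directly along a minimal but not totally geodesic boundary, the metric $\tilde g$ is only $C^0$ across the seam: the second fundamental forms from the two sides are \emph{opposite}, so $\partial_u a$ has a jump $[\partial_u a]_0 = 2A_{\partial X}^g \neq 0$. After Miao smoothing, the metric $\tilde g_\delta$ is reflection-symmetric, hence $\partial X$ is totally geodesic in $\tilde g_\delta$ for every $\delta>0$ (indeed $\partial_u a_\delta(0)$ is the average of the two one-sided limits, which cancel). Consequently $\tilde g_\delta|_X$ has totally geodesic boundary for all $\delta>0$, whereas $g$ does not; as $\delta\to 0$ one has $\tilde g_\delta\to g$ in $C^0$ but \emph{not} in $C^1$, because the boundary second fundamental form stays identically zero along the family while it is nonzero for $g$. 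The Neumann conformal factor $u_\delta$ converges (its equation has uniformly bounded coefficients), so it cannot cure this: $u_\delta^4 \tilde g_\delta|_X$ still has totally geodesic boundary for each $\delta>0$ and fails to converge to anything conformal to $g$ in the $C^\infty$ topology. Passing to the moduli space does not help either, since any $\varphi\in\mathcal D$ preserves $\partial X$ and hence preserves the ``totally geodesic boundary'' property. In short, your one-parameter family lies entirely in the totally-geodesic-boundary locus and never reaches $[g]$.

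The paper does \emph{not} abandon the Gromov--Lawson tube. It first conformally modifies $g$ (with an \emph{oblique} boundary condition $\partial_\nu u = -\tfrac14 H u$, not Neumann) on the shrinking domains $X_t$ so that each $\partial X_t$ becomes minimal with positive interior scalar curvature; then it builds the $\e$-tube around $\partial X'$ in the conformally modified ambient metric. Since $H'=0$, the dominant $\tfrac{2}{\e}H'\cos\theta$ term in \eqref{eq:EstScal} is gone, so the tube scalar curvature is no longer globally positive---but it is uniformly positive for $\theta$ near $\pi/2$ and only mildly negative elsewhere, with $\|(R_{g^M_\delta})_-\|_{L^1}\lesssim \e$ after a suitable choice of $\delta=\delta(\e)$. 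The crucial path parameter is the \emph{sweep} variable $u$ (which slice $S_u$ bounds the domain $M_u^+$), exactly as in Section~\ref{sec:elldef}; and at each $u$ one solves the conformal-Laplacian eigenvalue problem with the oblique condition $\partial_\nu\phi_u=-\tfrac14 H^{S_u}\phi_u$ to force $H=0$ exactly. This sweep is what your construction is missing: it is the mechanism that continuously interpolates between a conformal copy of $g$ (at $u=-2\e_0$) and a totally geodesic boundary (at $u=\e_0$).

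A smaller point: in your reduction of (ii) to (i), the assertion that $Ric_g\equiv 0$ and $H_g=0$ force the boundary to be totally geodesic is not correct---flat $3$-manifolds admit closed minimal (unstable) surfaces that are not totally geodesic. The paper handles this case separately (Lemma~\ref{pro:RigidityMin}): when $Ric_g\equiv 0$ but some boundary component is not totally geodesic, that component is an unstable minimal surface, and one pushes it inward to become mean-convex and then applies an oblique conformal correction to land in $\mathcal H_{R>0,H=0}$.
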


In fact, this result is obtained by combining Proposition \ref{pro:Conclusive} above (which provides a path through totally geodesic boundaries) and the deformation result below, which is a refinement of the argument we presented in Section \ref{sec:elldef}.

\begin{prop}\label{thm:TotGeod}
	Let $X^3$ be a connected, orientable, compact manifold with boundary, endowed with a Riemannian metric $g\in \mathcal{H}_{R>0, H=0}$. Then there exists a continuous path of smooth metrics on $X^3$, $\mu\in [0,1]\mapsto g_{\mu}\in \mathcal{H}_{R>0, H=0}$, such that $g_0=g$ and $g_1$ has  totally geodesic boundary.  $(X^3, g_1)$ doubles to a smooth Riemannian manifold of positive scalar curvature.
\end{prop}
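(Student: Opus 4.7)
The plan is to follow the three-step scheme of Proposition~\ref{thm:tgdef}---doubling, Miao-type smoothing, conformal correction---but with each step redesigned to preserve $H = 0$ identically along the path. The first structural observation is that the inward push of Subsection~\ref{subs:GL} cannot be used here: it relies on strict mean-convexity $H_g > 0$ to produce a tubular collar foliated by strictly mean-convex surfaces, a property with no analog when $H_g = 0$. However, such a push is also unnecessary here: since $H_g = 0$, the distributional scalar curvature of the Lipschitz doubled metric on $M := DX$ has no singular contribution on the interface $\Sigma = \partial X \subset M$ (the jump $H_+ + H_-$ vanishes), and inherits directly the positivity of $R_g$ off $\Sigma$.

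Accordingly, I would first form $M = DX$ equipped with the doubled metric $g^M$ (Lipschitz, smooth off $\Sigma$), which is $\mathbb{Z}/2\mathbb{Z}$-invariant under the canonical involution $\iota$ exchanging the two copies of $X$. I would then carry out the Miao smoothing of Subsection~\ref{subs:Miao} with a mollifier that is \emph{even in the normal direction} to $\Sigma$, so that the resulting smooth metrics $g^M_\delta$ remain $\iota$-invariant. As a consequence, $\Sigma$, being the fixed point set of the isometric involution $\iota$, is totally geodesic in $(M, g^M_\delta)$, and $(M, g^M_\delta)$ is itself a smooth closed Riemannian manifold. The scalar curvature estimates of Proposition~\ref{pro:miao} adapt verbatim to this symmetric setup. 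An $\iota$-equivariant conformal correction using the first Neumann eigenfunction of the conformal Laplacian on $(X, g^M_\delta|_X)$---paralleling the construction of $\gamma(\mu)$ in Subsection~\ref{subs:proof1} and relying on Lemma~\ref{lem:contdepdata}, Lemma~\ref{lem:poseigenv} and Appendix~\ref{sec:Neu}---then yields a smooth endpoint $\bar g$ on $X$ with $R_{\bar g} > 0$, totally geodesic boundary, and smooth double.

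The main difficulty is then to connect $g$ to $\bar g$ continuously inside $\mathcal{H}_{R>0, H=0}$. Using the smoothing parameter $\delta$ itself as the path parameter \emph{fails}: for every $\delta > 0$, $g^M_\delta|_X$ already has totally geodesic boundary, so it differs from $g$ in the $C^1$-topology and the path is discontinuous at $\delta = 0$. The remedy I propose is a Fermi-coordinate reparametrization combined with a conformal correction. In a collar $\partial X \times [0, \eta]$ with $g = dt^2 + g_t$, set $g_\mu := dt^2 + g_{\phi_\mu(t)}$ with $\phi_\mu : [0, \eta] \to [0, \eta]$ a smooth family of reparametrizations equal to the identity near $t = \eta$, satisfying $\phi_0 = \mathrm{id}$, $\phi_1'(0) = 0$, and (in order for the double of the endpoint to be smooth) with all odd derivatives of $\phi_1$ at $t = 0$ vanishing. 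Since $A_{g_\mu}(\cdot, 0) = \phi_\mu'(0)\, A_g$, the condition $H = 0$ is preserved identically along the path. A direct Gauss-equation computation (using $H_g = 0$) gives
\begin{equation*}
R_{g_\mu}(\cdot, 0) \;=\; R_\Sigma \bigl(1 - (\phi_\mu'(0))^2\bigr) \;+\; (\phi_\mu'(0))^2\, R_g(\cdot, 0),
\end{equation*}
so at $\mu = 1$ the boundary scalar curvature reduces to $R_\Sigma$, which may be negative when $\partial X$ has high genus. The resolution, paralleling Subsection~\ref{subs:proof1}, is to compose the reparametrization with a $\mu$-dependent conformal rescaling by the first Neumann eigenfunction of the conformal Laplacian of $g_\mu$: for $\eta$ small the negative contribution of $R_\Sigma$ is confined to a thin collar and is dominated by $R_g > 0$ outside, so the first Neumann eigenvalue stays positive uniformly in $\mu$ by continuous dependence on the metric, and the conformal rescaling restores $R > 0$ pointwise. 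The main technical challenge is precisely this uniform-in-$\mu$ positivity of the Neumann first eigenvalue along the reparametrized family; once this is established, it yields the desired continuous path in $\mathcal{H}_{R>0, H=0}$ from $g$ to an endpoint whose double is smooth with positive scalar curvature.
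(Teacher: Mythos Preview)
Your Fermi-reparametrization idea is sound and yields a valid proof, but it is a genuinely different route from the paper's. You assert that the inward push ``cannot be used here''; in fact the paper \emph{does} push inward---its key new device is to replace the Neumann condition by the \emph{oblique} boundary condition $\partial_\nu u_t = -\tfrac14 H_t\,u_t$ on $\partial X_t$, which forces $u_t^4 g$ to have minimal boundary even though $(X_t,g)$ does not. With this in hand, the paper reruns the Gromov--Lawson tube and Miao smoothing of Section~\ref{sec:elldef}, now with oblique eigenfunctions throughout, to assemble the path. Your approach instead keeps the boundary fixed and kills the second fundamental form directly via $g_\mu = dt^2 + g_{\phi_\mu(t)}$; this bypasses the tube construction entirely and is arguably more elementary.

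Two remarks on your write-up. First, the Miao-smoothed $\bar g$ you build is not the endpoint of your reparametrization path, so your two halves do not actually connect; but this is harmless, since the reparametrization endpoint $u_1^4 g_1$ already has totally geodesic boundary and smooth double (your odd-derivative condition on $\phi_1$ makes $g_1$ double smoothly, and the Neumann eigenfunction extends evenly as in Subsection~\ref{subs:proof1}), so the Miao step is simply redundant. Second, ``continuous dependence on the metric'' does not directly give the eigenvalue positivity you need, since $\partial_t^2 h_\mu$ contains $\phi_\mu''=O(1/\eta)$ and the family $g_\mu$ is not $C^2$-compact as $\eta\to 0$. What saves you is that the dangerous contribution $\phi_\mu''\,H_g|_{S_{\phi_\mu(t)}}$ inside $\partial_t H_{g_\mu}$ is $O(1)$ rather than $O(1/\eta)$: the minimality hypothesis $H_g|_{S_0}=0$ forces $H_g|_{S_s}=O(s)=O(\eta)$, balancing $\phi_\mu''$. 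With $R_{g_\mu}$ thus uniformly bounded and its negative part supported in a collar of volume $O(\eta)$, Lemma~\ref{lem:poseigenv} then yields the uniform positivity.
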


\subsection{Conformal deformations with Robin boundary conditions}\label{subs:Ancy}

In order to perform the doubling construction (starting from a metric in $\mathcal{H}_{R\geq 0, H=0}$), we may first need to deform our background metric to strictly positive scalar curvature (whenever such a deformation is allowed).

\begin{lem}\label{pro:RigidityMin}
	Let $(X^3,g)$ be a connected, orientable, compact Riemannian manifold, such that the scalar curvature of $g$ is everywhere zero, and each boundary component is a minimal surface with respect to $g$. Suppose that 
	\emph{either}  $Ric_g$ is not identically zero, \emph{or} $\partial X$ is not totally geodesic with respect to $g$, then there is a small (isotopic) smooth perturbation $g'$ of $g$, such that the scalar curvature of $g'$ is strictly positive and every connected component of $\partial X$ is minimal.
	If instead $Ric_g$ is everywhere zero, and all the components of $\partial X$ are totally geodesic, then there exists a compact interval $I\subset \R$ such that $(X,g)$ is isometric to $S^1\times S^1\times I$ equipped with a flat metric.
\end{lem}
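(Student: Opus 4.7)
The plan is to mirror the trichotomy behind Proposition \ref{pro:Rigidity}, modifying each case so that the resulting isotopy stays in $\mathcal{H}_{R\ge 0,H=0}$ and reaches $\mathcal{H}_{R>0,H=0}$ at the endpoint. Case (c), when $Ric_g\equiv 0$ and $\partial X$ is totally geodesic, is literally part iii) of Proposition \ref{pro:Rigidity}: Ricci flatness in dimension three forces $g$ to be flat, the doubled metric is smooth across a totally geodesic boundary, and the classification of flat closed $3$-manifolds combined with connectedness of $X$ force $(X,g)\simeq S^1\times S^1\times I$ with a flat product metric. Case (a), when $Ric_g\not\equiv 0$, is essentially part i) of Proposition \ref{pro:Rigidity}: choose a cutoff $\varphi$ compactly supported away from $\partial X$ with $Ric_g\not\equiv 0$ on its support, form $g_t=g+t\varphi\, Ric_g$, and conformally correct by the first Neumann eigenfunction $u_t$ of the conformal Laplacian, setting $g'_t=u_t^{4}g_t$. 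Since $g_t\equiv g$ near $\partial X$ and $\partial_\nu u_t=0$, the conformal transformation formula for $H$ gives $H_{g'_t}=u_t^{-3}(H_{g_t}u_t+4\partial_\nu u_t)=0$, while the computation $\lambda'_1(0)<0$ yields $R_{g'_t}\ge 0$ along the whole path, with strict positivity for small $t<0$.

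The main novelty is Case (b), when $Ric_g\equiv 0$ but some component of $\partial X$ is not totally geodesic. Here I would \emph{couple} the inward boundary deformation of Proposition \ref{pro:Rigidity} part ii) with a simultaneous Robin-type conformal correction that preserves minimality of the boundary. Let $f$ be the first Jacobi eigenfunction on the non-totally-geodesic components (extended by zero to the totally geodesic ones), with eigenvalue $\lambda<0$, and let $\phi_t\colon X\to X_t$ be the associated inward normal deformation. Set $\hat g_t:=\phi_t^{*}g$: this stays Ricci flat, and its boundary mean curvature $h_t:=H_{\hat g_t}=-t\lambda f+O(t^2)$ is nonnegative on $\partial X$ and strictly positive on the deformed components for small $t>0$. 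Now solve the eigenvalue problem
\[
-8\Delta_{\hat g_t}u_t=\mu_1(t)u_t\ \text{in}\ X,\qquad \partial_\nu u_t+\tfrac{1}{4}h_t\, u_t=0\ \text{on}\ \partial X,
\]
whose Rayleigh quotient
\[
\mu_1(t)=\inf_{u\in H^1(X)\setminus\{0\}}\frac{\int_X 8|\nabla u|^2\,dV_{\hat g_t}+2\int_{\partial X}h_t u^2\,dA_{\hat g_t}}{\int_X u^2\,dV_{\hat g_t}}
\]
vanishes at $t=0$ (attained by constants) and is strictly positive for every $t\in(0,t_0]$: non-constant test functions contribute $\int|\nabla u|^2>0$, while constants contribute $2\int_{\partial X}h_t\,dA>0$. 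Setting $g_t:=u_t^{4}\hat g_t$, the conformal formulas $R_{g_t}=\mu_1(t)u_t^{-4}$ and $H_{g_t}=u_t^{-3}(h_t u_t+4\partial_\nu u_t)=0$ show that $t\mapsto g_t$ is a continuous isotopy in $\mathcal{H}_{R\ge 0,H=0}$, starting at $g$ (since $u_0$ is constant, $g_0$ is a harmless homothety of $g$ absorbable by reparametrization) and lying in $\mathcal{H}_{R>0,H=0}$ for every positive $t$.

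The only genuinely new point is Case (b), where one must check that the Robin-type conformal factor and the boundary-pushing diffeomorphism can be performed simultaneously as a smooth one-parameter family of metrics. Simplicity of the first eigenvalue $\mu_1(t)$ and continuity (in fact smoothness) of $u_t$ near $t=0$ follow from an analytic perturbation argument analogous to Lemma \ref{lem:contdepdata}, applied here with a Robin rather than Neumann condition whose coefficient $h_t/4$ depends smoothly on $t$; the positivity of the Rayleigh quotient for $t>0$ was verified above. These are precisely the two ingredients that allow the isotopy to remain in $\mathcal{H}_{R\ge 0,H=0}$ while acquiring strict positivity of the scalar curvature throughout $(0,t_0]$.
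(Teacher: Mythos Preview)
Your proposal is correct and follows essentially the same approach as the paper: Case (a) and Case (c) are handled exactly as in Proposition \ref{pro:Rigidity}, and in Case (b) the paper also couples the inward Jacobi-eigenfunction deformation with a conformal correction by the first eigenfunction of the oblique (Robin-type) problem $\Delta_g u_t=-\lambda_1(t)u_t$ in $X_t$, $\partial_\nu u_t=-\tfrac14 H^{(tf)}u_t$ on $\partial X_t$, proving $\lambda_1(t)>0$ via the same Rayleigh-quotient integration by parts you indicate. The only cosmetic differences are that the paper works on the moving domains $X_t$ rather than pulling back to $X$, and your normalization gives $g_0$ as a homothety of $g$ (which you correctly note is absorbable).
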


\begin{proof}
	  When $(X^3,g)$ is not Ricci flat,  we can argue as in the proof of Proposition \ref{pro:Rigidity}, case i).
	Therefore, assume that $Ric_g=0$ in $X$, and some boundary component $\Sigma$ of $X$ is (minimal but) not totally geodesic. By virtue of the variational characterization of the first eigenvalue of the Jacobi operator, it is seen at once that $\Sigma$ is unstable, thus we consider the normal deformations  defined by the associated first eigenfunction (taken positive and normalized, as usual) which determine subdomains $X_t\subset X$ whose boundary component in question has mean curvature $H_t>0$. Note that the boundary components of $\partial X$ that are totally geodesic in metric $g$ are not modified.
	 On $X_t$, we solve the elliptic eigenvalue problem (for the conformal Laplacian with Robin boundary condition):
	\[\begin{cases}\Delta_g \phi_t= -\lambda_1(t)\phi_t\quad  &\text{in }X_t,\\ \partial_{\nu_t} \phi_t=-\frac{1}{4}H_t \phi_t\quad &\text{on }\partial X_t,\end{cases}\]
	where $\lambda_1(t)$ is the first eigenvalue. Multiplying both sides by $\phi_t$ and integrating over $X_t$, we have that
	\[\lambda_1(t)\int_{X_t}\phi_t^2 dVol_g=\frac{1}{4}\int_{\partial X_t}H_t \phi_t^2 dS_g+\int_{X_t}|\nabla_g \phi_t|^2 dVol_g,
	\] which
	implies that $\lambda_1(t)>0$ for $t\in (0,\epsilon)$. In particular, the Riemannian manifold $(X_t,\phi_t^4 g_t)$ has strictly positive scalar curvature and minimal boundary. We then define the corresponding metrics on $X$ by means of the obvious diffeomorphism $\varphi_t: X\rightarrow X_t$.	
\end{proof}	

\begin{cor}\label{cor:MinEquiv}
	
	Let $X^3$ be a connected, orientable, compact manifold with boundary. Then 
	\[
	\mathcal{H}_{R>0, H=0}\neq\emptyset \ \ \Longleftrightarrow \ \ \mathcal{H}_{R\geq 0, H= 0}\neq\emptyset
	\]
	unless $X^3\simeq S^1\times S^1\times I$ (in which case the space $\mathcal{H}_{R\geq 0, H= 0}$ only contains flat metrics, making the boundary totally geodesic).
\end{cor}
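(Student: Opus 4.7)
The forward implication is tautological since $\mathcal{H}_{R>0, H=0}\subset \mathcal{H}_{R\geq 0, H=0}$. For the reverse, my plan is to take $g\in\mathcal{H}_{R\geq 0, H=0}$ and dichotomize according to whether $R_g$ vanishes identically on $X$. In the case $R_g\not\equiv 0$, I would mimic the conformal step used in the proof of Proposition \ref{pro:Rigidity} part \emph{i)}, but with a Neumann rather than oblique boundary condition. Concretely, consider the first eigenfunction $u>0$ of the problem
\[
\Delta_g u - \tfrac{1}{8} R_g u = -\lambda_1 u \quad\text{in } X, \qquad \partial_\nu u = 0 \quad\text{on } \partial X.
\]
Since $R_g\geq 0$ and $R_g\not\equiv 0$, the associated Rayleigh quotient is strictly positive: a minimizer realizing a vanishing Rayleigh quotient would be forced to be constant and supported on $\{R_g=0\}$, contradicting $R_g\not\equiv 0$. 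Thus $\lambda_1>0$, and the dimension-three conformal transformation rules give $R_{g'}=8\lambda_1 u^{-4}>0$ while the Neumann condition $\partial_\nu u=0$ yields $H_{g'}=0$, so that $g':=u^4 g\in\mathcal{H}_{R>0, H=0}$.

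In the complementary case $R_g\equiv 0$, I would directly invoke the trichotomy of Lemma \ref{pro:RigidityMin}: either a small isotopic perturbation yields $g'\in\mathcal{H}_{R>0, H=0}$, or $(X,g)$ is Ricci-flat with totally geodesic boundary and is necessarily isometric to a flat $S^1\times S^1\times I$, i.e. the excluded topological type. Concatenating the two cases produces the main equivalence under the hypothesis $X\not\simeq S^1\times S^1\times I$.

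To establish the exceptional assertion concerning $X\simeq S^1\times S^1\times I$, my plan is to first show that $\mathcal{H}_{R>0, H=0}$ is actually empty on such $X$. Indeed, any putative $g\in\mathcal{H}_{R>0, H=0}$ would, by Proposition \ref{thm:TotGeod}, deform to a metric whose double is a smooth Riemannian metric of positive scalar curvature on $DX\simeq T^3$, contradicting the Schoen--Yau / Gromov--Lawson obstruction to PSC on tori. Consequently, for any $g\in\mathcal{H}_{R\geq 0, H=0}$ on this $X$, the conformal step above would produce an element of the now-empty space $\mathcal{H}_{R>0, H=0}$ unless $R_g\equiv 0$; Lemma \ref{pro:RigidityMin} then rules out every non-rigid alternative (each would again yield, after a small perturbation, a forbidden metric), leaving only the flat, totally geodesic case. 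The only non-routine point of the entire plan is the positivity of $\lambda_1$ in the conformal step; the rest is bookkeeping against Lemma \ref{pro:RigidityMin} and Proposition \ref{thm:TotGeod}.
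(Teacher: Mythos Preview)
Your proof is correct and follows essentially the approach the paper intends (the paper states this corollary without an explicit proof, as an immediate consequence of Lemma~\ref{pro:RigidityMin}). Your dichotomy on whether $R_g\equiv 0$, with the conformal Neumann-eigenfunction step handling $R_g\not\equiv 0$ and Lemma~\ref{pro:RigidityMin} handling $R_g\equiv 0$, is exactly the natural argument and parallels the paper's treatment of the analogous Corollary~\ref{cor:Equivalence}.

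One small remark on the exceptional case $X\simeq S^1\times S^1\times I$: your route through Proposition~\ref{thm:TotGeod} (double to a smooth positive scalar curvature metric on $T^3$, then contradict the torus obstruction) is valid and involves no circularity, but it is heavier than necessary. You could instead observe that $\mathcal{H}_{R>0,H=0}\subset\mathcal{M}_{R>0,H\geq 0}$ and invoke Corollary~\ref{cor:Equivalence}, which already shows $\mathcal{M}_{R>0,H\geq 0}=\emptyset$ on $S^1\times S^1\times I$; this is closer in spirit to the paper's own handling of the exceptional case there (via a stable minimal torus). Either way, the remainder of your argument---that any $g\in\mathcal{H}_{R\geq 0,H=0}$ must then be scalar-flat (else the conformal step produces a forbidden metric) and hence rigid by Lemma~\ref{pro:RigidityMin}---is correct.
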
	

\begin{cor}\label{cor:TopCharMin}
	Let $X^3$ be a connected, orientable, compact manifold with boundary, such that $\mathcal{H}_{R\geq 0, H=0}\neq\emptyset$.
	Then either $X\simeq S^1\times S^1\times I$ or there exist integers $A,B,C\ge 0$, such that $X$ is diffeomorphic to a connected sum of the form
	\[ P_{\g_1}\#\cdots\# P_{\g_A}\# S^3/\Gamma_1\#\cdots\# S^3/\Gamma_B\# \left(\#_{i=1}^C (S^2\times S^1)\right).\]
	Here $P_{\g_i}$, $i\le A$, are genus $\g_i\geq 0$ handlebodies, and $\Gamma_i$, $i\le B$, are finite subgroups of $SO(4)$ acting freely on $S^3$. Viceversa, any such manifold supports Riemannian metrics of positive scalar curvature and minimal boundary.
\end{cor}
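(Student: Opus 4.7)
The proof has a forward direction (from $\mathcal{H}_{R\geq 0,H=0}\neq\emptyset$ to the topological decomposition) and a reverse direction (the ``Viceversa'' clause). The plan is to handle the forward direction via the doubling strategy of Section \ref{sec:elldef} (deform to a PSC metric with totally geodesic boundary, produce a smooth PSC double, apply the purely topological classification), and to read off the reverse direction directly from the model metrics built in Section \ref{subs:ModelMet}.

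For the forward direction, I would first observe that if $X^3\simeq S^1\times S^1\times I$ we are already in the ``either'' branch of the statement and there is nothing to prove. Otherwise, Corollary \ref{cor:MinEquiv} upgrades the hypothesis $\mathcal{H}_{R\geq 0,H=0}\neq\emptyset$ to $\mathcal{H}_{R>0,H=0}\neq\emptyset$, furnishing a metric $g\in\mathcal{H}_{R>0,H=0}$. Next, Proposition \ref{thm:TotGeod} produces a continuous path $\mu\mapsto g_\mu\in\mathcal{H}_{R>0,H=0}$ from $g$ to an endpoint $g_1$ with totally geodesic boundary, and asserts that $(X^3,g_1)$ doubles to a smooth closed Riemannian $3$-manifold $DX$ of positive scalar curvature. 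The resulting $DX$ satisfies the hypothesis of Theorem \ref{theorem.topological.classification.of.3-mfld.with.boundary}, which then yields the desired connected-sum description of $X$.

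For the reverse direction, in the case $X^3\simeq S^1\times S^1\times I$ the flat product metric lies in $\mathcal{H}_{R\geq 0,H=0}$ and we are done. In the remaining case, with $X$ presented as a connected sum of the prescribed type, I would invoke the model triple construction of Section \ref{subs:ModelMet}. That construction yields a reflexive triple $(DX,\tilde g,f)$ where $\tilde g$ has positive scalar curvature and the fixed set $\fix(f)$ coincides with the image of $\partial X$ under the doubling identification. Since $\fix(f)$ is a codimension-one fixed set of the isometric involution $f$, it is automatically totally geodesic in $(DX,\tilde g)$, so restricting $\tilde g$ to a fundamental domain (one copy of $X$) yields a metric in $\mathcal{H}_{R>0,H=0}\subset\mathcal{H}_{R\geq 0,H=0}$, as required.

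The main obstacle is really deferred to Proposition \ref{thm:TotGeod}, the minimal-boundary counterpart of Proposition \ref{thm:tgdef}, whose proof must refine the Miao-type smoothing of Section \ref{sec:elldef} so as to maintain $H\equiv 0$ along the deformation rather than merely $H\geq 0$. Once that proposition is granted, Corollary \ref{cor:TopCharMin} follows by simply assembling Corollary \ref{cor:MinEquiv}, Theorem \ref{theorem.topological.classification.of.3-mfld.with.boundary} and the model-metric construction, with no further essential input.
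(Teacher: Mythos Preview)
Your argument is correct, but it takes a longer route than the paper intends. Since the paper gives no explicit proof of Corollary~\ref{cor:TopCharMin}, its placement immediately after Corollary~\ref{cor:MinEquiv} (and \emph{before} the proof of Proposition~\ref{thm:TotGeod}) signals that it is meant to follow directly from the earlier topological characterization, not from the minimal-boundary doubling machinery.

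For the forward direction, once Corollary~\ref{cor:MinEquiv} hands you a metric in $\mathcal{H}_{R>0,H=0}$, you may simply observe the inclusion $\mathcal{H}_{R>0,H=0}\subset\mathcal{M}_{R>0,H\geq 0}$ and invoke Corollary~\ref{cor:GenTopChar} (equivalently Theorem~\ref{thm:A}). There is no need to pass through Proposition~\ref{thm:TotGeod} and re-run the Gromov--Lawson/Miao doubling: that proposition is proved \emph{after} the corollary and is aimed at the path-connectedness statement, not at the mere topological classification. For the reverse direction, rather than invoking the model triples of Section~\ref{subs:ModelMet}, one can start from the metric $g\in\mathcal{M}_{R>0,H>0}$ supplied by Theorem~\ref{thm:A} and solve the oblique eigenvalue problem $\Delta_g u-\tfrac{1}{8}R_g u=-\lambda_1 u$ with $\partial_\nu u=-\tfrac{1}{4}H_g u$ (exactly as in the proof of Lemma~\ref{pro:RigidityMin}); positivity of $R_g$ and $H_g$ forces $\lambda_1>0$, so $u^4 g\in\mathcal{H}_{R>0,H=0}$. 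Your route via Proposition~\ref{thm:TotGeod} and the model metrics is valid, but it front-loads the hardest analytic and constructive parts of Section~\ref{sec:MinPaths} and Section~\ref{subs:ModelMet} into what is intended to be an immediate corollary.
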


\subsection{A modification of the Gromov-Lawson doubling scheme}\label{subs:FinMin}

We shall now present the proof of Proposition \ref{thm:TotGeod}.

\begin{proof}
	Let $\epsilon_0>0$ be small enough that $\partial X\subset X$ has a tubular neighborhood of width $5\e_0$ and define, for $t\in (0,5\e_0)$ the set
	 $X_t:=\{x\in X: d_g(x,\partial X)\ge t\}$. For any such domain, we modify the metric $g$ by solving the principal eigenvalue problem:
	\begin{equation}\label{elliptic.eigenvalue.problem.distance.retraction}
	\begin{cases}\Delta_g \phi_t-\frac{1}{8}R_g \phi_t= -\lambda_1(t)\phi_t\quad  &\text{in }X_t,\\
	 \partial_{\nu_t} \phi_t=-\frac{1}{4}H_t \phi_t\quad &\text{on }\partial X_t,\end{cases}
	\end{equation}
	where $H_t$ is the mean curvature of $\partial X_t$ with respect to the outward-pointing normal vector field. It follows from Lemma \ref{lem:contdepdata} that the couple $(\lambda_1(t),\phi_t)$ varies smoothly with respect to the parameter $t$. In particular, since $t\mapsto\lambda_1(t)$ is continuous, and $\lambda_1(0)>0$ (since we assumed $g\in\mathcal{H}_{R>0, H=0}$), we conclude that, possibly by choosing $\epsilon_0$ smaller, for $t\in [0,5\epsilon_0]$ one has indeed $\lambda_1(t)>0$. 
	Denote $X'=X_{2\epsilon_0}$ and consider a smooth, positive extension to $X$ of  $\phi_{2\epsilon_0}$, which we do not rename. For $\epsilon\in (0,\epsilon_0/2)$ sufficiently small, the metric $g_1=\phi_{2\epsilon_0}^4 g$, restricted to $X_{2(\epsilon_0-\epsilon)}$, has positive scalar curvature. Now, with respect to this metric, we consider the Gromov-Lawson doubling construction of $X'$ in $\tilde{X}\times\mathbb{R}$, for $\tilde{X}=X_{2(\epsilon_0-\epsilon)}$.

	Thus, we endow $M\simeq DX$, with a non-smooth metric $g_1^M=g_1^M(\epsilon)$, by identifying it with 
	\[T_\epsilon (X')=\{(x,h)\in \tilde{X}\times \R: d^{\tilde{X}\times \R}((x,h),X'\times \{0\})=\epsilon\}.\]
		Let $R^{T_\epsilon (\partial X')}$ be the scalar curvature of $T_\epsilon (\partial X')$ in $\tilde{X}\times \R$, and $H^{Z_{\theta}}$ be the mean curvature of the level set $Z_{\theta}$ in $T_\epsilon (\partial X')$.
	It follows from the calculations we presented in the proof of Lemma \ref{lem:singpath}, see in particular equations \eqref{eq:EstScal} and \eqref{eq:eigenv}, that
	\[R^{T_\epsilon (\partial X')}=R^{\tilde{X}\times \R}+\cos(\theta)O(1),\quad H^{Z_{\theta}}=O(\epsilon).\]
	In particular, there exists $\theta_0>0$ independent of $\epsilon$, such that in the region defined by $\theta \in [\frac{\pi}{2}-\theta_0,\frac{\pi}{2}]$, we have $R^{T_\epsilon (\partial X')}>0$ uniformly in $\epsilon$.

\begin{figure}[htbp]
	\centering
	\includegraphics[scale=1.26]{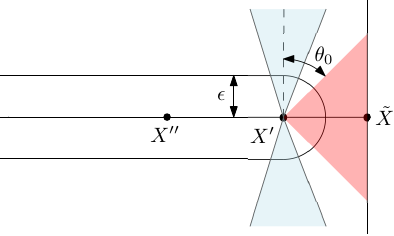}
	\caption{The Gromov-Lawson doubling construction, adapted to the minimal case.}
	\label{fig:Doubling}
\end{figure}

As in Section \ref{sec:elldef}, we employ the notation  $S_u^{\pm}$ to denote the level sets of the coordinate $u$, and we let $M_u^{\pm}$ be the connected component with boundary $S_u^{\pm}$ that contains $X_{\pm}''$. For notational simplicity, we will write $u,M_u,S_u$ in lieu of $u_+,M_u^+, S_u^+$, respectively. 

 We then apply Miao's smoothing construction described in Section \ref{subs:Miao}, along the interface $S_0$, in the region where $u\in [-\delta,\delta]$. Here $\delta=\delta(\epsilon)<\epsilon^2$ is to be chosen later, and at this stage we only require that $2\delta<\delta^{\ast}:=\epsilon \theta_0$, so that we have a positive lower bound for the scalar curvature of $M_u\setminus S_0$ when $u\in [-\delta^{\ast},0)\cup (0,\delta^{\ast}]$. Precisely, for the metric
\[g_1^M=du\otimes du+a(u),\]
and for fixed cut-off functions as in Section \ref{subs:Miao}, define  $a_\delta(u)$ by means of equation \eqref{def:1}. Then, 
\[g^M_{1,\delta} = du\otimes du+a_\delta(u)\]
is a smooth Riemannian metric that agrees with $g^M$ outside the region defined by $|u|<\delta/2$. Moreover, following the proof of Proposition \ref{pro:miao}, there exists a constant $C>0$ (independent of the choices of the parameters $\e$ and $\delta$) and a positive function $\omega$ such that the scalar curvature $R_{g^M_{1,\delta}}$ and the mean curvature $H^{S_u}_{g^M_{1,\delta}}$ satisfy the following conditions:
\[
\begin{cases}
R_{g^M_{1,\delta}}> -\omega(\e) &  \ \text{for} \ u\in [-\delta,\delta] \\
R_{g^M_{1,\delta}}> C  & \ \text{for} \ u\in [-\e_0,-\delta)\cup(\delta,\delta^*)\\
R_{g^M_{1,\delta}}>-C^{-1}  & \ \text{for} \ u\in [\delta^*,\pi\epsilon/2], 
\end{cases},  
\]
and 
\[
|H^{S_u}_{g^M_{1,\delta}}|<C^{-1} \epsilon, \ \ \text{for} \ u\in [-\epsilon_0, \pi\e/2).
\]

That being said, we consider the concatenation of five continuous paths of smooth metrics, within $\mathcal{H}_{R>0, H=0}$, defined as follows:
\begin{enumerate}
    \item [\emph{i)}] for $\mu\in [0,1]$, the metrics $(1-\mu+\mu \phi_0)^4 g$, where $u_0$ is the first eigenfunction defined by \eqref{elliptic.eigenvalue.problem.distance.retraction} on $X=X_0$, that determine an isotopy from $(X,g)$ to $(X, \phi_0^4 g)$;
    \item [\emph{ii)}] for $t\in [0,2\epsilon_0]$, the metrics $\phi_t^4 g$, pulled back through the diffeomorphisms $\Psi_t: X\rightarrow X_t$, where $\phi_t$ is the first eigenfunction of \eqref{elliptic.eigenvalue.problem.distance.retraction} on $(X_t,g)$;
    \item [\emph{iii)}] for $\mu\in [0,1]$, the metrics $(1-\mu+\mu(\psi_{2\epsilon_0}))^4 g_1$, where $\psi_{2\epsilon_0}$ is the first Neumann eigenfunction of the conformal Laplace operator on $X'\cong X_{2\epsilon_0}$, pulled back through the diffeomorphism $\Psi_{2\epsilon_0}: X\rightarrow X_{2\epsilon_0}$;
    \item [\emph{iv)}] for $t\in [2\epsilon_0, 2\epsilon_0+2\epsilon]$, the metrics $\psi^4_{t}g_1$, where $\psi_{t}$ is the first Robin eigenfunction of the conformal Laplace operator on $M_{2\epsilon_0-t}^+\cong X_{t}$, pulled back through the diffeomorphism $\Psi_{t}: X\rightarrow X_{t}$;
    \item [\emph{v)}] for $u\in [-2\epsilon, \pi\epsilon/2]$, the pull back of the metrics $(M_u^+,(\psi^{(\delta)}_u)^4 g^M_{1,\delta})$, where $\psi^{(\delta)}_u$ is the first Robin eigenfunction of the conformal Laplace operator on $M_u^+$, through the diffeomorphisms identifying $X'$ with $M_u^+$ (see Section \ref{subs:proof1}) and then via $\Psi_{2\e_0}:X\to X_{2\e_0}$. 
\end{enumerate}

Concerning case v), and similarly for case iv) with $g_1$ in lieu of $g^M_{1,\delta}$, by first Robin eigenvalue we mean the principal eigenvalue defined by
\begin{equation}\label{elliptic.eigenvalue.problem.tubular}
\begin{cases} \Delta_{g^M_{1,\delta}} \psi^{(\delta)}_u -\frac{1}{8}R_{g^M_{1,\delta}} \psi^{(\delta)}_u = -\lambda_1(u) \psi^{(\delta)}_u \quad &\text{in } M_u^+,\\ \partial_{\nu_u} \psi^{(\delta)}_u= -\frac{1}{4} H^{S_u}_{g^M_{1,\delta}} \psi^{(\delta)}_u \quad &\text{on }\partial M_u^+.
\end{cases}
\end{equation}
It follows from the bounds above, concerning $R_{g^M_{1,\delta}}$ and $H^{S_u}_{g^M_{1,\delta}}$, that one can choose $\epsilon$ small enough and $\delta=\delta(\epsilon)$ so that all the hypotheses of Lemma \ref{lem:poseigenv} are satisfied, and as a result $\lambda_1(u)$ admits a uniform, positive lower bound as we vary the parameter $u$. We note that, strictly speaking, in order to accomodate hypothesis 2. of Lemma \ref{lem:poseigenv} for a uniform constant $C_0>0$ (say $C_0=1$) one may have to consider for $f_{\omega}$, a smooth truncation $\chi(R^M_{\delta})$ in lieu of $R^M_{\delta}$, where $\chi:\mathbb{R}\to\mathbb{R}$ is a smooth non-decreasing function such that $\chi(s)=s$ for $s\leq 1$ and $\chi(s)=2$ for $s\geq 2$. This would take care of the fact that, for $\epsilon>0$ small, the scalar curvature of our family of metrics may not be uniformly bounded \emph{from above}, however such a modification does not in any way affect the validity of the argument.

 We conclude that $(X,g)$ can be deformed, through metrics in $\mathcal{H}_{R>0, H=0}$, to a positive scalar curvature metric with totally geodesic boundary, and, like we observed in Section \ref{subs:proof1}, $(M_{\pi\epsilon/2}^+,(\psi^{(\delta)}_{\pi\epsilon/2})^4 g^M_{1,\delta})$ doubles to a smooth reflexive manifold, which completes the proof.
\end{proof}

\appendix

		\section{Conformal deformations via Robin eigenfunctions}\label{sec:Neu}
	
	In this section, let us denote by $(X^n,g)$ a compact Riemannian manifold with boundary of dimension $n\geq 3$. Given suitably regular functions $f:X\to\mathbb{R}$ and $h:\partial X\to\mathbb{R}$, as specified below, we shall be concerned with an elliptic eigenvalue problem for a Schr\"odinger operator of the form
	\[
	L\phi=\Delta_g\phi-f \phi
	\]
	under a Robin boundary condition, that is to say a boundary condition of the form
	\begin{equation}\label{eq:Robin}
	\frac{\partial \phi}{\partial \nu}=-h\phi
	\end{equation}
	where $\nu$ is the outward-pointing, unit normal vector field along $\partial X$. 
	
	\begin{rmk}\label{rmk:shortcut}
	  For all purposes of the main body of the present paper, with the exception of the applications presented in Section \ref{sec:MinPaths}, we will only need to consider Neumann boundary conditions, i.~e. the case when $h=0$. In that case, the trace inequality given below, Lemma \ref{lem:TraceIneq} is not needed, and both the statement and the proof of \ref{lem:poseigenv} can be partly simplified.
	\end{rmk}
	
	If, say, the data $f$ and $h$ are continuous, it is well-known that there exists a sequence of eigenvalues $\left\{\lambda_i\right\}$, and associated eigenfunctions $\left\{\phi_i\right\}$ so that for every $i\geq 1$ one has
	\[
	\begin{cases}
	L\phi_i=-\lambda_i \phi_i \ & \text{on} \ X \\
	\frac{\partial \phi_i}{\partial \nu}=-h\phi_i & \text{on} \ \partial X;
	\end{cases}
	\] 
	 the family $\left\{\phi_i\right\}$ is an Hilbertian basis for $L^2$.
	 
	 The first eigenvalue has the variational characterization
	 \[
	 \lambda_1=\inf_{\phi\in H^1 (X,g)\setminus\left\{0\right\}}\frac{\int_X(|\nabla \phi|^2+f\phi^2)dVol_g+\int_{\partial X}h\phi^2\,dS_g}{\int_{X}\phi^2 dVol_g}
	 \]
	 which easily implies that it has unit multiplicity (i.~e. it is simple) and the associated eigenfunction can be taken to be everywhere positive on $X$. In this article, we tacitly adopt the following convention: \emph{the first eigenfunction is chosen to be positive and of unitary $L^2$ norm.}
	 
	 \

	 We need to present two additional facts about the first eigenvalue for the problem described above when the function $f$ varies smoothly in a family satisfying certain uniform bounds, which are inspired by the situation arising in the regularization scheme given in Section 3 of \cite{Miao02}.
	  
	  The first result concerns the dependence of $\lambda_1$ on both the data $f, h$ and a possibly varying background metric $g$.

	  \begin{lem}[cf. Lemma A.1 in \cite{MS15}]\label{lem:contdepdata}
	  	Let $\Omega\subset \R^d$ be open, and suppose $\Omega \ni \omega\mapsto g(\omega)$ is a smooth family of metrics on a compact manifold $X$. Let $\omega\mapsto L_{\omega}$ be a smooth family of self-adjoint second order linear elliptic operators of the form
	  	\[
	  	L_{\omega}=\Delta_{g(\omega)}-f_{\omega}
	  	\] whose coefficients depend smoothly on the parameter $\omega$, and whose first eigenvalue $\lambda_1(\omega)$ has a one-dimensional eigenspace (which holds true for $\partial X\neq\emptyset$ under Robin boundary conditions, as in \eqref{eq:Robin} with $h=h_{\omega}$). Then $\lambda_1: \Omega\rightarrow \R$ is smooth, and there exists a smooth map $u:\Omega\times X\rightarrow \R$ so that $u(\omega,\cdot)$ is a normalized, positive eigenfunction of $L(p)$ with Robin eigenvalue $\lambda_1(\omega)$.
	  \end{lem}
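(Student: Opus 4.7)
The strategy is a standard application of the implicit function theorem in suitable Hölder spaces, combined with the uniqueness of the positive normalized first Neumann eigenfunction to patch local solutions into a global one. Fix $p_{0}\in\Omega$ and let $(\lambda_{0},u_{0})$ denote the first Neumann eigenpair of $L(p_{0})$, with $u_{0}>0$ and $\int_{X}u_{0}^{2}\,dVol_{g(p_{0})}=1$. I would work in the Banach space $Z=\{u\in C^{2,\alpha}(X):\partial_{\nu(p_{0})}u=0\ \text{on}\ \partial X\}$, where for convenience the boundary condition is defined with respect to the fixed normal vector field $\nu(p_{0})$; this avoids having the function space itself depend on $p$. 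The variation of $\nu$ with $p$ is then reincorporated by viewing the Neumann condition $\partial_{\nu(p)}u=0$ as an extra (smooth in $p$) scalar equation on $\partial X$.

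More precisely, I would define the map
\[
F:\Omega\times Z\times \mathbb{R}\longrightarrow C^{0,\alpha}(X)\times C^{1,\alpha}(\partial X)\times \mathbb{R},
\]
\[
F(p,u,\lambda)=\Bigl(L(p)u+\lambda u,\ \partial_{\nu(p)}u\big|_{\partial X},\ \textstyle\int_{X}u^{2}\,dVol_{g(p)}-1\Bigr).
\]
Smooth dependence of the coefficients of $L(p)$, of $dVol_{g(p)}$, and of $\nu(p)$ on $p$ yields that $F$ is smooth. By construction $F(p_{0},u_{0},\lambda_{0})=0$. The key step is then to verify that the partial Fréchet derivative
\[
D_{(u,\lambda)}F\big|_{(p_{0},u_{0},\lambda_{0})}\colon (v,\mu)\longmapsto \Bigl(L(p_{0})v+\lambda_{0}v+\mu u_{0},\ \partial_{\nu(p_{0})}v\big|_{\partial X},\ 2\textstyle\int_{X}u_{0}v\,dVol_{g(p_{0})}\Bigr)
\]
is a linear isomorphism of Banach spaces. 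Injectivity follows from self-adjointness: pairing the first equation with $u_{0}$ and integrating by parts (both $u_{0}$ and $v$ satisfy the Neumann condition) gives $\mu=0$; the remaining equation $(L(p_{0})+\lambda_{0})v=0$ with $\int u_{0}v=0$ forces $v\equiv 0$ by the assumed one-dimensionality of the eigenspace. For surjectivity, given data $(h,\psi,\alpha)$, I would first lift $\psi$ to a function $w\in C^{2,\alpha}(X)$ with $\partial_{\nu(p_{0})}w=\psi$ on $\partial X$, reduce to the homogeneous boundary case, then determine $\mu$ via the solvability condition $\mu=\int_{X}u_{0}(h-L(p_{0})w-\lambda_{0}w)\,dVol_{g(p_{0})}$, apply the Fredholm alternative (in the Neumann setting) to solve for $v$ modulo $\mathrm{span}(u_{0})$, and finally fix the free multiple of $u_{0}$ so that $\int u_{0}v=\alpha/2$. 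Standard Schauder estimates ensure all of this takes place at the level of Hölder spaces with continuous inverses.

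With the isomorphism established, the implicit function theorem produces a neighborhood $U\ni p_{0}$ and smooth maps $p\mapsto u(p,\cdot)\in Z$, $p\mapsto \lambda(p)\in\mathbb{R}$ solving $F(p,u(p,\cdot),\lambda(p))=0$. By continuity and the normalization $\int u^{2}\,dVol_{g(p)}=1$, for $p$ close to $p_{0}$ the function $u(p,\cdot)$ stays positive on $X$; hence it coincides with the first Neumann eigenfunction of $L(p)$ and $\lambda(p)=\lambda_{1}(p)$. These local choices patch into a global smooth map $u:\Omega\times X\to\mathbb{R}$ and a smooth function $\lambda_{1}:\Omega\to\mathbb{R}$ thanks to the unique characterization of the normalized positive first eigenfunction and the assumed simplicity hypothesis.

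I expect the main technical obstacle to be the careful bookkeeping of the Neumann boundary condition as $p$ varies, in particular the choice of function spaces that make $F$ genuinely smooth in $p$ (the normal vector field $\nu(p)$ itself depends on $g(p)$, so one must either fix the space $Z$ using a reference normal and treat the $p$-dependence of the boundary operator as part of the equation, as above, or equivalently conjugate by a $p$-dependent diffeomorphism of a tubular collar to pull back all boundary data to a common reference collar). Once this is handled correctly, invertibility of the linearization is a direct consequence of self-adjointness, the Fredholm alternative, and the hypothesis that the first eigenspace is one-dimensional.
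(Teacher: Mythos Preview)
The paper does not actually prove this lemma: it is stated with the attribution ``cf.\ Lemma~A.1 in Mantoulidis--Schoen'' and no argument is supplied. Your implicit function theorem approach is the standard one and is in substance what that reference does, so there is nothing to compare against in the paper itself.

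There is, however, a genuine technical glitch in your setup. You define $Z=\{u\in C^{2,\alpha}(X):\partial_{\nu(p_{0})}u=0\ \text{on}\ \partial X\}$ \emph{and} you include $\partial_{\nu(p)}u|_{\partial X}$ as the second component of $F$. At $p=p_{0}$ this second component vanishes identically on $Z$, so the linearization $D_{(u,\lambda)}F|_{(p_{0},u_{0},\lambda_{0})}$ lands in $C^{0,\alpha}(X)\times\{0\}\times\mathbb{R}$ and cannot be surjective onto $C^{0,\alpha}(X)\times C^{1,\alpha}(\partial X)\times\mathbb{R}$. Your own surjectivity argument exposes this: you lift an arbitrary $\psi\in C^{1,\alpha}(\partial X)$ to some $w$ with $\partial_{\nu(p_{0})}w=\psi$, but such $w$ cannot lie in $Z$ unless $\psi=0$. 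The fix is easy and does not change the spirit of the proof: take $Z=C^{2,\alpha}(X)$ with no boundary condition built in, and let the second component of $F$ enforce the full Neumann condition $\partial_{\nu(p)}u=0$. Then the linearized boundary operator $v\mapsto\partial_{\nu(p_{0})}v$ is genuinely surjective onto $C^{1,\alpha}(\partial X)$, and your injectivity/surjectivity arguments go through verbatim. Alternatively, keep $Z$ as you defined it, drop the middle component of $F$ entirely, and absorb the $p$-dependence of $\nu$ via the collar-diffeomorphism conjugation you already mention at the end.
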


	  The second result provides a sufficient condition for the first eigenvalue to be uniformly positive as we vary some parameter. In order to state and prove it, we first need an ancillary result, that is an $L^2$-trace inequality for a family of data and metrics satisfying certain uniform bounds. 
	  
	  \begin{lem}\label{lem:TraceIneq}
		Let $(X,g)$ be a smooth Riemannian manifold with boundary, and $\Lambda>0$ a constant. There exists $\sigma_0>0$ depending on $(X,g)$, but not on $\Lambda$, such that for every $\sigma<\sigma_0$, if $g_\omega$ is a smooth Riemannian metrics satisfying
		\begin{equation}\label{assumption.C0.small.C1.bounded}
			\|g_\omega - g\|_{C^0(X,g)}\le \sigma,\quad \|g_\omega-g\|_{C^1(X,g)}\le \Lambda,\quad \forall \omega\in \Omega,
		\end{equation}
		then there exists a constant $C_1=C_1(X,g,\Lambda)$ such that for any $\phi\in H^1(X,g_\omega)$, we have
		\begin{equation}\label{eq.L2.trace}
		\int_{\partial X} \phi^2 dS_{g_\omega}\le C_1\left( \int_{X} (|\nabla_{g_\omega} \phi|^2 + \phi^2)  dVol_{g_\omega}\right).
		\end{equation}
	\end{lem}
	
	\begin{proof}
		Denote by $r_0$ the injectivity radius of $(X,g)$. Let $\eta:[0,\infty)\rightarrow $ be a smooth, non-increasing function such that
		$0\le \eta\le 1$, $\eta(x)=1$ when $0\le x\le \frac12$ and $\eta(x)=0$ when $x\ge 1$; let then $d(p)=d_g(p,\partial X)$. We define a smooth vector field $\xi$ on $X$ by setting 
		\[\xi(p)=-(\eta\circ d)(p)\cdot \nabla_g d(p).\]
		We have that $\xi=\nu$ on $\partial X$, where $\nu$ is the outward unit normal vector of $\partial X$, and $|\xi|_g\le 1$ in $X$. Thus, there exists $\sigma_0>0$ such that when $\sigma<\sigma_0$ and $\|g_\omega-g\|_{C^0(g)}\le \sigma$, $\langle \xi,\nu_{g_\omega}\rangle_{g_\omega}\ge \frac 12$ along $\partial X$, and $|\xi|_{g_\omega}\le 2$ inside $X$. 
		
		Given any $C^1$ function $\phi$, one can apply the divergence theorem in $(X,g_\omega)$ and get
		\[\int_{\partial X} \langle \phi^2 \xi,\nu_{g_\omega}\rangle_{g_\omega} dS_{g_\omega } =\int_X \Div_{g_\omega} (\phi^2 \xi) dVol_{g_\omega}.\]

		Hence, it follows that
		\begin{align*}
			\frac12 \int_{\partial X} \phi^2 dS_{g_\omega} &\le \int_X (|\phi\langle \nabla_{g_\omega} \phi, \xi \rangle_{g_\omega}|  + \phi^2 |\Div_{g_\omega} \xi|) dVol_{g_\omega}\\
			&\le \int_X (\phi^2 + |\nabla_{g_\omega} \phi|^2 + \phi^2 |\Div_{g_\omega} \xi|) dVol_{g_\omega},
		\end{align*}
		where we have used that $|\phi\langle \nabla_{g_\omega} \phi, \xi\rangle_{g_\omega}|\le \frac14 \phi^2|\xi|^2_{g_\omega} + |\nabla_{g_\omega} \phi|^2$ and that $|\xi|_{g_\omega}\le 2$.
		
		Now we consider the divergence term. In a local coordinate system $\{x_j\}$, let $\xi=(\xi^1,\cdots, \xi^n)$. Then
		\[\Div_{g_\omega}\xi = \sum_i \partial_i \xi^i  + \sum_{i,l} \Gamma_{il}^l \xi^i.\]
		Since $\|g_\omega-g\|_{C^1(X,g)}\le \Lambda$, $|\Div_{g_\omega}\xi - \Div_g \xi|\leq\Lambda'$, where $\Lambda'$ is a constant depending only on $\Lambda$. Therefore \eqref{eq.L2.trace} holds for a uniform constant $C_1$ depending only on $X,g,\Lambda$, provided $\sigma<\sigma_0$.

	\end{proof} 
	  
	We can now proceed with the aforementioned uniform eigenvalue estimate.

\begin{lem}\label{lem:poseigenv}
	Given constants $\Lambda, \kappa, C_0>0$, a smooth Riemannian manifold with boundary $(X,g)$, there exist positive constants $\sigma_0=\sigma_0(X,g)$ and $\tau_0=\tau_0(X,g,\Lambda,\kappa,C_0)$, such that, for any smooth Riemannian metric $g_\omega$ on $X$,  functions $f_\omega\in C^\infty(X)$, $h_\omega\in C^\infty(\partial X)$, $\omega\in \Omega$, and constants $\sigma\in (0,\sigma_0), \tau\in(0,\tau_0)$ if:
		\begin{enumerate}
			\item [1.] $\|g_\omega-g\|_{C^0(X,g)}\le \sigma,  \|g_\omega-g\|_{C^1(X,g)}\le \Lambda$,
			\item [2.] $\|f_\omega\|_{L^{\frac n2 +1}(X,g_\omega)}\le C_0$,
			\item [3.] $\|(f_\omega)_{-}\|_{L^{\frac n2}(X,g_\omega)}<\tau$,  $\|h_\omega\|_{L^\infty(\partial X, g_\omega)}<\tau$, 
			\item [4.] there exists a smooth domain $X_1=X_1(\omega)\subset X$, $Vol_{g_\omega} (X_1)\le \tau Vol_{g_\omega}(X)$, and $f_\omega\ge \kappa>0$ on $X\setminus X_1$,
		\end{enumerate}
		then the first eigenvalue $\lambda_1(\omega)$ of the elliptic problem
		\[\begin{cases}
		\Delta_{g_\omega} \phi - f_\omega \phi = -\lambda_1(\omega) \phi \quad &\text{in }X,\\ \partial_{\nu_{g_\omega}} \phi= -h_\omega \phi \quad &\text{on }\partial X,
		\end{cases}\]
		satisfies an estimate of the form $\lambda_1(\omega)\ge \lambda_*>0$ for a constant $\lambda_{\ast}=\lambda_{\ast}(X, g,\Lambda, \kappa, C_0)$.

\end{lem}

\begin{proof}
    	By setting $\sigma_0$ sufficiently small as requested by Lemma \ref{lem:TraceIneq}, assumption 1. above implies that there exists a constant $C_1=C_1(X,g_0,\Lambda)$ such that \eqref{eq.L2.trace} holds. By enlarging $C_1$ if necessary, we may further assume the validity of the Sobolev inequality with a uniform constant $C_1=C_1(X,g_0,\Lambda)>1$. In other words, for any function $\phi\in H^1(X,g_\omega)$, we have:
	\[\left(\int_X |\phi|^{\frac{2n}{n-2}}dVol_{g_\omega}\right)^{\frac{n-2}{n}}\le C_1\left(\int_{X} |\nabla_{g_\omega} \phi|^2 dVol_{g_\omega} +\int_{X} \phi^2 dVol_{g_\omega} \right),\]
	and
	\[\int_{\partial X} \phi^2 dS_{g_\omega}\le C_1\int_X (|\nabla_{g_\omega} \phi|^2 + \phi^2) dVol_{g_\omega}. \]

		For any $H^1$ function $\phi$, using assumption 3. one can estimate
	\begin{multline*}\int_X (|\nabla_{g_\omega}\phi|^2 + f_{\omega} \phi^2) dVol_{g_\omega} +\int_{\partial X} h_{\omega} \phi^2 dS_{g_\omega} \\ \ge \int_X (|\nabla_{g_\omega} \phi|^2 - (f_{\omega})_{-}\phi^2) dVol_{g_\omega} 
	- \tau \int_{\partial X} \phi^2 dS_{g_\omega}
	\end{multline*}
	and by H\"older's inequality
	\begin{align*}
	\int_X (f_{\omega})_{-}\phi^2 dVol_{g_\omega} &\le \left(\int_X (f_{\omega})_{-}^{\frac n2} dVol_{g_\omega}\right)^{\frac 2n}\left(\int_X |\phi|^{\frac{2n}{n-2}}dVol_{g_\omega}\right)^{\frac{n-2}{n}}\\
		&\le \tau \left(\int_X \phi^\frac{2n}{n-2}dVol_{g_\omega}\right)^{\frac{n-2}{n}} \\
		& \le \tau C_1\left(\int_X |\nabla_{g_{\omega}} \phi|^2 dVol_{g_\omega} + \int_{X} \phi^2 dVol_{g_\omega} \right),
	\end{align*}
	which then implies
	\begin{align*}
		& \int_X (|\nabla_{g_\omega} \phi|^2 + f_{\omega} \phi^2)dVol_{g_\omega} +\int_{\partial X} h_{\omega} \phi^2 dS_{g_\omega} \\
		&\ge (1-\tau C_1) \int_X |\nabla_{g_\omega} \phi|^2 
		 -\tau C_1\int_{X}\phi^2 dVol_{g_\omega} - \tau \int_{\partial X} \phi^2 dS_{g_\omega} \\
			&\ge (1-2\tau C_1)\int_X |\nabla_{g_{\omega}} \phi|^2 
			 -2\tau C_1 \int_{X} \phi^2\\
			&\ge -2\tau C_1 \int_X \phi^2,
	\end{align*}
	provided $\tau< (2C_1)^{-1}$.
    
    From the variational characterization of the first eigenvalue, we have that 
    \[ 
    \lambda_1(\omega)\ge- 2\tau C_1 \ge -1.
    \] Let us now improve the coarse bound above in the case $\phi=\phi_1^{(\omega)}$, namely for the first eigenfunction $\phi_1^{(\omega)}>0$ associated to $\lambda_1(\omega)$. 
	We shall prove the estimate $\lambda_1(\omega)\ge \lambda_*$ with 
	\[\lambda_*=\min\{1,\frac12 C_2^{-2}\kappa\},\]
	for $C_2$ a constant depending only on $X, g, \Lambda, \kappa, C_0$. 
    
    Assume that $\lambda_1(\omega)\le 1$. By Moser's Harnack inequality, there exists a constant $C_2$ depending on $C_1, C_0$ (cf. assumption 2.)  and $(X,g_0)$, but otherwise uniform in $(g_\omega,f_\omega,h_\omega)$), such that
	\[\sup_{p\in X} \phi^{(\omega)}_1(p)\le C_2\inf_{p\in X} \phi^{(\omega)}_1 (p).\]
	As a result, we can derive the following chain of inequalities
	\begin{align*}
		&\lambda_1(\omega)=\frac{\int_X (|\nabla_{g_\omega}\phi^{(\omega)}_1|^2 + f_{\omega} (\phi^{(\omega)}_1)^2) dVol_{g_{\omega}}+ \int_{\partial X} h_{\omega} (\phi^{(\omega)}_1)^2 dS_{g_\omega}}{\|\phi\|^{2}_{L^2(X,g_{\omega})}} \\
		&\ge \frac{(1-\tau C_1)\int_X |\nabla_{g_\omega} \phi^{(\omega)}_1|^2 dVol_{g_\omega}  +\int_X (f_{\omega}-\tau C_1) (\phi^{(\omega)}_1)^2 dVol_{g_\omega}}{\|\phi\|^{2}_{L^2(X,g_{\omega})}} \\
		&\ge \frac{\inf_X (\phi^{(\omega)}_1)^2 \int_X (f_{\omega})_{+} dVol_{g_\omega}-  \sup_X (\phi^{(\omega)}_1)^2 \int_X ((f_{\omega})_{-}+\tau C_1) dVol_{g_\omega}}{\|\phi\|^{2}_{L^2(X,g_{\omega})}} \\
		&\ge \frac{\int_X (f_{\omega})_+ dVol_{g_\omega} - C_2^4 \int_X (f_\omega)_{-} dVol_{g_\omega} - \tau C_1 C_2^4 Vol_{g_\omega}(X)}{C_2^{2}Vol_{g_{\omega}}(X)}  .
	\end{align*}

On the other hand, again by H\"older's inequality, 
\[
\int_X (f_\omega)_{-} dVol_{g_\omega} \le \|(f_\omega)_{-}\|_{L^\frac n2 (X,g_{\omega})} Vol_{g_{\omega}}(X)^{\frac{n-2}{n}}\le \tau Vol_{g_{\omega}}(X)^{\frac{n-2}{n}}.
\]
Thus,
	\begin{align*}
		&\lambda_1(\omega) \\
		& \ge \frac{\kappa (Vol_{g_{\omega}}(X)-Vol_{g_{\omega}}(X_1)) - \tau C_2^4 Vol_{g_{\omega}}(X)^{\frac{n-2}{n}}   - \tau C_1C_2^4 Vol_{g_{\omega}}(X)}
		{C_2^{2} Vol_{g_{\omega}}(X)}\\
						&\ge \frac12 C_2^{-2}\kappa 
						\ge \lambda_*,
	\end{align*}
	provided we simply require (by virtue of assumption 4.)
	\[\tau\leq \frac{\kappa}{2\left(\kappa+ C_2^4 Vol_{g_{\omega}}(X)^{-\frac 2n}+C_1C_2^4\right)}.\]
	Keeping in mind the fact that $g$ and $g_{\omega}$ are $C^0$-close, possibly by taking $\sigma_0$ smaller, we choose 
	\[\tau_0=\frac{1}{10}\min\left\{(3C_1^2)^{-1}, \frac{\kappa}{2\left(\kappa + C_2^4 Vol_g(X)^{-\frac 2n} +C_1C_2^4\right)} \right\}\]
	which allows to complete the proof.

\end{proof}

\section{Convex curvature conditions}\label{sec:ConvexCond}

Here, we would like to discuss why the curvature conditions appearing in our main theorems
are \emph{convex} within any given conformal class. The result we provide is the following:

\begin{lem}\label{lem:convex}
	Let $n\geq 3$, and let $(X^n,g)$ be a compact Riemannian manifold with boundary. We have that the following sets are convex:
	\begin{enumerate}
	\item [i)] {$\left\{\phi\in C_{>0}^{\infty}(X) \ : \ R_{\phi^{4/(n-2)}g}>0\right\}$}	
	\item[ii)] {$\left\{\phi\in C_{>0}^{\infty}(X) \ : \ R_{\phi^{4/(n-2)}g}\geq 0\right\}$}	\item [iii)]{$\left\{\phi\in C_{>0}^{\infty}(X) \ : \ H_{\phi^{4/(n-2)}g}>0\right\}$}	
	\item [iv)]{$\left\{\phi\in C_{>0}^{\infty}(X) \ : \ H_{\phi^{4/(n-2)}g}\geq 0\right\}$}	
	\end{enumerate}	
where $R_{\tilde{g}}$ denotes the scalar curvature of $(X,\tilde{g})$, while $H_{\tilde{g}}$ denotes the mean curvature of its boundary.
In particular, the sets defined in i) and iii) are \emph{strictly} convex.
Hence, denoted by $[g]$ the conformal class of the metric $g$,  the sets
\[
\mathcal{M}^{[g]}_{R>0, H>0}:=\left\{\tilde{g}\in [g] \ : \ R_{\tilde{g}}>0, H_{\tilde{g}}>0 \right\},
\]
\[
\mathcal{M}^{[g]}_{R>0, H\geq 0}:=\left\{\tilde{g}\in [g] \ : \ R_{\tilde{g}}>0, H_{\tilde{g}}\geq 0 \right\},
\] 
\[
\mathcal{M}^{[g]}_{R\geq 0, H> 0}:=\left\{\tilde{g}\in [g] \ : \ R_{\tilde{g}}\geq 0, H_{\tilde{g}}>0 \right\},
\] 
\[
\mathcal{M}^{[g]}_{R\geq 0, H\geq 0}:=\left\{\tilde{g}\in [g] \ : \ R_{\tilde{g}}\geq 0, H_{\tilde{g}}\geq 0 \right\},
\] 
are contractible.
\end{lem}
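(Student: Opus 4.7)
The plan is to reduce all four convexity claims to a single observation: in the parametrization $\tilde g = u^{4/(n-2)} g$ (with $u \in C^{\infty}_{>0}(X)$), the conformal change formulas express the relevant curvatures as a strictly positive multiple of an expression that is \emph{linear} in $u$. Explicitly, the formulas \eqref{eq:changeR} and \eqref{eq:changeH} read
\[
R_{u^{4/(n-2)} g} = u^{-\frac{n+2}{n-2}} \Bigl( -\tfrac{4(n-1)}{n-2}\Delta_g u + R_g u \Bigr), \qquad
H_{u^{4/(n-2)} g} = u^{-\frac{n}{n-2}} \Bigl( \tfrac{2(n-1)}{n-2}\,\partial_\nu u + H_g u \Bigr),
\]
and the prefactors $u^{-(n+2)/(n-2)}$, $u^{-n/(n-2)}$ are strictly positive at every point since $u > 0$. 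Consequently, at each $x \in X$ (resp. $x \in \partial X$) the signs of $R_{u^{4/(n-2)} g}(x)$ and $H_{u^{4/(n-2)} g}(x)$ coincide with the signs of the two linear expressions on the right hand side; equivalently, the conditions defining (i)--(iv) are linear pointwise inequalities on $u$ involving the conformal Laplacian $L_g = -\tfrac{4(n-1)}{n-2}\Delta_g + R_g$ and the conformal Neumann operator $B_g = \tfrac{2(n-1)}{n-2}\partial_\nu + H_g$.

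Given this, each of the sets (i)--(iv) is the intersection of the positive cone $\{u > 0\} \subset C^{\infty}(X)$ with a pointwise linear constraint imposed on every point of $X$, respectively $\partial X$, and hence is convex. Concretely, if $u_0, u_1$ satisfy, say, $L_g u_i > 0$ pointwise and $t \in [0,1]$, then by linearity $u_t := (1-t) u_0 + t u_1$ is still positive and $L_g u_t = (1-t) L_g u_0 + t L_g u_1 > 0$ pointwise, so $u_t$ lies in (i); cases (ii), (iii), (iv) are handled identically, exchanging $L_g$ with $B_g$ and the strict inequality with a weak one as appropriate. The strict convexity assertion for (i) and (iii) is then just the remark that these two sets are defined by open conditions, so they are open; thus every convex combination already sits in their interior.

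For the moduli statement at the end, I would observe that the assignment $u \mapsto u^{4/(n-2)} g$ is a bijection from $C^{\infty}_{>0}(X)$ onto the conformal class $[g]$ in the space of smooth metrics, under which the four spaces $\mathcal{M}^{[g]}_{\cdots}$ correspond exactly to the four convex parameter sets (i)--(iv). Transporting the straight-line homotopy $(u_0, u_1) \rightsquigarrow u_t$ along this bijection yields a continuous deformation within each $\mathcal{M}^{[g]}_{\cdots}$, from which contractibility to any chosen base metric in the class is immediate.

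I do not anticipate any serious obstacle: the entire argument rests on selecting the right parametrization — namely $u = v^{(n-2)/4}$, where $v$ is the usual conformal factor $\tilde g = v g$ — which is precisely the one that simultaneously linearizes the interior condition via $L_g$ and the boundary condition via $B_g$. The only point worth emphasizing is that convexity here is not convexity of $\tilde g$ as a tensor (under which $R_{\tilde g}$ would depend nonlinearly on $v$), but convexity in the distinguished $u$-parametrization, which is why the corresponding paths of metrics $t \mapsto u_t^{4/(n-2)} g$ provide the desired contractions.
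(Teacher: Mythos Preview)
Your proof is correct and follows essentially the same route as the paper: both arguments rest on the observation that the conformal change formulas \eqref{eq:changeR}, \eqref{eq:changeH} express $R_{u^{4/(n-2)}g}$ and $H_{u^{4/(n-2)}g}$ as a strictly positive prefactor times an expression that is linear in $u$, so that the sign conditions become linear (hence convex) constraints on $u$. The paper makes this explicit by writing out the identity
\[
R_{(\lambda_1 u_1+\lambda_2 u_2)^{4/(n-2)}g}
=\lambda_1\Bigl(\tfrac{\lambda_1 u_1+\lambda_2 u_2}{u_1}\Bigr)^{-\frac{n+2}{n-2}}R_{u_1^{4/(n-2)} g}
+\lambda_2\Bigl(\tfrac{\lambda_1 u_1+\lambda_2 u_2}{u_2}\Bigr)^{-\frac{n+2}{n-2}}R_{u_2^{4/(n-2)} g}
\]
(and the analogous one for $H$), which is exactly your linearity observation $L_g u_t=(1-t)L_gu_0+tL_gu_1$ rewritten after dividing through by the prefactor; the content is identical.
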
	

\begin{proof}
If we let $\tilde{g}=\phi^{4/(n-2)}g$, we know that the scalar curvature changes according to the equation
\begin{equation}\label{eq:changeR}
R_{\tilde{g}}=\phi^{-\frac{n+2}{n-2}}(R_{g}\phi-c(n)\Delta_g \phi)
\end{equation}
while the mean curvature of the boundary obeys the law
\begin{equation}\label{eq:changeH}
H_{\tilde{g}}=\phi^{-\frac{n}{n-2}}\left(H_{g}\phi+\frac{c(n)}{2}\frac{\partial \phi}{\partial \nu}\right)
\end{equation}
where $\nu$ denotes the outward-pointing unit normal to $\partial X$, and we have conveniently introduced the constant $c(n)=4(n-1)/(n-2)$.
Therefore, it follows that for any $\lambda_1,\lambda_2\in [0,1]$ with $\lambda_1+\lambda_2=1$ one can write
\begin{multline*}
R_{(\lambda_1 \phi_1+\lambda_2 \phi_2)^{4/(n-2)}g}\\
=\lambda_1\left(\frac{\lambda_1 \phi_1+\lambda_2 \phi_2}{\phi_1}\right)^{-\frac{n+2}{n-2}}R_{\phi^{4/(n-2)}_1 g}+\lambda_2\left(\frac{\lambda_1 \phi_1+\lambda_2 \phi_2}{\phi_2}\right)^{-\frac{n+2}{n-2}}R_{\phi^{4/(n-2)}_2 g}
\end{multline*}
and similarly
\begin{multline*}
H_{(\lambda_1 \phi_1+\lambda_2 \phi_2)^{4/(n-2)}g}\\
=\lambda_1\left(\frac{\lambda_1 \phi_1+\lambda_2 \phi_2}{\phi_1}\right)^{-\frac{n}{n-2}}H_{\phi^{4/(n-2)}_1 g}+\lambda_2\left(\frac{\lambda_1 \phi_1+\lambda_2 \phi_2}{\phi_2}\right)^{-\frac{n}{n-2}}H_{\phi^{4/(n-2)}_2 g}.
\end{multline*}
Given these two formulae, all conclusions are straightforward.
\end{proof}	

\begin{rmk}
	\label{rem:ConvClosedCase}
The same conclusion as in \emph{i)} applies, as a special case, to compact manifolds without boundary. In particular, the subset $\mathcal{R}^{[g]}$ consisting of those metrics, in the conformal class of $[g]$, having positive scalar curvature,	is contractible.
\end{rmk}
	
\section{Conformal deformations via cutoff functions}\label{sec:push-in}
	
In this appendix, we explain how to deform Riemannian metrics on a compact 3-manifold in order to increase the mean curvature of the boundary  without significantly decreasing the scalar curvature in the interior. For expository convenience, we shall first present the case when the deformation only involves a given metric, and then discuss the uniform deformation of a continuous path.
	
\begin{lem}\label{lem:pushMet}
		Let $n\geq 3$, and let $(X^n,g)$ be a compact Riemannian manifold with boundary. If $(X,g)$ has positive scalar curvature and weakly mean-convex boundary, there exists a continuous path of smooth metrics $[0,1]\ni \mu \mapsto g_{\mu} \in\mathcal{M}_{R>0, H\geq 0}$ such that $g(0)=g$ and for any $\mu>0$ one has in fact $g_{\mu} \in\mathcal{M}_{R>0, H>0}$. Furthermore, the path in question depends continuously on the input metric $g$.
\end{lem}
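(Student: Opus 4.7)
The plan is to produce the desired isotopy via a small conformal deformation $g_\mu = u_\mu^{4/(n-2)}g$, where $u_\mu = 1 + \mu\phi$ for a suitable smooth function $\phi\colon X\to\R$ chosen so that the linearization at $\mu=0$ strictly increases the boundary mean curvature everywhere while affecting the scalar curvature only in a controlled way. The key requirements on $\phi$ are that $\phi|_{\partial X}\equiv 0$ and $\partial\phi/\partial\nu \ge c_0 > 0$ on $\partial X$ (with $\nu$ the outward unit normal). Such a function is easily constructed: fix a tubular collar of width $2\varepsilon_0$ in which the distance function $d_g(\cdot,\partial X)$ is smooth, pick a cutoff $\chi\in C^\infty([0,\infty))$ with $\chi\equiv 1$ on $[0,\varepsilon_0]$ and $\chi\equiv 0$ on $[2\varepsilon_0,\infty)$, and set $\phi(x) = -\chi(d_g(x,\partial X))\,d_g(x,\partial X)$ on the collar, extended by $0$ outside. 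A direct computation gives $\phi|_{\partial X}=0$ and $(\partial\phi/\partial\nu)|_{\partial X}=1$.

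Next I would invoke the conformal change formulas \eqref{eq:changeR} and \eqref{eq:changeH} from Appendix \ref{sec:ConvexCond}, obtaining
\[
R_g u_\mu - c(n)\Delta_g u_\mu \;=\; R_g \;+\; \mu\bigl(R_g\phi - c(n)\Delta_g\phi\bigr),
\]
\[
H_g u_\mu + \tfrac{c(n)}{2}\tfrac{\partial u_\mu}{\partial\nu}\bigg|_{\partial X} \;=\; H_g \;+\; \mu\,\tfrac{c(n)}{2}\,\tfrac{\partial\phi}{\partial\nu}\bigg|_{\partial X},
\]
where the vanishing of $\phi$ on $\partial X$ has been used to cancel the $\mu H_g\phi$ term in the boundary expression. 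This cancellation is the structural reason the construction works cleanly: the first-order boundary correction is purely the (uniformly positive) normal-derivative term, with no ambient contribution that could compete with it on the set $\{H_g=0\}$.

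From there, a compactness argument yields the conclusion. Since $R_g$ is strictly positive on the compact set $X$ and $\phi$, $\Delta_g\phi$ are bounded, there exists $\mu_0 > 0$ such that $R_{g_\mu} > 0$ on $X$ for every $\mu\in[0,\mu_0]$. Simultaneously, since $H_g\ge 0$ on $\partial X$ and $\partial\phi/\partial\nu\ge c_0 > 0$, the boundary expression above is $\ge \mu\,(c(n)/2)\,c_0$, strictly positive for any $\mu\in(0,\mu_0]$ and equal to $H_g\ge 0$ at $\mu=0$. Finally, rescaling $\mu\mapsto\mu\mu_0$, one obtains a continuous path $[0,1]\ni\mu\mapsto g_\mu$ of smooth metrics with $g_0 = g$, $g_\mu\in\mathcal{M}_{R>0,H\ge 0}$ for all $\mu$, and $g_\mu\in\mathcal{M}_{R>0,H>0}$ for $\mu>0$.

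There is no substantial obstacle here: the only point that requires a moment of care is the uniform lower bound on $\partial\phi/\partial\nu$ along $\partial X$, which is immediate from the explicit construction, together with the compactness of $X$ needed to make $\mu_0$ uniform. Everything else is a one-parameter perturbation argument exploiting the convexity-type structure of the conformal formulas already exploited in Appendix \ref{sec:ConvexCond}.
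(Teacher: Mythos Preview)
Your proof is correct and follows essentially the same approach as the paper's: a conformal deformation by a factor depending only on the distance to $\partial X$, arranged so that the outward normal derivative is strictly positive at the boundary while the interior perturbation is $C^2$-bounded. The only cosmetic difference is that the paper's conformal factor equals $1+\epsilon\eta$ (rather than $1$) on $\partial X$, which is harmless since $H_g\ge 0$; your choice $\phi|_{\partial X}=0$ makes the boundary computation slightly cleaner but is otherwise the same argument.
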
	

Of course, by continuity one can always accomodate the requirement that the scalar curvature of the endpoint metric of the path be larger than, say, $\inf R_g /2$.

\begin{proof}
Consider a smooth, non-increasing convex function $\psi: [0,\infty)\to [0,\infty)$ such that 
\[
\psi(t)=
\begin{cases}
1-t & \text{if} \ 0\leq t\leq 1/2; \\
0 & \text{if} \ t\geq 1, 
\end{cases}
\]
and for $\eta, \epsilon\in (0,1)$ set 
\[
\phi_{\eta,\epsilon}(t)=1+\epsilon \eta\psi\left(t/\eta\right).
\]
Let us assume, for the sake of simplicity, that $\partial X$ is connected (the case of multiple connected components is handled working one connected component at a time).
If we pick $\eta$ smaller than the size of a tubular neighborhood of $\partial X$ in $(X,g)$ and let $z$ denote the distance function from $\partial X$, one can consider a conformal metric of the form $\phi^{4/(n-2)}_{\eta,\epsilon}(z)g$: we have that (by equations \eqref{eq:changeR} and \eqref{eq:changeH}) one can choose
\[
\epsilon< C\eta \frac{\inf R_g}{2},
\] 
where $C$ only depends on $n$ and the function $\psi$, so that this metric has scalar curvature bounded from below by $\inf R_g /2$ and boundary mean curvature bounded from below by $\epsilon/2$. The desired path $g_{\mu}$ is then obtained by simply varying the parameter in the above construction, namely setting for $\mu\in [0,1]$
\[
g_{\mu}=\phi^{4/(n-2)}_{\eta,\mu\epsilon}(z)g.
\]
\end{proof}
	
\begin{lem}\label{lem:pushFam}
	In the setting above, let $[0,1]\ni\mu\mapsto \alpha_{\mu}\in \mathcal{M}_{R>0,H\geq 0}$ be a continuous path of smooth metrics on $X$.
	\begin{enumerate}
	    \item [i)] if $\partial X$ is strictly mean-convex with respect to $\alpha_0$,  then there exists a continuous path $[0,1]\ni\mu\mapsto \tilde{\alpha}_{\mu}\in \mathcal{M}_{R>0,H>0}$ such that $\tilde{\alpha}_0=\alpha_0$ and $\tilde{\alpha}_1$ can be chosen depending on $\alpha_1$ only;
	    \item [ii)] if $\partial X$ is strictly mean-convex with respect to $\alpha_0$ and with respect to $\alpha_1$,  then there exists a continuous path $[0,1]\ni\mu\mapsto \tilde{\alpha}_{\mu}\in \mathcal{M}_{R>0,H>0}$ such that $\tilde{\alpha}_0=\alpha_0$ and $\tilde{\alpha}_1=\alpha_1$. 
	\end{enumerate} 
	\end{lem}		

\begin{proof}
We simply consider two parametric versions of the construction presented above for Lemma \ref{lem:pushMet}. Concerning part i), for $\mu\in [0,1]$ we shall set
\[
\tilde{\alpha}_{\mu}=\phi^{4/(n-2)}_{\eta_{\mu},\epsilon_{\mu}}(z_{\alpha_{\mu}})\alpha_{\mu}.
\]
The size $\eta_{\mu}$ of the tubular neighborhood of $\partial X$ in metric $\alpha_{\mu}$ can be bound uniformly, by compactness of $[0,1]$ and continuous dependence of the injectivity radius of $\partial X$ with respect to $C^2$ variations of the metric (see Ehrlich \cite{Ehr74}), and one takes
\[
\epsilon_{\mu}=C\eta_{\mu} \mu \frac{\inf R_{\alpha_{\mu}}}{2}.
\]
We obtain the desired path $\tilde{\alpha}_{\mu}$ by varying $\mu\in [0,1]$. 
Similarly, for part ii) we shall follow the same argument and simply take
\[
\epsilon_{\mu}=C\eta_{\mu} (\mu-\mu^2) \frac{\inf R_{\alpha_{\mu}}}{2}.
\]
\end{proof}	

There also exist dual deformation results that allow to slightly bump-up the scalar curvature at the interior at the price of decreasing the mean curvature of the boundary. For our purposes, we shall only need the following statement.

\begin{lem}\label{lem:EigenfDef}
	Let $(X^n,g)$ be a Riemannian manifold of non-negative scalar curvature and (strictly) mean-convex boundary. Then there is a continuous path of smooth metrics $[0,1]\ni\mu\mapsto g_\mu\in\mathcal{M}_{R\geq 0, H>0}$ such that $g_0=g$, and for any $\mu>0$ one has in fact $g_{\mu}\in \mathcal{M}_{R>0, H>0}$. Furthermore, the path in question depends continuously on the input metric $g$.
\end{lem}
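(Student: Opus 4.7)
The strategy is dual to Lemma \ref{lem:pushMet}: whereas that result boosted the boundary mean curvature at the cost of slightly decreasing the scalar curvature, here I must boost the scalar curvature while only modestly decreasing $H$, exploiting the strict mean-convexity of the boundary as reserve. The natural tool is a conformal deformation whose conformal factor solves an elliptic eigenvalue problem with \emph{Robin} (rather than Neumann) boundary condition, combined with the conformal convexity results of Appendix \ref{sec:ConvexCond} to promote a single auxiliary metric into a whole path.

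Concretely, for a small parameter $\tau>0$ to be determined, I would consider the first eigenvalue $\lambda_1(\tau)$ of the Robin boundary value problem
\[
\begin{cases}
\Delta_g u - \dfrac{R_g}{c(n)} u = -\lambda\, u & \text{in } X,\\[2pt]
\dfrac{\partial u}{\partial \nu} + \tau\, u = 0 & \text{on } \partial X,
\end{cases}
\]
with $c(n)=4(n-1)/(n-2)$. Its variational characterization
\[
\lambda_1(\tau)=\inf_{\phi\in H^1(X)\setminus\{0\}}\frac{\int_X\!\bigl(|\nabla\phi|^2+\tfrac{R_g}{c(n)}\phi^2\bigr)\,dVol_g+\tau\int_{\partial X}\phi^2\,dS_g}{\int_X\phi^2\,dVol_g}
\]
together with $R_g\geq 0$ and $\tau>0$ forces $\lambda_1(\tau)>0$: either the trace of a competitor $\phi$ on $\partial X$ is non-trivial (in which case the boundary term contributes positively) or $\phi|_{\partial X}\equiv 0$ (in which case the Poincaré inequality yields $\int_X|\nabla\phi|^2>0$). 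The associated positive eigenfunction $u$ is smooth by standard elliptic regularity. Using the conformal change formulae \eqref{eq:changeR}, \eqref{eq:changeH} together with the Robin condition $\partial_\nu u=-\tau u$, the metric $\tilde g:=u^{4/(n-2)}g$ satisfies
\[
R_{\tilde g}=c(n)\,\lambda_1(\tau)\,u^{-4/(n-2)}>0,\qquad H_{\tilde g}=u^{-2/(n-2)}\Bigl(H_g-\tfrac{c(n)}{2}\tau\Bigr).
\]
Since $H_g$ is bounded away from zero on the compact boundary, choosing $\tau$ sufficiently small ensures $H_{\tilde g}>0$, so that $\tilde g\in\mathcal{M}_{R>0,H>0}\cap[g]$.

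With such a $\tilde g$ in hand, I would interpolate by setting
\[
g_\mu:=\bigl((1-\mu)+\mu u\bigr)^{4/(n-2)}g,\qquad \mu\in[0,1],
\]
so that $g_0=g$ and $g_1=\tilde g$. The explicit formula derived in the proof of Lemma \ref{lem:convex} expresses $R_{g_\mu}$ as a positive linear combination of $R_g$ and $R_{\tilde g}$ (with strictly positive coefficient in front of $R_{\tilde g}$ as soon as $\mu>0$), and similarly for $H_{g_\mu}$. Hence $R_{g_\mu}>0$ for every $\mu\in(0,1]$ while $H_{g_\mu}>0$ throughout $[0,1]$, completing the construction.

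The only delicate point is the borderline case $R_g\equiv 0$, where a pure Neumann eigenvalue problem would yield $\lambda_1=0$ with constant eigenfunction and thus produce no effective deformation. The Robin term $\tau\int_{\partial X}\phi^2$ is precisely what injects positivity into the Rayleigh quotient in this scenario; its introduction costs only a loss of order $\tau$ in the boundary mean curvature, and this loss is absorbed by the strict positivity of $H_g$, which is uniform by compactness of $\partial X$. The whole argument thus hinges on striking this simple trade-off between the Robin parameter and the reservoir provided by the strict mean-convexity hypothesis.
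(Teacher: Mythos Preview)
Your argument is correct and takes a genuinely different route from the paper. The paper proceeds in two steps: first it multiplies $g$ by a cutoff-based conformal factor $\check{\phi}_{\eta,\epsilon}(z)=1-\epsilon\eta\,\psi(z/\eta)$ (here $z$ is the distance to $\partial X$), chosen so that $\Delta_g\check{\phi}\le 0$ with strict inequality near the boundary; this produces intermediate metrics $\check g_\mu$ with $R_{\check g_\mu}\ge 0$, $R_{\check g_\mu}>0$ on an open set, and $H_{\check g_\mu}>0$. It then solves the \emph{Neumann} eigenvalue problem for the conformal Laplacian of each $\check g_\mu$ and multiplies by the first eigenfunction $\zeta_\mu$ to spread the positivity of the scalar curvature everywhere. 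Your approach collapses these two steps into one by replacing the Neumann condition with the Robin condition $\partial_\nu u+\tau u=0$: the boundary term $\tau\int_{\partial X}\phi^2$ in the Rayleigh quotient does by itself what the paper's cutoff seed does, at the controlled cost $\tfrac{c(n)}{2}\tau$ in mean curvature that you absorb into the strict positivity of $H_g$. The linear interpolation of conformal factors via Lemma~\ref{lem:convex} is common to both arguments. Your version is more direct and makes the trade-off between scalar and mean curvature fully transparent; the paper's version has the mild advantage of reusing only the Neumann theory already set up in Appendix~\ref{sec:Neu}, at the price of an extra preparatory deformation.
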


\begin{proof}
	With the same notation as in the proof of Lemma \ref{lem:pushMet}, consider the smooth family of functions
	\[
	\check{\phi}_{\eta,\epsilon}(t)=1-\epsilon \eta\psi\left(t/\eta\right)
	\]
	which are positive for all $\e$ sufficiently small. Then, when pre-composing with the distance function $z$ from $\partial X$ we observe that  $\Delta_g (\check{\phi}_{\eta,\epsilon}(z))\leq 0$ on $X$, with strict inequality near the boundary. 
	With this conformal factor, the metrics $\check{g}_\mu=\check{\phi}_{\eta,\epsilon}^{4/n-2}(z)g$, $\mu\in [0,1]$ form a smooth family such that $R_{\check{g}_\mu}\ge 0$, $R_{\check{g}_\mu}>0$ somewhere, and $H_{\check{g}_\mu}>0$ for $\e$ sufficiently small. For each $\mu\in [0,1]$, consider the eigenvalue problem
	\[\lambda_1(\check{g}_\mu)=\frac{\int_X (c(n)|\nabla_{\check{g}_\mu}\zeta|^2+R_{\check{g}_\mu}\zeta^2) dVol_{\check{g}_\mu}}{\int_X \zeta^2dVol_{\check{g}_\mu}}.\]
	Since $R_{\check{g}_\mu}\ge 0$, and $R_{\check{g}_\mu}>0$ on an open subset of $X$, it is clear that $\lambda_1(\check{g}_\mu)>0$ for $\mu\in (0,1]$. Take the associated eigenfunction $\zeta_\mu$, under our usual $L^2$ normalization condition. By Lemma \ref{lem:contdepdata}, $\lambda_1(\check{g}_\mu)$ is a smooth function of $\mu$, $\zeta_\mu$ is a smooth family of $C^{\infty}$ functions. Then the metrics $g_{\mu}= \zeta_{\mu}^{4/n-2} \check{g}_\mu$ satisfy the properties we claimed.
\end{proof}

\bibliographystyle{amsbook}

\end{document}